\def\author#1{\gdef\autrun{\def\and{\unskip, }#1}\gdef\@author{#1}}
\newcommand{\N}{{\mathbb N}}
\newcommand{\R}{{\mathbb R}}
\newtheorem{theorem}{Theorem}[section]
\newtheorem{corollary}[theorem]{Corollary}
\newtheorem{example}[theorem]{Example}
\newtheorem{remark}[theorem]{Remark}
\newtheorem{hypothesis}[theorem]{Hypothesis}
\newtheorem{lemma}[theorem]{Lemma}
\newtheorem{proposition}[theorem]{Proposition}
\newtheorem{claim}[theorem]{Claim}
\numberwithin{equation}{section}
\begin{document}

\title{Parameterized splitting theorems and bifurcations\\ for potential operators\thanks
{Partially supported by the NNSF  11271044 of China.  }}
\date{January 11, 2020}
\author{Guangcun Lu
\thanks{Corresponding author
\endgraf\hspace{2mm} 2010 {\it Mathematics Subject Classification.}
 58E05, 49J52 (primary), 49J45 (secondary).
 \endgraf\hspace{2mm} {\it Key words and phrases.}
 Bifurcation, potential operator, splitting theorem,
quasi-linear elliptic Euler equations.
}}

\date{March 25, 2020}

 \maketitle \vspace{-0.3in}

\abstract{
We show that parameterized versions of splitting theorems in Morse theory
can be effectively used to generalize  some famous bifurcation theorems
for potential operators which may have weaker differentiability. 
As applications, many new bifurcation results for quasi-linear elliptic Euler equations and systems
of higher order are obtained.} \vspace{-0.1in}
\medskip\vspace{12mm}


%


\tableofcontents

\section{Introduction}\label{sec:Intro}
\setcounter{equation}{0}

Let $H$ be a real Hilbert space, $U$ an open neighborhood of $0$ in $H$,
 and $I$  an open interval in $\mathbb{R}$. Suppose that
$\mathcal{F}:I\times U\to\mathbb{R}$  is G\^ateaux differentiable in the second variable
and that
\begin{equation}\label{e:Intro.1}
\mathcal{F}'_u(\lambda,u)=0
\end{equation}
possesses the trivial solution $u=0$ for each $\lambda\in I$.
A point $(\mu,0)\in I\times H$ is called  a {\it bifurcation point} of (\ref{e:Intro.1})
if every neighborhood of it in $I\times H$ contains a solution $(\lambda,u)$ of (\ref{e:Intro.1})
with $u\ne 0$.
 Bifurcation theory is concerned with the structure of the solutions of (\ref{e:Intro.1}) near
 a bifurcation point.
 H. Poincare  \cite{Poi} initiated the mathematical study of this subject in 1885.
 A. Liapunov \cite{Lia} and  E. Schmidt \cite{Sch08} reduced bifurcation theory to a finite-dimensional problem.
 M. A. Krasnosel'skii \cite{Kra} and J. Cronin \cite{Cro}
 studied bifurcation problems with  the method of topological degrees.
 The former also first used variational methods to study such problems,
which leaded to tremendous  progress  since the late 1960s, 
 see \cite{Pro, Ber72, Boh, Mar, Rab, FaRa1, FaRa2, Ber, Ch2, Ben, Wa, Kie, MaWi, Liu, BaCl, Ba1, IoSch, ChWa, Can}
  etc and references therein. Nevertheless, the key first step in all of these studies is to reduce the problem
  to a finite dimensional situation via either the Lyapunov-Schmidt  reduction or the center
 manifold theorems. It was because of this point that the functional $\mathcal{F}$ was often assumed to be $C^2$, except
  \cite{McTu, Kie, Liu, IoSch, Can},
  where functionals of class $C^{1,1}$ or even $C^1$ are considered.

In this paper we  also use variational methods, precisely Morse theory, to study bifurcation theory
of (\ref{e:Intro.1}) for the functional $\mathcal{F}$ with lower smoothness.
The reference \cite{Pro} by G. Prodi  seems to be the first paper studying bifurcation problems with Morse theory. See also
  M. Berger \cite{Ber72}, B\"ohme \cite{Boh} and  Marino \cite{Mar}.
  L. Nirenberg \cite{Ni} used the Morse lemma and the splitting theorem to study
  such problems from a different view.
 K. C. Chang \cite{Ch2} gave a Morse theory proof of Rabinowitz bifurcation theorem
\cite{Rab}. The basic idea is to study changes of
critical groups of $\mathcal{F}(\lambda,\cdot)$ at $0$ as $\lambda$ varying.
As we know, main tools  to compute critical groups are the splitting theorem and its corollary (the shifting theorem), which
are stated for $C^2$ functionals on Hilbert spaces (\cite[Theorem~I.5.1]{Ch} and \cite[Theorem~8.3]{MaWi}.
 In fact, the first step in the proof of the splitting theorem
(cf. \cite{Ch, MaWi}) is also the Lyapunov-Schmidt finite-dimensional reduction, which requires
 $C^2$-smoothness of functionals. Thus the Rabinowitz bifurcation theorem
    are inapplicable for studying bifurcations
  of quasilinear elliptic equations of potential type
since the corresponding  variational functionals on natural chosen Sobolev spaces
 cannot be of class $C^2$  in general.
Recently, the author developed  Morse theory methods for a class of
quasilinear elliptic equations  by proving some  splitting theorems for some classes of non-$C^2$ functionals \cite{Lu1}--\cite{Lu7}.
In their proofs our finite-dimensional reductions either required weaker differentiability for potential operators or
 were completed on a smaller space.
The ideas of this paper come from these studies.


 ({\bf Notation}. Let $X, Y$ be Banach spaces, and $H$ a Hilbert space. Throughout the paper,
 we denote by $\mathscr{L}(X,Y)$ the space of all bounded linear  operators from $X$ to $Y$,
 by $\mathscr{L}(X)=\mathscr{L}(X,X)$, and by $\mathscr{L}_s(H)$ the space of all bounded linear self-adjoint
 operators on $H$, by  $B_X(x,r)$ (resp. $\bar{B}_X(x,r)$) the open (resp. closed) ball in $X$ with
 center $x\in X$ and radius $r>0$. For a map $f$ from $X$ to $Y$,
 $Df(x)$ (resp. $df(x)$ or $f'(x)$) denotes the G\^ateaux (resp. Fr\'echet) derivative of $f$ at $x\in X$,
 which is an element in $\mathscr{L}(X,Y)$. Of course, we also use $f'(x)$ to denote $Df(x)$
 without occurring of confusions. When $Y=\mathbb{R}$, $f'(x)\in \mathscr{L}(X,\mathbb{R})=X^\ast$,
 and if $X=H$ we call the Riesz representation of $f'(x)$ in $H$
 gradient of $f$ at $x$, denoted by $\nabla f(x)$. The the Fr\'echet (or G\^ateaux) derivative of $\nabla f$ at $x\in H$
 is denoted by $f''(x)$, which is an element in $\mathscr{L}_s(H)$.
($f''(x)$ can be seen as a symmetric bilinear form on $H$ without confusion occuring.)
 Moreover, we always use $0$ to
 denote the origin in different vector spaces without special statements.
 As usual $\mathbb{N}$ denotes the set of all positive integers, and $\mathbb{N}_0=\mathbb{N}\cup\{0\}$.
 The unit sphere in a Banach space $X$ is denoted by $SX$. If $G$ is a compact Lie group, and
 $M$ is a $G$-space or $G$-module, we use $M^G$ to denote the fixed point set of the $G$-action,
 i.e., $M^G=\{x\in M\,|\, gx=x\;\forall g\in G\}$. )

%

Our main aim is to study generalizations of some previous famous bifurcation theorems
such as those by Krasnosel'skii \cite{Kra}, by Rabinowitz  \cite{Rab}, and by Fadelll and Rabinowitz  \cite{FaRa1, FaRa2}
with parameterized versions of two new splitting theorems established by the author \cite{Lu6,Lu7} and \cite{Lu1,Lu2, Lu4}
 (see Appendix~\ref{app:A}) and the parameterized version of a splitting  lemma  by Bobylev and Burman \cite{BoBu} (see
Appendix~\ref{app:B}). The following are our basic assumptions related to them.


\begin{hypothesis}\label{hyp:1.1}
{\rm Let $H$ be a Hilbert space with inner product $(\cdot,\cdot)_H$
and the induced norm $\|\cdot\|$, and let $X$ be a dense linear subspace in $H$.
Let  $U$ be an open neighborhood of $0$ in $H$,
and let $\mathcal{L}\in C^1(U,\mathbb{R})$ satisfy $\mathcal{L}'(0)=0$.
Assume that the gradient $\nabla\mathcal{L}$ has a G\^ateaux derivative $B(u)\in \mathscr{L}_s(H)$ at every point
$u\in U\cap X$, and that the map $B: U\cap X\to
\mathscr{L}_s(H)$  has a decomposition
$B=P+Q$, where for each $x\in U\cap X$,  $P(x)\in\mathscr{L}_s(H)$ is  positive definitive and
$Q(x)\in\mathscr{L}_s(H)$ is compact. $B$, $P$ and $Q$ are also assumed to satisfy the following
properties:
\begin{description}
\item[(D1)]  $\{u\in H\,|\, B(0)u=\mu u,\;\mu\le 0\}\subset X$.
\item[(D2)] For any sequence $(x_k)\subset
U\cap X$ with $\|x_k\|\to 0$, $\|P(x_k)u-P(0)u\|\to 0$ for any $u\in H$.
\item[(D3)] The  map $Q: U\cap X\to \mathscr{L}_s(H)$ is continuous at $0$ with respect to the topology
on $H$.
\item[(D4)] For any sequence $(x_k)\subset U\cap X$ with $\|x_k\|\to 0$, there exist
 constants $C_0>0$ and $k_0\in\N$ such that
$(P(x_k)u, u)_H\ge C_0\|u\|^2$ for all $u\in H$ and for all $k\ge k_0$.
\end{description}}
\end{hypothesis}

By \cite[Lemma~2.7]{Lu7} the condition ({\rm D4}) is equivalent to the following
\begin{description}
\item[\bf (D4*)] There exist positive constants $\eta_0>0$ and  $C'_0>0$ such that $\bar{B}_H(0,\eta_0)\subset U$ and
$$
(P(x)u, u)\ge C'_0\|u\|^2\quad\forall u\in H,\;\forall x\in
\bar{B}_H(0,\eta_0)\cap X.
$$
\end{description}

\begin{hypothesis}\label{hyp:1.2}
{\rm Let $U\subset H$ be as in Hypothesis~\ref{hyp:1.1},  $\mathcal{L}\in C^1(U,\mathbb{R})$ satisfy
$\mathcal{L}'(0)=0$  and the gradient $\nabla\mathcal{L}$ have the G\^ateaux derivative
$\mathcal{L}''(u)\in\mathscr{L}_s(H)$ at any $u\in U$, which is a compact operator
 and approaches to $\mathcal{L}''(0)$ in $\mathscr{L}_s(H)$ as $u\to 0$ in $H$.
}
\end{hypothesis}

\begin{hypothesis}\label{hyp:1.3}
{\rm Let $H$ be a Hilbert space with inner product $(\cdot,\cdot)_H$
and the induced norm $\|\cdot\|$, and let $X$ be a Banach space with
norm $\|\cdot\|_X$, such that $X\subset H$ is dense in $H$ and $\|x\|\le\|x\|_X\;\forall x\in X$. 
For an open neighborhood $U$ of $0$ in $H$, $U\cap X$
is also an open neighborhood of $0$ in $X$, denoted by $U^X$.
 Let $\mathcal{L}:U\to\mathbb{R}$ be a continuous functional  satisfying  the following
conditions:
\begin{description}
\item[(F1)] $\mathcal{L}$ is continuously directional
differentiable and $D\mathcal{L}(0)=0$.
\item[(F2)] There exists a continuous and continuously directional differentiable
 map $A: U^X\to X$, which is also  strictly Fr\'{e}chet differentiable
 at $0$,   such that
$D\mathcal{ L}(x)[u]=(A(x), u)_H$ for all $x\in U\cap X$ and $u\in X$.
\item[(F3)] There exists a map $B: U\cap X\to \mathscr{L}_s(H)$ such that
$(DA(x)[u], v)_H=(B(x)u, v)_H$ for all $x\in U\cap X$  and
$u, v\in X$. (So $B(x)$ induces an element in $\mathscr{L}(X)$, denoted by $B(x)|_X$,
and $B(x)|_X=DA(x)\in\mathscr{L}(X),\;\forall x\in U\cap X$.)
\item[(C)]   $\{u\in H\,|\, B(0)(u)\in X\}\subset X$,
in particular  ${\rm Ker}(B(0))\subset X$.
\item[(D)] $B$ satisfies the same conditions as in Hypothesis~\ref{hyp:1.1}.
\end{description}}
\end{hypothesis}

({\it Note}: (F2) in \cite[\S8]{Lu2} does not have the requirement that $A: U^X\to X$ is continuous since
we consider that it is contained in the assumption of the strict Fr\'{e}chet differentiability of $A$ at $0$.
Actually, the latter may only imply that $A$ is continuous near $0\in X$, which is not sufficient
for us using the mean value theorem in the third line of \cite[p.2958]{Lu2}.
Actually, in order to assure effectiveness of such an argument we only need the assumption  that
$A: U^X\to X$ is continuous and G\^ateaux differentiable, which is  weaker since
the continuously directional differentiability of $A$ implies that $A$ is G\^ateaux differentiable.)

\begin{hypothesis}\label{hyp:1.4}
{\rm Let $H, X$ and $U\subset H$ be as in Hypothesis~\ref{hyp:1.3},
 $\mathcal{L}\in C^1(U,\mathbb{R})$ has the critical point $0\in U$, and there exist
maps $A\in C^1(U^X, X)$ and $B: U\cap X\to \mathscr{L}_s(H)$ satisfying (F2)-(F3) in
Hypothesis~\ref{hyp:1.3} and
\begin{description}
\item[(D*)] For each $x\in U\cap X$,  $B(x)\in\mathscr{L}_s(H)$ is a compact linear  operator
and the  map $B: U\cap X\to \mathscr{L}_s(H)$ is continuous at $0$ with respect to the topology on $H$.
\end{description}}
\end{hypothesis}

Clearly, Hypothesis~\ref{hyp:1.2} and Hypothesis~\ref{hyp:1.4} are stronger than
Hypothesis~\ref{hyp:1.1} and Hypothesis~\ref{hyp:1.3}, respectively.
In applications, bifurcation theorems obtained under Hypothesises~\ref{hyp:1.1},\ref{hyp:1.2}
are mainly applied to quasi-linear elliptic systems and Lagrange systems;
those obtained under Hypothesises~\ref{hyp:1.3}, \ref{hyp:1.4} or the following are
more powerful for us studying bifurcations in Lagrange systems and geodesics on Finsler manifolds.

An important case of the bifurcation problem (\ref{e:Intro.1})
is that of eigenvalues of nonlinear problems. The following are two related hypothesises.

\begin{hypothesis}\label{hyp:Bif.2.2.0}
{\rm Let tuples $(H,X,U,\mathcal{L}, A, B)$ and
 $(H,X,U, \widehat{\mathcal{L}}, \widehat{A}, \widehat{B})$ satisfy  (F1)-(F3)
  in Hypothesis~\ref{hyp:1.3}, $\mathcal{L},\widehat{\mathcal{L}}\in C^1(U,\mathbb{R})$ and $A, \widehat{A}\in C^1(U^X,X)$.
   Suppose that $\lambda^\ast$ is an eigenvalue  of
\begin{equation}\label{e:Spl.2.1}
B(0)v-\lambda\widehat{B}(0)v=0,\quad v\in H,
\end{equation}
and that  for each $\lambda$ near $\lambda^\ast$ the operator $\mathfrak{B}_{\lambda}:=B(0)-\lambda \widehat{B}(0)$
satisfies (C) in Hypothesis~\ref{hyp:1.3} and (D1) in Hypothesis~\ref{hyp:1.1}, i.e.,
$\{u\in H\,|\,\mathfrak{B}_{\lambda}u\in X\}\cup\{u\in H\,|\, \mathfrak{B}_{\lambda}u=\mu u,\;\mu\le 0\}\subset X$.
 Assume also that one of the following two conditions holds:
 \begin{description}
 \item[(I)] $\lambda^\ast\ne 0$,
 and for each $x\in U\cap X$,  $B(x)$  has a decomposition
$B(x)=P(x)+ Q(x)$,
where $P(x)\in\mathscr{L}_s(H)$ is  positive definitive,
$Q(x)\in\mathscr{L}_s(H)$ is  compact, and both satisfy the properties (D2)--(D4) in Hypothesis~\ref{hyp:1.1};
 $\widehat{B}$ satisfies (D*) in Hypothesis~\ref{hyp:1.4}, i.e., for each $x\in U\cap X$,
  $\widehat{B}(x)\in\mathscr{L}_s(H)$ is compact and  $\widehat{B}(x)\to\widehat{B}(0)$ in $\mathscr{L}_s(H)$
  as $x\to 0$ along $X\cap U$ in $H$.
    \item[(II)]  $\lambda^\ast=0$, $B$ is as in (I), and   $\widehat{B}(x)\to\widehat{B}(0)$ in $\mathscr{L}_s(H)$
  as $x\to 0$ along $X\cap U$ in $H$.
\end{description}}
\end{hypothesis}

Under Hypothesis~\ref{hyp:Bif.2.2.0}, for each $\lambda$ near $\lambda^\ast$ it is easily checked that  the functional
${\mathcal{L}}_\lambda:={\mathcal{L}}-\lambda\widehat{\mathcal{L}}$ satisfies Hypothesis~\ref{hyp:1.3}.
In fact, this also holds under the following weaker one.

\begin{hypothesis}\label{hyp:Bif.2.2.0+}
{\rm ``$\mathcal{L},\widehat{\mathcal{L}}\in C^1(U,\mathbb{R})$ and $A, \widehat{A}\in C^1(U^X,X)$"
in Hypothesis~\ref{hyp:Bif.2.2.0} is replaced by
``$\widehat{\mathcal{L}}\in C^1(U,\mathbb{R})$ and $\widehat{A}\in C^1(U^X,X)$".
}
\end{hypothesis}

Let us outline  our main ideas to study bifurcation problem of (\ref{e:Intro.1})
when $\mathcal{F}(\lambda,u)=\mathcal{L}(u)-\lambda\widehat{\mathcal{L}}(u)$, where $\mathcal{L}\in C^1(U,\mathbb{R})$ is as in Hypothesis~\ref{hyp:1.1} with $X=H$ and  $\widehat{\mathcal{L}}\in C^1(U,\mathbb{R})$ satisfies Hypothesis~\ref{hyp:1.2}.
Note that for each $\lambda\in\mathbb{R}$, $\mathcal{L}_{\lambda}:=\mathcal{L}-\lambda\widehat{\mathcal{L}}$
also satisfies Hypothesis~\ref{hyp:1.1} with $X=H$. Let $\lambda^\ast\in\mathbb{R}$
be an eigenvalue of $\mathcal{L}''(0)u=\lambda\widehat{\mathcal{L}}''(0)u$.
New finite-dimensional reduction in the proof of our parameterized splitting theorem
Theorem~\ref{th:A.2} (see \cite[Theorem~2.16]{Lu7})
shows that solving 
$$
\mathcal{L}'(u)-\lambda\widehat{\mathcal{L}}'(u)=0
$$
for $(\lambda,u)$ near $(\lambda^\ast, 0)$ in $\mathbb{R}\times H$ 
is equivalent to solving 
 $$
 d\mathcal{L}^\circ_\lambda(z)=0 
 $$
 for $(\lambda,z)$  near $(\lambda^\ast, 0)$ in $\mathbb{R}\times H^0$,
 where $H^0={\rm Ker}(B(0)-\lambda^\ast\widehat{B}(0))$ and  $(\lambda^\ast-\delta, \lambda^\ast+\delta)\ni\lambda\mapsto \mathcal{L}^\circ_\lambda\in
 C^1(\bar{B}_{H^0}(0,\epsilon))$ is continuous.
    Hence under suitable additional conditions we may carry out other
 arguments along \cite{Rab, FaRa1, FaRa2, Ch2, Wa, BaCl, Ba1} etc, and obtain many bifurcation theorems.
For example, suppose that the eigenvalue $\lambda^\ast$ is also isolated. Then for each $\lambda\ne\lambda^\ast$
 close to $\lambda^\ast$ the origin $0\in H$ is a nondegenerate critical point
 of $\mathcal{L}_{\lambda}$ in the sense of \cite{Lu6,Lu7}, in particular
  an isolated critical point of $\mathcal{L}_{\lambda}$ and thus $0\in H^\circ$
 is such a critical point of $\mathcal{L}^\circ_{\lambda}$  as well. Under some additional conditions we
 can use the parameterized shifting theorem in  \cite{Lu6,Lu7}
  to compute critical groups of $\mathcal{L}^\circ_{\lambda}$ at $0\in H^0$
 and conclude that  $\mathcal{L}^\circ_{\lambda}$ takes a local maximum (resp.
 minimum) at $0\in H^0$ as $\lambda$ varies in one (resp. other) side of $\lambda^\ast$.
 Thus  a generalization of Rabinowitz bifurcation theorem \cite{Rab} can follow from these and
 Canino's finite dimension version \cite[Theorem~5.1]{Can} 
  for an extension of the Rabinowitz's theorem by  Ioffe and Schwartzman \cite{IoSch}.
If $H$ is equipped with an orthogonal action of  a compact Lie group $G$
 for which  $U$, $\mathcal{L}$ and $\widehat{\mathcal{L}}$ are $G$-invariant,
we may apply methods in Fadelll and Rabinowitz  \cite{FaRa1, FaRa2} to
$G$-invariant $\mathcal{L}^\circ_\lambda$ to generalize their results as $G=\mathbb{Z}_2$ or $S^1$.
Sometime, (for example, when $\mathcal{L}$ and $\widehat{\mathcal{L}}$ satisfy  Hypothesis~\ref{hyp:1.3} and
 Hypothesis~\ref{hyp:1.4}, respectively) the reduced functional  $\mathcal{L}^\circ_\lambda$ is of class $C^2$,
 we may apply a result by  Bartsch and Clapp \cite[\S4]{BaCl} to
 $G$-invariant $\mathcal{L}^\circ_\lambda$ to obtain corresponding generalizations for any compact Lie group
 because we may explicitly compute the number $d$ in \cite[\S4]{BaCl} in our situation.

Once parameterized versions of other splitting theorems on Banach spaces are established,
our above methods are applicable. For example, we  may also write the parameterized versions of the splitting
lemmas at infinity in \cite{Lu3}, and then use them to derive some theorems of bifurcations at infinity.
These will be given otherwise.

 The bifurcation theorems  by Krasnosel'skii \cite{Kra}, by Rabinowitz  \cite{Rab}, and by Fadelll and Rabinowitz  \cite{FaRa1, FaRa2},
and their previous some generalizations, often require that  functionals $\mathcal{F}(\lambda,x)$
depend on $(\lambda,x)$ in $C^2$ way and that corresponding potential operators
$\nabla_x\mathcal{F}(\lambda,x)$ have forms either $A_\lambda x+ o(\|x\|)$ or
$(A-\lambda) x+ o(\|x\|)$ for $x\to 0$. Compared these with the above six hypothesises
it is easily seen that our functionals require lower smoothness and corresponding potential operators
 might have higher nonlinearity.

\textsf{ Outline of the paper.} This paper includes three parts.

Part I, abstract theory, is the core of this paper, consisting of Sections 2,3,4,5,6.
Section~\ref{sec:CGS} contains some notions, simple recall of necessary results about stability of critical groups,
and  a proposition about the uniform (PS) condition for a family of functionals.
In Section~\ref{sec:B.2.1}, for $\mathcal{F}\in C^0(I\times U,\mathbb{R})$ with  nonlinear dependence on parameters,
 we first discuss some necessary conditions for a point $(\mu,0)\in I\times H$ to be a bifurcation point of (\ref{e:Intro.1}),
 and then  as applications of splitting theorems in Appendix~\ref{app:A}
we prove two sufficient conditions for bifurcations occuring, 
which are part generalizations of results by Chow and Lauterbach \cite{ChowLa} and by Kielh\"ofer \cite{Kie}.
%
In Sections~\ref{sec:B.2}, \ref{sec:B.3} we shall use the parameterized versions of  splitting theorems
\cite{Lu6,Lu7, Lu1,Lu2, Lu4} to generalize some older bifurcation theorems.
Section~\ref{sec:B.2}  is dedicated to some generalizations of  Rabinowitz bifurcation theorem  \cite{Rab}.
Section~\ref{sec:B.3} deals with the equivariant case;
  some previous bifurcation theorems, such as those by Fadelll and Rabinowitz  \cite{FaRa1, FaRa2}
  by Bartsch and Clapp  \cite{BaCl} are generalized so that they can be used to study
variational bifurcation  for the integral functionals as in \cite[(1.3)]{Lu7}.
In order to compare our methods with previous those we state corresponding results
for $C^2$ functionals which can be obtained with our methods
at the end of each of Sections~\ref{sec:B.2.1},\ref{sec:B.2},\ref{sec:B.3}.
In Section~\ref{sec:BBH},  for potential operators of Banach-Hilbert regular functionals
(cf. Appendix~\ref{app:B}) we prove parallel results to some  bifurcation theorems in last two sections.

Part II give applications of the abstract theory developed in Part I to  quasi-linear elliptic systems.
Section~\ref{sec:BifE.1} contains some fundamental hypotheses and preliminaries
on quasi-linear elliptic systems considered in next sections.
Section~\ref{sec:BifE.2} proves some bifurcations results for quasi-linear elliptic systems with some growth restrictions
 with theorems proved in Sections~\ref{sec:B.2.1},\ref{sec:B.2} and \ref{sec:B.3}.
In Section~\ref{sec:BifE.3} we use abstract theorems in Section~\ref{sec:BBH}
to study bifurcations for quasi-linear elliptic systems
without growth restrictions unless higher smoothness requirements for the related functions $F$ and domains $\Omega$.
In Section~\ref{sec:BifE.4} we  study bifurcations  for quasi-linear elliptic Dirichlet problems
 from deformations of domains, and generalize previous results
   for semilinear elliptic Dirichlet problems on a ball.

Part III consists of two appendixes. Appendix~\ref{app:A} contains
parameterized versions of splitting  lemmas by author \cite{Lu7} and \cite{Lu1, Lu2}.
  Appendix~\ref{app:B} gives parameterized versions for Bobylev-Burman splitting
   lemmas in \cite{BoBu} and related results.

\section*{Acknowledgments}
The most part of this revised version was completed during my visit in  Rutgers University.
The author is deeply grateful to Professor Xiaochun Rong for his invitation and helps.



\newpage


%



%

\part{Abstract bifurcation theory}\label{par:CGS}



\section{Preliminary materials}\label{sec:CGS}
\setcounter{equation}{0}


In this section, for the reader¡¯s reference we list necessary notions and
results on stability of critical groups, 
 and then give related  propositions.


Recall that a $C^1$ functional $f$ on a Banach space $X$ is said to satisfy
the {\bf Palais-Smale condition} ((PS) {\bf condition}, for short)  in a closed subset $S\subset X$ if
every sequence $(u_n)\subset S$ with $f'(u_n)\to 0$ and $(f(u_n))$  bounded
 has a subsequence converging to $u\in X$. For some $c\in\mathbb{R}$, if
 ``$(f(u_n))$  bounded" is replaced by ``$f(u_n)\to c$", we say that $f$ satisfies
 the {\bf Palais-Smale condition at level $c$} ($(PS)_c$ {\bf condition}, for short) in $S$.
 A family of $C^1$ functionals on $X$  parameterized by
  a metric space $\Lambda$,  $\{f_\lambda\}_{\lambda\in\Lambda}$, is called to satisfy
 the {\bf uniform Palais-Smale condition} ({\bf uniform} (PS) {\bf condition}, for short)  in a closed subset $S\subset X$ if
every sequence $(\lambda_n, u_n)\subset \Lambda\times S$ such that $f'_{\lambda_n}(u_n)\to 0$ and $(f_{\lambda_n}(u_n))$  bounded
 has a subsequence converging to $(\lambda,u)\in \Lambda\times S$
 with $f'_{\lambda}(u)=0$  (cf. \cite[Definition~2.4]{CorH}).
 (If $\Lambda$ is compact we can assume $\lambda_n\to\lambda\in\Lambda$ in this definition.)

\begin{lemma}[\hbox{\cite[Remark 2.1(b)]{CorH}}]\label{lem:pre.1}
For a $C^1$ functional $f$ on a Banach space (or $C^1$ Finsler manifold) $X$ with norm $\|\cdot\|$,
 the {\bf weak slope} $|df|(u)$  of $f$ (as a continuous functional on the metric space $X$)  at any $u\in X$ (cf. \cite[Definition (2.1)]{DegMa}) is equal to $\|f'(u)\|$.
\end{lemma}

%
%
%

%

This lemma and \cite[Th.5.1]{CorH} directly lead to:

\begin{theorem}\label{th:stablity1}
For a family of functionals on a Banach space $X$, $\{f_\sigma\in C^1(X,\mathbb{R})\,|\, \sigma\in [0,1]\}$,
suppose that there exists an open set $U$ such that
\begin{description}
\item[(i)]  $z_\sigma\in U$ is a unique critical point of $f_\sigma$ in $\overline{U}$, $\forall\sigma\in [0,1]$;
\item[(ii)] $\sigma\to f_\sigma$ is continuous in $C^1(\overline{U})$ topology;
\item[(iii)]  $f_\sigma$ satisfies the (PS) condition in $\overline{U}$,  $\forall\sigma\in [0,1]$.
\end{description}
Then $C_\ast(f_\sigma,z_\sigma;{\bf K})$ is independent of $\sigma$ for any Abel group ${\bf K}$.
\end{theorem}

Actually, \cite[Th.5.1]{CorH} was stated for ${\bf K}=\mathbb{R}$. However, the proof therein is still effective for any
any Abel group ${\bf K}$ because the arguments involving in ${\bf K}$ is to use \cite[Prop.5.2]{CorH} and the latter
also clearly holds true if $\mathbb{R}$ is replaced by ${\bf K}$ by its proof.
Moreover, by \cite[Prop.3.7]{Cor} the definition of critical groups used in \cite{CorH} is equivalent to the usual one
as in \cite{Ch,Ch1,MaWi, McTu}, i.e.,
$$
C_q(f,z;{\bf K})=H_q(\{f\le f(z)\}\cap U, \{f\le f(z)\}\cap(U\setminus\{z\});{\bf K})
$$
if $z\in X$ is an isolated lower critical point of $f$, where $U$ is a neighborhood of $z$.

When $X$ is a Hilbert space, Theorem~\ref{th:stablity1} was proved in  \cite[Theorem~8.8]{MaWi} (if $f_\sigma\in C^{2-0}(X,\mathbb{R})$),
in Theorem~5.6 of \cite[Chap.I]{Ch} (if $f_\sigma\in C^2(X,\mathbb{R})$), and in \cite[Corollary~5.1.25]{Ch1} (if $f_\sigma\in C^1(X,\mathbb{R})$).

The conditions (ii)-(iii) in Theorem~\ref{th:stablity1} can also be replaced by others.

\begin{theorem}[\hbox{\cite[Th.3.6]{CiDe}}]\label{th:stablity2}
For a family of functionals on a Banach space $X$, $\{f_\sigma\in C^1(X,\mathbb{R})\,|\, \sigma\in [0,1]\}$,
suppose that there exists an open set $U$ such that
\begin{description}
\item[(i)] for each $\sigma\in [0,1]$, $z_\sigma\in U$ is a unique critical point of $f_\sigma$ in $\overline{U}$, and
            $[0,1]\ni \sigma\to z_\sigma\in U$ is also continuous;
\item[(ii)] $\sigma\to f_\sigma$ is continuous in $C^0(\overline{U})$ topology;
\item[(iii)] $\{f_\sigma\}_{\sigma\in[0,1]}$ satisfies the uniform (PS) condition on $\overline{U}$, that is,
for every sequence $\sigma_k\to \sigma$ in $[0,1]$ and $(u_k)$ in $\overline{U}$ with $f'_{\sigma_k}(u_k)\to 0$ and
$(f_{\sigma_k}(u_k))$ bounded, there exists a subsequence $(u_{k_j})$ convergent to some $u$ with $f'_\sigma(u)=0$.
\end{description}
Then $C_\ast(f_\sigma,z_\sigma;{\bf K})$ is independent of $\sigma$ for any Abel group ${\bf K}$.
\end{theorem}

This result has also a corresponding version at the level of continuous functionals, \cite[Theorem~3.1]{CiDe}.

%

 Let $X$ be a Banach space with dual space $X^\ast$.
 A map $T$ from a subset $D$ of $X$ to $X^\ast$ is said to be of {\bf class $(S)_+$ }
 if  for any sequence $(u_j)\subset D$  with  converging weakly to $u$ in $X$
  for which $\varlimsup_{j\to\infty}\langle T(u_j), u_j-u\rangle\le 0$, it
follows that $(u_j)$ converges strongly to $u$ in $X$.
 The definition was introduced by Browder \cite{Bro} and  Skrypnik \cite{Skr} (the condition of belonging to class $(S)_+$ is called condition $\alpha$ in the latter paper).

\begin{proposition}\label{prop:stablity}
Under Hypothesis~\ref{hyp:1.1} or Hypothesis~\ref{hyp:1.3} the restriction of the gradient $\nabla\mathcal{L}$
to a small neighborhood of $0\in H$ is  of the class $(S)_+$.
Thus $\mathcal{L}$ satisfies the (PS) condition in a closed neighborhood of  $0\in H$.
\end{proposition}

The case that Hypothesis~\ref{hyp:1.3} is satisfied  was proved in the proof of Theorem~2.12 of \cite[p.2966-2967]{Lu2}.
 If Hypothesis~\ref{hyp:1.1} holds the proposition may be proved
by almost same arguments.  It may also be contained in the proof of the following more general claim.

\begin{proposition}\label{prop:stablity1}
Let $H$, $X$ and $U$ be as in Hypothesis~\ref{hyp:1.1},  $I$  an  interval  in $\mathbb{R}$.
  Suppose that $\{\mathcal{F}_\lambda\in C^1(U,\mathbb{R})\,|\,\lambda\in I\}$  satisfies:
$\mathcal{F}'_\lambda(0)=0$,  the gradient $\nabla\mathcal{F}_\lambda$ has a G\^ateaux derivative $B_\lambda(u)\in \mathscr{L}_s(H)$ at every point
$u\in U\cap X$, and that the map $B_\lambda: U\cap X\to
\mathscr{L}_s(H)$  has a decomposition
$B_\lambda=P_\lambda+Q_\lambda$, where for each $x\in U\cap X$,  
$Q_\lambda(x)\in\mathscr{L}_s(H)$ is compact. For
some small $\delta>0$ with $\bar{B}_H(0, \delta)\subset U$ operators $P_\lambda$, $Q_\lambda$ and $\nabla\mathcal{F}_\lambda$ are also assumed to satisfy the following
properties:
 \begin{description}
 \item[(i)]   There exist positive constants $c_0>0$ such that
$$
(P_\lambda(x)u, u)\ge c_0\|u\|^2\quad\forall u\in H,\;\forall x\in
\bar{B}_H(0,\delta)\cap X,\quad\forall\lambda\in I;
$$
 \item[(ii)]  As $x\in U\cap X$ approaches $0$ in $H$,
$Q_\lambda(x)\to Q_\lambda(0)$ in $\mathscr{L}_s(H)$ uniformly  with respect to $\lambda\in I$;
 \item[(iii)]  If $(\lambda_n)\subset I$ converges to $\lambda\in I$ then
  $$
  \|Q_{\lambda_n}(0)-Q_\lambda(0)\|\to 0\quad\hbox{and}\quad \|\nabla\mathcal{F}_{\lambda_n}(x)-\nabla\mathcal{F}_\lambda(x)\|\to 0\quad\forall x\in \bar{B}_H(0, \delta)\cap X.
  $$
\end{description}
 Then for any sequences $\lambda_n\to\lambda_0$ in $I$ and  $(u_n)\subset\bar{B}_H(0, \delta)$ such that $\mathcal{F}'_{\lambda_n}(u_n)\to 0$ and $(\mathcal{F}_{\lambda_n}(u_n))$ is bounded, there exists  a subsequence $u_{n_k}\to u_0\in \bar{B}_H(0, \delta)$
with $\mathcal{F}'_{\lambda_0}(u_0)=0$.

Moreover, if $X$ itself is a normed line space with norm $\|\cdot\|$ such that the inclusion $X\hookrightarrow H$ is
continuous (and so $U\cap X$ is an open neighborhood of $0$ in $X$, denoted by $U^X$ as before),
then the sentence ``\textsf{the gradient $\nabla\mathcal{F}_\lambda$ has a G\^ateaux derivative $B_\lambda(u)\in \mathscr{L}_s(H)$ at every point
$u\in U\cap X$}" in the above statement  can be replaced by
``\textsf{there exists a map $B_\lambda: U^X\to \mathscr{L}_s(H)$ and a continuous and continuously directional differentiable
 map $A_\lambda: U^X\to X$ such that
 \begin{eqnarray}\label{e:pre.1}
D\mathcal{F}_\lambda(x)[u]=(A_\lambda(x), u)_H\quad\hbox{and}\quad
(DA_\lambda(x)[u], v)_H=(B_\lambda(x)u, v)_H
\end{eqnarray}
 for all $x\in U^X$  and $u, v\in X$}".
\end{proposition}
\begin{proof}
By (ii) we may shrink $\delta>0$ so that
$$
\|Q_\lambda(x)-Q_\lambda(0)\|<\frac{c_0}{2},\quad\forall x\in \bar{B}_H(0, \delta)\cap X,\quad\forall\lambda\in I.
$$
It follows from this  and (i) that for all $x\in \bar{B}_H(0, \delta)\cap X$ and $u\in H$,
\begin{eqnarray}\label{e:pre.2}
\bigl(B_\lambda(x)u,u\bigr)_H&=&\bigl(P_\lambda(x)u,u\bigr)_H+
\bigl([Q_\lambda(x)-Q_\lambda(0)]u,u\bigr)_H+ \bigl(Q_\lambda(0)u, u\bigr)_H\nonumber\\
&\ge& \frac{c_0}{2}\|u\|^2+ \bigl(Q_\lambda(0)u, u\bigr)_H.
\end{eqnarray}
Clearly, $(u_n)$ has a subsequence $(u_{n_k})$ weakly converging to $u_0\in \bar{B}_H(0, \delta)$.
Since $\bar{B}_H(0, \delta)\cap X$ is dense in $\bar{B}_H(0, \delta)$, and $\mathcal{F}_\lambda\in C^1(U)$, we have sequences
$(v_{n_k})\subset\bar{B}_H(0, \delta)\cap X$ and $(u_{0m})\subset\bar{B}_H(0, \delta)\cap X$ such that
for all $k,m=1,2,\cdots$,
\begin{eqnarray}\label{e:pre.3}
\|v_{n_k}-u_{n_k}\|<\frac{1}{k},\quad\|\nabla\mathcal{F}_{\lambda_{n_k}}(v_{n_k})-\nabla\mathcal{F}_{\lambda_{n_k}}(u_{n_k})\|<\frac{1}{k}
\quad\hbox{and}\quad\|u_{0m}-u_0\|<\frac{1}{m}.
\end{eqnarray}
Since $\nabla\mathcal{F}_\lambda$ is continuous and has a G\^ateaux derivative $B_\lambda(u)\in \mathscr{L}_s(H)$ at every point
$u\in U\cap X$, for each fixed $k$ using the mean value theorem we have $\tau\in (0, 1)$ such that
\begin{eqnarray*}
&&(\nabla\mathcal{F}_{\lambda_{n_k}}(v_{n_k}), v_{n_k}-u_{0m})_H\\
&=&(\nabla\mathcal{F}_{\lambda_{n_k}}(v_{n_k})-\nabla\mathcal{F}_{\lambda_{n_k}}(u_{0m}),
v_{n_k}-u_{0m})_H-(\nabla\mathcal{F}_{\lambda_{n_k}}(u_{0m}), v_{n_k}-u_{0m})_H\\
&=&\bigl(D(\nabla\mathcal{F}_{\lambda_{n_k}})(\tau v_{n_k}+ (1-\tau)u_{0m})[v_{n_k}-u_{0m}], v_{n_k}-u_{0m}\bigr)_H-(\nabla\mathcal{F}_{\lambda_{n_k}}(u_{0m}), v_{n_k}-u_{0m})_H\\
&=&\bigl(B_{\lambda_{n_k}}(\tau v_{n_k}+ (1-\tau)u_{0m})(v_{n_k}-u_{0m}), v_{n_k}-u_{0m}\bigr)_H-(\nabla\mathcal{F}_{\lambda_{n_k}}(u_{0m}), v_{n_k}-u_{0m})_H\\
&\ge& \frac{c_0}{2}\|v_{n_k}-u_{0m}\|^2-(\nabla\mathcal{F}_{\lambda_{n_k}}(u_{0m}), v_{n_k}-u_{0m})_H+
(Q_{\lambda_{n_k}}(0)(v_{n_k}-u_{0m}), v_{n_k}-u_{0m})_H,
\end{eqnarray*}
where the final inequality is because of (\ref{e:pre.2}).
Since $\mathcal{F}_\lambda\in C^1(U)$ letting $m\to\infty$ we get
\begin{eqnarray*}
(\nabla\mathcal{F}_{\lambda_{n_k}}(v_{n_k}), v_{n_k}-u_{0})_H&\ge& \frac{c_0}{2}\|v_{n_k}-u_{0}\|^2-(\nabla\mathcal{F}_{\lambda_{n_k}}(u_{0}), v_{n_k}-u_{0})_H\\
&&+(Q_{\lambda_{n_k}}(0)(v_{n_k}-u_{0}), v_{n_k}-u_{0})_H,\quad\forall k=1,2,\cdots.
\end{eqnarray*}
Note that (\ref{e:pre.3}) implies $\lim_{k\to\infty}\nabla\mathcal{F}_{\lambda_{n_k}}(v_{n_k})=\lim_{k\to\infty}\nabla\mathcal{F}_{\lambda_{n_k}}(u_{n_k})=0$ and $v_{n_k}\rightharpoonup u_0$.
It easily follows these  and (iii) that
$$
\frac{c_0}{2}\lim_{k\to\infty}\|v_{n_k}-u_0\|^2\le
\lim_{k\to\infty}(\nabla\mathcal{F}_{\lambda_{n_k}}(v_{n_k}), v_{n_k}-u_0)_H=0
$$
and thus $u_{n_k}\to u_0$ in $H$. The desired claim is proved.

In order to prove the second part, we only need to change the five lines below (\ref{e:pre.3}) into:\\
``Since $A_{\lambda}: U^X\to X$  is continuous, and continuously directional differentiable,
 and satisfies (\ref{e:pre.1}),  for each fixed $k$ using the mean value theorem we have $\tau\in (0, 1)$ such that
\begin{eqnarray*}
&&(\nabla\mathcal{F}_{\lambda_{n_k}}(v_{n_k}), v_{n_k}-u_{0m})_H\\
&=&(\nabla\mathcal{F}_{\lambda_{n_k}}(v_{n_k})-\nabla\mathcal{F}_{\lambda_{n_k}}(u_{0m}),
v_{n_k}-u_{0m})_H-(\nabla\mathcal{F}_{\lambda_{n_k}}(u_{0m}), v_{n_k}-u_{0m})_H\\
&=&(A_{\lambda_{n_k}}(v_{n_k})-A_{\lambda_{n_k}}(u_{0m}), v_{n_k}-u_{0m})_H-(\nabla\mathcal{F}_{\lambda_{n_k}}(u_{0m}), v_{n_k}-u_{0m})_H\\
&=&\bigl(DA_{\lambda_{n_k}}(\tau v_{n_k}+ (1-\tau)u_{0m})[v_{n_k}-u_{0m}], v_{n_k}-u_{0m}\bigr)_H-(\nabla\mathcal{F}_{\lambda_{n_k}}(u_{0m}), v_{n_k}-u_{0m})_H
\end{eqnarray*}
"
the same conclusion is obtained.
 \end{proof}

\begin{corollary}\label{cor:stablity2}
Suppose that one of the following two conditions holds:
 \begin{description}
\item[(I)]   $\mathcal{L}\in C^1(U,\mathbb{R})$ and $\widehat{\mathcal{L}}\in C^1(U,\mathbb{R})$ satisfy
 Hypothesis~\ref{hyp:1.1} and  Hypothesis~\ref{hyp:1.2}, respectively.
 \item[(II)]  $\mathcal{L}\in C^1(U,\mathbb{R})$ and $\widehat{\mathcal{L}}\in C^1(U,\mathbb{R})$ satisfy
 Hypothesis~\ref{hyp:1.3} and  Hypothesis~\ref{hyp:1.4}, respectively.
 %
 \end{description}
  Then for a bounded interval $I$ in $\mathbb{R}$ and for
   some small $\delta>0$ with $\bar{B}_H(0, \delta)\subset U$
  the conclusion of Proposition~\ref{prop:stablity1} holds true  on $\bar{B}_H(0, \delta)$
for the family $\{\mathcal{F}_\lambda:=\mathcal{L}-\lambda\widehat{\mathcal{L}}\,|\, \lambda\in I\}$,
 that is, for any sequences $\lambda_n\to\lambda_0$ in $I$ and  $(u_n)\subset\bar{B}_H(0, \delta)$ such that
 $\mathcal{F}'_{\lambda_n}(u_n)\to 0$ and $(\mathcal{F}_{\lambda_n}(u_n))$ is bounded, there exists
   a subsequence $u_{n_k}\to u_0\in \bar{B}_H(0, \delta)$
with $\mathcal{F}'_{\lambda_0}(u_0)=0$.
\end{corollary}


\begin{proof}
{\bf Case (I)}. 
The gradient $\nabla\mathcal{F}_\lambda=\nabla\mathcal{L}-\lambda\nabla\mathcal{G}$ has a G\^ateaux derivative $B_\lambda(u)=B(u)-\lambda\widehat{\mathcal{L}}''(u)\in \mathscr{L}_s(H)$ at every point $u\in U\cap X$, and that $B_\lambda(x)=P_\lambda(x)+Q_\lambda(x)=P(x)+Q_\lambda(x)$ for each $x\in U\cap X$, where
$Q_\lambda(x)=Q(x)-\lambda\widehat{\mathcal{L}}''(x)\in \mathscr{L}_s(H)$ is compact.
By (D4) or (D4*) in Hypothesis~\ref{hyp:1.1}, $P_\lambda\equiv P$ satisfies Proposition~\ref{prop:stablity1}(i) for some small $\delta>0$ with $\bar{B}_H(0, \delta)\subset U$.
Since the internal $I\subset\mathbb{R}$ is bounded, it is easily seen that
the conditions (ii) and (iii) in Proposition~\ref{prop:stablity1} are satisfied.

{\bf Case (II)}. For each $x\in U\cap X$, let $A_\lambda(x)=A(x)-\lambda\widehat{\mathcal{L}}'(x)$, $B_\lambda(x)=B(x)-\lambda\widehat{\mathcal{L}}''(x)$, $P_\lambda\equiv P$  and $Q_\lambda(x)=Q(x)-\lambda\widehat{\mathcal{L}}''(x)$.
Similar arguments show that the conditions of Proposition~\ref{prop:stablity1} may be satisfied.
\end{proof}


\section{New necessary conditions and sufficient criteria for  bifurcations of gradient mappings }\label{sec:B.2.1}



For a completely continuous operator $A:U\to H$,
 if  $A$ is the gradient of a weakly continuous and uniformly
differentiable functional $f:U\to\mathbb{R}$ and has the Fr\'echet derivative $A'(0)$,
(which must be compact and self-adjoint,) Krasnosel'ski \cite{Kra}
proved that  each nonzero eigenvalue $\mu$ of $A'(0)$ gives a bifurcation point
$(\mu,0)$ of the equation $A(u)=\lambda u$.
More precisely, for any sufficiently small $r>0$ there exists $\lambda_r\in\mathbb{R}$, $u_r\in SH$
such that $A(u_r)=\lambda_r u_r$ and $\lambda_r\to\mu$ as $r\to 0$.
Since then several extensions and improvements of this  work have been made.
For example, if $f\in C^2(U, \mathbb{R})$ satisfies $f'(0)=0$
and $\mu$ is an isolated eigenvalue of $f^{\prime\prime}(0)$ of finite multiplicity,
Rabinowitz \cite{Rab74} proved that  $(\mu,0)$
is a bifurcation point of the equation $\nabla f(u)=\lambda u$.
See \cite{Rab74,To} and references therein for further details.
In this section we give  necessary conditions and
sufficient criteria for a point $(\mu,0)\in I\times H$ to be a bifurcation point of (\ref{e:Intro.1})
under some new assumptions of $\mathcal{F}$. More sufficient criteria
will be given in next three sections.

\subsection{Necessary conditions}\label{sec:B.2.1N}


\begin{theorem}\label{th:Ka1}
Let $H$, $X$ and $U$ be as in Hypothesis~\ref{hyp:1.1},
 and $\Lambda$ a topological space.
  For each $\lambda\in\Lambda$, let $\{\mathcal{F}_\lambda\in C^1(U, \mathbb{R})\,|\,\lambda\in\Lambda\}$
    satisfy $\mathcal{F}'_\lambda(0)=0$, and let the gradient $\nabla\mathcal{F}_\lambda$ have
 a G\^ateaux derivative $B_\lambda(u)\in \mathscr{L}_s(H)$ at every point
$u\in U\cap X$. Suppose that the map $B_\lambda: U\cap X\to
\mathscr{L}_s(H)$  has a decomposition $B_\lambda=P_\lambda+Q_\lambda$, where for each $x\in U\cap X$,
 $P_\lambda(x)\in\mathscr{L}_s(H)$ is  positive definitive and
$Q_\lambda(x)\in\mathscr{L}_s(H)$ is compact, and
   that $P_\lambda$ and $Q_\lambda$ satisfy the following conditions:
    \begin{description}
\item[(i)]  If $(x_k)\subset U\cap X$ approaches to $0$ in $H$ and $(\lambda_n)\subset \Lambda$ converges to $\lambda^\ast$ then
$\|P_{\lambda_k}(x_k)h-P_{\lambda^\ast}(0)h\|\to 0$ for each $h\in H$.
 \item[(ii)]  For some small $\delta>0$, there exists a positive constant $c_0>0$ such that
$$
(P_\lambda(x)u, u)\ge c_0\|u\|^2\quad\forall u\in H,\;\forall x\in
\bar{B}_H(0,\delta)\cap X,\quad\forall\lambda\in \Lambda.
$$
 \item[(iii)]  $Q_\lambda: U\cap X\to \mathscr{L}_s(H)$ is uniformly continuous at $0$  with respect to $\lambda\in \Lambda$.
  \item[(iv)]  If $(\lambda_n)\subset \Lambda$ converges to $\lambda^\ast$ then
  $\|Q_{\lambda_n}(0)-Q_\lambda(0)\|\to 0$.
  \end{description}
Then $0\in H$ is a degenerate critical point of   $\mathcal{F}_{\lambda^\ast}$ if
  $({\lambda}^\ast,0)\in \Lambda\times U$ is a bifurcation point of
    \begin{equation}\label{e:Ka0}
 \mathcal{F}'_{{\lambda}}(u)=0,\quad (\lambda,u)\in\Lambda\times U.
\end{equation}
  \end{theorem}

\begin{proof}
 Since $({\lambda}^\ast, 0)\in\Lambda\times U$ is a bifurcation point  of the equation  (\ref{e:Ka0}),
 there exists a sequence $({\lambda}_k, \bar{u}_k)\in\Lambda\times(U\setminus\{0\})$
such that ${\lambda}_k\to{\lambda}^\ast$, $\bar{u}_k\to 0$ and
 \begin{equation}\label{e:KBi.2.2+}
 \mathcal{F}'_{{\lambda}_k}(\bar{u}_k)=0,\quad\forall k\in\mathbb{N}.
\end{equation}
  Passing to a subsequence, if necessary, we can assume $\bar{v}_k=\bar{u}_k/\|\bar{u}_k\|\rightharpoonup v^\ast$.
By (ii)  there exist positive constants $\eta_0>0$ and  $c_0>0$ such that $B_H(0,\eta_0)\subset U$ and
 \begin{equation}\label{e:KBi.2.3}
(P_\lambda(u)h, h)_H\ge c_0\|h\|^2,\quad\forall h\in H,\;\forall u\in
B_H(0,\eta_0)\cap X,\;\forall\lambda\in I.
\end{equation}
 Clearly, we can assume that $(\bar{u}_k)\subset B_H(0,\eta_0)\setminus\{0\}$.
Since $B_H(0,\eta_0)\cap X$ is dense in $B_H(0,\eta_0)$ we may choose
 $({u}_k)\subset (B_H(0,\eta_0)\cap X)\setminus\{0\}$ such that
  \begin{eqnarray}\label{e:KBi.2.3.0}
&&  \|u_k-\bar{u}_k\|\to 0\quad\hbox{and so}\quad u_k\to 0,\;v_k:={u}_k/\|{u}_k\|\rightharpoonup v^\ast,\\
&&\left\|\frac{1}{\|u_k\|}\nabla\mathcal{F}_{{\lambda}_k}(u_k)-
  \frac{1}{\|\bar{u}_k\|}\nabla\mathcal{F}_{{\lambda}_k}(\bar{u}_k)\right\|<\frac{1}{2^k},\;\forall k,\label{e:KBi.2.3.1}\\
&&\biggl|\frac{1}{\|u_k\|^2}(\nabla\mathcal{F}_{{\lambda}_k}(u_k), u_k)_H
-\frac{1}{\|\bar{u}_k\|^2}(\nabla\mathcal{F}_{{\lambda}_k}(\bar{u}_k), \bar{u}_k)_H\biggr|<\frac{1}{2^k},\;\forall k.\label{e:KBi.2.3.1+}
\end{eqnarray}
  For each fixed $k$, since $\nabla\mathcal{F}_{\lambda_k}(0)=0$
  using the Mean Value Theorem  we get $t_k\in (0, 1)$ such that
\begin{eqnarray}\label{e:KBi.2.4.1}
\frac{1}{\|u_k\|^2}(\nabla\mathcal{F}_{{\lambda}_k}(u_k), u_k)_H&=&
(D(\nabla\mathcal{F}_{{\lambda}_k})(t_ku_k)v_k, v_k)_H\nonumber\\
&=&(P_{{\lambda}_k}(t_ku_k)v_k, v_k)_H +(Q_{{\lambda}_k}(t_ku_k)v_k, v_k)_H\nonumber\\
&\ge& c_0+(Q_{{\lambda}_k}(t_ku_k)v_k, v_k)_H\nonumber\\
&=& c_0+([Q_{{\lambda}_k}(t_ku_k)-Q_{{\lambda}_k}(0)]v_k, v_k)_H+ (Q_{{\lambda}_k}(0)v_k, v_k)_H.\nonumber\\
\end{eqnarray}
Since $t_ku_k\to 0$ by (\ref{e:KBi.2.3.0}),  using (iii) and (iv), respectively,   we deduce that for sufficiently $k$,
$$
|([Q_{{\lambda}_k}(t_ku_k)-Q_{{\lambda}_k}(0)]v_k, v_k)_H|<\frac{c_0}{4}\quad\hbox{and}\quad
 |(Q_{{\lambda}_k}(0)v_k, v_k)_H-(Q_{{\lambda}^\ast}(0)v^\ast, v^\ast)_H|< \frac{c_0}{2}
$$
and thus (\ref{e:KBi.2.4.1}) leads to
\begin{eqnarray*}\label{e:KBi.2.4.1+}
\frac{1}{\|u_k\|^2}(\nabla\mathcal{F}_{{\lambda}_k}(u_k), u_k)_H>
\frac{c_0}{2}+ (Q_{{\lambda}^\ast}(0)v^\ast, v^\ast)_H.
\end{eqnarray*}
 By (\ref{e:KBi.2.2+}) and (\ref{e:KBi.2.3.1+}) the left side approaches to zero.
Hence $v^\ast\ne 0$.

Obverse that (\ref{e:KBi.2.2+}) and (\ref{e:KBi.2.3.1}) imply
  \begin{equation}\label{e:KBi.2.6}
\Big|\frac{1}{\|u_k\|}(\nabla\mathcal{F}_{{\lambda}_k}(u_k), h)_H\Bigr|\le\frac{1}{2^k}\|h\|,\quad
\forall h\in H,\quad\forall k\in\mathbb{N}.
 \end{equation}
  Fixing  $h\ne 0$, as in (\ref{e:KBi.2.4.1}), for some $\tau_k\in (0, 1)$, depending on $u_k$ and $h$,
 \begin{eqnarray}\label{e:KBi.2.7+}
\frac{1}{\|u_k\|}(\nabla\mathcal{F}_{{\lambda}_k}(u_k), h)_H&=&(D(\nabla\mathcal{F}_{{\lambda}_k})(\tau_ku_k)v_k, h)_H\nonumber\\
&=&(P_{{\lambda}_k}(\tau_ku_k)v_k, h)_H +(Q_{{\lambda}_k}(\tau_ku_k)v_k, h)_H\nonumber\\
&=&(v_k, P_{{\lambda}_k}(\tau_ku_k)h)_H +(v_k, Q_{{\lambda}_k}(\tau_ku_k)h)_H.
\end{eqnarray}
As above, by (iii)-(iv) we deduce that $(v_k, Q_{{\lambda}_k}(\tau_ku_k)h)_H\to (v^\ast, Q_{{\lambda}^\ast}(0)h)_H$.
Moreover, (i) implies that $P_{{\lambda}_k}(\tau_ku_k)h\to P_{{\lambda}^\ast}(0)h$. It follows from these
and (\ref{e:KBi.2.3.0}), (\ref{e:KBi.2.6}) and (\ref{e:KBi.2.7+}) that
$$
(v^\ast, P_{{\lambda}^\ast}(0)h)_H +(v^\ast, Q_{{\lambda}^\ast}(0)h)_H=0,\quad\forall h\in H,
$$
and thus $D(\nabla\mathcal{F}_{{\lambda}^\ast})(0)v^\ast=0$.
\end{proof}

Similarly, we have
\begin{theorem}\label{th:Ka2}
In Theorem~\ref{th:Ka1}, if we replace ``Hypothesis~\ref{hyp:1.1}"  by
``Hypothesis~\ref{hyp:1.3}", and  ``the gradient $\nabla\mathcal{F}_\lambda$ has
 a G\^ateaux derivative $B_\lambda(u)\in \mathscr{L}_s(H)$ at every point
$u\in U\cap X$" by ``there exists a map $B_\lambda: U\cap X\to \mathscr{L}_s(H)$ and a continuous and continuously directional differentiable
 map $A_\lambda: U^X\to X$ such that $D\mathcal{ L}(x)[u]=(A_\lambda(x), u)_H$ and
$(DA_\lambda(x)[u], v)_H=(B_\lambda(x)u, v)_H$ for all $x\in U\cap X$  and
$u, v\in X$", then the conclusion of Theorem~\ref{th:Ka1} also is true.
\end{theorem}

Indeed, in the proof of Theorem~\ref{th:Ka1} we only need  to replace
$D(\nabla\mathcal{F}_{{\lambda}_k})(t_ku_k)$ and $D(\nabla\mathcal{F}_{{\lambda}_k})(\tau_ku_k)$
by $A(t_ku_k)$ and $A(\tau_ku_k)$, respectively, and complete the proof of Theorem~\ref{th:Ka2}.


\begin{corollary}\label{cor:Bi.2.2}
 Let $\mathcal{L}\in C^1(U,\mathbb{R})$ satisfy
 Hypothesis~\ref{hyp:1.1} with $X=H$, and let
  $\widehat{\mathcal{L}}_j\in C^1(U,\mathbb{R})$, $j=1,\cdots,n$, satisfy
 Hypothesis~\ref{hyp:1.2}.
Suppose that $(\vec{\lambda}^\ast, 0)\in\mathbb{R}^n\times U$ is a (multiparameter) bifurcation point  for the equation
\begin{equation}\label{e:Bi.2.1}
\mathcal{L}'(u)=\sum^n_{j=1}\lambda_j\widehat{\mathcal{L}}'_j(u),\quad u\in U.
\end{equation}
Then $\vec{\lambda}^\ast=(\lambda^\ast_1,\cdots,\lambda^\ast_n)$ is an  eigenvalue  of
\begin{equation}\label{e:Bi.2.2}
\mathcal{L}''(0)v-\sum^n_{j=1}\lambda_j\widehat{\mathcal{L}}''_j(0)v=0,\quad v\in H,
\end{equation}
 that is, $0$ is
a degenerate critical point of the functional $\mathcal{L}-\sum^n_{j=1}\lambda^\ast_j\widehat{\mathcal{L}}_j$
in the sense stated above Theorem~\ref{th:A.1}.
Moreover, if $\vec{\lambda}^\ast=0$, we only need  that
each $\widehat{\mathcal{L}}_j\in C^1(U,\mathbb{R})$ has properties:
$\widehat{\mathcal{L}}_j'(0)=0$  and the gradient $\nabla\widehat{\mathcal{L}}_j$ has the G\^ateaux derivative
$\mathcal{L}''(u)\in\mathscr{L}_s(H)$ at any $u\in U$, which approaches to
 $\mathcal{L}''(0)$ in $\mathscr{L}_s(H)$ as $u\to 0$ in $H$.
\end{corollary}

This result generalizes  the  necessity part of Theorem~12 in \cite[Chapter~4, \S4.3]{Skr}
(including the classical Krasnoselsi potential bifurcation theorem \cite{Kra}).
The  sufficiency  part of Theorem~12 in \cite[Chapter~4, \S4.3]{Skr} is contained in the case that the
condition (a) in Corollary~\ref{cor:Bi.2.4.2} holds.

 Denoted by  $H(\vec{\lambda})$ the solution space  of (\ref{e:Bi.2.2}).
 It is of finite dimension as the kernel of a linear Fredholm operator.

When $n=1$, comparing with Theorem~12 in \cite[Chapter~4, \S4.3]{Skr1}, the latter also required:
\begin{description}
\item[(a)]  $\widehat{\mathcal{L}}$ is
weakly continuous and uniformly differentiable in $U$;
\item[(b)] $\mathcal{L}'$ has uniformly positive definite Frech\`et derivatives and satisfies the condition
$\alpha)$ in \cite[Chapter~3, \S2.2]{Skr1}.
\end{description}


\begin{proof}[Proof of Corollary~\ref{cor:Bi.2.2}]
First, we consider the case that $\widehat{\mathcal{L}}_j\in C^1(U,\mathbb{R})$, $j=1,\cdots,n$, satisfy
 Hypothesis~\ref{hyp:1.2}.
Let $B=P+Q$ and $\widehat{B}_j=\widehat{P}_j+\widehat{Q}_j$ be the
 corresponding operators with $\mathcal{L}$ and
  $\widehat{\mathcal{L}}_j$, $j=1,\cdots,n$, respectively.
 Then $\widehat{P}_j=0$ and $\widehat{Q}_j=\widehat{\mathcal{L}}''_j$ for $j=1,\cdots,n$.
 Let $\mathcal{F}_{\vec{\lambda}}(u)=\mathcal{L}(u)-\sum^n_{j=1}\lambda_j\widehat{\mathcal{L}}'_j(u)$.
 Denote by $B_{\vec{\lambda}}=P_{\vec{\lambda}}+Q_{\vec{\lambda}}$
   the  corresponding operators.
   Then $P_{\vec{\lambda}}=P$ and $Q_{\vec{\lambda}}=Q-\sum^n_{j=1}\lambda_j\widehat{\mathcal{L}}''_j(u)$.
  Take $\Lambda$ to be any compact neighborhood of $\vec{\lambda}^\ast$ in $\mathbb{R}^n$.
  It is easily checked that  $\{\mathcal{F}_{\vec{\lambda}}\,|\,\vec{\lambda}\in\Lambda\}$
   satisfies the conditions of Theorem~\ref{th:Ka1}.

    Next, we prove the part of ``Moreover".
 We can assume that there exist positive constants $\eta_0>0$ and  $c_0>0$ such that $B_H(0,\eta_0)\subset U$ and
 \begin{equation}\label{e:KBi.2.31}
(P(u)h, h)_H\ge 2c_0\|h\|^2,\quad\forall h\in H,\;\forall u\in
B_H(0,\eta_0)\cap X.
\end{equation}
Since each $\widehat{\mathcal{L}}''_j(u)$ is continuous at $0\in H$, we can shrink $\eta_0>0$ and choose a
small compact neighborhood  $\Lambda$ of $0\in\mathbb{R}^n$ such that
\begin{eqnarray}\label{e:KBi.2.32}
&&\|\widehat{\mathcal{L}}''_j(u)-\widehat{\mathcal{L}}''_j(0)\|<1,\quad\forall u\in
B_H(0,\eta_0),\;j=1,\cdots,n,\\
&&\sum^n_{j=1}|\lambda_j(\widehat{\mathcal{L}}''_j(u)h,h)_H|<c_0\|h\|^2, \quad\forall u\in
B_H(0,\eta_0),\;\forall \vec{\lambda}\in\Lambda,\;\forall h\in H.\label{e:KBi.2.33}
\end{eqnarray}
Let us write $Q_{\vec{\lambda}}\equiv Q$ and $P_{\vec{\lambda}}(u)=P(u)-\sum^n_{j=1}\lambda_j\widehat{\mathcal{L}}''_j(u)$.
Then (\ref{e:KBi.2.33}) and (\ref{e:KBi.2.31}) lead to
 \begin{equation}\label{e:KBi.2.34}
(P_{\vec{\lambda}}(u)h, h)_H\ge c_0\|h\|^2,\quad\forall h\in H,\;\forall u\in
B_H(0,\eta_0)\cap X.
\end{equation}
Moreover, if $(x_k)\subset B_H(0,\eta_0)\cap X$ approaches to $0$ in $H$ and $(\vec{\lambda}_k)\subset \Lambda$ converges to $0$ then
for any fixed $h\in H$ we deduce from (\ref{e:KBi.2.33}) and the definition of $P_{\vec{\lambda}}$ that
\begin{eqnarray*}
\|P_{\vec{\lambda}_k}(x_k)h-P_{0}(0)h\|\le \sum^n_{j=1}|\lambda_{k,j}|\|\widehat{\mathcal{L}}''_j(x_k)h\|\le
\sum^n_{j=1}|\lambda_{k,j}|(\|\widehat{\mathcal{L}}''_j(0)\|+1)\|h\|\to 0.
\end{eqnarray*}
 These show that $Q_{\vec{\lambda}}$ and $P_{\vec{\lambda}}$ for $\vec{\lambda}\in\Lambda$
satisfy the assumptions of Theorem~\ref{th:Ka1}.
\end{proof}

Using Theorem~\ref{th:Ka2} and similar reasoning we can obtain

\begin{corollary}\label{cor:Bi.2.2*}
 Let $\mathcal{L}\in C^1(U,\mathbb{R})$ satisfy
 Hypothesis~\ref{hyp:1.3}, and let
  $\widehat{\mathcal{L}}_j\in C^1(U,\mathbb{R})$, $j=1,\cdots,n$, satisfy
 Hypothesis~\ref{hyp:1.4}.
Suppose that $(\vec{\lambda}^\ast, 0)\in\mathbb{R}^n\times U^X$ is a (multiparameter) bifurcation point  for the equation
\begin{equation}\label{e:Bi.2.1*}
A(u)=\sum^n_{j=1}\lambda_j\widehat{A}_j(u),\quad u\in U^X.
\end{equation}
Then $\vec{\lambda}^\ast=(\lambda^\ast_1,\cdots,\lambda^\ast_n)$ is an  eigenvalue  of
\begin{equation}\label{e:Bi.2.2*}
B(0)v-\sum^n_{j=1}\lambda_j\widehat{B}_j(0)v=0,\quad v\in H,
\end{equation}
 that is, $0$ is
a degenerate critical point of the functional $\mathcal{L}-\sum^n_{j=1}\lambda^\ast_j\widehat{\mathcal{L}}_j$
in the sense stated above Theorem~\ref{th:A.1}.
 Moreover, if $\vec{\lambda}^\ast=0$, we only need
that each $\widehat{\mathcal{L}}_j\in C^1(U,\mathbb{R})$ satisfies
 Hypothesis~\ref{hyp:1.4} without requirement that $B(x)\in\mathscr{L}_s(H)$ is compact.
\end{corollary}

Conversely, if $\vec{\lambda}^\ast$ is an isolated eigenvalue  of (\ref{e:Bi.2.2*}), under some additional conditions
we shall show in Theorem~\ref{th:Bi.2.3} that $(\vec{\lambda}^\ast, 0)\in\mathbb{R}^n\times U^X$ is a  bifurcation point
 of (\ref{e:Bi.2.1*}).


\subsection{Sufficient criteria}\label{sec:B.2.1S}



Changes of Morse type numbers imply existence of bifurcation instants \cite{Boh, Ber72}.
Different generalizations are given in \cite{ChowLa,Kie, MaWi, SmoWa, PRS,Ryb}.
For example,  \cite[Theorem~8.8]{MaWi} showed that changes of critical groups lead to bifurcations.
However, it is difficult to compute critical groups for non-twice continuously differentiable functionals.
In this section, with helps of splitting theorems in \cite{Lu2,Lu3, Lu6,Lu7}
we give some general bifurcation results for potential operator families of a class of non-twice continuously differentiable functionals.

Let $H$ be a real Hilbert space,  $I$  a bounded open interval containing $0$ in $\mathbb{R}$,
and $\{B_\lambda\}_{\lambda\in I}\subset \mathscr{L}_s(H)$
such that $\|B_\lambda-B_0\|\to 0$ as $\lambda\to 0$.
Suppose that  $0$ is an isolated point of the spectrum $\sigma(B_0)$
with $n=\dim{\rm Ker}(B_0)\in (0, \infty)$, and that
${\rm Ker}(B_\lambda)=\{0\}\;\forall\pm\lambda\in (0,\varepsilon_0)$ for some
positive number $\varepsilon_0\ll 1$.
%
By \cite[Remark I.21.1]{Kie1}  the generalized eigenspace $E_0$ of $B_0$ with eigenvalue $0$
is equal to $N(B_0)={\rm Ker}(B_0)$ (i.e., the algebraic and geometric multiplicities of $0$ are same),
and $H=E_0\oplus R(B_0)$, where $R(B_0)={\rm Im}(B_0)$.
It was shown in  Sections II.5.1 and III.6.4 of \cite{Ka} that 
the generalized eigenspace $E_0$ 
is perturbed to an invariant space $E_\lambda$  of $B_\lambda$ of dimension $n$,
and all perturbed eigenvalues near $0$ (the so-called $0$-group  in Kato's terminology in \cite{Ka}),
denoted by ${\rm eig}_0(B_\lambda)$,
are eigenvalues of the finite-dimensional operator $B_\lambda$ restricted to the $n$-dimensional
invariant space $E_\lambda$ (cf. \cite[Remark II.4]{Kie1}). In other words,
${\rm eig}_0(B_\lambda)$ is the set of eigenvalues of $B_\lambda$ which approach $0$ as $\lambda\to 0$.
%
Hence shrinking $\varepsilon_0$ if necessary,  for each $\lambda\in (-\varepsilon_0,\varepsilon_0)\setminus\{0\}$, $B_\lambda$ has exactly
$n$ eigenvalues (depending  continuously on $\lambda$) near zero, and none of them is zero.
 Let $r(B_\lambda)$ be the number of elements in ${\rm eig}_0(B_\lambda)\cap\mathbb{R}^-$ and
 \begin{equation}\label{e:Bi.1.1}
 r^+_{B_\lambda}=\lim_{\lambda\to 0+}r(B_\lambda),\qquad
 r^-_{B_\lambda}=\lim_{\lambda\to 0-}r(B_\lambda).
 \end{equation}
Then if $|\lambda|>0$ is small enough we have $\mu_\lambda-\mu_0=r^+_{B_\lambda}$ for $\lambda>0$,
and $\mu_\lambda-\mu_0=r^-_{B_\lambda}$ for $\lambda<0$, where $\mu_\lambda$
is the dimension of negative definite space of $B_\lambda$.

In some sense the following may be viewed as a converse of Theorem~\ref{th:Ka1}.

\begin{theorem}\label{th:Bi.1.1}
Let $H$ and $I$ be as above,  $U$ an open neighborhood of $0$ in $H$,
 and let $\mathcal{F}:I\times U\to\mathbb{R}$ be such that  each
    $\mathcal{F}_\lambda:=\mathcal{F}(\lambda,\cdot)$ satisfies  Hypothesis~\ref{hyp:1.1} on $U$
    with  corresponding operators  $B_\lambda$, $P_\lambda$ and $Q_\lambda$.
Suppose that the following eight conditions are satisfied:
    \begin{description}
\item[(i)]  For some small $\delta>0$, $\lambda\mapsto \mathcal{F}_\lambda$
    is continuous at $\lambda=0$ in $C^0(\bar{B}_H(0, \delta))$ topology.
 \item[(ii)]  For some small $\delta>0$, there exist positive constants $c_0>0$ such that
$$
(P_\lambda(x)u, u)\ge c_0\|u\|^2\quad\forall u\in H,\;\forall x\in
\bar{B}_H(0,\delta)\cap X,\quad\forall\lambda\in I.
$$
 \item[(iii)]  $Q_\lambda: U\cap X\to \mathscr{L}_s(H)$ is uniformly continuous at $0$  with respect to $\lambda\in I$.
  \item[(iv)]  If $(\lambda_n)\subset I$ converges to $\lambda\in I$ then
  $$
  \|Q_{\lambda_n}(0)-Q_\lambda(0)\|\to 0\quad\hbox{and}\quad \|\nabla\mathcal{F}_{\lambda_n}(x)-\nabla\mathcal{F}_\lambda(x)\|\to 0\quad\forall x\in U\cap X.
  $$
\item[(v)]  ${\rm Ker}(B_\lambda(0))=\{0\}$
 for small $|\lambda|\ne 0$.
 \item[(vi)] $B_\lambda(0)\to B_0(0)$  as $\lambda\to 0$;
\item[(vii)] $0\in\sigma(B_0(0))$.
\item[(viii)]  $r^+_{B_\lambda(0)}\ne
r^-_{B_\lambda(0)}$.
 \end{description}
   Then $(0,0)\in I\times U$ is  a bifurcation point of the equation (\ref{e:Intro.1}).
 Moreover, the same conclusion still holds if the above conditions (i)-(iv) are replaced by the following two
\begin{description}
\item[(a)]  for some small $\delta>0$, $\lambda\mapsto \mathcal{F}_\lambda$
    is continuous at $\lambda=0$ in $C^1(\bar{B}_H(0, \delta))$ topology;
 \item[(b)]  for some small $\delta>0$, each $\mathcal{F}_\lambda$ satisfies the (PS) condition in $\bar{B}_H(0, \delta)$.
\end{description}
\end{theorem}

\begin{proof}
By a contradiction, suppose that $(0,0)\in I\times U$ is not a bifurcation point of the equation
(\ref{e:Intro.1}). Then by shrinking $\delta>0$  we can find $0<\varepsilon_0\ll 1$  such that for each
$\lambda\in [-\varepsilon_0,\varepsilon_0]$ the functional $\mathcal{F}_\lambda$ has a unique
critical point $0$ sitting in $\bar{B}_H(0, \delta)$.

By the first part of Proposition~\ref{prop:stablity1} we see that $\{\mathcal{F}_\lambda\}_{|\lambda|\le\varepsilon_0}$ satisfies  the  uniform (PS) condition on
 $\bar{B}_H(0, \delta)$. It follows from this, (i) and  Theorem~\ref{th:stablity2} that
 \begin{eqnarray}\label{e:Bi.1.2}
 C_\ast(\mathcal{F}_\lambda, 0;{\bf K})=C_\ast(\mathcal{F}_0, 0;{\bf K}),\quad\forall \lambda\in [-\varepsilon_0,\varepsilon_0].
 \end{eqnarray}

It remains to prove  that the assumptions (v)-(viii) insure that (\ref{e:Bi.1.2}) cannot occur.

By (v), we can assume that $0$ is a nondegenerate critical point of $\mathcal{F}_\lambda$
for $0<|\lambda|\le\varepsilon_0$ by shrinking $\varepsilon_0>0$ if necessary.
It follows from this, (\ref{e:Bi.1.2}) and
Theorem~\ref{th:A.1} with $\lambda=0$ (or \cite[Theorem~2.1]{Lu7})
that all $\mathcal{F}_\lambda$, $0<|\lambda|\le\varepsilon_0$,
have the same Morse index $\mu_\lambda$ at $0\in H$, that is,
\begin{eqnarray}\label{e:Bi.1.3}
 [-\varepsilon_0, \varepsilon_0]\setminus\{0\}\ni\lambda\mapsto \mu_\lambda\quad\hbox{is constant.}
 \end{eqnarray}

On the other hand, by \cite[Proposition~B.2]{Lu2}, each $\varrho\in\sigma(B_0(0))\cap\{t\in\mathbb{R}^-\,|\,
t\le 0\}$ is an isolated point in $\sigma(B_0(0))$, which is also an
eigenvalue of finite multiplicity. (This can also be derived from \cite[Lemma~2.2]{BoBu}.)
Since $0\in\sigma(B_0(0))$ by (vii),
$0$ is an isolated point of the spectrum $\sigma(B_0(0))$ and an eigenvalue of $B_0(0)$ of the
 finite multiplicity $s_0$ by \cite[Lemma~2.2]{BoBu}.
Thus we can assume
$$
\sigma(B_0(0))\cap\{t\in\mathbb{R}^-\,|\,
t\le 0\}=\{0,\varrho_1,\cdots,\varrho_k\},
$$
 where $\varrho_i<0$ and has multiplicity $s_i$ for each $i=1,\cdots,k$.
As above (\ref{e:Bi.1.1}), we may use this and (vi) to prove: 
if $0<|\lambda|$ is small enough, $B_\lambda(0)$
 has exactly $s_i$ (possible same) eigenvalues near $\varrho_i$, but total dimension
 of corresponding eigensubspaces is equal to that of eigensubspace of $\varrho_i$.
Hence if $\lambda\in (0, \varepsilon_0]$ (resp. $-\lambda\in (0,\varepsilon_0]$) is small enough
we obtain
$\mu_\lambda=\mu_0+ r^+_{B_\lambda(0)}$
(resp. $\mu_{-\lambda}=\mu_0+ r^-_{B_\lambda(0)}$).
These and (viii) imply
$$
\mu_\lambda-\mu_{-\lambda}=r^+_{B_\lambda(0)}-
r^-_{B_\lambda(0)}\ne 0\quad\hbox{for small  $\lambda\in (0, \varepsilon_0]$},
$$
which contradicts the claim in (\ref{e:Bi.1.3}).

In order to prove the final part, note that under the assumptions of the first paragraph
we may use (a)-(b) and Theorem~\ref{th:stablity1} to derive (\ref{e:Bi.1.2}).
The remained arguments are same.
\end{proof}

Correspondingly, we have the following converse of Theorem~\ref{th:Ka2}.

\begin{theorem}\label{th:Bif.1.1}
 Let $H$, $X$ and $U$ be as in Hypothesis~\ref{hyp:1.3},
  and $I\subset\mathbb{R}$  a bounded  open interval containing $0$.
  Let $\mathcal{F}:I\times U\to\mathbb{R}$ be such that  each
    $\mathcal{F}_\lambda:=\mathcal{F}(\lambda,\cdot)$ satisfies  Hypothesis~\ref{hyp:1.3}
    with  corresponding operators $A_\lambda$, $B_\lambda$, $P_\lambda$ and $Q_\lambda$.
    Suppose  that either (i)-(viii) in Theorem~\ref{th:Bi.1.1} or
(a)-(b) and (v)-(viii) in Theorem~\ref{th:Bi.1.1}
 are satisfied.  Then $(0,0)\in I\times U^X$ is  a bifurcation point of the equation
\begin{eqnarray}\label{e:Bi.1.3.1}
 A_\lambda(x)=0.
 \end{eqnarray}
\end{theorem}

The proof is almost repeating that of Theorem~\ref{th:Bi.1.1}. In fact, it suffices to replace
 ``By Proposition~\ref{prop:stablity1}"  and
``Theorem~\ref{th:A.1} with $\lambda=0$ (or \cite[Theorem~2.1]{Lu7})"
with   ``By the latter part of Proposition~\ref{prop:stablity1}" and
``Theorem~\ref{th:A.4} with $\lambda=0$ (or \cite[(2.7)]{Lu2})", respectively.

\begin{corollary}\label{cor:Bi.3}
Let $\lambda^\ast\in\mathbb{R}$.   Suppose that $\mathcal{L}\in C^1(U,\mathbb{R})$ satisfy
 Hypothesis~\ref{hyp:1.1} without (D1),  $\widehat{\mathcal{L}}\in C^1(U,\mathbb{R})$ satisfy  Hypothesis~\ref{hyp:1.2},
 and that for each real $\lambda$ near $\lambda^\ast$ there holds:
 \begin{description}
  \item[(I.a)] $\{u\in H\,|\, {\mathcal{L}}''(0)u-\lambda\widehat{\mathcal{L}}''(0)u=\mu u,\,\mu\le 0\}\subset X$ for each $\lambda$ near $\lambda^\ast$;
  \item[(I.b)] ${\rm Ker}({\mathcal{L}}''(0)-\lambda^\ast\widehat{\mathcal{L}}''(0))\ne\{0\}$;
 \item[(I.c)] ${\rm Ker}({\mathcal{L}}''(0)-\lambda\widehat{\mathcal{L}}''(0))=\{0\}$ for each $\lambda\ne\lambda^\ast$ near $\lambda^\ast$;
 \item[(I.d)] $\lim_{\lambda\to 0+}r({\mathcal{L}}''(0)-(\lambda+\lambda^\ast)\widehat{\mathcal{L}}''(0))\ne
 \lim_{\lambda\to 0-}r({\mathcal{L}}''(0)-(\lambda+\lambda^\ast)\widehat{\mathcal{L}}''(0))$.
  \end{description}
  Then  $(\lambda^\ast,0)\in \mathbb{R}\times U$ is  a bifurcation point of the equation
 \begin{eqnarray}\label{e:Bi.1.4}
  \mathcal{L}'(u)-\lambda\widehat{\mathcal{L}}'(u)=0.
  \end{eqnarray}
   Moreover, the condition (I.d)  can be replaced by
  \begin{description}
  \item[(I.d')] ${\mathcal{L}}''(0)$ is invertible, ${\mathcal{L}}''(0)\widehat{\mathcal{L}}''(0)=\widehat{\mathcal{L}}''(0){\mathcal{L}}''(0)$
  and the positive and negative indexes of inertia of the restriction of ${\mathcal{L}}''(0)$ to $H^0_{\lambda^\ast}:={\rm Ker}({\mathcal{L}}''(0)-\lambda^\ast\widehat{\mathcal{L}}''(0))$
  are different.
  \end{description}
  \end{corollary}

\begin{corollary}\label{cor:Bi.3.1}
Let $\lambda^\ast\in\mathbb{R}$.  Suppose that $\mathcal{L}\in C^1(U,\mathbb{R})$ satisfy
 Hypothesis~\ref{hyp:1.3} without (C) and (D1), $\widehat{\mathcal{L}}\in C^1(U,\mathbb{R})$ satisfy  Hypothesis~\ref{hyp:1.4}
 with corresponding operators $\widehat{A}$ and $\widehat{B}=\widehat{P}+\widehat{Q}$,
 and that for each real $\lambda$ near $\lambda^\ast$
 there holds:
 \begin{description}
  \item[(II.a)] $\{u\in H\,|\, B(0)u-\lambda\widehat{B}(0)u=\mu u,\,\mu\le 0\}\subset X$ for each $\lambda$ near $\lambda^\ast$;
  \item[(II.b)] ${\rm Ker}(B(0)-\lambda^\ast\widehat{B}(0))\ne\{0\}$;
 \item[(II.c)] ${\rm Ker}(B(0)-\lambda\widehat{B}(0))=\{0\}$ for each $\lambda\ne\lambda^\ast$ near $\lambda^\ast$;
 \item[(II.d)] $\lim_{\lambda\to 0+}r(B(0)-(\lambda+\lambda^\ast)\widehat{B}(0))\ne
 \lim_{\lambda\to 0-}r(B(0)-(\lambda+\lambda^\ast)\widehat{B}(0))$.
  \end{description}
  Then  $(\lambda^\ast,0)\in \mathbb{R}\times U$ is  a bifurcation point of the equation
 \begin{eqnarray}\label{e:Bi.1.4.1}
  A(u)-\lambda\widehat{A}(u)=0.
  \end{eqnarray}
   Moreover, the condition (II.d)) can be replaced by
  \begin{description}
   \item[(II.d')] ${\mathcal{L}}''(0)$ and $\widehat{\mathcal{L}}''(0)$ in (I.d') are replaced by $B(0)$ and $\widehat{B}(0)$, respectively.
 \end{description}
 \end{corollary}

Consider $\mathcal{F}_\lambda:=\widehat{\mathcal{L}}-(\lambda+\lambda^\ast)\widehat{\mathcal{L}}$ for each real $\lambda$ near $0$.
Then Corollary~\ref{cor:Bi.3.1} (resp. Corollary~\ref{cor:Bi.3.1}) can follows from  Corollary~\ref{cor:stablity2} and
Theorem~\ref{th:Bi.1.1} (resp. Corollary~\ref{cor:stablity2} and Theorem~\ref{th:Bif.1.1})
if (I) (resp. (II)) holds. As to the ``Moreover" parts in these two corollaries, by (\ref{e:Bi.2.19.1}) in the proof of Corollary~\ref{cor:Bi.2.4.2},
we see that $\mathcal{F}_\lambda$ has different Morse indexes at $0\in H$ as $\lambda$ varies in both sides of $\lambda^\ast$.

Based on the arguments in \cite{Lu1}, we may use
Theorem~\ref{th:Bi.1.1} to get a generalization of \cite[Theorem~5.4.1]{EKBB} immediately. See
Theorem~\ref{th:BifE.9} for a high dimensional analogue (corresponding to
\cite[Theorems~5.4.2 and 5.7.4]{EKBB}).

In Corollary~\ref{cor:Bi.3}, if the condition ``${\mathcal{L}}''(0)$ is invertible" in (I.d')
is strengthened as ``${\mathcal{L}}''(0)$ is positive definite", other conditions are unnecessary.
That is, we have

\begin{theorem}\label{th:Bi.3}
Let $\mathcal{L}\in C^1(U,\mathbb{R})$ (resp.  $\widehat{\mathcal{L}}\in C^1(U,\mathbb{R})$) satisfy
 Hypothesis~\ref{hyp:1.1} with $X=H$ (resp.  Hypothesis~\ref{hyp:1.2}).
Suppose that ${\mathcal{L}}''(0)$ is positive definite.
Then $(\lambda^\ast,0)\in \mathbb{R}\times U$ is  a bifurcation point of the equation
(\ref{e:Bi.1.4}) if and only if  $\lambda^\ast\in\mathbb{R}$ is an  eigenvalue of
  \begin{eqnarray}\label{e:Bi.1.5}
  \mathcal{L}''(0)u-\lambda\widehat{\mathcal{L}}''(0)u=0,\; u\in H.
  \end{eqnarray}
\end{theorem}

\begin{proof}
Necessity is contained in Corollary~\ref{cor:Bi.2.2}. It remains to prove sufficiency.
Let $J$ denote the inverse of $(\mathcal{L}''(0))^{1/2}$, which is in $\mathscr{L}_s(H)$
and commutes with any $S\in\mathscr{L}_s(H)$ that commutes with $\mathcal{L}''(0)$. Define functionals
$$
\mathfrak{L}(u):= \mathcal{L}(Ju),\quad    \widehat{\mathfrak{L}}(u):= \widehat{\mathcal{L}}(Ju),\quad u\in J^{-1}(U).
$$
Then for any $u\in J^{-1}(U)$ and all $v\in H$ we have $\nabla\mathfrak{L}(u)= J\nabla\mathcal{L}(Ju)$,
$\nabla\widehat{\mathfrak{L}}(u)= J\nabla\widehat{\mathcal{L}}(Ju)$ and
$$
D(\nabla\mathfrak{L})(u)[v]= JD(\nabla\mathcal{L})(Ju)[Jv]\quad\hbox{and}\quad
D(\nabla\widehat{\mathfrak{L}})(u)[v]= JD(\nabla\widehat{\mathcal{L}})(Ju)[Jv].
$$
It follows that $\mathfrak{L}\in C^1(J^{-1}(U),\mathbb{R})$ (resp.  $\widehat{\mathfrak{L}}\in C^1(J^{-1}(U),\mathbb{R})$) satisfy
 Hypothesis~\ref{hyp:1.1} with $X=H$ (resp.  Hypothesis~\ref{hyp:1.2}).
Moreover, the bifurcation problem (\ref{e:Bi.1.4}) is equivalent to the following
\begin{eqnarray}\label{e:Bi.1.6}
  \mathfrak{L}'(u)-\lambda\widehat{\mathfrak{L}}'(u)=0,\quad u\in J^{-1}(U)
  \end{eqnarray}
and that $\lambda^\ast\in\mathbb{R}$ is an  eigenvalue of (\ref{e:Bi.1.5}) if and only if it is that of
  \begin{eqnarray}\label{e:Bi.1.7}
  J\mathcal{L}''(0)Ju-\lambda J\widehat{\mathcal{L}}''(0)Ju=0\quad\Longleftrightarrow\quad
  u-\lambda Lu=0,
  \end{eqnarray}
where $L:=J\widehat{\mathcal{L}}''(0)J$. Since $L\in\mathscr{L}_s(H)$ is compact,
The eigenvalues of (\ref{e:Bi.1.7}) consists of a sequence of nonzero  reals
$\{\lambda_k\}_{k=1}^\infty$ diverging  to infinity, and each of them is
 of  finite multiplicity. Hence $\lambda^\ast=\lambda_{k_0}$ for some $k_0\in\mathbb{N}$.
 Let $H_k$ be the eigensubspace corresponding to $\lambda_k$ for $k\in\mathbb{N}$, i.e.,
 \begin{equation}\label{e:Bi.2.8sec3}
H_k={\rm Ker}(I-\lambda_kL)={\rm Ker}(\mathfrak{L}''(0)-\lambda_k \widehat{\mathfrak{L}}''(0)),\quad
\forall k\in\mathbb{N}.
\end{equation}
 Then $H=\oplus^\infty_{k=0}H_k$, where $H_0={\rm Ker}(L)={\rm Ker}(\widehat{\mathcal{L}}''(0))$.

Now the eigenvalue $\lambda^\ast=\lambda_{k_0}$  is isolated and
has finite multiplicity. Let us choose $\delta>0$ such that $(\lambda^\ast-\delta,
\lambda^\ast+ \delta)\setminus\{\lambda^\ast\}$ has no intersection with $\{\lambda_k\}^\infty_{k=0}$,
where $\lambda_0=0$. Since $\mathfrak{L}_\lambda:=\mathfrak{L}-\lambda\widehat{\mathfrak{L}}$
 satisfies Hypothesis~\ref{hyp:1.1} with $X=H$, it has finite Morse index $\mu_\lambda$ and nullity $\nu_\lambda$ at $0$.
Moreover, since each $H_k$ is an invariant subspace of $\mathfrak{L}''(0)-\lambda\widehat{\mathfrak{L}}''(0)=I-\lambda L$, and
$H=\oplus^\infty_{k=0}H_k$ is an orthogonal decomposition, we have
$\mu_\lambda=\sum^\infty_{k=0}\mu_{\lambda,k}$,
where $\mu_{\lambda,k}$ is the dimension of
the maximal negative definite space of the quadratic functional
$H_k\ni u\mapsto f_{\lambda,k}(u):=(\mathfrak{L}''(0)u-\lambda\widehat{\mathfrak{L}}''(0)u,u)_H$.
Note that $f_{\lambda,k}(h)=(1-\lambda/\lambda_k)(h,h)_H,\;\forall h\in H_k$.
We deduce that $\mu_{\lambda,k}$ is equal to
$\dim H_k$ (resp. $0$) if $\lambda>\lambda_k$ (resp. $\lambda<\lambda_k$).
%
Hence  the Morse index of $\mathfrak{L}_\lambda$ at $0$,
 \begin{eqnarray}\label{e:Bi.2.13sec3}
\mu_\lambda=\sum_{\lambda_k<\lambda}\dim H_k.
\end{eqnarray}
This implies  that
$$
\mu_\lambda=\left\{\begin{array}{ll}
\mu_{\lambda^\ast},&\quad\forall \lambda\in (\lambda^\ast-\delta, \lambda^\ast),\\
\mu_{\lambda^\ast}+ \nu_{\lambda^\ast}, &\quad
\forall\lambda\in (\lambda^\ast, \lambda^\ast+\delta),
\end{array}
\right.
$$
Theorem~\ref{th:A.1} with $\lambda=0$ (or \cite[Theorem~2.1]{Lu7}) we obtain that
$$
C_q(\mathcal{L}_\lambda, 0;{\bf K})=C_q(\mathfrak{L}_\lambda, 0;{\bf K})=\left\{\begin{array}{ll}
\delta_{q\mu_{\lambda^\ast}}{\bf K},&\quad\forall \lambda\in (\lambda^\ast-\delta, \lambda^\ast),\\
\delta_{q(\mu_{\lambda^\ast}+ \nu_{\lambda^\ast})}, &\quad \forall\lambda\in (\lambda^\ast, \lambda^\ast+\delta).
\end{array}
\right.
$$
On the other hand, as in the proof of Theorem~\ref{th:Bi.1.1},
suppose that $(\lambda^\ast,0)\in \mathbb{R}\times U$ is not a bifurcation point of the equation
(\ref{e:Bi.1.4}). Then $C_q(\mathcal{L}_\lambda, 0;{\bf K})$ is independent of
$\lambda\in (\lambda^\ast-\delta,\lambda^\ast+\delta)$ by shrinking $\delta>0$ (if necessary),
which leads to a contradiction.
\end{proof}

 It is easily seen that the functional $\mathcal{L}$ in
\cite[\S4.3, Theorem~4.3]{Skr} or  in \cite[Chap.1, Theorem~3.4]{Skr2}.
satisfies the conditions of Theorem~\ref{th:Bi.3}. Hence the latter is a generalization of
\cite[Chap.1, Theorem~3.4]{Skr2}. A stronger generalization will be given in Corollary~\ref{cor:Bi.2.4.2}.

Since  $\mathcal{L}''(0)(X)\subset X$  implies $J(X)\subset X$,
slightly modifying the proof of Theorem~\ref{th:Bi.3} we can ontain:

\begin{theorem}\label{th:Bi.4}
$\mathcal{L}\in C^1(U,\mathbb{R})$ satisfy
 Hypothesis~\ref{hyp:1.3} without (C) and (D1), $\widehat{\mathcal{L}}\in C^1(U,\mathbb{R})$ satisfy  Hypothesis~\ref{hyp:1.4}
 with corresponding operators $\widehat{A}$ and $\widehat{B}=\widehat{P}+\widehat{Q}$.
Suppose that $B(0)$ is positive definite  and that
$\{u\in H\,|\, B(0)u-\lambda\widehat{B}(0)u=\mu u,\,\mu\le 0\}\subset X$ for each $\lambda$.
Then $(\lambda^\ast,0)\in \mathbb{R}\times U^X$ is  a bifurcation point of the equation
(\ref{e:Bi.1.4.1}) if and only if  $\lambda^\ast\in\mathbb{R}$ is an  eigenvalue of
  \begin{eqnarray}\label{e:Bi.1.8}
  B(0)u-\lambda\widehat{B}(0)u=0,\; u\in H.
  \end{eqnarray}
\end{theorem}

Note that the proofs of Theorems~\ref{th:Bi.1.1},~\ref{th:Bif.1.1} and \ref{th:Bi.3}
do not need the parameterized versions of the splitting lemmas and Morse-Palais lemmas.
In next three sections we shall use them to get stronger results.

In order to compare our method with previous those  let us consider  the case that
 the classical splitting lemma and Morse-Palais lemma for $C^2$ functionals can be used.
Indeed, it is easily seen that our above proof methods of Theorem~\ref{th:Bi.1.1}
 can lead to:

\begin{theorem}\label{th:Bi.6}
Let $H$, $I$ and $U$ be as in Theorem~\ref{th:Bi.1.1},
 and let $\mathcal{F}:I\times U\to\mathbb{R}$ satisfy the following conditions:
 \begin{description}
 \item[(1)] Each $\mathcal{F}_\lambda\in C^2(U,\mathbb{R})$, $\mathcal{F}'_\lambda(0)=0$ and $\mathcal{F}_0^{\prime\prime}(0)$ is a Fredholm operator.
 \item[(2)] The assumptions (v)-(viii) in Theorem~\ref{th:Bi.1.1} hold with $B_\lambda(0)=\mathcal{F}_\lambda^{\prime\prime}(0)$.
 \item[(3)] Either the assumptions (a)-(b) in Theorem~\ref{th:Bi.1.1} hold, or (i) in Theorem~\ref{th:Bi.1.1} holds and
 $\{\mathcal{F}_\lambda\}_{|\lambda|<\epsilon}$ satisfies the uniform (PS) condition on some closed neighborhood of $0\in H$ for
 small $\epsilon>0$.
 \end{description}
  Then $(0,0)\in I\times U$ is  a bifurcation point of the equation (\ref{e:Intro.1}).
\end{theorem}

By (vi) in Theorem~\ref{th:Bi.1.1} with $B_\lambda(0)=\mathcal{F}_\lambda^{\prime\prime}(0)$,
 $\mathcal{F}_\lambda^{\prime\prime}(0)$ is  Fredholm  for each $\lambda$ near $0\in\mathbb{R}$.

\begin{corollary}\label{cor:Bi.7}
Let $U$ be an open neighborhood of $0$ in a real Hilbert space $H$,
and let $\mathcal{L}, \widehat{\mathcal{L}}\in C^2(U,\mathbb{R})$ satisfy
$\mathcal{L}'(0)=0$ and  $\widehat{\mathcal{L}}'(0)=0$. Suppose that
$\lambda^\ast\in\mathbb{R}$ is an isolated eigenvalue of (\ref{e:Bi.1.5})
 of finite multiplicity, that the following  conditions are satisfied:
 \begin{description}
 \item[(1)] For each $\lambda$ near $\lambda^\ast$,
 $\mathcal{L}-\lambda\widehat{\mathcal{L}}$ satisfies the (PS) condition in a fixed closed neighborhood of $0$.
 \item[(2)] $\lim_{\lambda\to 0+}r(\mathcal{L}''(0)-(\lambda+\lambda^\ast)\widehat{\mathcal{L}}''(0))\ne
 \lim_{\lambda\to 0-}r(\mathcal{L}''(0)-(\lambda+\lambda^\ast)\widehat{\mathcal{L}}''(0))$.
 \end{description}
  Then $(0,0)\in I\times U$ is  a bifurcation point of the equation (\ref{e:Bi.1.4}).
   Moreover, the condition (2)  can be replaced by one of form (I.d') in Corollary~\ref{cor:Bi.3}.
 \end{corollary}

Since $\mathcal{L}''(0)-\lambda^\ast\widehat{\mathcal{L}}''(0)$ is a Fredholm operator, so is
$\mathcal{L}''(0)-\lambda\widehat{\mathcal{L}}''(0)$ for each real $\lambda$ near $\lambda^\ast$.
Compare Corollary~\ref{cor:Bi.7} with Corollaries~\ref{cor:Bi.2.6},\ref{cor:Bi.2.7}.

By the proof of Theorem~\ref{th:Bi.3} we easily get

\begin{theorem}\label{th:Bi.7.1}
Let $U$ be an open neighborhood of $0$ in a real Hilbert space $H$,
and let $\mathcal{L}, \widehat{\mathcal{L}}\in C^2(U,\mathbb{R})$ satisfy
$\mathcal{L}'(0)=0$ and  $\widehat{\mathcal{L}}'(0)=0$.
Suppose that  $\mathcal{L}''(0)$ is positive definite and  $\widehat{\mathcal{L}}''(0)$ is compact.
Then $(\lambda^\ast,0)\in \mathbb{R}\times U$ is  a bifurcation point of the equation
(\ref{e:Bi.1.4}) if and only if  $\lambda^\ast\in\mathbb{R}$ is an  eigenvalue of
(\ref{e:Bi.1.5}).
 \end{theorem}

Since $\mathcal{L}''(0)$ is positive definite we have $\mu_0=0$ and thus each $\mu_\lambda$
(the Morse index of $\mathcal{L}-\lambda\widehat{\mathcal{L}}$  at $0\in H$) is finite by
(\ref{e:Bi.2.13sec3}).

\begin{remark}\label{rem:Bi.8}
{\rm Based on the center manifold theory  Chow and Lauterbach  \cite{ChowLa} proved: if $\mathcal{F}\in C^2(I\times U,\mathbb{R})$
 satisfies the following four conditions:
\begin{description}
\item[1)] $\mathcal{F}'_\lambda(0)=0\;\forall\lambda\in I$,
\item[2)] $0<\dim{\rm Ker}(\mathcal{F}''_0(0))<\infty$,
\item[3)]  $0$ is isolated in $\sigma(\mathcal{F}''_0(0))$,
\item[4)]  $r^+_{\mathcal{F}''_\lambda(0)}\ne
r^-_{\mathcal{F}''_\lambda(0)}$,
\end{description}
then $(0,0)\in I\times U$ must be a bifurcation point of (\ref{e:Intro.1}).

This result was generalized by Kielh\"ofer \cite{Kie}
with the Lyapunov-Schmidt reduction and Conley's theorem on bifurcation of invariant sets.
The main result of \cite{Kie} (see also \cite[Theorem~II.7.3]{Kie1})
can be stated as follows (in our notations):
\textsl{Let $X\subset H$ be a continuously embedded subspace having norm $\|\cdot\|_X$,
$G:I\times X\to H$  continuous, $G(\lambda,0)=0\;\forall\lambda\in I$. Suppose
\begin{description}
\item[(A)] $G$ has a continuous
Fr\'echet derivative with respect to $u$ in a neighborhood of $(0,0)\in I\times X$, $G'_u(\lambda,0)=D_\lambda$,
and $D_0:X\to H$ is a Fredholm operator of index zero having an isolated eigenvalue $0$;
\item[(B)] there is a differentiable potential $\mathcal{F}:I\times X\to\mathbb{R}$
 such that $\mathcal{F}'_u(\lambda,u)[h]= (G(\lambda,u),  h)$  for
all $h\in X$ and for all $(\lambda,u)$ in a neighborhood of $(0,0)\in I\times D$;
\item[(C)]  the crossing number of $D_\lambda$ at $\lambda=0$
(defined in \cite[Definition II.7.1]{Kie1}) is nonzero, (which is equivalent to the condition that
$r^+_{B_\lambda}\ne r^-_{B_\lambda}$ if $D_\lambda=B_\lambda$ as in (\ref{e:Bi.1.1})).
\end{description}
Then  $(0,0)\in I\times X$ is a bifurcation point for the equation $G(\lambda,u)=0$.}

 The major difference between these two results and our Theorems~\ref{th:Bi.1.1},\ref{th:Bif.1.1}
 is that they require the higher smoothness for $G$ or $\mathcal{F}$, while we
require some kinds of (PS) conditions.
In particular, comparing Theorem~\ref{th:Bi.6} with the result by Chow and Lauterbach  \cite{ChowLa}
stated above  it is easily seen that my methods and theirs have both advantages and disadvantages. 
In applications to quasi-linear elliptic systems it is impossible to require higher smoothness for potential functionals. }
\end{remark}



\section{Some new bifurcation results of  Rabinowitz type}\label{sec:B.2}
\setcounter{equation}{0}

Let $H$ be a real Hilbert space, $U$ an open neighborhood of $0$ in $H$,
and let $f\in C^2(U, \mathbb{R})$ satisfy  $f'(0)=0$.
A classical bifurcation theorem by Rabinowitz \cite{Rab} claimed:
If $\lambda^\ast\in\mathbb{R}$ is an isolated eigenvalue of $f^{\prime\prime}(0)$ of finite multiplicity,
then $(\mu,0)$ is a bifurcation point of the equation $\nabla f(u)=\lambda u$ and
at least one of the following alternatives occurs:
\begin{description}
\item[(i)] $(\lambda^\ast, 0)$ is not an isolated solution in $\{\lambda^\ast\}\times U$ of the equation $\nabla f(u)=\lambda u$;
\item[(ii)]  for every $\lambda\in\mathbb{R}$ near $\lambda^\ast$ there is a nontrivial solution
$u_\lambda$ of the equation $\nabla f(u)=\lambda u$ converging to $0$ as $\lambda\to\lambda^\ast$;

\item[(iii)] there is an one-sided  neighborhood $\Lambda$ of $\lambda^\ast$ such that
for any $\lambda\in\Lambda\setminus\{\lambda^\ast\}$, the equation $\nabla f(u)=\lambda u$ has at least
two nontrivial solutions converging to zero as $\lambda\to\lambda^\ast$.
\end{description}
Under the same hypotheses B\"ohme \cite{Boh} and Marino \cite{Mar} independently proved that
for each small $r>0$, the equation $\nabla f(u)=\lambda u$ possesses  at least
two distinct one parameter families of solutions $(\lambda(r), u(r))$ having $\|u(r)\|=r$ and $\lambda(r)\to\lambda^\ast$
as $r\to 0$.  McLeod and Turner \cite{McTu} extended the latter result  to functions
 of class $C^{1,1}$ depending linearly on two parameters and such that the Lipschitz constant of the gradient going to
zero as the parameters and $u$ go to zero. Ioffe and Schwartzman \cite{IoSch}
generalized  the above Rabinowitz's bifurcation theorem  to equations $Lu+ \nabla\varphi_\lambda(u)=\lambda u$
with $\varphi$ continuous jointly in $(\lambda,u)$ and $\varphi_\lambda$  of class $C^{1,1}$, where $L\in\mathscr{L}_s(H)$
the Lipschitz constant of $\nabla\varphi_\lambda$ satisfies some additional conditions.

In this section we shall give a few new bifurcation results as the above Rabinowitz bifurcation theorem.
As said in Introduction, we can reduce some of them  to the following
finite-dimensional result of the above Rabinowitz's type,  which may be obtained as a corollary of \cite[Theorem~2]{IoSch}
(or \cite[Theorem~4.2]{CorH}).

\begin{theorem}[\hbox{\cite[Theorem~5.1]{Can}}]\label{th:Bi.2.1}
 Let $X$ be a finite dimensional normed space, let $\delta>0$, $\epsilon>0$, $\lambda^\ast\in\mathbb{R}$ and
for every $\lambda\in [\lambda^\ast-\delta, \lambda^\ast+\delta]$, let
$f_\lambda:B_X(0,\epsilon)\to\mathbb{R}$ be a function of class $C^1$.
Assume that
\begin{description}
\item[a)] the functions $\{(\lambda,u)\to f_\lambda(u)\}$ and
$\{(\lambda,u)\to f'_\lambda(u)\}$  are continuous on
$[\lambda^\ast-\delta, \lambda^\ast+\delta]\times B_X(0,\epsilon)$;
\item[b)] $u=0$ is a critical point of $f_{\lambda^\ast}$;
\item[c)] $f_\lambda$ has an isolated local minimum (maximum) at zero for every
$\lambda\in (\lambda^\ast,\lambda^\ast+\delta]$ and an isolated local maximum (minimum) at
zero for every $\lambda\in [\lambda^\ast-\delta, \lambda^\ast)$.
\end{description}
Then one at least of the following assertions holds:
\begin{description}
\item[i)] $u=0$ is not an isolated critical point of $f_{\lambda^\ast}$;
\item[ii)] for every $\lambda\ne\lambda^\ast$ in a neighborhood of $\lambda^\ast$ there is a nontrivial critical point of
$f_\lambda$ converging to zero as $\lambda\to\lambda^\ast$;
\item[iii)] there is an one-sided (right or left) neighborhood of $\lambda^\ast$ such that for every
$\lambda\ne\lambda^\ast$ in the neighborhood there are two distinct nontrivial critical points of $f_\lambda$
converging to zero as $\lambda\to\lambda^\ast$.
\end{description}
In particular,  $(\lambda^\ast, 0)\in [\lambda^\ast-\delta, \lambda^\ast+\delta]\times B_X(0,\epsilon)$
is a bifurcation point of $f'_\lambda(u)=0$.
\end{theorem}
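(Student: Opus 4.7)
The plan is to argue by contradiction: suppose all three conclusions (i), (ii), (iii) fail simultaneously, and derive a contradiction from Morse-theoretic constraints on the critical groups of $\phi_\lambda$ at $\theta$. Since (i) fails, $\theta$ is an isolated critical point of $\phi_{\lambda^\ast}$, so the critical group $C_\ast(\phi_{\lambda^\ast},\theta;\mathbf{K})$ is well-defined; write $P_0(t)$ for its Poincar\'e polynomial. By (b), after possibly exchanging the two sides of $\lambda^\ast$, I may assume that $\phi_\lambda$ has an isolated local minimum at $\theta$ for $\lambda\in(\lambda^\ast,\lambda^\ast+\delta]$ and an isolated local maximum at $\theta$ for $\lambda\in[\lambda^\ast-\delta,\lambda^\ast)$. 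Put $n=\dim X$, which I may take to be positive (the conclusion is vacuous for $n=0$).

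The main tool is the persistence of critical groups under continuous perturbation of $C^1$ functionals given by \cite{IoSch} or \cite{CorH}. It yields $r\in(0,\delta)$ and $\delta'\in(0,\delta)$ such that, for every $\lambda$ with $|\lambda-\lambda^\ast|\leq\delta'$, the critical set $K_\lambda:=\mathrm{Crit}(\phi_\lambda)\cap\overline{B(\theta,r)}$ is compact, and whenever $K_\lambda$ is finite one has a Morse-type identity
\begin{equation*}
\sum_{u\in K_\lambda}P(\phi_\lambda,u)(t)\;=\;P_0(t)+(1+t)R_\lambda(t)
\end{equation*}
for some polynomial $R_\lambda$ with nonnegative integer coefficients. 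Failure of (ii) furnishes a sequence $\mu_k\to\lambda^\ast$, $\mu_k\neq\lambda^\ast$, with $K_{\mu_k}=\{\theta\}$ for large $k$. After passing to a subsequence and invoking left-right symmetry, assume $\mu_k>\lambda^\ast$; then $P(\phi_{\mu_k},\theta)(t)=1$, so the identity forces $R_{\mu_k}\equiv 0$ and $P_0(t)=1$.

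Failure of (iii) on the left side of $\lambda^\ast$ furnishes a sequence $\nu_k\to\lambda^\ast$ with $\nu_k<\lambda^\ast$ such that $\phi_{\nu_k}$ has at most one nontrivial critical point $u_k\in\overline{B(\theta,r)}$. Since $\theta$ is a local maximum of $\phi_{\nu_k}$, $P(\phi_{\nu_k},\theta)(t)=t^n$, and the Morse identity reduces to
\begin{equation*}
t^n+\varepsilon_k\,P(\phi_{\nu_k},u_k)(t)\;=\;1+(1+t)R_{\nu_k}(t),\qquad\varepsilon_k\in\{0,1\}.
\end{equation*}
If $\varepsilon_k=0$, then $t^n-1=(1+t)R_{\nu_k}(t)$ is incompatible with $R_{\nu_k}$ having nonnegative integer coefficients for any $n\geq 1$ (for $n$ odd, $(1+t)\nmid t^n-1$ in $\mathbb{Z}[t]$; for $n$ even, the resulting quotient has sign-alternating coefficients). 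If $\varepsilon_k=1$, I invoke the standard finite-dimensional facts \cite{MaWi} that $C_0(\phi,u)\neq 0$ forces $u$ to be a local minimum and $C_n(\phi,u)\neq 0$ forces $u$ to be a local maximum, and split into three subcases: if $u_k$ is a local minimum of $\phi_{\nu_k}$, then $P(\phi_{\nu_k},u_k)(t)=1$ and the identity becomes $t^n=(1+t)R_{\nu_k}(t)$, impossible for $n\geq 1$; if $u_k$ is a local maximum, then $P(\phi_{\nu_k},u_k)(t)=t^n$ and $2t^n-1=(1+t)R_{\nu_k}(t)$ fails upon evaluating at $t=-1$; if $u_k$ is neither, then the coefficient of $t^0$ in $P(\phi_{\nu_k},u_k)$ vanishes, forcing $R_{\nu_k}(0)=-1$, a contradiction.

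The principal obstacle will be extracting the Morse-polynomial identity above in the precise form required, since \cite{IoSch,CorH} phrase persistence via Conley-type indices of sublevel (or Gromoll-Meyer) pairs rather than via a Morse polynomial directly; the translation is routine but must be carried out carefully in the merely $C^1$ setting, taking into account the compactness of $K_\lambda$ and uniformity of the isolating pair as $\lambda\to\lambda^\ast$. A secondary point is that the $\varepsilon_k=1$ analysis depends on the local finite-dimensional structure of critical groups at extrema, which must be applied uniformly in $k$ along the chosen subsequence.
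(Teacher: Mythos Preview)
The paper does not supply its own proof of this statement: it is quoted verbatim from \cite[Theorem~5.1]{Can}, with the remark that it ``may be obtained as a corollary of \cite[Theorem~2]{IoSch} (or \cite[Theorem~4.2]{CorH}).'' Your approach---assume all three alternatives fail and derive a contradiction from a Morse-type polynomial identity coming from the persistence of critical groups under $C^1$-continuous perturbation---is precisely in that spirit and is essentially how the result is proved in \cite{Can}.

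The argument is sound, with one expository wrinkle worth tightening. Having already fixed ``minimum on the right'' via hypothesis~(b), you then ``invoke left-right symmetry'' to place the subsequence $\mu_k$ on the right as well. These two reductions are compatible, but the second one is really the involution $\phi_\lambda\mapsto -\phi_{2\lambda^\ast-\lambda}$ (which preserves the min-on-right convention while reflecting $\lambda$ across $\lambda^\ast$); you should either name it explicitly or simply treat the case $\mu_k<\lambda^\ast$ directly---the polynomial analysis is entirely parallel, yielding $P_0(t)=t^n$ and then, using failure of~(iii) on the \emph{right}, identities such as $1-t^n=(1+t)R$ whose nonnegativity fails by the same coefficient-chasing you already carried out. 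The remaining ingredients (the local Morse inequalities over a stable Gromoll--Meyer pair furnished by \cite{CorH,IoSch}, and the finite-dimensional characterizations $C_0\neq 0\Leftrightarrow$ local minimum, $C_n\neq 0\Leftrightarrow$ local maximum, as recorded in the paper at (\ref{e:Bi.2.17})) are standard and correctly invoked.
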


Note that the local minimum (maximum) in the assumption c) must not be strict!




The following is the main result in this section.

\begin{theorem}\label{th:Bi.2.4}
Let $\mathcal{L}\in C^1(U,\mathbb{R})$ (resp.  $\widehat{\mathcal{L}}\in C^1(U,\mathbb{R})$) satisfy
 Hypothesis~\ref{hyp:1.1} with $X=H$ (resp.  Hypothesis~\ref{hyp:1.2}),
and let $\lambda^\ast\in\mathbb{R}$ be an isolated eigenvalue of
\begin{eqnarray}\label{e:Bi.2.7.4}
\mathcal{L}''(0)v-\lambda\widehat{\mathcal{L}}''(0)v=0,\quad v\in H.
\end{eqnarray}
(If $\lambda^\ast=0$, it is enough that $\widehat{\mathcal{L}}\in C^1(U,\mathbb{R})$ satisfies Hypothesis~\ref{hyp:1.2}
without requirement that each $\widehat{\mathcal{L}}''(u)\in\mathscr{L}_s(H)$ is compact.)
Suppose that the Morse indexes of $\mathcal{L}_\lambda:=\mathcal{L}-\lambda\widehat{\mathcal{L}}$
at $0\in H$ take values $\mu_{\lambda^\ast}$ and $\mu_{\lambda^\ast}+\nu_{\lambda^\ast}$
 as $\lambda\in\mathbb{R}$ varies in both sides of $\lambda^\ast$ and is close to $\lambda^\ast$,
where $\mu_{\lambda}$ and $\nu_{\lambda}$ are the Morse index and the nullity of  $\mathcal{L}_{\lambda}$
at $0$, respectively.      Then  one of the following alternatives occurs:
\begin{description}
\item[(i)] $(\lambda^\ast, 0)$ is not an isolated solution in $\{\lambda^\ast\}\times U$ of
\begin{eqnarray}\label{e:Bi.2.7.3}
\mathcal{L}'(u)=\lambda\widehat{\mathcal{L}'}(u);
\end{eqnarray}
\item[(ii)]  for every $\lambda\in\mathbb{R}$ near $\lambda^\ast$ there is a nontrivial solution
$u_\lambda$ of (\ref{e:Bi.2.7.3}) converging to $0$ as $\lambda\to\lambda^\ast$;

\item[(iii)] there is an one-sided  neighborhood $\Lambda$ of $\lambda^\ast$ such that
for any $\lambda\in\Lambda\setminus\{\lambda^\ast\}$,
(\ref{e:Bi.2.7.3}) has at least two nontrivial solutions converging to
zero as $\lambda\to\lambda^\ast$.
\end{description}
In particular,  $(\lambda^\ast, 0)\in\mathbb{R}\times U$ is a bifurcation point  for the equation
(\ref{e:Bi.2.7.3}).
\end{theorem}

When $X=H$ let us compare the assumptions of Theorem~\ref{th:Bi.2.4} with those of Corollary~\ref{cor:Bi.3}.
The assumption on the Morse indexes of $\mathcal{L}_\lambda$ at $0\in H$ in the former
is stronger than the condition (I.d) in the latter. It means that one of
$\lim_{\lambda\to 0+}r(B(0)-(\lambda+\lambda^\ast)\widehat{\mathcal{L}}''(0))$ and
 $\lim_{\lambda\to 0-}r(B(0)-(\lambda+\lambda^\ast)\widehat{\mathcal{L}}''(0))$
is equal to $0$, and another is equal to $\nu_{\lambda^\ast}$.

Clearly, our assumptions and those by Ioffe and Schwartzman \cite{IoSch} cannot be contained each other.
In Remark~\ref{rm:Bi.2.4.3} we shall compare it with the above Rabinowitz bifurcation theorem.

%
%

\begin{proof}[\it {Proof of Theorem~\ref{th:Bi.2.4}}]
Take $\delta>0$ such that $[\lambda^\ast-\delta,\lambda^\ast+\delta]\setminus\{\lambda^\ast\}$
contains no eigenvalues of (\ref{e:Bi.2.7.4}). Then
 $0\in H$ is a nondegenerate (and thus isolated) critical point of $\mathcal{L}_\lambda$ for each $\lambda\in [\lambda^\ast-\delta,\lambda^\ast+\delta]\setminus\{\lambda^\ast\}$.

\textsf{Firstly, we assume that} the Morse index $\mu_\lambda$ of $\mathcal{L}_\lambda$  satisfies
  \begin{eqnarray}\label{e:Bi.2.7.5}
 \mu_\lambda=\left\{\begin{array}{ll}
 \mu_{\lambda^\ast},&\quad\forall \lambda\in [\lambda^\ast-\delta, \lambda^\ast),\\ \mu_{\lambda^\ast}+\nu_{\lambda^\ast}, &\quad
\forall\lambda\in (\lambda^\ast, \lambda^\ast+\delta].
\end{array}
\right.
 \end{eqnarray}
  It follows from  Theorem~\ref{th:A.1} that  for any $q\in\mathbb{N}_0$,
\begin{eqnarray}\label{e:Bi.2.7.6}
C_q(\mathcal{L}_{\lambda},0;{\bf K})=
\left\{\begin{array}{ll}
\delta_{q\mu_{\lambda^\ast}}{\bf K},&\quad
\forall \lambda\in [\lambda^\ast-\delta, \lambda^\ast),\\
\delta_{q(\mu_{\lambda^\ast}+\nu_{\lambda^\ast})}{\bf K},&\quad
\forall \lambda\in (\lambda^\ast, \lambda^\ast+\delta].
\end{array}\right.
\end{eqnarray}
 Let $H^0_{\lambda^\ast}$ be the eigenspace of (\ref{e:Bi.2.7.4}) associated with $\lambda^\ast$ and $(H^0_{\lambda^\ast})^\bot$
the orthogonal complementary of $H^0_{\lambda^\ast}$ in $H$.
Applying Theorem~\ref{th:A.2}
to $\mathcal{L}_{\lambda}=\mathcal{L}_{\lambda^\ast}+(\lambda-\lambda^\ast)\widehat{\mathcal{L}}$
with $\lambda\in [\lambda^\ast-\delta, \lambda^\ast+\delta]$,
 we have  $\epsilon>0$ and a unique continuous map
 $$
 \psi:[\lambda^\ast-\delta, \lambda^\ast+\delta]\times (B_H(0,\epsilon)\cap H^0_{\lambda^\ast})\to (H^0_{\lambda^\ast})^\bot
 $$
   such that for each $\lambda\in [\lambda^\ast-\delta, \lambda^\ast+\delta]$,
 $\psi(\lambda, 0)=0$ and
\begin{eqnarray*}
 P^\bot_{\lambda^\ast}\nabla\mathcal{L}(z+ \psi(\lambda, z))-
 \lambda P^\bot_{\lambda^\ast}\nabla\widehat{\mathcal{L}}(z+ \psi(\lambda, z))=0\quad\forall z\in B_{H}(0,\epsilon)\cap H^0_{\lambda^\ast},
 \end{eqnarray*}
 where  $P^\bot_{\lambda^\ast}$ is the orthogonal projection onto $(H^0_{\lambda^\ast})^\bot$; moreover
  the functional
 \begin{equation}\label{e:Bi.2.15}
 \mathcal{L}^\circ_\lambda: B_H(0, \epsilon)\cap H^0_{\lambda^\ast}\to\mathbb{R},\;z\mapsto
 \mathcal{L}(z+\psi(\lambda, z))- \lambda\widehat{\mathcal{L}}(z+\psi(\lambda, z))
  \end{equation}
   is of class $C^1$, and has differential at $z\in B_H(0, \epsilon)\cap H^0_{\lambda^\ast}$ given by
\begin{eqnarray}\label{e:Bi.2.15+}
D\mathcal{L}^\circ_\lambda(z)[h]=D\mathcal{L}(z+\psi(\lambda,z))[h]-
\lambda D\widehat{\mathcal{L}}(z+\psi(\lambda,z))[h],\quad\forall h\in H^0_{\lambda^\ast}.
\end{eqnarray}
Note that for each $\lambda\in[\lambda^\ast-\delta, \lambda^\ast+\delta]$,
the map $z\mapsto z+ \psi({\lambda}, z))$ induces an one-to-one correspondence
 between the critical points of  $\mathcal{L}_{\lambda}^\circ$ near $0\in H^0_{\lambda^\ast}$
and those of $\mathcal{L}_{\lambda}$ near $0\in H$. Hence the problem is reduced to finding
the critical points of $\mathcal{L}^\circ_\lambda$
near $0\in H^0_{\lambda^\ast}$ for $\lambda$ near $\lambda^\ast$.

Now by  Theorems~\ref{th:A.1},~\ref{th:A.3}, $0\in H^0_{\lambda^\ast}$ is also an
isolated critical point of $\mathcal{L}^\circ_\lambda$, and
 from (\ref{e:Bi.2.7.5}) and (\ref{e:Bi.2.7.6}) we may derive that for any $j\in\mathbb{N}_0$,
\begin{eqnarray}\label{e:Bi.2.16}
C_{j}(\mathcal{L}^\circ_{\lambda},0;{\bf K})=\left\{\begin{array}{ll}
\delta_{j0}{\bf K},&\quad \forall \lambda\in [\lambda^\ast-\delta, \lambda^\ast),\\
\delta_{j\nu_{\lambda^\ast}}{\bf K},&\quad
\forall \lambda\in (\lambda^\ast, \lambda^\ast+\delta].
\end{array}\right.
\end{eqnarray}

For a $C^1$ function $\varphi$ on a neighborhood  of the origin $0\in\mathbb{R}^N$
we may always find  $\tilde\varphi\in C^1(\mathbb{R}^N,
\mathbb{R})$ such that it agrees with $\varphi$ near $0\in\mathbb{R}^N$ and is also
coercive (so satisfies the (PS)-condition).
Suppose that $0$ is an isolated critical point of $\varphi$. By Corollary~6.96, Proposition~6.97 and Example~6.45 in \cite{MoMoPa},
we obtain
\begin{equation}\label{e:Bi.2.17}
\left.
\begin{array}{ll}
&C_k(\varphi,0;{\bf K})=\delta_{k0}\;\Longleftrightarrow\;\hbox{ $0$ is a strict local minimizer of $\varphi$},\\
&C_k(\varphi,0;{\bf K})=\delta_{kN}\;\Longleftrightarrow\;\hbox{ $0$ is a strict local maximizer of $\varphi$}
\end{array}\right\}
\end{equation}
and thus $C_0(\varphi, 0;{\bf K})=0=C_N(\varphi, 0;{\bf K})$
if $0\in  \mathbb{R}^N$ is neither a local  maximizer nor a local
minimizer of $\varphi$.

Because of these,  (\ref{e:Bi.2.16}) and (\ref{e:Bi.2.17}) lead to
\begin{eqnarray}\label{e:Bi.2.18}
0\in H^0_{\lambda^\ast}\;\hbox{is a strict local}\left\{
\begin{array}{ll}
\hbox{minimizer of}\;\mathcal{L}^\circ_{\lambda},&\quad
\forall \lambda\in [\lambda^\ast-\delta, \lambda^\ast),\\
\hbox{maximizer of}\;\mathcal{L}^\circ_{\lambda},&\quad
\forall \lambda\in (\lambda^\ast, \lambda^\ast+\delta].
\end{array}\right.
\end{eqnarray}
By this and Theorem~\ref{th:Bi.2.1}, one of the following possibilities occurs:
\begin{description}
\item[(1)]  $0\in H^0_{\lambda^\ast}$ is not an isolated critical point of $\mathcal{L}^\circ_{\lambda^\ast}$;
\item[(2)]  for every $\lambda\in\mathbb{R}$ near $\lambda^\ast$,
$\mathcal{L}^\circ_{\lambda}$ has a nontrivial critical point  converging to
$0\in H^0_{\lambda^\ast}$ as $\lambda\to\lambda^\ast$;

\item[(3)] there is an one-sided  neighborhood $\Lambda$ of $\lambda^\ast$ such that
for any $\lambda\in\Lambda\setminus\{\lambda^\ast\}$,
$\mathcal{L}^\circ_{\lambda}$ has two nontrivial critical points
 converging to zero as $\lambda\to\lambda^\ast$.
\end{description}
Obviously, they lead to (i), (ii) and (iii), respectively.

\textsf{Next,  assume}
\begin{eqnarray}\label{e:Bi.2.19-}
\mu_\lambda=\left\{\begin{array}{ll}
 \mu_{\lambda^\ast}+\nu_{\lambda^\ast},&\quad\forall \lambda\in [\lambda^\ast-\delta, \lambda^\ast),\\ \mu_{\lambda^\ast}, &\quad \forall\lambda\in (\lambda^\ast, \lambda^\ast+\delta].
\end{array}
\right.
\end{eqnarray}
Then we can obtain
\begin{eqnarray}\label{e:Bi.2.19}
0\in H^0_{\lambda^\ast}\;\hbox{is a strict local}\left\{
\begin{array}{ll}
\hbox{maximizer of}\;\mathcal{L}^\circ_{\lambda},&\quad
\forall \lambda\in [\lambda^\ast-\delta, \lambda^\ast),\\
\hbox{minimizer of}\;\mathcal{L}^\circ_{\lambda},&\quad
\forall \lambda\in (\lambda^\ast, \lambda^\ast+\delta].
\end{array}\right.
\end{eqnarray}
This also leads to either (i) or (ii) or (iii).\end{proof}
%

From the proof of Theorem~\ref{th:Bi.2.4} it is easily seen that  Theorem~\ref{th:Bi.2.4}(i)
may be replaced by ``$0\in H^0_{\lambda^\ast}$ is not an isolated critical point of
$\mathcal{L}^\circ_{\lambda^\ast}$". In fact, they are equivalent in the present case.


The following two corollaries strengthen Corollary~\ref{cor:Bi.3}.

\begin{corollary}\label{cor:Bi.2.4.1}
Let $\mathcal{L}\in C^1(U,\mathbb{R})$ (resp.  $\widehat{\mathcal{L}}\in C^1(U,\mathbb{R})$) satisfy
 Hypothesis~\ref{hyp:1.1} with $X=H$ (resp.  Hypothesis~\ref{hyp:1.2}),
and let $\lambda^\ast\in\mathbb{R}$ be an isolated eigenvalue of (\ref{e:Bi.2.7.4}).
(If $\lambda^\ast=0$, it is enough that $\widehat{\mathcal{L}}\in C^1(U,\mathbb{R})$ satisfies Hypothesis~\ref{hyp:1.2}
without requirement that each $\widehat{\mathcal{L}}''(u)\in\mathscr{L}_s(H)$ is compact.)
Suppose that $\widehat{\mathcal{L}}''(0)$ is either semi-positive or semi-negative.
Then the conclusions of Theorem~\ref{th:Bi.2.4} hold true.
\end{corollary}

\begin{proof}
It suffices to prove that the assumption about
the Morse indexes of $\mathcal{L}_\lambda=\mathcal{L}-\lambda\widehat{\mathcal{L}}$
at $0\in H$ in Theorem~\ref{th:Bi.2.4} can be satisfied.
Since $\lambda^\ast\in\mathbb{R}$ is an isolated eigenvalue of (\ref{e:Bi.2.7.4}),
by the proof of Theorem~\ref{th:Bi.2.4} we have $\delta>0$ such that
 $0\in H$ is a nondegenerate critical point of $\mathcal{L}_\lambda$ for each $\lambda\in [\lambda^\ast-\delta,\lambda^\ast+\delta]\setminus\{\lambda^\ast\}$. This is equivalent to
 the fact that  $0\in H$ is a nondegenerate critical point of the $C^\infty$ functional
 $H\ni u\mapsto\mathfrak{L}_\lambda(u):=([\mathcal{L}''(0)-\lambda\widehat{\mathcal{L}}''(0)]u,u)_H$
 for each $\lambda\in [\lambda^\ast-\delta,\lambda^\ast+\delta]\setminus\{\lambda^\ast\}$. Hence
 $$
 \mathcal{L}''(0)u-\lambda\widehat{\mathcal{L}}''(0)u\ne 0,\quad\forall u\in H\setminus\{0\},\quad\forall \lambda\in [\lambda^\ast-\delta,\lambda^\ast+\delta]\setminus\{\lambda^\ast\}.
 $$
 This shows that $0\in H$ is a unique critical point of each $\mathfrak{L}_\lambda$.
 Since $ \nabla\mathfrak{L}_\lambda(u)=\mathcal{L}''(0)u-\lambda\widehat{\mathcal{L}}''(0)u=
 P(0)u+ Q(0)u-\lambda\widehat{\mathcal{L}}''(0)u$ is a bounded linear Fredholm operator,
 it is easily seen that $\mathfrak{L}_\lambda$ satisfies the (PS) condition on any closed ball.
  Hence for any small $0<\epsilon<\delta$,
  the families
 $\{\mathfrak{L}_\lambda\,|\,\lambda\in [\lambda^\ast-\delta,\lambda^\ast-\epsilon]\}$ and
 $\{\mathfrak{L}_\lambda\,|\,\lambda\in [\lambda^\ast+\epsilon,\lambda^\ast+\delta]\}$
 satisfy the conditions of Theorem~\ref{th:stablity1} on any closed ball $\bar{B}_H(0, r)$.
 Thus the critical groups $C_\ast(\mathfrak{L}_\lambda, 0;{\bf K})$
 are all isomorphic as $\lambda$ varies in $\lambda\in [\lambda^\ast-\delta,\lambda^\ast)$ (resp.
      $\lambda\in (\lambda^\ast,\lambda^\ast+\delta]$) because of arbitrariness of $\epsilon>0$.
Note that
$C_q(\mathfrak{L}_\lambda, 0;{\bf K})=\delta_{q\mu_{\lambda}}{\bf K}$
for each $\lambda\in [\lambda^\ast-\delta,\lambda^\ast+\delta]\setminus\{\lambda^\ast\}$.
 It follows that
   \begin{equation}\label{e:Bi.2.19.1}
   \mu_\lambda=\left\{\begin{array}{ll}
 \mu_{\lambda^\ast-\delta},&\quad\forall \lambda\in [\lambda^\ast-\delta, \lambda^\ast),\\
 \mu_{\lambda^\ast+\delta}, &\quad \forall\lambda\in (\lambda^\ast, \lambda^\ast+\delta].
\end{array}
\right.
\end{equation}

 We assume now that $\widehat{\mathcal{L}}''(0)$ is semi-positive. Then
 $$
 \mathcal{L}''(0)-\lambda_1\widehat{\mathcal{L}}''(0)\ge \mathcal{L}''(0)-\lambda_2\widehat{\mathcal{L}}''(0),
\quad\forall\; \lambda^\ast-\delta\le\lambda_1\le\lambda_2\le\lambda^\ast+\delta.
$$
Since all $\mu_{\lambda}$ and $\nu_{\lambda}$ are finite, 
from  this, \cite[Proposition~2.3.3]{Ab} and (\ref{e:Bi.2.19.1}) we derive
 \begin{eqnarray*}
&&\mu_{\lambda^\ast-\delta}\le\mu_{\lambda^\ast}\le \mu_{\lambda^\ast+\delta},\\
&&\mu_{\lambda^\ast-\delta}\le\mu_{\lambda^\ast}+\nu_{\lambda^\ast}\le \mu_{\lambda^\ast+\delta},\\
&&\mu_{\lambda^\ast}\le\lim_{\lambda\to\lambda^\ast}\inf\mu_\lambda=\mu_{\lambda^\ast-\delta},\\
&&\mu_{\lambda^\ast}+\nu_{\lambda^\ast}\ge\lim_{\lambda\to\lambda^\ast}\sup\mu_\lambda
=\mu_{\lambda^\ast+\delta}.
\end{eqnarray*}
These imply that (\ref{e:Bi.2.7.5}) is satisfied.

 Similarly, if
$\widehat{\mathcal{L}}''(0)$ is semi-negative, then (\ref{e:Bi.2.19-}) holds true.
 The desired conclusions are obtained.
\end{proof}

 Note that (\ref{e:Bi.2.2}) has no isolated eigenvalues if
$\cap^n_{j=1}{\rm Ker}(\widehat{\mathcal{L}}''_j(0))\cap{\rm Ker}(\mathcal{L}''(0))\ne\{0\}$.
It is natural to ask when  $\vec{\lambda}^\ast$ is an isolated  eigenvalue  of (\ref{e:Bi.2.2}).
We also consider the case $n=1$ merely.
Suppose that  $\mathcal{L}''(0)$ is  invertible.
Then $\mathcal{L}''(0)$ cannot be negative definite because $\mathcal{L}\in C^1(U,\mathbb{R})$ satisfies Hypothesis~\ref{hyp:1.1} with $X=H$.
It follows that $0$ is not an eigenvalue  of (\ref{e:Bi.2.7.4}), and that $\lambda\in\mathbb{R}\setminus\{0\}$
is an eigenvalue  of (\ref{e:Bi.2.7.4}) if and only if $1/\lambda$
is an eigenvalue  of the compact linear self-adjoint operator $L:=[\mathcal{L}''(0) ]^{-1}\widehat{\mathcal{L}}''(0)\in\mathscr{L}_s(H)$.
By Riesz-Schauder theory, the spectrum of $L$, $\sigma(L)$,
consists of zero and a sequence of real eigenvalues of  finite multiplicity
converging to zero, $\{1/\lambda_k\}_{k=1}^\infty$. (Hence $|\lambda_k|\to\infty$ as $k\to\infty$.)
Let $H_k$ be the eigensubspace corresponding to $1/\lambda_k$ for $k\in\mathbb{N}$.
Then $H=\oplus^\infty_{k=0}H_k$, where $H_0={\rm Ker}(L)={\rm Ker}(\widehat{\mathcal{L}}''(0))$ and
\begin{equation}\label{e:Bi.2.8}
H_k={\rm Ker}(I/\lambda_k-L)={\rm Ker}(\mathcal{L}''(0)-\lambda_k \widehat{\mathcal{L}}''(0)),\quad
\forall k\in\mathbb{N}.
\end{equation}


\begin{corollary}\label{cor:Bi.2.4.2}
Let $\mathcal{L}\in C^1(U,\mathbb{R})$ (resp.  $\widehat{\mathcal{L}}\in C^1(U,\mathbb{R})$) satisfy
 Hypothesis~\ref{hyp:1.1} with $X=H$ (resp.  Hypothesis~\ref{hyp:1.2}). Suppose that the following two conditions
  satisfied:
 \begin{description}
\item[(a)] $\mathcal{L}''(0)$ is invertible
and $\lambda^\ast=\lambda_{k_0}$ is an eigenvalue of (\ref{e:Bi.2.7.4}).
 \item[(b)] $\mathcal{L}''(0)\widehat{\mathcal{L}}''(0)=\widehat{\mathcal{L}}''(0)\mathcal{L}''(0)$ (so
 each $H_k$  is an invariant subspace of $\mathcal{L}''(0)$),   and $\mathcal{L}''(0)$
 is either positive  or negative on $H_{k_0}$.
 \end{description}
 Then the conclusions of Theorem~\ref{th:Bi.2.4} hold true.
 Moreover, if ${\mathcal{L}}''(0)$ is positive definite, the condition (b) is unnecessary.
\end{corollary}

Clearly, the ``Moreover" part strengthens Theorem~\ref{th:Bi.3} and so
\cite[\S4.3, Theorem~4.3]{Skr} or  in \cite[Chap.1, Theorem~3.4]{Skr2}.

\begin{proof}[\it Proof of Corollary~\ref{cor:Bi.2.4.2}]
We only need to prove that the conditions of Theorem~\ref{th:Bi.2.4} are satisfied.
By the assumptions the eigenvalue  $\lambda^\ast\ne 0$ is isolated and
has finite multiplicity. Let us choose $\delta>0$ such that $(\lambda^\ast-\delta,
\lambda^\ast+ \delta)\setminus\{\lambda^\ast\}$ has no intersection with $\{\lambda_k\}^\infty_{k=0}$,
where $\lambda_0=0$. Since each $H_k$ is an invariant subspace of $\mathcal{L}''(0)-\lambda\widehat{\mathcal{L}}''(0)$ by (b), and
$H=\oplus^\infty_{k=0}H_k$ is an orthogonal decomposition, we have
$\mu_\lambda=\sum^\infty_{k=0}\mu_{\lambda,k}$,
where $\mu_{\lambda,k}$ is the dimension of
the maximal negative definite space of the quadratic functional
$H_k\ni u\mapsto f_{\lambda,k}(u):=(\mathcal{L}''(0)u-\lambda\widehat{\mathcal{L}}''(0)u,u)_H$.
Note that $H_k$ has an orthogonal decomposition $H_k^+\oplus H_k^-$, where
$H_k^+$ (resp. $H_k^-$) is the positive (resp. negative) definite subspace
of $\mathcal{L}''(0)|_{H_k}$, 
and $f_{\lambda,k}(h)=(1-\lambda/\lambda_k)(\mathcal{L}''(0)h,h)_H,\;\forall h\in H_k$, we deduce
that $\mu_{\lambda,k}$ is equal to
$\dim H^+_k$ (resp. $\dim H^-_k$) if $\lambda>\lambda_k$ (resp. $\lambda<\lambda_k$).
%
Hence  the Morse index of $\mathcal{L}_\lambda$ at $0$,
 \begin{eqnarray}\label{e:Bi.2.13}
\mu_\lambda=\sum_{\lambda_k<\lambda}\dim H_k^+ + \sum_{\lambda_n>\lambda}\dim H_n^-.
\end{eqnarray}
(Here we do not use the finiteness of $\mu_\lambda$.) Let $\nu^+_{\lambda^\ast}=\dim H^+_{k_0}$ (resp. $\nu^-_{\lambda^\ast}=\dim H^-_{k_0}$).
Using (\ref{e:Bi.2.13}) it is easy to verify that
$$
\mu_\lambda=\left\{\begin{array}{ll}
\mu_{\lambda^\ast}+ \nu^-_{\lambda^\ast},&\quad\forall \lambda\in (\lambda^\ast-\delta, \lambda^\ast),\\
\mu_{\lambda^\ast}+ \nu^+_{\lambda^\ast}, &\quad
\forall\lambda\in (\lambda^\ast, \lambda^\ast+\delta),
\end{array}
\right.
$$
 Since $\mathcal{L}''(0)$
 is either positive  or negative  on $H^0_{\lambda^\ast}$, we get
 either (\ref{e:Bi.2.7.5}) or (\ref{e:Bi.2.19-}).
Note that $\mathcal{L}_\lambda:=\mathcal{L}-\lambda\widehat{\mathcal{L}}$
 satisfies Hypothesis~\ref{hyp:1.1} with $X=H$. It has finite Morse index $\mu_\lambda$ and nullity $\nu_\lambda$ at $0$.
Thus (\ref{e:Bi.2.7.6}) and (\ref{e:Bi.2.18}) can be obtained.

If ${\mathcal{L}}''(0)$ is positive definite, by the proof of Theorem~\ref{th:Bi.3}
we have (\ref{e:Bi.2.7.5}) and hence (\ref{e:Bi.2.7.6}).
\end{proof}

\begin{remark}\label{rm:Bi.2.4.3}
{\rm It is possible to prove a generalization of Theorem~\ref{th:A.2} similar to  Theorem~\ref{th:A.5-},
 which leads to a more general version of  Theorem~\ref{th:Bi.2.4}  as the following Theorem~\ref{th:Bif.2.2.0}.
These will be explored otherwise.}
\end{remark}

Now we study corresponding analogue  of the above results in the setting of \cite{Lu1,Lu2}.
Here is a more general form of the analogue of Theorem~\ref{th:Bi.2.4}.

\begin{theorem}\label{th:Bif.2.2.0}
Let $H$, $X$ and $U$ be as in Hypothesis~\ref{hyp:1.3},
and let $\{\mathcal{L}_\lambda\in C^1(U, \mathbb{R})\,|\,\lambda\in\Lambda\}$ be a continuous family of functionals
parameterized by an open interval $\Lambda\subset\mathbb{R}$ containing  $\lambda^\ast$.
 For each $\lambda\in\Lambda$, assume $\mathcal{L}'_\lambda(0)=0$, and that
 there exist maps $A_\lambda\in C^1(U^X, X)$ and $B_\lambda: U\cap X\to \mathscr{L}_s(H)$,
which  depend on $\lambda$ continuously,  such that for all $x\in U\cap X$  and $u, v\in X$,
 $$
 D\mathcal{L}_\lambda(x)[u]=(A_\lambda(x), u)_H\quad\hbox{and}\quad
(DA_\lambda(x)[u], v)_H=(B_\lambda(x)u, v)_H.
$$
 Suppose also that the following conditions hold.
 \begin{description}
 \item[(a)]   $B_\lambda$ has a decomposition
$B_\lambda=P_\lambda+Q_\lambda$, where for each $x\in U\cap X$,
 $P_\lambda(x)\in\mathscr{L}_s(H)$ is  positive definitive and
$Q_\lambda(x)\in\mathscr{L}_s(H)$ is compact, so that
$(\mathcal{L}_{\lambda}, H, X, U, A_{\lambda}, B_{\lambda}=P_{\lambda}+ Q_{\lambda})$ satisfies Hypothesis~\ref{hyp:1.3}.

\item[(b)]  For each $h\in H$, it holds that $\|P_{\lambda}(x)h-P_{\lambda^\ast}(0)h\|\to 0$
as $x\in U\cap X$ approaches to $0$ in $H$ and $\lambda\in\Lambda$ converges to $\lambda^\ast$.

 \item[(c)]  For some small $\delta>0$, there exist positive constants $c_0>0$ such that
$$
(P_\lambda(x)u, u)\ge c_0\|u\|^2\quad\forall u\in H,\;\forall x\in
\bar{B}_H(0,\delta)\cap X,\quad\forall\lambda\in \Lambda.
$$
 \item[(d)]  $Q_\lambda: U\cap X\to \mathscr{L}_s(H)$ is uniformly continuous at $0$  with respect to $\lambda\in \Lambda$.
  \item[(e)]  If $\lambda\in \Lambda$ converges to $\lambda^\ast$ then
  $\|Q_{\lambda}(0)-Q_{\lambda^\ast}(0)\|\to 0$.
   \item[(f)]  ${\rm Ker}(B_{\lambda^\ast}(0))\ne\{0\}$, ${\rm Ker}(B_{\lambda}(0))=\{0\}$ for any $\lambda\in\Lambda\setminus\{\lambda^\ast\}$,
   and the Morse indexes of $\mathcal{L}_\lambda$
at $0\in H$ take values $\mu_{\lambda^\ast}$ and $\mu_{\lambda^\ast}+\nu_{\lambda^\ast}$
 as $\lambda\in\mathbb{R}$ varies in both sides of $\lambda^\ast$ and is close to $\lambda^\ast$,
where $\mu_{\lambda}$ and $\nu_{\lambda}$ are the Morse index and the nullity of  $\mathcal{L}_{\lambda}$
at $0$, respectively.
    \end{description}
 Then  one of the following alternatives occurs:
\begin{description}
\item[(i)] $(\lambda^\ast,0)$ is not an isolated solution  in  $\{\lambda^\ast\}\times U^X$ of the equation
\begin{equation}\label{e:Bif.2.2.1*}
A_\lambda(u)=0,\quad (\lambda, u)\in\Lambda\times U^X.
\end{equation}

\item[(ii)]  For every $\lambda\in\Lambda$ near $\lambda^\ast$ there is a nontrivial solution $u_\lambda$ of (\ref{e:Bif.2.2.1*}) in $U^X$, which  converges to $0$ in $X$ as $\lambda\to\lambda^\ast$.

\item[(iii)] There is an one-sided  neighborhood $\Lambda^\ast$ of $\lambda^\ast$ such that
for any $\lambda\in\Lambda^\ast\setminus\{\lambda^\ast\}$,
(\ref{e:Bif.2.2.1*}) has at least two nontrivial solutions in $U^X$,
which  converge to $0$ in $X$  as $\lambda\to\lambda^\ast$.
\end{description}
Therefore $(\lambda^\ast,0)\in\Lambda\times U^X$ is a bifurcation point  for the equation
(\ref{e:Bif.2.2.1*}); in particular $(\lambda^\ast,0)\in\Lambda\times U$ is a bifurcation point  for the equation
\begin{equation}\label{e:Bif.2.2*}
D\mathcal{L}_\lambda(u)=0,
\quad (\lambda,u)\in \Lambda\times U.
\end{equation}
 \end{theorem}

 \begin{proof}
Though the proof is almost repeat of that of Theorem~\ref{th:Bi.2.4}
we give it for completeness.
Let $H^+_\lambda$, $H^-_\lambda$ and $H^0_\lambda$ be the positive definite, negative definite and zero spaces of
${B}_\lambda(0)$.  Denote by $P^0_\lambda$ and $P^\pm_\lambda$ the orthogonal projections onto
$H^0_\lambda$ and $H^\pm_\lambda=H^+_\lambda\oplus H^-_\lambda$,
and by $X^\star_\lambda=X\cap H^\star_\lambda$ for $\star=+,-$, and by  $X^\pm_\lambda=P^\pm_\lambda(X)$.
By Theorem~\ref{th:A.5-}   there exists $\delta>0$ with $[\lambda^\ast-\delta,\lambda^\ast+\delta]\subset\Lambda$,
$\epsilon>0$ and a (unique) $C^1$ map
$$
\psi:[\lambda^\ast-\delta,\lambda^\ast+\delta]\times B_{H^0_{\lambda^\ast}}(0,\epsilon)\to X^\pm_{\lambda^\ast}
$$
which is $C^1$ in the second variable and
satisfies $\psi(\lambda, 0)=0\;\forall\lambda\in [\lambda^\ast-\delta,\lambda^\ast+\delta]$ and
$$
 P^\pm_{\lambda^\ast}A_{\lambda}(z+ \psi(\lambda,z))=0\quad\forall (\lambda,z)\in [\lambda^\ast-\delta,\lambda^\ast+\delta]\times B_{H^0_{\lambda^\ast}}(0,\epsilon),
 $$
an open neighborhood $W$ of $0$ in $H$ and an origin-preserving homeomorphism
\begin{eqnarray*}
&&[\lambda^\ast-\delta, \lambda^\ast+\delta]\times B_{H^0_{\lambda^\ast}}(0,\epsilon)\times
\left(B_{H^+_{\lambda^\ast}}(0, \epsilon) + B_{H^-_{\lambda^\ast}}(0, \epsilon)\right)\to [\lambda^\ast-\delta, \lambda^\ast+\delta]\times W,\nonumber\\
&&\hspace{20mm}({\lambda}, z, u^++u^-)\mapsto ({\lambda},\Phi_{{\lambda}}(z, u^++u^-)),
\end{eqnarray*}
such that for each $\lambda\in [\lambda^\ast-\delta, \lambda^\ast+\delta]$,
\begin{eqnarray}\label{e:NSpl.2.2}
&&\mathcal{L}_{\lambda}\circ\Phi_{\lambda}(z, u^++ u^-)=\|u^+\|^2-\|u^-\|^2+ \mathcal{
L}_{{\lambda}}(z+ \psi({\lambda}, z))\\
&& \quad\quad \forall (z, u^+ + u^-)\in  B_{H^0_{\lambda^\ast}}(0,\epsilon)\times
\left(B_{H^+_{\lambda^\ast}}(0, \epsilon) + B_{H^-_{\lambda^\ast}}(0, \epsilon)\right).\nonumber
\end{eqnarray}
(Clearly, each $\Phi_\lambda$ is  an origin-preserving homeomorphism.)
 Moreover, the functional
\begin{equation}\label{e:NSpl.2.3}
\mathcal{L}_{\lambda}^\circ: B_{H^0_{\lambda^\ast}}(0,\epsilon)\to \mathbb{R},\;
z\mapsto\mathcal{L}_{\lambda}(z+ \psi({\lambda}, z))
\end{equation}
 is of class $C^{2}$,
and for each $\lambda\in[\lambda^\ast-\delta, \lambda^\ast+\delta]$
the map $z\mapsto z+ \psi({\lambda}, z))$ induces an one-to-one correspondence
 between the critical points of  $\mathcal{L}_{\lambda}^\circ$ near $0\in H^0_{\lambda^\ast}$
and solutions of $A_\lambda(u)=0$ near $0\in U^X$.
 Hence the problem is reduced to finding
the critical points of $\mathcal{L}^\circ_\lambda$
near $0\in H^0_{\lambda^\ast}$ for $\lambda$ near $\lambda^\ast$.

By (f), we can shrink $\delta>0$ so that for each $\lambda\in [\lambda^\ast-\delta, \lambda^\ast+\delta]\setminus\{\lambda^\ast\}$,
$0\in H$  is a nondegenerate critical point  $\mathcal{L}_\lambda$ and the Morse indexes of $\mathcal{L}_\lambda$
at $0\in H$ take constant values in $[\lambda^\ast-\delta, \lambda^\ast)$ and $(\lambda^\ast, \lambda^\ast+\delta]$, respectively.
Assume that (\ref{e:Bi.2.7.5}) is satisfied.
Then Theorem~\ref{th:A.4} with $\lambda=0$ leads to
(\ref{e:Bi.2.7.6}). From these and Corollary~\ref{cor:A.6} we deduce
(\ref{e:Bi.2.16}) and therefore the expected conclusions as in the proof of Theorem~\ref{th:Bi.2.4}.
Another case can be proved similarity.
\end{proof}

 As showed below Theorem~\ref{th:Bi.2.4},
Theorem~\ref{th:Bif.2.2.0} is a strengthened version of  Theorem~\ref{th:Bif.1.1}
under the stronger assumptions on the Morse indexes of $\mathcal{L}_\lambda$ at $0\in H$.
Consequently,  for bifurcations of eigenvalues of nonlinear problems
we obtain the following two corollaries, which strengthen Corollary~\ref{cor:Bi.3.1}
and  correspond to Corollaries~\ref{cor:Bi.2.4.1},~\ref{cor:Bi.2.4.2}, respectively.

\begin{corollary}\label{cor:Bif.2.2.1}
Under Hypothesis~\ref{hyp:Bif.2.2.0+},
suppose that the eigenvalue $\lambda^\ast\in\mathbb{R}$  is isolated and
 that $B(0)$ is either semi-positive or semi-negative.
  Then the conclusions of Theorem~\ref{th:Bif.2.2.0} hold if
  (\ref{e:Bif.2.2.1*}) and (\ref{e:Bif.2.2*}) are, respectively, replaced by the following two equations
  \begin{eqnarray}\label{e:Bif.2.2.1}
&&A(u)=\lambda\widehat{A}(u),\quad (\lambda, u)\in\mathbb{R}\times U^X,\\
&&D\mathcal{L}(u)=\lambda D\widehat{\mathcal{L}}(u),\label{e:Bif.2.2}
\quad (\lambda,u)\in \mathbb{R}\times U.
\end{eqnarray}
   \end{corollary}

  Indeed, by the proof of Corollary~\ref{cor:Bi.2.2} it is easily seen  that
  for sufficiently small $\delta>0$   the family
  $\{\mathcal{L}_\lambda:=\mathcal{L}-\lambda \widehat{\mathcal{L}}\,|\,\lambda\in (\lambda^\ast-\delta, \lambda^\ast+\delta)\}$
  satisfies the assumptions of Theorem~\ref{th:Bif.2.2.0}.
  (Of course, as in the proof Corollary~\ref{cor:Bi.2.4.1}, we need to replace
$\mathcal{L}''(0)$ and $\widehat{\mathcal{L}}''(0)$ by $B(0)$ and $\widehat{B}(0)$, respectively.)

Similarly,  we may modify the proof of Corollary~\ref{cor:Bi.2.4.2} to get:

\begin{corollary}\label{cor:Bif.2.2.2}
Under Hypothesis~\ref{hyp:Bif.2.2.0+},
suppose that the following two conditions are satisfied:
 \begin{description}
\item[(a)] $B(0)$ is invertible
and $\lambda^\ast$ is an eigenvalue of $B(0)u-\lambda\widehat{B}(0)u=0$.
 \item[(b)] $B(0)\widehat{B}(0)=\widehat{B}(0)B(0)$,   and $B(0)$
 is either positive  or negative on $H^0_{\lambda^\ast}={\rm Ker}(B(0)-\lambda^\ast\widehat{B}(0))$.
 \end{description}
 Then the conclusions of Corollary~\ref{cor:Bif.2.2.1} are true.
 Moreover, if $B(0)$ is positive definite, the condition (b) is unnecessary.
\end{corollary}

Since $B(0)$ is invertible, the case (II) of Hypothesis~\ref{hyp:Bif.2.2.0} cannot occur.
The ``Moreover" part strengthens Theorem~\ref{th:Bi.4}.

In applications to Lagrange systems using
Theorem~\ref{th:Bif.2.2.0} and Corollaries~\ref{cor:Bif.2.2.1},~\ref{cor:Bif.2.2.2}
produce stronger results than using Theorem~\ref{th:Bi.2.4} and Corollaries~\ref{cor:Bi.2.4.1},~\ref{cor:Bi.2.4.2},
see \cite{Lu8}. Moreover, we may give parameterized versions of splitting lemmas for the Finsler energy functional
in \cite{Lu5} as Theorem~\ref{th:A.5-} and then obtain similar results to
Theorem~\ref{th:Bif.2.2.0} and Corollaries~\ref{cor:Bif.2.2.1},~\ref{cor:Bif.2.2.2} for geodesics on Finsler manifolds.


Now we give a multiparameter bifurcation result,
which is not only a converse of Corollary~\ref{cor:Bi.2.2*} in stronger conditions, but also
a generalizations of Corollaries~\ref{cor:Bif.2.2.1},~\ref{cor:Bif.2.2.2} in some sense.

\begin{theorem}\label{th:Bi.2.3}
Let $\mathcal{L}\in C^1(U,\mathbb{R})$ satisfy
Hypothesis~\ref{hyp:1.3}, except (C) and (D1),   and let
  $\widehat{\mathcal{L}}_j\in C^1(U,\mathbb{R})$, $j=1,\cdots,n$, satisfy
 Hypothesis~\ref{hyp:1.4}.
Suppose also that the following conditions hold:
\begin{description}
\item[(A)]  $\vec{\lambda}^\ast$ is an isolated  eigenvalue  of (\ref{e:Bi.2.2}), (writting $H(\vec{\lambda}^\ast)$
the solution space  of (\ref{e:Bi.2.2}) with $\vec{\lambda}=\vec{\lambda}^\ast$),
 and for each point $\vec{\lambda}$ near $0\in\mathbb{R}^n$, it holds that
\begin{equation}\label{e:Bi.2.2.1}
\{u\in H\,|\,\mathfrak{B}_{\vec{\lambda}}u\in X\}\cup\{u\in H\,|\, \mathfrak{B}_{\vec{\lambda}}u
=\mu u,\;\mu\le 0\}\subset X
\end{equation}
where $\mathfrak{B}_{\vec{\lambda}}:=B(0)-\sum^n_{j=1}\lambda_j\widehat{B}_j(0)$.
 \item[(B)]  Either $H(\vec{\lambda}^\ast)$ has odd dimension, or $\mathfrak{B}_{\vec{\lambda}^\ast}\widehat{B}_j(0)=\widehat{B}_j(0)\mathfrak{B}_{\vec{\lambda}^\ast}$
 for $j=1,\cdots,n$, and there exists  $\vec{\lambda}\in\mathbb{R}^n\setminus\{{0}\}$ such that  the  symmetric bilinear form
\begin{equation}\label{e:Bi.2.2.3}
H(\vec{\lambda}^\ast)\times H(\vec{\lambda}^\ast)\ni (z_1,z_2)\mapsto \mathscr{Q}_{\vec{\lambda}}(z_1,z_2):=
 \sum^n_{j=1}(\lambda_j-\lambda^\ast_j)(\widehat{B}_j(0)z_1,z_2)_H
\end{equation}
has different Morse indexes and coindexes.
\end{description}
(If $\vec{\lambda}^\ast=0$, we only need  that
each $\widehat{\mathcal{L}}_j\in C^1(U,\mathbb{R})$ is as in ``Moreover" part of Corollary~\ref{cor:Bi.2.2}.)
Then $(\vec{\lambda}^\ast, 0)\in\mathbb{R}^n\times U^X$ is a  bifurcation point of
(\ref{e:Bi.2.1*}). Moreover, suppose that (B) is replaced by
\begin{description}
\item[(B')] $\mathfrak{B}_{\vec{\lambda}^\ast}\widehat{B}_j(0)=\widehat{B}_j(0)\mathfrak{B}_{\vec{\lambda}^\ast}$
 for $j=1,\cdots,n$, and there exists  $\vec{\mu}\in\mathbb{R}^n\setminus\{{0}\}$ such that
the form $ \mathscr{Q}_{\vec{\mu}}$ on $H(\vec{\lambda}^\ast)$ is either positive definite or negative one.
\end{description}
Then one of the following alternatives occurs:
\begin{description}
\item[(i)] $(\vec{\lambda}^\ast, 0)$ is not an isolated solution of (\ref{e:Bi.2.1*}) in
 $\{\vec{\lambda}^\ast\}\times U^X$.

\item[(ii)]  For every $t$ near $0\in\mathbb{R}$ there is a nontrivial solution $u_t$ of (\ref{e:Bi.2.1*}) with $\vec{\lambda}=t(\vec{\mu}-\vec{\lambda}^\ast)+\vec{\lambda}^\ast$
    converging to $0$ in $X$ as $t\to 0$.

\item[(iii)] There is an one-sided  neighborhood $\mathfrak{T}$ of $0\in\mathbb{R}$ such that
for any $t\in\mathfrak{T}\setminus\{0\}$,
(\ref{e:Bi.2.1*}) with $\vec{\lambda}=t(\vec{\mu}-\vec{\lambda}^\ast)+\vec{\lambda}^\ast$ has at least two nontrivial solutions converging to
$0$ in $X$ as $t\to 0$.
\end{description}

\end{theorem}

\begin{proof} {\bf Step 1}.\quad {\it Prove the first claim}.
Let $H^+_{\vec{\lambda}}$, $H^-_{\vec{\lambda}}$ and $H^0_{\vec{\lambda}}$ be the positive definite, negative definite and zero spaces of
$\mathfrak{B}_{\vec{\lambda}}=B(0)-\sum^n_{j=1}\lambda_j\widehat{B}_j(0)$.
 Denote by $P^0_{\vec{\lambda}}$ and $P^\pm_{\vec{\lambda}}$ the orthogonal projections onto
  $H^0_{\vec{\lambda}}$ and $H^\pm_{\vec{\lambda}}=H^+_{\vec{\lambda}}\oplus H^-_{\vec{\lambda}}$,
and by $X^\star_{\vec{\lambda}}=X\cap H^\star_{\vec{\lambda}}$ for $\star=+,-$, and by  $X^\pm_{\vec{\lambda}}=P^\pm_{\vec{\lambda}}(X)$.
By the assumption, for each $\vec{\lambda}\in\mathbb{R}^n$ near $\vec{\lambda}^\ast$ spaces
$H^-_{\vec{\lambda}}$ and $H^0_{\vec{\lambda}}$ have finite dimensions and are contained in $X$, so
$H^-_{\vec{\lambda}}=X^-_{\vec{\lambda}}$ and $H^0_{\vec{\lambda}}=X^0_{\vec{\lambda}}$.
Because the corresponding operator $A$ with $\mathcal{L}\in C^1(U,\mathbb{R})$
is not assumed to be $C^1$,  Theorem~\ref{th:A.5-}  cannot be used.
But modifying its proof  (as in the proof of \cite[Theorem~2.1]{Lu2})  there exist  $\delta>0$ and
$\epsilon>0$, a (unique) $C^{1-0}$ map
\begin{eqnarray*}
\psi:\vec{\lambda}^\ast+[-\delta,\delta]^n\times B_{H^0}(0,\epsilon)\to X^\pm_{\vec{\lambda}^\ast}
\end{eqnarray*}
which is strictly Fr\'echet differentiable in the second variable at $0\in H^0$ and satisfies $\psi(\vec{\lambda}, 0)=0\;\forall\vec{\lambda}\in \vec{\lambda}^\ast+[-\delta,\delta]^n$, and
an open neighborhood $W$ of $0$ in $H$ and an origin-preserving homeomorphism
\begin{eqnarray*}
&&(\vec{\lambda}^\ast+[-\delta,\delta]^n)\times B_{H^0_{\vec{\lambda}^\ast}}(0,\epsilon)\times
\left(B_{H^+_{\vec{\lambda}^\ast}}(0, \epsilon) + B_{H^-_{\vec{\lambda}^\ast}}(0, \epsilon)\right)\to
(\vec{\lambda}^\ast+[-\delta,\delta]^n)\times W,\nonumber\\
&&\hspace{20mm}({\vec{\lambda}}, z, u^++u^-)\mapsto ({\vec{\lambda}},\Phi_{\vec{\lambda}}(z, u^++u^-)),
\end{eqnarray*}
such that for each $\vec{\lambda}\in \vec{\lambda}^\ast+[-\delta,\delta]^n$,
the functional $\mathcal{L}_{\vec{\lambda}}$ satisfies
\begin{eqnarray}\label{e:BiSpl.2.2}
&&\mathcal{L}_{\vec{\lambda}}\circ\Phi_{\vec{\lambda}}(z, u^++ u^-)=\|u^+\|^2-\|u^-\|^2+ \mathcal{
L}_{\vec{\lambda}}(z+ \psi(\vec{\lambda}, z))\\
&& \quad\quad \forall (z, u^+ + u^-)\in  B_{H^0_{\vec{\lambda}^\ast}}(0,\epsilon)\times
\left(B_{H^+_{\vec{\lambda}^\ast}}(0, \epsilon) + B_{H^-_{\vec{\lambda}^\ast}}(0, \epsilon)\right).\nonumber
\end{eqnarray}
 Moreover,  the functional
$\mathcal{L}_{\vec{\lambda}}^\circ: B_{H^0_{\vec{\lambda}^\ast}}(0,\epsilon)\to \mathbb{R}$ given by
$\mathcal{L}_{\vec{\lambda}}^\circ(z)=\mathcal{L}_{\vec{\lambda}}(z+ \psi({\vec{\lambda}}, z))$
 is of class $C^{2-0}$, has first-order  differential  at $z_0\in
B_{H^0_{\vec{\lambda}^\ast}}(0, \epsilon)$ are given by
 \begin{eqnarray}\label{e:BiSpl.2.4}
d\mathcal{L}^\circ_{\vec{\lambda}}(z_0)[z]=\bigl(A(z_0+ \psi(\vec{\lambda}, z_0))-\sum^n_{j=1}\lambda_j \widehat{A}_j(z_0+ \psi(\vec{\lambda}, z_0)), z\bigr)_H\quad\forall z\in H^0_{\vec{\lambda}^\ast},
\end{eqnarray}
and $d\mathcal{L}^\circ_{\vec{\lambda}}$ has strict Fr\'echet derivative at $0\in H^0_{\vec{\lambda}^\ast}$,
\begin{eqnarray}\label{e:BiSpl.2.5}
  d^2\mathcal{L}^\circ_{\vec{\lambda}}(0)[z,z']=
 \left(P^0_{\vec{\lambda}^\ast}\bigr[\mathfrak{B}_{\vec{\lambda}}-
 \mathfrak{B}_{\vec{\lambda}}(P^\pm_{\vec{\lambda}^\ast}\mathfrak{B}_{\vec{\lambda}}|_{X^\pm_{\vec{\lambda}^\ast}})^{-1}
 (P^\pm_{\vec{\lambda}^\ast}\mathfrak{B}_{\vec{\lambda}})\bigr]z, z'\right)_H,\;
  \forall z,z'\in H^0_{\vec{\lambda}^\ast}.
 \end{eqnarray}
%
 Clearly, (\ref{e:BiSpl.2.5}) implies $d^2\mathcal{L}^\circ_{\vec{\lambda}^\ast}(0)=0$.
 As the arguments above \cite[Claim~2.17]{Lu7} we have
  \begin{equation}\label{e:Bi.2.2.2}
  d^2\mathcal{L}^\circ_{\vec{\lambda}}(0)[z_1,z_2]=\sum^n_{j=1}(\lambda_j-\lambda^\ast_j)(\widehat{B}_j(0)(z_1+D_z\psi(\vec{\lambda}, 0)[z_1]),z_2)_H,\quad\forall z_1, z_2\in H^0_{\vec{\lambda}^\ast}.
  \end{equation}
 (The negative sign in \cite[(2.59)]{Lu7} should be removed.)

 As before, using the above results   the
  bifurcation problem (\ref{e:Bi.2.1*}) is reduced to the following one
 \begin{equation}\label{e:Bif.2.2.5}
 \nabla\mathcal{L}^\circ_{\vec{\lambda}}(u)=0,\;u\in B_{H^0_{\vec{\lambda}^\ast}}(0,\epsilon),\;\vec{\lambda}\in \vec{\lambda}^\ast+[-\delta,\delta]^n.
 \end{equation}
By ({\bf A}) we may shrink $\delta>0$ so that
for each $\vec{\lambda}\in \vec{\lambda}^\ast+([-\delta,\delta]^n\setminus\{0\})$,
$\mathcal{L}\in C^1(U,\mathbb{R})$ satisfy Hypothesis~\ref{hyp:1.3} and has
$0\in H$ as a nondegenerate critical point.
Using Theorem~\ref{th:A.4} with $\widehat{\mathcal{L}}=0$
(or \cite[Remark~2.2]{Lu2})  and a consequence of (\ref{e:BiSpl.2.2})  as in Theorem~\ref{th:A.3} we obtain
that for any Abel group ${\bf K}$ and for each $\vec{\lambda}\in \vec{\lambda}^\ast+([-\delta,\delta]^n\setminus\{0\})$,
\begin{equation}\label{e:Bif.2.2.6}
C_q(\mathcal{L}^\circ_{\vec{\lambda}}, 0;{\bf K})=\delta_{(q+{\mu_{\vec{\lambda}^\ast}})\mu_{\vec{\lambda}}}{\bf K},\quad
\forall q\in\mathbb{N}\cup\{0\},
\end{equation}
where $\mu_{\vec{\lambda}}$ is the Morse index of $\mathcal{L}_{\vec{\lambda}}$  at $0\in H$.

By a contradiction suppose now that $(\vec{\lambda}^\ast, 0)\in\mathbb{R}^n\times U^X$ is not a  bifurcation point of
(\ref{e:Bi.2.1*}). Then $(\vec{\lambda}^\ast, 0)\in\mathbb{R}^n\times  H^0_{\vec{\lambda}^\ast}$ is not a  bifurcation point of
(\ref{e:Bif.2.2.5}). Hence we may shrink $\epsilon>0$ and $\delta>0$ such that
$\mathcal{L}^\circ_{\vec{\lambda}}$ has a unique critical point $0$ in $B_{H^0_{\vec{\lambda}^\ast}}(0,\epsilon)$
for each $\vec{\lambda}\in \vec{\lambda}^\ast+[-\delta,\delta]^n$. As before it follows from the stability of critical groups
(cf.Theorem~\ref{th:stablity1}) that
\begin{equation}\label{e:Bi.2.7.2}
C_\ast(\mathcal{L}^\circ_{\vec{\lambda}}, 0;{\bf K})=C_\ast(\mathcal{L}^\circ_{\vec{\lambda}^\ast}, 0;{\bf K}),\quad
\forall \vec{\lambda}\in \vec{\lambda}^\ast+[-\delta,\delta]^n.
\end{equation}
This and (\ref{e:Bif.2.2.6}) imply  that the Morse index of $\mathcal{L}^\circ_{\vec{\lambda}}$ at $0$
is constant with respect to $\vec{\lambda}\in \vec{\lambda}^\ast+([-\delta,\delta]^n\setminus\{{0}\})$.
On the other hand,  for every $\vec{\lambda}\in \vec{\lambda}^\ast+([-\delta,\delta]^n\setminus\{{0}\})$,
(\ref{e:Bi.2.2.2}) implies
$$
d^2\mathcal{L}^\circ_{2\vec{\lambda}^\ast-\vec{\lambda}}(0)=-d^2\mathcal{L}^\circ_{\vec{\lambda}}(0)
$$
 as the nondegenerate quadratic forms on $H^0_{{\lambda}^\ast}$. Note that $H^0_{\vec{\lambda}^\ast}=H(\vec{\lambda}^\ast)$.
So if $\dim H^0_{\vec{\lambda}^\ast}$ is odd, $d^2\mathcal{L}^\circ_{\vec{\lambda}}(0)$ and $d^2\mathcal{L}^\circ_{2\vec{\lambda}^\ast-\vec{\lambda}}(0)$ must have different Morse indexes. This contradicts (\ref{e:Bi.2.7.2}).

In other situation, since for each $j=1,\cdots,n$, $\mathfrak{B}_{\vec{\lambda}^\ast}\widehat{B}_j(0)=\widehat{B}_j(0)\mathfrak{B}_{\vec{\lambda}^\ast}$
implies that $P^\star_{\vec{\lambda}^\ast}\widehat{B}_j(0)=\widehat{B}_j(0)P^\star_{\vec{\lambda}^\ast}$,
 $\star=0,+,-$,   and $D_z\psi(\vec{\lambda}, 0)[z_1]\in H^\pm_{\vec{\lambda}}$ for all $z_1\in H^0_{\vec{\lambda}}$,
we deduce
\begin{eqnarray*}
(D_z\psi(\vec{\lambda}, 0)[z_1], \widehat{B}_j(0)z_2)_H&=&(D_z\psi(\vec{\lambda}, 0)[z_1], \widehat{B}_j(0)P^0_{\vec{\lambda}^\ast}z_2)_H\\
&=&(D_z\psi(\vec{\lambda}, 0)[z_1], P^0_{\vec{\lambda}^\ast}\widehat{B}_j(0)z_2)_H=0,\quad
\forall z_1, z_2\in H^0_{\vec{\lambda}^\ast}.
\end{eqnarray*}
Thus (\ref{e:Bi.2.2.2}) becomes
$$
d^2\mathcal{L}^\circ_{\vec{\lambda}}(0)[z_1,z_2]=\sum^n_{j=1}(\lambda_j-\lambda^\ast_j)(\widehat{B}_j(0)z_1,z_2)_H=
  \mathscr{Q}_{\vec{\lambda}}(z_1,z_2),\quad\forall z_1, z_2\in H^0_{\vec{\lambda}^\ast}.
$$
Then the same reasoning  leads to a contradiction. The first claim is proved.

\noindent{\bf Step 2}.\quad {\it Prove the second claim}.
By (\ref{e:Bi.2.2.3}), $\mathscr{Q}_{t(\vec{\mu}-\vec{\lambda}^\ast)+\vec{\lambda}^\ast}=t\mathscr{Q}_{\vec{\mu}}$
for any $t\in\mathbb{R}$. By replacing $\vec{\mu}$ by $2\vec{\lambda}^\ast-\vec{\mu}$ (if necessary)  we may assume that
the form $d^2\mathcal{L}^\circ_{\vec{\mu}}(0)=\mathscr{Q}_{\vec{\mu}}$ is positive definite.
Then when $t>0$ with small $|t|$ the forms
$d^2\mathcal{L}^\circ_{t(\vec{\mu}-\vec{\lambda}^\ast)+\vec{\lambda}^\ast}$
are positive definite and so $0\in H^0_{\vec{\lambda}^\ast}$ is a strict local
minimizer of $\mathcal{L}^\circ_{t(\vec{\mu}-\vec{\lambda}^\ast)+\vec{\lambda}^\ast}$;
when $t<0$ with small $|t|$ the forms
$d^2\mathcal{L}^\circ_{t(\vec{\mu}-\vec{\lambda}^\ast)+\vec{\lambda}^\ast}$
are negative definite and so $0\in H^0_{\vec{\lambda}^\ast}$ is a strict local
maximizer of $\mathcal{L}^\circ_{t(\vec{\mu}-\vec{\lambda}^\ast)+\vec{\lambda}^\ast}$.
As before the desired  claims follows from  Theorem~\ref{th:Bi.2.1}.
\end{proof}

Some results in next Sections~\ref{sec:B.3},\ref{sec:BBH} can also be generalized 
to such more general multiparameter forms.



Finally, in order to understand advantages and disadvantages of our above methods
let us state results which can be obtained with the parameterized version of the classical splitting lemma
for $C^2$ functionals stated in Remark~\ref{rm:Spl.2.5} and the classical Morse-Palais lemma  for $C^2$ functionals.


\begin{theorem}\label{th:Bi.2.5}
Let $U$ be an open neighborhood of $0$ in a real Hilbert space $H$,
$\Lambda\subset\mathbb{R}$ an open interval  containing  $\lambda^\ast$,
and let $\{\mathcal{L}_\lambda\in C^2(U, \mathbb{R})\,|\,\lambda\in\Lambda\}$ be a  family of functionals
such that $\Lambda\times U\ni (\lambda,u)\mapsto\nabla\mathcal{L}_\lambda(u)\in H$ is  continuous.
Suppose that the following conditions are satisfied:
\begin{description}
\item[(a)] For each $\lambda\in\Lambda$,  $0$ is a critical point of $\mathcal{L}_\lambda$ with
 finite Morse index $\mu_\lambda$ and nullity $\nu_\lambda$.

\item[(b)]  If  $\nu_{\lambda^\ast}\ne 0$, $\nu_{\lambda}=0$ for any $\lambda\in\Lambda\setminus\{\lambda^\ast\}$,
   and $\mu_\lambda$  take values $\mu_{\lambda^\ast}$ and $\mu_{\lambda^\ast}+\nu_{\lambda^\ast}$
 as $\lambda\in\mathbb{R}$ varies in both sides of $\lambda^\ast$ and is close to $\lambda^\ast$.
\end{description}
  Then  one of the following alternatives occurs:
\begin{description}
\item[(i)] $(\lambda^\ast,0)$ is not an isolated solution  in  $\{\lambda^\ast\}\times U$ of the equation
\begin{equation}\label{e:Bi.2.7.3}
D\mathcal{L}_\lambda(u)=0,\quad (\lambda,u)\in \Lambda\times U.
\end{equation}
\item[(ii)]  For every $\lambda\in\Lambda$ near $\lambda^\ast$ there is a nontrivial solution $u_\lambda$ of (\ref{e:Bi.2.7.3}) in $U$
converging to $0$  as $\lambda\to\lambda^\ast$.

\item[(iii)] There is an one-sided  neighborhood $\Lambda^\ast$ of $\lambda^\ast$ such that
for any $\lambda\in\Lambda^\ast\setminus\{\lambda^\ast\}$, (\ref{e:Bi.2.7.3}) has at least two nontrivial solutions in $U$
converging to $0$  as $\lambda\to\lambda^\ast$.
\end{description}
In particular, $(\lambda^\ast,0)\in\Lambda\times U$ is a bifurcation point  for the equation
(\ref{e:Bi.2.7.3}).
\end{theorem}

In fact, we only need to make suitable replacements in the  proof of Theorem~\ref{th:Bif.2.2.0}.
For example, we replace symbols ``$X$" by ``$H$", ``$A_\lambda$" by ``$\nabla \mathcal{L}_\lambda$", and
 ``$B_\lambda$" by ``$\mathcal{L}^{\prime\prime}_{\lambda}$", and phrases
 `` By Theorem~\ref{th:A.5-}" by ``the parameterized version of the classical splitting lemma  for $C^2$ functionals
stated in Remark~\ref{rm:Spl.2.5}",  and ``Theorem~\ref{th:A.4} with $\lambda=0$" by
   ``the classical Morse-Palais lemma  for $C^2$ functionals",
 ``Corollary~\ref{cor:A.6}" by ``a corresponding result with Corollary~\ref{cor:A.6}".

\begin{theorem}\label{th:Bi.2.6}
The conclusions of Theorem~\ref{th:Bi.2.5} hold true if (a) and (b) are replaced by the following conditions:
\begin{description}
\item[(a')] For each $\lambda\in\Lambda$,  $\mathcal{L}'_\lambda(0)=0$, $0<\nu_{\lambda^\ast}:=\dim{\rm Ker}(\mathcal{L}''_{\lambda^\ast}(0))<\infty$.
\item[(b')] For some small $\delta>0$, $\mathcal{L}_\lambda$ has an isolated local minimum (maximum) at zero for every
$\lambda\in (\lambda^\ast,\lambda^\ast+\delta]$ and an isolated local maximum (minimum) at
zero for every $\lambda\in [\lambda^\ast-\delta, \lambda^\ast)$.
\end{description}
 \end{theorem}

\begin{proof}
As in the proof of Theorem~\ref{th:Bif.2.2.0},  the original bifurcation problem  is reduced to that of
 \begin{eqnarray*}
 \nabla\mathcal{L}^\circ_{{\lambda}}(u)=0,\;u\in B_{H^0_{{\lambda}^\ast}}(0,\epsilon),\;
 \lambda\in [\lambda^\ast-\delta,\lambda^\ast+\delta]\setminus\{\lambda^\ast\}.
 \end{eqnarray*}
Here each $\mathcal{L}^\circ_\lambda: B_H(0, \epsilon)\cap H^0_{\lambda^\ast}\to\mathbb{R}$ is
   a $C^2$ functional given by (\ref{e:NSpl.2.3}), and
 has differential at $z\in B_H(0, \epsilon)\cap H^0_{\lambda^\ast}$ given by
 $D\mathcal{L}_{\lambda}^\circ(z)[\zeta]=(\nabla\mathcal{L}_{\lambda}(z+ \psi({\lambda}, z)),\zeta)_H$
 for $\zeta\in H^0_{\lambda^\ast}$. Clearly, the assumption (b') implies that $\mathcal{L}_\lambda^\circ$
 has an isolated local minimum (maximum) at zero for every
$\lambda\in (\lambda^\ast,\lambda^\ast+\delta]$ and an isolated local maximum (minimum) at
zero for every $\lambda\in [\lambda^\ast-\delta, \lambda^\ast)$.
Then Theorem~\ref{th:Bi.2.1} leads to the desired  claims.
\end{proof}

Comparing Theorem~\ref{th:Bi.2.6} with \cite[Theorem~4.2]{CorH} we have no  the (PS) condition,
but require higher smoothness of functionals.

 Corresponding to Corollaries~\ref{cor:Bi.2.4.1},~\ref{cor:Bi.2.4.2} (or
Corollaries~\ref{cor:Bif.2.2.1},~\ref{cor:Bif.2.2.2}) we have

\begin{corollary}\label{cor:Bi.2.6}
Let $U$ be an open neighborhood of $0$ in a real Hilbert space $H$,
and let $\mathcal{L}, \widehat{\mathcal{L}}\in C^2(U,\mathbb{R})$ satisfy
$\mathcal{L}'(0)=0$ and  $\widehat{\mathcal{L}}'(0)=0$. Suppose that
$\lambda^\ast\in\mathbb{R}$ is an  eigenvalue of
$\mathcal{L}''(0)u=\lambda\widehat{\mathcal{L}}''(0)u$ in $H$  of finite multiplicity.
 Then the conclusions of Theorem~\ref{th:Bi.2.5} hold true with $\mathcal{L}_\lambda=\mathcal{L}-\lambda\widehat{\mathcal{L}}$
if  one of the following conditions is satisfied:
\begin{description}
\item[(a)] The eigenvalue $\lambda^\ast$ is isolated,  $\mathcal{L}_\lambda$  has finite Morse index  at $0$ for each $\lambda\in\mathbb{R}$ near $\lambda^\ast$, and  $\widehat{\mathcal{L}}''(0)$ is either semi-positive or semi-negative.
\item[(b)] For some small $\delta>0$, $\mathcal{L}_\lambda$ has an isolated local minimum (maximum) at zero for every
$\lambda\in (\lambda^\ast,\lambda^\ast+\delta]$ and an isolated local maximum (minimum) at
zero for every $\lambda\in [\lambda^\ast-\delta, \lambda^\ast)$.
\end{description}
\end{corollary}

When  $\widehat{\mathcal{L}}(u)=\frac{1}{2}(u,u)_H$, the case (a) shows that
 the  Rabinowitz bifurcation theorem stated at the beginning of this section
 can be obtained under the additional condition that $\mathcal{L}_\lambda$  has finite Morse index  at $0$ for
each $\lambda\in\mathbb{R}$ near $\lambda^\ast$. The latter  condition cannot imply that
$\mathcal{L}$  has finite Morse index  at $0$ if $\lambda^\ast\ne 0$.

\begin{proof}[\it Proof of Corollary~\ref{cor:Bi.2.6}]
The case that (b) holds follows from Theorem~\ref{th:Bi.2.6} directly.

When the case (a) holds, since $\lambda^\ast\in\mathbb{R}$ is an isolated eigenvalue of
$\mathcal{L}''(0)u=\lambda\widehat{\mathcal{L}}''(0)u$ in $H$  of finite multiplicity,
for each $\lambda\in\mathbb{R}$ near $\lambda^\ast$,
 $\mathcal{L}''(0)-\lambda\widehat{\mathcal{L}}''(0)$ is a bounded linear Fredholm operator.
(Hence $\mathcal{L}_\lambda$ has  finite  nullity $\nu_\lambda$  at $0$.)
Repeating the proof of Corollary~\ref{cor:Bi.2.4.1}
we can get (\ref{e:Bi.2.19.1}) and hence the condition (b) of Theorem~\ref{th:Bi.2.5} is satisfied.
\end{proof}

\begin{corollary}\label{cor:Bi.2.7}
Let $U$ be an open neighborhood of $0$ in a real Hilbert space $H$,
and let $\mathcal{L}, \widehat{\mathcal{L}}\in C^2(U,\mathbb{R})$ satisfy
$\mathcal{L}'(0)=0$ and  $\widehat{\mathcal{L}}'(0)=0$. 
 Suppose that the following two conditions are satisfied:
 \begin{description}
\item[(a)] $\mathcal{L}''(0)$ is invertible, $\widehat{\mathcal{L}}''(0)$ is compact
and $\lambda^\ast$ is an eigenvalue of $\mathcal{L}''(0)u=\lambda\widehat{\mathcal{L}}''(0)u$ in $H$.
 \item[(b)] For each $\lambda\in\mathbb{R}$ near $\lambda^\ast$,  $\mathcal{L}_\lambda=\mathcal{L}-\lambda\widehat{\mathcal{L}}$ has finite Morse index at $0\in H$, $\mathcal{L}''(0)\widehat{\mathcal{L}}''(0)=\widehat{\mathcal{L}}''(0)\mathcal{L}''(0)$,
    and  $\mathcal{L}''(0)$  is either positive  or negative on $H^0_{\lambda^\ast}:={\rm Ker}(\mathcal{L}''(0)-\lambda^\ast\widehat{\mathcal{L}}''(0))$.
 \end{description}
 Then the conclusions of Theorem~\ref{th:Bi.2.5} hold true.
 Moreover, if ${\mathcal{L}}''(0)$ is positive definite, the condition (b) is unnecessary.
\end{corollary}

Clearly, the assumption (a) implies that $\mathcal{L}_\lambda$  has finite nullity at $0$ for each $\lambda\in\mathbb{R}$.
Repeating the proof of Corollary~\ref{cor:Bi.2.4.1} gives the desired claim.
The ``Moreover" part strengthens Theorem~\ref{th:Bi.7.1}.

Compare Corollaries~\ref{cor:Bi.2.6},\ref{cor:Bi.2.7} with
 \cite{Rab74,Rab}  by Rabinowitz.
Similarly, we can also write a corresponding result with Theorem~\ref{th:Bi.2.3}.

\section{Bifurcation for equivariant problems}\label{sec:B.3}
\setcounter{equation}{0}

There are abundant studies of bifurcations for equivariant maps,
see \cite{Ba1, FaRa1, FaRa2, Fe1, SmoWa} and the references therein.
In this section we shall generalize  main results of \cite{FaRa1, FaRa2}
in the setting of Section~\ref{sec:B.2}.

Let $G$ be a topological group. A (left) {\bf action} of it  over a Banach space $X$ is a continuous
map $G\times X\to X,\;(g,x)\mapsto gx$ satisfying the usual rules: $(g_1g_2)x=g_1(g_2x)$ and $ex=x$ for all $x\in X$
and $g_1,g_2\in G$, where $e\in G$ denotes the unite element. This action is called {\bf linear} (resp. {\bf
isometric}) if for each $g\in G$, $X\ni x\mapsto gx\in X$ is linear (and so a bounded linear operator
from $X$ to itself) (resp. $\|gx\|=\|x\|$ for all $x\in X$). When $X$ is a Hilbert space a linear isometric action
is also called  an orthogonal action (since a linear isometry must be an orthogonal linear transformation).


\subsection{Bifurcation theorems of Fadell--Rabinowitz type}\label{sec:B.3.1}


We first study finite dimensional situations.
The following is a refinement of \cite[Theorem~5.1]{Can}.

\begin{theorem}\label{th:Bi.2.1E}
Under the assumptions of Theorem~\ref{th:Bi.2.1},
let $X$ be equipped with a linear isometric action of a compact Lie group $G$ so that
each $f_\lambda$ is invariant under the $G$-action. Suppose also that
 the local minimums (maximums) in assumption c) of Theorem~\ref{th:Bi.2.1} are strict,
 and that $u=0$ is an isolated critical point of $f_{\lambda^\ast}$.
Then when the Lie group  $G$ is equal to $\mathbb{Z}_2=\{{\rm id}, -{\rm id}\}$ (resp. $S^1$
without fixed points except $0$,
    which implies $\dim X$ to be an even more than one),
               then   there exist left and right  neighborhoods $\Lambda^-$ and $\Lambda^+$ of $\lambda^\ast$ in $\mathbb{R}$
and integers $n^+, n^-\ge 0$, such that $n^++n^-\ge\dim X$ (resp. $\frac{1}{2}\dim X$),
and that for $\lambda\in\Lambda^-\setminus\{\lambda^\ast\}$ (resp. $\lambda\in\Lambda^+\setminus\{\lambda^\ast\}$)
$f_\lambda$ has at least $n^-$ (resp. $n^+$) distinct critical
$G$-orbits different from $0$, which converge to
 $0$ as $\lambda\to\lambda^\ast$.
\end{theorem}

This is essentially contained in the proofs of  \cite{Rab, FaRa1, FaRa2}.
For completeness we will give most of its proof detail.

\begin{proof}[\it Proof of Theorem~\ref{th:Bi.2.1E}]
By assumptions we have either
\begin{eqnarray}\label{e:fBi.1}
0\in X\;\hbox{is a strict local}\left\{
\begin{array}{ll}
\hbox{minimizer of}\;f_{\lambda},&\quad
\forall \lambda\in [\lambda^\ast-\delta, \lambda^\ast),\\
\hbox{maximizer of}\;f_{\lambda},&\quad
\forall \lambda\in (\lambda^\ast, \lambda^\ast+\delta].
\end{array}\right.
\end{eqnarray}
or
\begin{eqnarray}\label{e:fBi.2}
0\in X\;\hbox{is a strict local}\left\{
\begin{array}{ll}
\hbox{maximizer of}\;f_{\lambda},&\quad
\forall \lambda\in [\lambda^\ast-\delta, \lambda^\ast),\\
\hbox{minimizer of}\;f_{\lambda},&\quad
\forall \lambda\in (\lambda^\ast, \lambda^\ast+\delta].
\end{array}\right.
\end{eqnarray}
By the assumption a) of Theorem~\ref{th:Bi.2.1}, replacing
$f_\lambda$ by $f_\lambda-f_\lambda(0)$ if necessary, we now and henceforth assume $f_\lambda(0)=0\;\forall\lambda$.
Since $u=0$ is an isolated critical point of $f_{\lambda^\ast}$, by \cite[p.136]{Ja} there exist mutually disjoint:\\
{\bf Case 1}. $0\in X$ is a local minimizer of $f_{\lambda^\ast}$;\\
{\bf Case 2}. $0\in X$ is a proper local maximizer of $f_{\lambda^\ast}$, i.e., it is a  local maximizer of $f_{\lambda^\ast}$
and $0$ belongs to the closure of $\{f_{\lambda^\ast}<0\}$;\\
{\bf Case 3}. $0\in X$ is a saddle point of $f_{\lambda^\ast}$, i.e., $f_{\lambda^\ast}$  takes both positive and negative values in every neighborhood of $0$.

Though our proof ideas are following   \cite[Theorem~2.2]{Rab} and \cite[Theorem~1.2]{FaRa1, FaRa2},
some technical improvements are necessary since each $f_{\lambda}$ is only of class $C^1$.

By the assumption b) of Theorem~\ref{th:Bi.2.1}, the function $(\lambda,z)\mapsto
Df_\lambda(z)$ is continuous on
$[\lambda^\ast-\delta, \lambda^\ast+\delta]\times B_X(0,\epsilon)$.
It follows that
\begin{eqnarray*}
R_{\delta,\epsilon}:
&=&\{(\lambda, z)\in (\lambda^\ast-\delta, \lambda^\ast+\delta)\times B_X(0,\epsilon)\,|\,z\in B_X(0,\epsilon)\setminus
K(f_\lambda)\}
\end{eqnarray*}
is an open subset in $[\lambda^\ast-\delta, \lambda^\ast+\delta]\times B_X(0,\epsilon)$, where
$K(f_\lambda)$ denotes the critical set of $f_\lambda$.

Though the local flow
 of $\nabla f_{\lambda}$ must not exist,
fortunately, by suitably modifying the standard constructions in \cite[\S4]{Pa1} (see also
 \cite[Propsitions~5.29, 5.57]{MoMoPa}) we have:

\begin{lemma}\label{lem:Fpseudogradient}
There exists a smooth map $R_{\delta,\epsilon}\to X,\;(\lambda,z)\mapsto\mathscr{V}_\lambda(z)$,
such that for each $\lambda\in(\lambda^\ast-\delta, \lambda^\ast+ \delta)$ the map
$\mathscr{V}_\lambda: B_X(0,\epsilon)\setminus
K(f_\lambda)\to X$
is  $G$-equivariant and satisfies
$$
\|\mathscr{V}_\lambda(z)\|\le 2\|Df_{\lambda}(z)\|\quad\hbox{and}\quad
\langle Df_{\lambda}(z), \mathscr{V}_\lambda(z)\rangle\ge
\|Df_{\lambda}(z)\|^2
$$
for all $z\in B_X(0,\epsilon)\setminus K(f_\lambda)$, i.e., it is pseudo-gradient vector field of
$f_\lambda$ in the sense of \cite{Pa1}.
\end{lemma}

Notice that this is actually true for any compact Lie group $G$.
There exist some variants and generalizations for the notion of the pseudo-gradient vector field in \cite{Pa1},
which are more easily constructed and are still effective in applications. For example,
according to \cite{Guo} $\mathscr{V}_\lambda$ is also called a pseudo-gradient vector field of
$f_\lambda$ even if two inequalities in Lemma~\ref{lem:Fpseudogradient} are replaced by
$$
\|\mathscr{V}_\lambda(z)\|\le \alpha\|Df_{\lambda}(z)\|\quad\hbox{and}\quad
\langle Df_{\lambda}(z), \mathscr{V}_\lambda(z)\rangle\ge
\beta\|Df_{\lambda}(z)\|^2,
$$
where $\alpha$ and $\beta$ are two positive constants.

\begin{proof}[\it Proof of Lemma~\ref{lem:Fpseudogradient}]
Since $(\lambda^\ast-\delta, \lambda^\ast+\delta)\times B_{X}(0, \epsilon)\ni (\lambda,z)\mapsto
Df_\lambda(z)\in X^\ast$ is continuous, for any given
$(\lambda,z)\in R_{\delta,\epsilon}$ we have an open neighborhood
$O_{(z,\lambda)}$ of $z$ in $B_{X}(0, \epsilon)\setminus K(f_\lambda)$,
a positive number $r_{(z,\lambda)}$
with $(\lambda-r_{(z,\lambda)}, \lambda+r_{(z,\lambda)})\subset
(\lambda^\ast-\delta, \lambda^\ast+ \delta)$ and
a  vector $v_{(z,\lambda)}\in X$ such that
for all $(z',\lambda')\in O_{(z,\lambda)}\times (\lambda-r_{(z,\lambda)}, \lambda+r_{(z,\lambda)})$,
$$
\|v_{(z,\lambda)}\|< 2\|Df_{\lambda'}(z')\|\quad\hbox{and}\quad
\langle Df_{\lambda'}(z'), v_{(z,\lambda)}\rangle>
\|Df_{\lambda'}(z')\|^2.
$$
Now all above $O_{(z,\lambda)}\times (\lambda-r_{(z,\lambda)}, \lambda+r_{(z,\lambda)})$
form an open cover $\mathscr{C}$ of $R_{\delta,\epsilon}$,
and the latter admits a $C^\infty$-unit decomposition $\{\eta_\alpha\}_{\alpha\in\Xi}$
subordinate to a locally finite
refinement $\{W_\alpha\}_{\alpha\in\Xi}$ of $\mathscr{C}$.
Since each $W_\alpha$ can be contained in some open subset of form
$O_{(z,\lambda)}\times (\lambda-r_{(z,\lambda)}, \lambda+r_{(z,\lambda)})$, we have
a  vector $v_\alpha\in X$ such that for all $(z',\lambda')\in W_\alpha$,
$$
\|v_\alpha\|< 2\|Df_{\lambda'}(z')\|\quad\hbox{and}\quad
\langle Df_{\lambda'}(z'), v_\alpha\rangle>
\|Df_{\lambda'}(z')\|^2.
$$
Set $\chi(\lambda,z)=\sum_{\alpha\in\Xi}\eta_\alpha(\lambda,z) v_\alpha$. Then $\chi$ is a smooth map
from $R_{\delta,\epsilon}$ to $X$,  and satisfies
$$
\|\chi(z,\lambda)\|<2\|Df_{\lambda}(z)\|\quad\hbox{and}\quad
\langle Df_{\lambda}(z), \chi(z,\lambda)\rangle>
\|Df_{\lambda}(z)\|^2
$$
for all $(z,\lambda)\in R_{\delta,\epsilon}$. Let $d\mu$ denote the right invariant Haar measure on $G$. Define
$$
R_{\delta,\epsilon}\ni (z,\lambda)\mapsto\mathscr{V}_\lambda(z)=\int_G g^{-1}\chi(gz,\lambda)d\mu\in X.
$$
It is easily checked that $\mathscr{V}_\lambda$ satisfies requirements (see the proof \cite[Propsition~5.57]{MoMoPa}).
\end{proof}

Consider the ordinary differential equation
\begin{equation}\label{e:fBi.3}
\frac{d\varphi}{ds}=-\mathscr{V}_{\lambda^\ast}(\varphi),\quad\varphi(0,z)=z
\end{equation}
in $B_X(0,\epsilon)$. For each $z\in B_X(0,\epsilon)$ this equation possesses a unique solution $\varphi(t, z)$  whose
 the maximal existence interval is $(\omega_-(z),\omega_+(z))$ with
$-\infty\le\omega_-(z)<0<\omega_+(z)\le+\infty$.
Note that $0\in X$ is an isolated critical point of $f_{\lambda^\ast}$.
There exists  a small neighborhood $\mathcal{B}$ of $0$ in $B_X(0,\epsilon)$ such that
$u=0$ is a unique critical point of $f_{\lambda^\ast}$ siting in it. Put
\begin{eqnarray*}
&&S^+=\{z\in \mathcal{B}\,|\, \omega_+(z)=+\infty\;\&\;\varphi(t,z)\in \mathcal{B},\;\forall \ast t>0\},\\
&&S^-=\{z\in \mathcal{B}\,|\, \omega_-(z)=-\infty\;\&\;\varphi(t,z)\in \mathcal{B},\;\forall \ast t<0\}.
\end{eqnarray*}
Then $S^+\ne\emptyset$ (resp. $S^-\ne\emptyset$) if there are points near $0\in \mathcal{B}$ where $f_{\lambda^\ast}$
 is positive (resp. negative).
Repeating the constructions in \cite[Lemmas~1.15,1.16]{Rab} and \cite[Lemma~8.28]{FaRa2}
(and replacing $\psi$ therein by $\varphi$) we can obtain:

\begin{lemma}\label{lem:FBi.3.3-}
There is a constant $\gamma>0$ and a $G$-invariant open neighborhood $\mathcal{Q}$ of $0$ in $X$
with compact closure  $\overline{\mathcal{Q}}$ contained in $\mathcal{B}$ such that
\begin{description}
\item[(i)] if $z\in \mathcal{Q}$ then
$|f_{\lambda^\ast}(z)|<\gamma$;
\item[(ii)] if $(z,t)\in \mathcal{Q}\times\mathbb{R}$ satisfies
$|f_{\lambda^\ast}(\varphi(t,z))|<\gamma$, then $\varphi(t,z)\in\mathcal{Q}$;
\item[(iii)] if $z\in \partial\mathcal{Q}$, then either
$|f_{\lambda^\ast}(z)|=\gamma$ or
$\varphi(t,z)\in \partial\mathcal{Q}$ for all $t$ satisfying
$|f_{\lambda^\ast}(\varphi(t,z))|\le\gamma$.
\end{description}
\end{lemma}

Actually, for sufficiently small $\rho>0$ with $\bar{B}_X(0,\rho)\subset\mathcal{B}$,
 $\mathcal{Q}$ may be chosen as
$$
\{z\in B_X(0,\epsilon)\,|\,-\gamma<f_{\lambda^\ast}(z)<\gamma\}\cap\{\varphi(t,z)\,|\,
z\in B_X(0,\rho),\;\omega_-(z)<t<\omega_+(z)\}.
$$
Clearly, when $0\in X$ is a strict local maximizer (resp. minimizer) of $f_{\lambda^\ast}$,
$\overline{\mathcal{Q}}\cap(f_{\lambda^\ast})^{-1}(\gamma)$ (resp. $\overline{\mathcal{Q}}\cap(f_{\lambda^\ast})^{-1}(-\gamma)$)
may be empty. Moreover, for any $z\in\overline{\mathcal{Q}}$, if $\varphi(t,z)\notin \overline{\mathcal{Q}}$
for some $t=t(z)>0$, then $f_{\lambda^\ast}(\varphi(t,z))<-\gamma$.

In the following, if $G=\mathbb{Z}_2==\{{\rm id}, -{\rm id}\}$  let $i_G$ denote the genus in \cite{Rab},
and if $G=S^1$ without fixed points except $0$
let $i_G$ denote the index ${\rm Index}^\ast_{\mathbb{C}}$ in \cite[\S7]{FaRa2}.
Let $T^\ast=S^\ast\cap\partial\overline{\mathcal{Q}}$, $\ast=+,-$.

\begin{lemma}\label{lem:FBi.3.4}
Both $T^+$ and $T^-$ are $G$-invariant compact subsets of $\partial\overline{\mathcal{Q}}$, and also satisfy
\begin{description}
\item[(i)] $\min\{f_\lambda(z)\,|\,z\in T^+\}>0$ and
$\max\{f_\lambda(z)\,|\,z\in T^-\}<0$;
\item[(ii)] $i_{\mathbb{Z}_2}(T^+)+ i_{\mathbb{Z}_2}(T^-)\ge\dim X$ and
$i_{S^1}(T^+)+ i_{S^1}(T^-)\ge\frac{1}{2}\dim X$.
\end{description}
\end{lemma}
By the definitions of $T^+$ and $T^-$, Lemma~\ref{lem:FBi.3.4}(i) is apparent.
Two inequalities in Lemma~\ref{lem:FBi.3.4}(ii) are \cite[Lemma~2.11]{FaRa1} and \cite[Theorem~8.30]{FaRa2}, respectively.

Define a piecewise-linear  function $\alpha:[0, \infty)\to [0, \infty)$
by $\alpha(t)=0$ for $t\le 1$, $\alpha(t)=-t+2$ for $t\in [1,2]$, and $\alpha(t)=0$ for $t\ge 2$.
For $\rho>0$ let $\alpha_\rho:[0, \infty)\to [0, \infty)$ be defined by
$\alpha_\rho(t)=\alpha(t/\rho)$.
Let $d(z, \partial\overline{\mathcal{Q}})=\inf\{\|z-u\|\,|\, u\in \partial\overline{\mathcal{Q}}\}$ for $z\in \overline{\mathcal{Q}}$.
For each $\lambda\in (\lambda^\ast-\delta, \lambda^\ast+ \delta)$, define
$$
\Xi_{\rho,\lambda}:\overline{\mathcal{Q}}\to H^0,\;z\mapsto\alpha_\rho(d(z,\partial\overline{\mathcal{Q}}))\mathscr{V}_{\lambda^\ast}(z)+(1-\alpha_\rho(d(z, \partial\overline{\mathcal{Q}})))
\mathscr{V}_{\lambda}(z),
$$
which is $G$-equivariant and Lipschitz continuous.
Since $\inf\{\|Df_{\lambda^\ast}(z)\|\,|\, z\in\partial\overline{\mathcal{Q}}\}>0$ we may choose
a small $\rho>0$ so that
\begin{eqnarray}\label{e:fBi.4}
&&\inf\{\|Df_{\lambda^\ast}(z)\|\,|\, z\in\overline{\mathcal{Q}}\;\&\; d(z, \partial\overline{\mathcal{Q}}))\le 2\rho\}>0,\\
&&\inf\{\langle Df_{\lambda^\ast}(z),\mathscr{V}_{\lambda^\ast}(z)\rangle\,|\, z\in\overline{\mathcal{Q}}\;\&\; d(z, \partial\overline{\mathcal{Q}}))\le 2\rho\}>0.\nonumber
\end{eqnarray}
Moreover, the function $(\lambda,z)\mapsto Df_\lambda(z)$ is uniform continuous on
$[\lambda^\ast-\delta, \lambda^\ast+\delta]\times \overline{\mathcal{Q}}$ by the assumption b) of Theorem~\ref{th:Bi.2.1}.
Then this and Lemma~\ref{lem:Fpseudogradient} imply that there exists a small $0<\delta_0<\delta$ such that
\begin{eqnarray}\label{e:fBi.5}
&&\frac{1}{2}\|Df_{\lambda^\ast}(z)\|\le \|Df_{\lambda}(z)\|\le 2\|Df_{\lambda^\ast}(z)\|,\\
&&\frac{1}{2}\langle Df_{\lambda^\ast}(z),\mathscr{V}_{\lambda^\ast}(z)\rangle\le \langle Df_{\lambda}(z),\mathscr{V}_{\lambda^\ast}(z)\rangle\le 2
\langle Df_{\lambda^\ast}(z),\mathscr{V}_{\lambda^\ast}(z)\rangle\label{e:fBi.6}
\end{eqnarray}
for all $z\in\overline{\mathcal{Q}}$ satisfying $d(z, \partial\overline{\mathcal{Q}}))\le 2\rho$ and for
all $\lambda\in [\lambda^\ast-\delta_0, \lambda^\ast+\delta_0]$.
(Clearly, (\ref{e:fBi.4}) and (\ref{e:fBi.5}) show that any critical point of $f_\lambda$ sitting in
$\overline{\mathcal{Q}}$ must be in
$\{ z\in\overline{\mathcal{Q}}\,|\, d(z, \partial\overline{\mathcal{Q}}))>2\rho\}$.)
By (\ref{e:fBi.5}) and Lemma~\ref{lem:Fpseudogradient} we deduce
\begin{eqnarray}\label{e:fBi.7}
\|\Xi_{\rho,\lambda}(z)\|&\le& \alpha_\rho(d(z,\partial\overline{\mathcal{Q}}))\|\mathscr{V}_{\lambda^\ast}(z)\|+
(1-\alpha_\rho(d(z, \partial\overline{\mathcal{Q}})))
\|\mathscr{V}_{\lambda}(z)\|\nonumber\\
&\le&2\alpha_\rho(d(z,\partial\overline{\mathcal{Q}}))\|Df_{\lambda^\ast}(z)\|+
2(1-\alpha_\rho(d(z, \partial\overline{\mathcal{Q}})))
\|Df_{\lambda}(z)\|\nonumber\\
&\le&  4\|Df_{\lambda}(z)\|,\quad\forall z\in \overline{\mathcal{Q}}\setminus K(f_\lambda),\quad
\forall \lambda\in [\lambda^\ast-\delta_0, \lambda^\ast+\delta_0].
\end{eqnarray}
Similarity, (\ref{e:fBi.5})-(\ref{e:fBi.6}) and Lemma~\ref{lem:Fpseudogradient} produce
\begin{eqnarray*}\label{e:fBi.8}
\langle Df_{\lambda}(z), \Xi_{\rho,\lambda}(z)\rangle
&=& \alpha_\rho(d(z,\partial\overline{\mathcal{Q}}))
\langle Df_{\lambda}(z),\mathscr{V}_{\lambda^\ast}(z)\rangle
+(1-\alpha_\rho(d(z, \partial\overline{\mathcal{Q}})))
\langle Df_{\lambda}(z), \mathscr{V}_{\lambda}(z)\rangle\\
&\ge&\frac{1}{2}\alpha_\rho(d(z,\partial\overline{\mathcal{Q}}))\|Df_{\lambda^\ast}(z)\|^2+
(1-\alpha_\rho(d(z, \partial\overline{\mathcal{Q}})))
\|Df_{\lambda}(z)\|^2\\
&\ge&\frac{1}{8}\alpha_\rho(d(z,\partial\overline{\mathcal{Q}}))\|Df_{\lambda}(z)\|^2+
(1-\alpha_\rho(d(z, \partial\overline{\mathcal{Q}})))
\|Df_{\lambda}(z)\|^2\\
&\ge&\frac{1}{8}\|Df_{\lambda}(z)\|^2,\quad\forall z\in \overline{\mathcal{Q}}\setminus K(f_\lambda),\quad
\forall \lambda\in [\lambda^\ast-\delta_0, \lambda^\ast+\delta_0].
\end{eqnarray*}
Hence  $\Xi_{\rho,\lambda}$ is a locally Lipschitz continuous
pseudo-gradient vector field of $f_{\lambda}(z)$ in the sense of
\cite{Guo}.

Replacing $\hat{V}$ in the proofs of \cite[Lemma~1.19]{Rab}  and \cite[Lemma~8.65]{FaRa2} by
$\Xi_{\rho,\lambda}$ and suitably modifying the proof of  \cite[Theorem~4]{Cl74} or \cite[Theorem~1.9]{Rab74}
we may obtain:

\begin{lemma}\label{lem:FBi.3.3}
Let $\gamma$ and $\mathcal{Q}$ be as in Lemma~\ref{lem:FBi.3.3-}.
For each $\lambda\in [\lambda^\ast-\delta_0, \lambda^\ast+ \delta_0]$
the restriction of $f_{\lambda}$ to $\overline{\mathcal{Q}}$
has the $G$-deformation property, i.e.,
for every $c\in\mathbb{R}$
and every $\hat{\tau}>0$, every $G$-neighborhood $U$ of
$K_{\lambda,c}:=K(f_\lambda)\cap\{z\in \overline{\mathcal{Q}}\,|\,
f_\lambda(z)\le c\}$ there exists an $\tau\in (0,\hat{\tau})$ and
a $G$ equivariant homotopy $\eta:[0, 1]\times \overline{\mathcal{Q}}\to \overline{\mathcal{Q}}$
with the following properties:
\begin{description}
\item[(i)] if $A\subset \overline{\mathcal{Q}}$ is closed and $G$-invariant, so is $\eta(s,A)$ for any $s\in [0,1]$;
\item[(ii)]  $\eta(s,z)=z$ if $z\in \overline{\mathcal{Q}}\setminus (f_\lambda)^{-1}[c-\hat{\tau}, c+\hat{\tau}]$;
\item[(iii)] $\eta(s,\cdot)$ is homeomorphism of $\overline{\mathcal{Q}}$
to $\overline{\mathcal{Q}}$ for each $s\in [0,1]$;
\item[(iv)] $\eta(1, A_{\lambda,c+\tau}\setminus U)\subset A_{\lambda,c-\tau}$,
 where $A_{\lambda, d}:=\{z\in \overline{\mathcal{Q}}\,|\, f_\lambda(z)\le d\}$ for $d\in\mathbb{R}$;
\item[(v)] if $K_{\lambda,c}=\emptyset$, $\eta(1, A_{\lambda,c+\tau})\subset A_{\lambda,c-\tau}$.
\end{description}
\end{lemma}

Let
\begin{eqnarray*}
&&\mathscr{E}(\overline{\mathcal{Q}})=\{A\subset \overline{\mathcal{Q}}\,|\,A\;\hbox{is closed and $G$-invariant}\},\\
&&\mathscr{E}(\overline{\mathcal{Q}})_j=\{A\in \mathcal{E}(\overline{\mathcal{Q}})\,|\, i_G(A)\le j\},\;j=1,2,\cdots,\\
&&\mathscr{F}=\{\chi\in C(\overline{\mathcal{Q}}, \overline{\mathcal{Q}})\,|\,\hbox{$\chi$
is $G$-equivariant, one to one, and $\chi(x)=x\,\forall x\in T^-$}\},\\
&&\mathbb{R}^-\cdot K=\{\varphi(s,x)\,|\, -\infty<s<0,\;x\in K\}\quad\hbox{for}\;K\subset T^-.
\end{eqnarray*}

Suppose $i_G(T^-)=k>0$.
For $1\le j\le k$ define
\begin{eqnarray*}
&&G_{j}=\{\chi(\mathbb{R}^-\cdot K )\,|\,\chi\in\mathscr{F}, K\subset T^-
\;\hbox{belongs to $\mathscr{E}(\overline{\mathcal{Q}})$ and $i_G(K)\ge j$}\},\\
&&\Gamma_{j}=\{\overline{A\setminus B}\,|\,\hbox{for some integer $q\in [j, i_G(T^-)]$},\;A\in G_{q}, B\in \mathscr{E}(\overline{\mathcal{Q}})_{q-j}\}.
\end{eqnarray*}
As in the proofs of \cite[Lemma~2.12]{FaRa1} and \cite[Lemma~8.55]{FaRa2} there hold:
\begin{description}
\item[1)] $\Gamma_{j+1}\subset\Gamma_{j},\;1\le j\le i_G(T^-)-1$.
\item[2)] If $\chi\in\mathscr{F}$ and $A\in\Gamma_{j}$, then $\chi(A)\in\Gamma_{j}$.
\item[3)] If $A\in\Gamma_{j}$ and $B\in \mathscr{E}(\overline{\mathcal{Q}})_s$ with $s<j$, then $\overline{A\setminus B}\in \Gamma_{j-s}$.
\end{description}

Let us define 
$$
c_{\lambda,j}=\inf_{A\in\Gamma_{j}}\max_{z\in A}f_{\lambda}(z),\quad j=1,\cdots,k.
$$
Then the above property 1) implies $c_{\lambda,1}\le c_{\lambda,2}\le\cdots\le c_{\lambda,k}$.

\textsf{Firstly, let us assume that (\ref{e:fBi.1}) is true.}
 For each $\lambda\in [\lambda^\ast-\delta, \lambda^\ast)$, $0\in X$ is always a local (strict) minimizer  of $f_{\lambda}$. Therefore for  sufficiently small $\rho_\lambda>0$, we have
$f_{\lambda}(z)>0$ for any $0<\|z\|\le\rho_\lambda$
and so
$$
c_{\lambda,1}\ge \min_{\|z\|=\rho_\lambda}f_{\lambda}(z)>0.
$$
Using Lemma~\ref{lem:FBi.3.3} and
repeating other arguments in \cite[Theorem~2.9]{FaRa1} and \cite[Lemmas~8.67, ~8.72]{FaRa2} we obtain:
\begin{description}
\item[(a)] each $c_{\lambda,j}$
is a critical value of $f_{\lambda}$ with a corresponding critical point in $\mathscr{Q}$;
\item[(b)] if $c_{\lambda,j+1}=\cdots=c_{\lambda,j+p}=c$ then $i_G(K_{\lambda,c})\ge p$,
where $K_{\lambda,c}$ is as in Lemma~\ref{lem:FBi.3.3};
\item[(c)] all critical points corresponding to $c_{\lambda,j}$, $1\le j\le i_G(T^-)$, converge to $z=0$ as $\lambda\to\lambda^\ast-$.
\end{description}
$G$ is equal to $\mathbb{Z}_2=\{{\rm id}, -{\rm id}\}$ (resp. $S^1$

Moreover, when $p>1$,  $K_{\lambda,c}$ contains infinitely many distinct $G$-orbits. (In fact, if $G=\mathbb{Z}_2=\{{\rm id}, -{\rm id}\}$
 this follows from  \cite[Lemma~2.8]{FaRa1}.
If $G=S^1$ without fixed points except $0$,
since  $0\notin K_{\lambda,c}$ and  the isotropy group $S^1_x$ of the $S^1$-action at any $x\in X$ is either finite or $S^1$,
we deduce that the induced $S^1$-action on $K_{\lambda,c}$ has only finite isotropy groups and thus that
 $K_{\lambda,c}/S^1$ is an infinite set by \cite[Remark~6.16]{FaRa2}.)
These lead to


\begin{claim}\label{cl:F4.5}
If $\lambda\in [\lambda^\ast-\delta, \lambda^\ast)$ is close to $\lambda^\ast$,
$f_{\lambda}$ has at least $k$ distinct
 nontrivial critical $G$-orbits, which also converge to $0$ as $\lambda\to\lambda^\ast$.
\end{claim}

 For every $\lambda\in (\lambda^\ast, \lambda^\ast+\delta]$,
$0\in X$ is a local maximizer of $f_{\lambda}$, by considering
$-f_{\lambda}$ the same reason yields:

\begin{claim}\label{cl:F4.6}
If $i_{G}(T^+)=l>0$, for every $\lambda\in (\lambda^\ast, \lambda^\ast+\delta]$  close to $\lambda^\ast$,
$f_{\lambda}$ has at least $l$ distinct  nontrivial
critical $G$-orbits converging to $0$ as $\lambda\to\lambda^\ast$.
\end{claim}
These two claims together yield the  result of Theorem~\ref{th:Bi.2.1E}.

\textsf{Next, if (\ref{e:fBi.2}) holds true}, the similar arguments lead to:

\begin{claim}\label{cl:F4.7}
if $\lambda\in (\lambda^\ast, \lambda^\ast+\delta]$ (resp.
$\lambda\in [\lambda^\ast-\delta, \lambda^\ast)$) is close to $\lambda^\ast$,
$f_{\lambda}$ has at least $l$ (resp. $k$) distinct nontrivial
critical $G$-orbits, which also converge to $0$ as $\lambda\to\lambda^\ast$.
\end{claim}
So the expected result is still obtained.
\end{proof}




The following  may be viewed as a generalization of
Fadell--Rabinowitz theorems in \cite{FaRa1, FaRa2}.

\begin{theorem}\label{th:Bi.3.2}
Under the assumptions of Theorem~\ref{th:Bi.2.4}, let $H^0_{\lambda^\ast}$ be the eigenspace of
(\ref{e:Bi.2.7.4}) associated with $\lambda^\ast$. Let $H$ be equipped with
an orthogonal action of  a compact Lie group $G$ under which
$U$, $\mathcal{L}$ and $\widehat{\mathcal{L}}$  are $G$-invariant.
  If the Lie group  $G$ is equal to $\mathbb{Z}_2=\{{\rm id}, -{\rm id}\}$ (resp. $S^1$
     and the fixed point set of the induced $S^1$-action on $H^0_{\lambda^\ast}$,
    $(H^0_{\lambda^\ast})^{S^1}=\{x\in H^0\,|\, g\cdot x=x\;\forall g\in S^1\}$, is $\{0\}$,
    which implies $\dim H^0_{\lambda^\ast}$ to be an even more than one),
        then  one of the following alternatives holds:
\begin{description}
\item[(i)] $(\lambda^\ast, 0)$ is not an isolated solution of (\ref{e:Bi.2.7.3}) in
 $\{\lambda^\ast\}\times U$;

\item[(ii)] there exist left and right  neighborhoods $\Lambda^-$ and $\Lambda^+$ of $\lambda^\ast$ in $\mathbb{R}$
and integers $n^+, n^-\ge 0$, such that $n^++n^-\ge\dim H^0_{\lambda^\ast}$ (resp. $\frac{1}{2}\dim H^0_{\lambda^\ast}$),
and that for $\lambda\in\Lambda^-\setminus\{\lambda^\ast\}$ (resp. $\lambda\in\Lambda^+\setminus\{\lambda^\ast\}$)
(\ref{e:Bi.2.7.3}) has at least $n^-$ (resp. $n^+$) distinct critical
$G$-orbits different from $0$, which converge to
 $0$ as $\lambda\to\lambda^\ast$.
\end{description}
In particular,  $(\lambda^\ast, 0)\in\mathbb{R}\times U$ is a bifurcation point  for the equation
(\ref{e:Bi.2.7.3}). 
\end{theorem}
\begin{proof}
The first claim follows from Theorem~\ref{th:Bi.2.4}.
For others, by the arguments above Step 1 in the proof of Theorem~\ref{th:Bi.2.4}
the problem is reduced to finding the $G$-critical orbits of $\mathcal{L}^\circ_\lambda$
near $0\in H^0_{\lambda^\ast}$ for $\lambda$ near $\lambda^\ast$,
where $\mathcal{L}^\circ_\lambda: B_{H}(0, \epsilon)\cap H^0_{\lambda^\ast}\to\mathbb{R}$ is given by (\ref{e:Bi.2.15}).
Suppose that (i) does not hold.  By shrinking $\delta>0$, we obtain
that  for each  $\lambda\in [\lambda^\ast-\delta, \lambda^\ast+\delta]$, $0\in H$ is an isolated critical point of
$\mathcal{L}_\lambda$ and thus $0\in H^0_{\lambda^\ast}$ is an isolated critical point of
$\mathcal{L}^\circ_\lambda$. Then we have
(\ref{e:Bi.2.18}) and (\ref{e:Bi.2.19}), and so (ii) follows from
Theorem~\ref{th:Bi.2.1E}.
 \end{proof}

By Corollaries~\ref{cor:Bi.2.4.1} and \ref{cor:Bi.2.4.2} we get

\begin{corollary}\label{cor:Bi.3.2.1}
If ``Under the assumptions of Theorem~\ref{th:Bi.2.4}" in Theorem~\ref{th:Bi.3.2}
is replaced by ``Under the assumptions of one of Corollaries~\ref{cor:Bi.2.4.1} and \ref{cor:Bi.2.4.2}",
then the conclusions of Theorem~\ref{th:Bi.3.2} hold true.
\end{corollary}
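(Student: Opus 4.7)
The proposal is to observe that Corollary~\ref{cor:Bi.3.2.1} is essentially a compatibility statement: the hypotheses of Corollaries~\ref{cor:Bi.2.4.1} and~\ref{cor:Bi.2.4.2} were introduced precisely in order to verify the Morse-index jump condition used in Theorem~\ref{th:Bi.2.4}, and that jump condition is the only ingredient the proof of Theorem~\ref{th:Bi.3.2} extracts from Theorem~\ref{th:Bi.2.4}. Hence after checking equivariance of the finite-dimensional reduction, the proof of Theorem~\ref{th:Bi.3.2} can be re-run verbatim.

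First I would record what the two corollaries actually supply. The proof of Corollary~\ref{cor:Bi.2.4.1} establishes, via the monotonicity argument based on \cite[Proposition~2.3.3]{Ab} applied to $\mathcal{F}''(\theta)-\lambda\mathcal{G}''(\theta)$, that when $\mathcal{G}''(\theta)$ is semi-positive (resp.\ semi-negative) the Morse indices $\mu_\lambda$ of $\mathcal{L}_\lambda$ at $\theta$ satisfy exactly (\ref{e:Bi.2.7.5}) (resp.\ the dual inequalities used in the second half of the proof of Theorem~\ref{th:Bi.2.4}). The proof of Corollary~\ref{cor:Bi.2.4.2}, via the spectral decomposition (\ref{e:Bi.2.8}) and the computations in its Cases 1 and 2, produces (\ref{e:Bi.2.13}) and its analogue. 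In both situations one obtains the critical group formula (\ref{e:Bi.2.16}) for the reduced functional $\mathcal{L}^\circ_\lambda$ on $H^0$, which is what Theorem~\ref{th:Bi.3.2} genuinely uses.

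Next I would verify that the reduction is $G$-equivariant. Since the $G$-action is orthogonal and both $\mathcal{F}, \mathcal{G}$ are $G$-invariant, the gradient $\nabla\mathcal{L}_\lambda$ is $G$-equivariant and hence so is $B_\lambda(\theta)=\mathcal{F}''(\theta)-\lambda\mathcal{G}''(\theta)$; consequently $H^0=\mathrm{Ker}(B_{\lambda^\ast}(\theta))$ and its orthogonal complement $(H^0)^\perp$ are $G$-invariant. The uniqueness of the implicit map $\psi$ in the parameterized splitting theorem \cite[Theorem~2.16]{Lu7} then forces $\psi(\lambda,g\cdot z)=g\cdot\psi(\lambda,z)$, so $\mathcal{L}^\circ_\lambda$ defined by (\ref{e:Bi.2.15}) is $G$-invariant on $B_{H^0}(\theta,\epsilon)$, and either (\ref{e:Bi.2.18}) or (\ref{e:Bi.2.19}) holds as in the proof of Theorem~\ref{th:Bi.3.2}.

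Finally I would invoke the proof of Theorem~\ref{th:Bi.3.2} without modification: construct the $G$-equivariant pseudo-gradient $\mathscr{V}_\lambda$ of Lemma~\ref{lem:pseudogradient}, the equivariant deformation of Lemma~\ref{lem:Bi.3.3}, and the sets $T^\pm$ of Lemma~\ref{lem:Bi.3.4}, then run the Fadell--Rabinowitz minimax defined by \cite[(2.13)]{FaRa1} or \cite[(8.56)]{FaRa2} to obtain the $n^\pm$ distinct critical $G$-orbits. The bookkeeping that $c_1>0$ (or $c_{\gamma+1}>0$) coming from the local minimizer/maximizer property at $\theta\in H^0$ is justified by (\ref{e:Bi.2.18})--(\ref{e:Bi.2.19}), which we have just re-established under the new hypotheses. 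The only potential obstacle is the equivariant character of $\psi$; once it is settled by the uniqueness argument above, everything else is a literal transcription of the proof of Theorem~\ref{th:Bi.3.2}, so no new ideas are needed.
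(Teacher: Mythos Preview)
Your proposal is correct and follows the same logic as the paper: the paper's ``proof'' is simply the one-line remark ``By Corollaries~\ref{cor:Bi.2.4.1},~\ref{cor:Bi.2.4.2} we get'', since those corollaries were proved by verifying the Morse-index jump hypothesis of Theorem~\ref{th:Bi.2.4}, after which Theorem~\ref{th:Bi.3.2} applies verbatim. Your additional paragraph on the $G$-equivariance of $\psi$ is not needed as a separate step, since Theorem~\ref{th:Bi.3.2} is already stated under the $G$-invariance assumption and its proof handles the equivariant reduction internally; but it does no harm.
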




Now we consider generalizations of
Fadell--Rabinowitz theorems in \cite{FaRa1, FaRa2} in the setting of \cite{Lu1,Lu2}.
Combing Theorem~\ref{th:Bi.2.1E} with Theorem~\ref{th:Bif.2.2.0} can naturally lead to one.
Instead of this we adopt another way.
Recall that Fadell--Rabinowitz bifurcation theorems \cite{FaRa1, FaRa2}
 were generalized  to the case of arbitrary compact Lie groups by
Bartsch and Clapp \cite{BaCl}, and Bartsch \cite{Ba1}.
Bartsch and Clapp \cite[\S4]{BaCl} proved the following theorem
(according to our notations).

\begin{theorem}\label{th:Bif.2.2.2+}
 Let $Z$ be a finite dimensional Hilbert space equipped with a linear isometric action of a compact Lie group $G$
 with $Z^G=\{0\}$, let $\delta>0$, $\epsilon>0$, $\lambda^\ast\in\mathbb{R}$ and
for every $\lambda\in [\lambda^\ast-\delta, \lambda^\ast+\delta]$ let
$f_\lambda:B_Z(0,\epsilon)\to\mathbb{R}$ be a function of class $C^2$.
Assume that
\begin{description}
\item[(a)] the functions $\{(\lambda,u)\to f_\lambda(u)\}$ and
$\{(\lambda,u)\to f'_\lambda(u)\}$  are continuous on
$[\lambda^\ast-\delta, \lambda^\ast+\delta]\times B_Z(0,\epsilon)$;
\item[(b)] $u=0$ is a critical point of each $f_\lambda$, $L_{\lambda}:=D(\nabla f_{\lambda})(0)\in\mathscr{L}_s(Z)$ is an isomorphism
for each $\lambda\in (\lambda^\ast-\delta, \lambda^\ast+\delta)\setminus\{\lambda^\ast\}$, and  $L_{\lambda^\ast}=0$;
\item[(c)] the eigenspaces $Z_\lambda$ and $Z_\mu$ belonging to
  $\sigma(L_{\lambda})\cap\mathbb{R}^-$ and $\sigma(L_{\mu})\cap\mathbb{R}^-$,
  respectively, are isomorphic if $(\lambda-\lambda^\ast)(\mu-\lambda^\ast)>0$ for any
  $\lambda,\mu\in (\lambda^\ast-\delta, \lambda^\ast+\delta)\setminus\{\lambda^\ast\}$, and thus
  the number
\begin{equation}\label{e:Bi.3.9.2*}
d:=\ell(SZ)-\min\{\ell(SZ_{\lambda})+\ell(SZ_{\mu}^\bot),
 \ell(SZ_{\lambda}^\bot)+\ell(SZ_{\mu})\}
\end{equation}
  is independent of $\lambda\in (\lambda^\ast-\delta, \lambda^\ast)$ and $\mu\in(\lambda^\ast, \lambda^\ast+\delta)$,
  where $Z_\lambda^\bot$ is the orthogonal complementary of $Z_\lambda$ in $Z$ and
  $\ell$ denote the $(\mathscr{G}, h^\ast)$-length  used in \cite{BaCl}.
\end{description}
Then one at least of the following assertions holds if $d>0$:
\begin{description}
\item[(i)] $u=0$ is not an isolated critical point of $f_{\lambda^\ast}$;
\item[(ii)] there exist left and right  neighborhoods $\Lambda^-$ and $\Lambda^+$ of $\lambda^\ast$ in $\mathbb{R}$
and integers $n^+, n^-\ge 0$, such that $n^++n^-\ge d$
and for $\lambda\in\Lambda^-\setminus\{\lambda^\ast\}$ (resp. $\lambda\in\Lambda^+\setminus\{\lambda^\ast\}$),
$f_\lambda$ has at least $n^-$ (resp. $n^+$) distinct critical
$G$-orbits different from $0$, which converge to
 $0$ as $\lambda\to\lambda^\ast$.
\end{description}
In particular,  $(\lambda^\ast, 0)\in [\lambda^\ast-\delta, \lambda^\ast+\delta]\times B_Z(0,\epsilon)$
is a bifurcation point of $f'_\lambda(u)=0$.
\end{theorem}

 \begin{theorem}\label{th:Bif.2.2.4-}
Under the assumptions of Theorem~\ref{th:Bif.2.2.0}
suppose that  a compact Lie group $G$  acts on $H$ orthogonally, which induces  $C^1$ isometric actions on $X$,
 and that both $U$ and $\mathcal{L}_\lambda$ are $G$-invariant (and hence $H^0_\lambda$, $H^\pm_\lambda$
are $G$-invariant subspaces).
If the fixed point set of the induced $G$-action on $H^0_{\lambda^\ast}$ is $\{0\}$ then
 one of the following alternatives occurs:
\begin{description}
\item[(i)] $(\lambda^\ast,0)$ is not an isolated solution  in  $\{\lambda^\ast\}\times U^X$ of the equation (\ref{e:Bif.2.2.1*});
\item[(ii)] there exist left and right  neighborhoods $\Lambda^-$ and $\Lambda^+$ of $\lambda^\ast$ in $\mathbb{R}$
and integers $n^+, n^-\ge 0$, such that $n^++n^-\ge \ell(SH^0_{\lambda^\ast})$
and for $\lambda\in\Lambda^-\setminus\{\lambda^\ast\}$ (resp. $\lambda\in\Lambda^+\setminus\{\lambda^\ast\}$),
$\mathcal{L}_\lambda$ has at least $n^-$ (resp. $n^+$) distinct critical
$G$-orbits different from $0$, which converge to
 $0$ as $\lambda\to\lambda^\ast$.
\end{description}
In particular,  $(\lambda^\ast, 0)\in [\lambda^\ast-\delta, \lambda^\ast+\delta]\times U^X$
is a bifurcation point of (\ref{e:Bif.2.2.1*}).
\end{theorem}

\begin{proof}
Follow the notations in the proof of Theorem~\ref{th:Bif.2.2.0}. In the present situation,
for each $\lambda\in [\lambda^\ast-\delta, \lambda^\ast+\delta]$,
the maps $\psi(\lambda, \cdot)$  and $\Phi_{\lambda}(\cdot,\cdot)$  are
  $G$-equivariant, and $C^2$ functional $\mathcal{L}^\circ_{\lambda}$ given by (\ref{e:NSpl.2.3}) is $G$-invariant.
As in the proof of Theorem~\ref{th:Bi.2.4} we obtain either
\begin{eqnarray}\label{e:Bif.2.2.3}
0\in H^0_{\lambda^\ast}\;\hbox{is a strict local}\left\{
\begin{array}{ll}
\hbox{minimizer of}\;\mathcal{L}^\circ_{\lambda},&\quad
\forall \lambda\in [\lambda^\ast-\delta, \lambda^\ast),\\
\hbox{maximizer of}\;\mathcal{L}^\circ_{\lambda},&\quad
\forall \lambda\in (\lambda^\ast, \lambda^\ast+\delta]
\end{array}\right.
\end{eqnarray}
or
\begin{eqnarray}\label{e:Bif.2.2.4}
0\in H^0_{\lambda^\ast}\;\hbox{is a strict local}\left\{
\begin{array}{ll}
\hbox{maximizer of}\;\mathcal{L}^\circ_{\lambda},&\quad
\forall \lambda\in [\lambda^\ast-\delta, \lambda^\ast),\\
\hbox{minimizer of}\;\mathcal{L}^\circ_{\lambda},&\quad
\forall \lambda\in (\lambda^\ast, \lambda^\ast+\delta].
\end{array}\right.
\end{eqnarray}
Note that for each $\lambda\in [\lambda^\ast-\delta, \lambda^\ast+\delta]\setminus\{\lambda^\ast\}$,
 $0\in H^0_{\lambda^\ast}$ is a nondegenerate critical point of
the functional $\mathcal{L}^\circ_\lambda$ by Remark~\ref{rm:Spl.2.4}(ii).
Let $L_\lambda:=D(\nabla\mathcal{L}_{\lambda}^\circ)(0)$. Then $L_{\lambda^\ast}=0$ by
(\ref{e:Spli.2.5}). Hence this and (\ref{e:Spli.2.3})-(\ref{e:Spli.2.4}) show that functionals
$f_\lambda:=\mathcal{L}_{\lambda}^\circ$ on $B_Z(0,\epsilon)$ with $Z:=H^0_{\lambda^\ast}$,
 $\lambda\in [\lambda^\ast-\delta, \lambda^\ast+\delta]$,
satisfy the conditions (a)-(b) in Theorem~\ref{th:Bif.2.2.2+}.
(In fact, $\lambda\mapsto L_\lambda\in\mathscr{L}_s(Z)$ is also continuous by (\ref{e:Spli.2.5}).)

It remains to prove that they also satisfy the condition (c) in Theorem~\ref{th:Bif.2.2.2+}.

Firstly, we assume (\ref{e:Bif.2.2.3}) holding. Then it implies that
$L_\lambda$ is positive definite for any $\lambda\in (\lambda^\ast-\delta, \lambda^\ast)$,
and negative definite for any $\lambda\in (\lambda^\ast, \lambda^\ast+\delta)$.
It follows that $\sigma(L_{\lambda})\cap\mathbb{R}^-=\emptyset$ and so $Z_\lambda=\emptyset$
for any $\lambda\in (\lambda^\ast-\delta, \lambda^\ast)$, and
$\sigma(L_{\lambda})\cap\mathbb{R}^-=\sigma(L_{\lambda})$ and so $Z_\lambda=Z$
for any $\lambda\in (\lambda^\ast, \lambda^\ast+\delta)$.
Hence the condition (c) in Theorem~\ref{th:Bif.2.2.2+} is satisfied. Note also that in
the present situation we have
\begin{eqnarray*}
d&=&\ell(SZ)-\min\{\ell(SZ_{\lambda})+\ell(SZ_{\mu}^\bot),
 \ell(SZ_{\lambda}^\bot)+\ell(SZ_{\mu})\}\\
 &=&\ell(SZ)-\min\{0,  \ell(SZ)+\ell(SZ)\}=\ell(SZ)
\end{eqnarray*}
 for any of $\lambda\in (\lambda^\ast-\delta, \lambda^\ast)$ and $\mu\in(\lambda^\ast, \lambda^\ast+\delta)$
 because the $(\mathscr{G}, h^\ast)$-length of a $G$-space,  $\ell(X)$, is equal to zero if and only if $X=\emptyset$ by
  \cite[Proposition~1.5]{BaCl}.

Next, let (\ref{e:Bif.2.2.4}) be satisfied. Then we have
$\sigma(L_{\lambda})\cap\mathbb{R}^-=\sigma(L_{\lambda})$ and so $Z_\lambda=Z$
for any $\lambda\in (\lambda^\ast-\delta, \lambda^\ast)$, and
 $\sigma(L_{\lambda})\cap\mathbb{R}^-=\emptyset$ and so $Z_\lambda=\emptyset$
for any $\lambda\in (\lambda^\ast, \lambda^\ast+\delta)$.
These  lead to
\begin{eqnarray*}
d=\ell(SZ)-\min\{\ell(SZ_{\lambda})+\ell(SZ_{\mu}^\bot),
 \ell(SZ_{\lambda}^\bot)+\ell(SZ_{\mu})\}=\ell(SZ)
\end{eqnarray*}
 for any of $\lambda\in (\lambda^\ast-\delta, \lambda^\ast)$ and $\mu\in(\lambda^\ast, \lambda^\ast+\delta)$.

 Note that we have always $\ell(SZ)>0$ by
 \cite[Proposition~1.5]{BaCl}. Hence Theorem~\ref{th:Bif.2.2.2+} gives the desired conclusions.
 \end{proof}

When $G=\mathbb{Z}_2=\{-id_H, id_H\}$  (resp. $G=S^1$)
we have $d=\dim H^0_{\lambda^\ast}$ (resp. $d=\frac{1}{2}\dim H^0_{\lambda^\ast}$) as showed in
Remark~\ref{rem:Bif.3.4} below.

 \begin{corollary}\label{cor:Bif.3.3}
Under the assumptions of one of Corollaries~\ref{cor:Bif.2.2.1},~\ref{cor:Bif.2.2.2}
suppose that $H$ is equipped with an orthogonal action of  a compact Lie group $G$
  which induces a $C^1$ isometric action on $X$, and that $U$, $\mathcal{L}$ and $\widehat{\mathcal{L}}$ are $G$-invariant.
 Then the conclusions of Theorem~\ref{th:Bif.2.2.4-} hold true.
\end{corollary}

 \begin{remark}\label{rem:Bif.3.4}
{\rm For a finite-dimensional real $G$-module $M$ with $M^G=\{0\}$ and $h^\ast=H^\ast_G$ Borel cohomology it was proved in \cite[Propositions~2.4,2.6]{BaCl}:
\begin{description}
\item[(i)] if $G=(\mathbb{Z}/p)^r$, where $r>0$ and $p$ is a prime, then $(\mathscr{G}, H^\ast_G)$-{\rm length}(SM) is equal
to $\dim M$ for $p=2$, and to $\frac{1}{2}\dim M$ for $p>2$;
\item[(ii)] if $G=(S^1)^r$,  $r>0$, then $(\mathscr{G}, H^\ast_G)$-{\rm length}(SM) is equal
to  $\frac{1}{2}\dim M$;
\item[(iii)] if $G=S^1\times\Gamma$, $\Gamma$ is finite, and such that the fixed point set of $S^1\equiv S^1\times\{e\}$
is trivial,  then $(\mathscr{G}, H^\ast_G)$-{\rm length}(SM) is equal
to  $\frac{1}{2}\dim M$.
\end{description}
Here the coefficients in the Borel cohomology $H^\ast_G$ are $G=(\mathbb{Z}/p)^r$ in (i), and $\mathbb{Q}$ in (ii) and (iii).
}
\end{remark}

Let us point out that it is possible to generalize the above results with methods in  \cite[\S7.5]{Ba1}.

As remarked below Corollary~\ref{cor:Bif.2.2.2}, Theorem~\ref{th:Bif.2.2.4-}
and Corollary~\ref{cor:Bif.3.3} are powerful to applications in Lagrange systems and geodesics on Finsler manifolds.

 \begin{remark}\label{rem:Bif.3.5}
{\rm Under the assumptions of any one of Theorems~\ref{th:Bi.2.6},\ref{th:Bi.2.5}
and Corollaries~\ref{cor:Bi.2.6},\ref{cor:Bi.2.7},
if $H$ is equipped with an orthogonal action of  a compact Lie group $G$
 such that $U$ and $\mathcal{L}_\lambda$  are $G$-invariant and that
 the fixed point set of the induced $G$-action on $H^0_{\lambda^\ast}$ is $\{0\}$,
  then the corresponding conclusions with Theorem~\ref{th:Bi.3.2}  (for $G=\mathbb{Z}_2=\{{\rm id}, -{\rm id}\}$ or $S^1$)
 and those with Theorem~\ref{th:Bif.2.2.4-} with $X$ replaced by $H$ hold true.
Compare the latter with \cite[Therrem~3.1]{BaCl} by Bartsch and Clapp.

 }
\end{remark}

\subsection{Bifurcations starting a nontrivial critical orbit}\label{sec:B.3.2}


\begin{hypothesis}[\hbox{\cite[Hypothesis~2.20]{Lu7}}]\label{hyp:S.6.2}
{\rm {\bf (i)} Let $G$ be a compact
Lie group, and  $\mathcal{H}$  a $C^3$ Hilbert-Riemannian $G$-space
(that is, ${\cal H}$ is a $C^3$ $G$-Hilbert manifold with a Riemannian
metric $(\!(\cdot,\cdot)\!)$ such that $T{\cal H}$ is a $C^2$ Riemannian $G$-vector bundle, see \cite{Was}).\\
 {\bf (ii)} The $C^1$ functional $\mathcal{ L}:\mathcal{H}\to\mathbb{R}$ is $G$-invariant, the gradient
 $\nabla\mathcal{L}:\mathcal{H}\to T\mathcal{H}$ is G\^ateaux differentiable
 (i.e., under any $C^3$ local chart the functional $\mathcal{L}$
 has a G\^ateaux differentiable gradient map), and $\mathcal{ O}$ is an isolated
 critical orbit which is a $C^3$ critical submanifold with  Morse index $\mu_\mathcal{O}$.}
\end{hypothesis}

Under the above assumptions
let $\pi:N\mathcal{ O}\to \mathcal{ O}$ denote the normal bundle of it. The
bundle is a $C^2$-Hilbert vector bundle over $\mathcal{ O}$, and can
be considered as a subbundle of $T_\mathcal{ O}{\cal H}$ via the
Riemannian metric $(\!(\cdot, \cdot)\!)$. The metric $(\!(\cdot, \cdot)\!)$
induces a natural $C^2$ orthogonal bundle
projection ${\bf \Pi}:T_{\mathcal{O}}\mathcal{H}\to N\mathcal{O}$. For $\varepsilon>0$,
the so-called normal disk bundle of radius $\varepsilon$ is denoted by
$N\mathcal{ O}(\varepsilon):=\{(x,v)\in N\mathcal{O}\,|\,\|v\|_{x}<\varepsilon\}$.
 If $\varepsilon>0$ is small enough  the exponential map $\exp$ gives a $C^2$-diffeomorphism
 $\digamma$ from  $N\mathcal{ O}(\varepsilon)$ onto an open
neighborhood of $\mathcal{ O}$ in ${\cal H}$, $\mathcal{
N}(\mathcal{ O},\varepsilon)$.
For $x\in\mathcal{ O}$, let  $\mathscr{L}_s(N\mathcal{O}_x)$ denote the space
of those operators $S\in \mathscr{L}(N\mathcal{ O}_x)$ which are self-adjoint
with respect to the inner product $(\!(\cdot, \cdot)\!)_x$, i.e.
$(\!(S_xu, v)\!)_x=(\!(u, S_xv)\!)_x$ for all $u, v\in N\mathcal{
O}_x$. Then we have a $C^1$ vector bundle $\mathscr{L}_s(N\mathcal{ O})\to
\mathcal{O}$ whose fiber at $x\in\mathcal{ O}$ is given by
$\mathscr{L}_s(N\mathcal{ O}_x)$.

\begin{hypothesis}\label{hyp:Bi.3.19}
{\rm Under Hypothesis~\ref{hyp:S.6.2}, let for some $x_0\in\mathcal{ O}$ the pair
$(\mathcal{L}\circ\exp|_{N\mathcal{O}(\varepsilon)_{x_0}},  N\mathcal{O}(\varepsilon)_{x_0})$
satisfy the corresponding conditions with Hypothesis~\ref{hyp:1.1} with $X=H=N\mathcal{O}(\varepsilon)_{x_0}$.
(For this goal we only need require that the pair $(\mathcal{L}\circ\exp_{x_0},  B_{T_{x_0}\mathcal{H}}(0,\varepsilon))$
satisfy the corresponding conditions with Hypothesis~\ref{hyp:1.1} with $X=H=T_{x_0}\mathcal{H}$
by \cite[Lemma~2.8]{Lu7}.) Let $\widehat{\mathcal{L}}\in C^1(\mathcal{H},\mathbb{R})$  be  $G$-invariant, have
a critical orbit $\mathcal{O}$, and also satisfy:
\begin{description}
\item[(i)] the gradient $\nabla(\widehat{\mathcal{L}}\circ\exp|_{B_{T_{x_0}\mathcal{H}}(0,\varepsilon)})$ is G\^ateaux differentiable, and its derivative
at any $u\in B_{T_{x_0}\mathcal{H}}(0,\varepsilon)$,
$$
d^2(\widehat{\mathcal{L}}\circ\exp|_{B_{T_{x_0}\mathcal{H}}(0,\varepsilon)})(u)\in\mathscr{L}_s(T_{x_0}\mathcal{H}),
$$
    is also a compact linear operator.
\item[(ii)]  $B_{T_{x_0}\mathcal{H}}(0,\varepsilon)\to \mathscr{L}_s(T_{x_0}\mathcal{H}),\;u\mapsto d^2(\widehat{\mathcal{L}}\circ\exp|_{B_{T_{x_0}\mathcal{H}}(0,\varepsilon)})(u)$
is continuous at $0\in T_{x_0}\mathcal{H}$.
%
(Thus the assumptions on $\mathcal{G}$ assure that the functionals
$\mathcal{L}_{\lambda}:=\mathcal{L}-\lambda\widehat{\mathcal{L}}$,
$\lambda\in\mathbb{R}$, also satisfy the conditions of
\cite[Theorems~2.21 and 2.22]{Lu7}.)
\end{description}}
\end{hypothesis}

Under Hypothesis~\ref{hyp:Bi.3.19}, we say $\mathcal{O}$ to be a {\it bifurcation $G$-orbit
with parameter $\lambda^\ast$} of  the equation
\begin{equation}\label{e:Bi.3.11}
\mathcal{L}'(u)=\lambda\widehat{\mathcal{L}}'(u),\quad u\in \mathcal{H}
\end{equation}
if for any $\varepsilon>0$ and for any neighborhood $\mathscr{U}$ of $\mathcal{O}$ in $\mathcal{H}$
there exists a solution $G$-orbit $\mathcal{O}'\ne \mathcal{O}$ in $\mathscr{U}$ of
(\ref{e:Bi.3.11}) with some $\lambda\in (\lambda^\ast-\varepsilon, \lambda^\ast+\varepsilon)$.
Equivalently, for some (and so any) fixed $x_0\in\mathcal{O}$ there exists a sequence $(\lambda_n, u_n)\subset (\lambda^\ast-\varepsilon, \lambda^\ast+\varepsilon)\times \mathcal{H}$ such that
\begin{equation}\label{e:Bi.3.11.1}
(\lambda_n, u_n)\to(\lambda^\ast, x_0),\quad \mathcal{L}'(u_n)=\lambda_n\mathcal{G}'(u_n)
\quad\hbox{and}\quad u_n\notin \mathcal{O}\quad\forall n.
\end{equation}

For any $x_0\in\mathcal{O}$, $\mathcal{S}_{x_0}:=\exp_{x_0}({N\mathcal{O}(\varepsilon)_{x_0}})$
is a $C^2$ {\bf slice} for the action of $G$ on $\mathcal{H}$
in the following sense:
\begin{description}
\item[(i)]  $\mathcal{S}_{x_0}$ is a $C^2$ submanifold of $\mathcal{H}$ containing $x_0$, and the tangent space $T_{x_0}\mathcal{S}_{x_0}\subset T_{x_0}\mathcal{H}$ is a closed complement to $T_{x_0}\mathcal{O}$;
\item[(ii)] $G\cdot\mathcal{S}_{x_0}$  is a neighborhood of the orbit $\mathcal{O}=G\cdot x_0$, i.e., the orbit of every
$x\in\mathcal{H}$ sufficiently close to $x_0$ must intersect $\mathcal{S}_{x_0}$ and $(G\cdot x)\pitchfork\mathcal{S}_{x_0}$;
\item[(iii)] $\mathcal{S}_{x_0}$ is invariant under the isotropy group $G_{x_0}$.
\end{description}
(The $C^1$ $G$-action is sufficient for (ii).)
Clearly, a point $u\in {N\mathcal{O}(\varepsilon)_{x_0}}$ near $0_{x_0}\in {N\mathcal{O}(\varepsilon)_{x_0}}$
is a critical point of $\mathcal{L}_\lambda\circ\exp|_{N\mathcal{O}(\varepsilon)_{x_0}}$
if and only if $x:=\exp_{x_0}(u)$ is a critical point of  $\mathcal{L}_\lambda|_{\mathcal{S}_{x_0}}$.
Since $d\mathcal{L}_\lambda(x)[\xi]=0\;\forall \xi\in T_{x}(G\cdot x)$ and
$T_x\mathcal{H}=T_{x}(G\cdot x)\oplus T_x\mathcal{S}_{x_0}$, we deduce
\begin{equation}\label{e:Bi.3.11.2}
d\mathcal{L}_\lambda(x)=0\quad\Longleftrightarrow\quad d(\mathcal{L}_\lambda\circ\exp|_{N\mathcal{O}(\varepsilon)_{x_0}})(u)=0.
\end{equation}

Note that for any $x_0\in\mathcal{O}$ the orthogonal complementary of $T_{x_0}\mathcal{O}$ in $T_{x_0}\mathcal{H}$, $N\mathcal{O}_{x_0}$,
 is an invariant subspace of
  $$
 \mathcal{L}''_\lambda(x_0):=d^2(\mathcal{L}\circ\exp|_{B_{T_{x_0}\mathcal{H}}(0,\varepsilon)})(0)\quad\forall\lambda\in\mathbb{R}
 $$
 (in particular
 $\mathcal{L}''(x_0):=d^2(\mathcal{L}\circ\exp|_{B_{T_{x_0}\mathcal{H}}(0,\varepsilon)})(0)$ and $\widehat{\mathcal{L}}''(x_0):=d^2(\widehat{\mathcal{L}}\circ\exp|_{B_{T_{x_0}\mathcal{H}}(0,\varepsilon)})(0)$).
 %
%
Let $\mathcal{L}''_\lambda(x_0)^\bot$  (resp.  $\mathcal{L}''(x_0)^\bot$,  $\widehat{\mathcal{L}}''(x_0)^\bot$) denote
the restriction self-adjoint operator of $\mathcal{L}''_\lambda(x_0)$ (resp. $\mathcal{L}''(x_0)$, $\widehat{\mathcal{L}}''(x_0)$)
from  $N\mathcal{O}_{x_0}$ to itself. Then $\mathcal{L}''_\lambda(x_0)^\bot=d^2(\mathcal{L}_\lambda\circ\exp|_{N\mathcal{O}(\varepsilon)_{x_0}})(0)$ and
$$
\mathcal{L}''(x_0)^\bot=d^2(\mathcal{L}\circ\exp|_{N\mathcal{O}(\varepsilon)_{x_0}})(0),\quad
\widehat{\mathcal{L}}''(x_0)^\bot=d^2(\widehat{\mathcal{L}}\circ\exp|_{N\mathcal{O}(\varepsilon)_{x_0}})(0).
$$
Applying Corollary~\ref{cor:Bi.2.4.1} to
$
(\mathcal{L}\circ\exp|_{N\mathcal{O}(\varepsilon)_{x_0}}, \widehat{\mathcal{L}}\circ\exp|_{N\mathcal{O}(\varepsilon)_{x_0}},
N\mathcal{O}(\varepsilon)_{x_0})$ and using (\ref{e:Bi.3.11.2}) we obtain:

\begin{theorem}\label{th:Bi.3.20.1}
Under Hypothesis~\ref{hyp:Bi.3.19},
suppose that  $\lambda^\ast\in\mathbb{R}$  an isolated eigenvalue of
\begin{equation}\label{e:Bi.3.12}
\mathcal{L}''(x_0)^\bot v-\lambda\widehat{\mathcal{L}}''(x_0)^\bot v=0,\quad v\in N\mathcal{O}_{x_0},
\end{equation}
and  that $\widehat{\mathcal{L}}''(x_0)^\bot$ is either semi-positive or semi-negative.
 Then $\mathcal{O}$ is a  bifurcation $G$-orbit
with parameter $\lambda^\ast$ of  the equation
(\ref{e:Bi.3.11})
 and  one of the following alternatives occurs:
\begin{description}
\item[(i)] $\mathcal{O}$ is not an isolated critical orbit of $\mathcal{L}_{\lambda^\ast}$;

\item[(ii)]  for every $\lambda\in\mathbb{R}$ near $\lambda^\ast$ there is a critical point
$u_\lambda\notin\mathcal{O}$ of $\mathcal{L}_{\lambda}$ converging to $x_0$ as $\lambda\to\lambda^\ast$;

\item[(iii)] there is an one-sided  neighborhood $\Lambda$ of $\lambda^\ast$ such that
for any $\lambda\in\Lambda\setminus\{\lambda^\ast\}$,
$\mathcal{L}_{\lambda}$  has at least two critical points which sit in $\mathcal{S}_{x_0}\setminus\mathcal{O}$ converging to
$x_0$ as $\lambda\to\lambda^\ast$.
\end{description}
\end{theorem}

Suppose that $\mathcal{L}''(x_0)^\bot$ is invertible, or equivalently
 ${\rm Ker}(\mathcal{L}''(x_0))=T_{x_0}\mathcal{O}$. Then
$0$ is not an eigenvalue  of (\ref{e:Bi.3.12})
and $\lambda\in\mathbb{R}\setminus\{0\}$
is an eigenvalue  of (\ref{e:Bi.3.12}) if and only if $1/\lambda$
is an eigenvalue  of compact linear self-adjoint operator
$$
L_{x_0}:=[\mathcal{L}''(x_0)^\bot ]^{-1}\widehat{\mathcal{L}}''(x_0)^\bot\in\mathscr{L}_s(N\mathcal{O}_{x_0}).
$$
Hence $\sigma(L_{x_0})\setminus\{0\}=\{1/\lambda_k\}_{k=1}^\infty\subset\mathbb{R}$
with $|\lambda_k|\to \infty$, and each $1/\lambda_k$ has finite multiplicity.
Let $N\mathcal{O}_{x_0}^k$ be the eigensubspace corresponding to $1/\lambda_k$ for $k\in\mathbb{N}$.
Then
\begin{eqnarray}\label{e:Bi.3.13}
&&N\mathcal{O}_{x_0}^0:={\rm Ker}(L_{x_0})={\rm Ker}(\widehat{\mathcal{L}}''(x_0)^\bot),\nonumber\\
&&N\mathcal{O}_{x_0}^k={\rm Ker}(I/\lambda_k-L_{x_0})={\rm Ker}(\mathcal{L}''(x_0)^\bot-\lambda_k\widehat{\mathcal{L}}''(x_0)^\bot),\quad
\forall k\in\mathbb{N},
\end{eqnarray}
and $N\mathcal{O}_{x_0}=\oplus^\infty_{k=0}N\mathcal{O}_{x_0}^k$.
Since $T_{x_0}\mathcal{O}\subset{\rm Ker}(\mathcal{L}''(x_0))\cap{\rm Ker}(\widehat{\mathcal{L}}''(x_0))$,
for $\lambda\in\mathbb{R}$, $\mathcal{O}$ is a nondegenerate critical orbit
of $\mathcal{L}_\lambda$ if and only if $\lambda$ is not an eigenvalue  of (\ref{e:Bi.3.12}).
Applying Corollary~\ref{cor:Bi.2.4.2} to
$(\mathcal{L}\circ\exp|_{N\mathcal{O}(\varepsilon)_{x_0}}, \widehat{\mathcal{L}}\circ\exp|_{N\mathcal{O}(\varepsilon)_{x_0}},
N\mathcal{O}(\varepsilon)_{x_0})$ and using (\ref{e:Bi.3.11.2}) we obtain:



\begin{theorem}\label{th:Bi.3.20.2}
Under Hypothesis~\ref{hyp:Bi.3.19},
suppose that  $\mathcal{L}''(x_0)^\bot$ is invertible
and that $\lambda^\ast=\lambda_{k_0}$ is an eigenvalue of (\ref{e:Bi.3.12}) as above.
Then the conclusions of Theorem~\ref{th:Bi.3.20.1} hold true if
  one of the following two conditions is also satisfied:
 \begin{description}
\item[(a)] $\mathcal{L}''(x_0)^\bot$ is positive;
 \item[(b)] each $N\mathcal{O}_{x_0}^k$ in (\ref{e:Bi.3.13}) with $L=[\mathcal{L}''(0)]^{-1}\mathcal{G}''(0)$
 is an invariant subspace of $\mathcal{L}''(x_0)^\bot$  (e.g. these are true if $\mathcal{L}''(x_0)^\bot$
 commutes with $\widehat{\mathcal{L}}''(x_0)^\bot$), and $\mathcal{L}''(x_0)^\bot$
 is either positive definite or negative one on $N\mathcal{O}_{x_0}^{k_0}$.
 \end{description}
\end{theorem}

Of course, corresponding to Theorem~\ref{th:Bi.2.4} there is a more general result.
Moreover, if either $G_{x_0}=\mathbb{Z}_2$ acts on $N\mathcal{O}(\varepsilon)_{x_0}$ via the antipodal map
or $G_{x_0}=S^1$ acts orthogonally on $N\mathcal{O}_{x_0}$ and
the fixed point set of the induced $S^1$-action on $N\mathcal{O}_{x_0}^0$ is $\{0\}$,
we may apply Theorem~\ref{th:Bi.3.2} to
$(\mathcal{L}\circ\exp|_{N\mathcal{O}(\varepsilon)_{x_0}}, \widehat{\mathcal{L}}\circ\exp|_{N\mathcal{O}(\varepsilon)_{x_0}}, N\mathcal{O}(\varepsilon)_{x_0})$
near $0=0_{x_0}\in N\mathcal{O}(\varepsilon)_{x_0}$ to get a stronger result.

We can also apply to Corollaries~\ref{cor:Bif.2.2.1},~\ref{cor:Bif.2.2.2},
 Theorem~\ref{th:Bif.2.2.4-} and Corollary~\ref{cor:Bif.3.3}
to suitably chosen slices in the setting of \cite{Lu4} to get some  corresponding results.
It is better for us using this idea in concrete applications.
For Theorems~\ref{th:Bi.3.20.1},\ref{th:Bi.3.20.2} there also exists a remark similar to Remark~\ref{rem:Bif.3.5}.

\section{Bifurcations for potential operators of Banach-Hilbert regular functionals}\label{sec:BBH}
\setcounter{equation}{0}

The aim of this section is to generalize partial results in last three sections to
potential operators of Banach-Hilbert regular functionals on Banach spaces.
Different from bifurcation results in previous sections, these bifurcation theorems
are on Banach (rather than Hilbert) spaces.
There exist a few splitting theorems for Banach-Hilbert regular functionals
in the literature (though they were expressed with different terminologies),
for example, \cite[Theorem~1.2]{BoBu}, \cite[Theorem~2.5]{JM} and \cite[p. 436, Theorem~1]{DiHiTr}
(and \cite[Theorem~1.4]{Tr1}
for the special nondegenerate case). The latter required functionals to be of class $C^3$.
Though \cite[Theorem~2.5]{JM} seems to need less conditions,
 we choose the setting of \cite{BoBu} since proving parameterized versions of
\cite[Theorem~1.2]{BoBu} can be more directly obtained from the original one,
see Appendix~\ref{app:B}. Actually, it is not hard to deduce a parameterized version
of \cite[Theorem~2.5]{JM} after  the corresponding form of nondegenerate case of it
is proved by slightly more troublesome arguments. On the other hand,
in view of applications to bifurcations for quasi-linear elliptic systems
in Part~\ref{par:BifE}, there is no big differences for results
obtained by bifurcation theories on the basis of  these  splitting theorems.


In this section we always assume that  $H$ is a Hilbert space with inner product $(\cdot,\cdot)_H$
and the induced norm $\|\cdot\|$,  $X$ is a Banach space with
norm $\|\cdot\|_X$, and that they satisfy the condition (S) in Appendix~\ref{app:B}, i.e.,
$X\subset H$ is dense in $H$ and  the inclusion $X\hookrightarrow H$ is continuous.

\begin{theorem}\label{th:BBHH.1}
Let $H$ and $X$ be as above, and let $\{\mathscr{L}_\lambda\,|\,\lambda\in\Lambda\}$
 a family of $(B_X(0, \delta), H)$-regular functionals (cf. Appendix~\ref{app:B})
parameterized by an open interval $\Lambda\subset\mathbb{R}$ containing  $\lambda^\ast$
such that
$\mathscr{L}_\lambda$ and the corresponding $A_\lambda$, $B_\lambda$ depend on $\lambda$ continuously.
 Suppose:
 \begin{description}
\item[(a)] $d\mathscr{L}_\lambda(0)=0\;\forall\lambda$.
\item[(b)] $\sigma(B_{\lambda^\ast}(0)|_X)\setminus\{0\}$
is bounded away from the imaginary axis, and $H^0_{\lambda^\ast}:={\rm Ker}({B}_{\lambda^\ast}(0))$
is a nontrivial subspace of finite dimension contained in $X$.
\item[(c)] For each $\lambda\in\Lambda\setminus\{\lambda^\ast\}$,
$\sigma(B_{\lambda}(0)|_X)$ is bounded away from the imaginary axis,
and $H^0_{\lambda}:={\rm Ker}({B}_\lambda(0))=\{0\}$.
\item[(d)] The negative definite space  $H^-_{\lambda}$
of each ${B}_\lambda(0)$ is of finite dimension (and hence is contained in $X$ by  Lemma~\ref{lem:BB.8}).
\item[(e)] The Morse indexes of $\mathscr{L}_\lambda$
at $0\in H$ take values $\mu_{\lambda^\ast}$ and $\mu_{\lambda^\ast}+\nu_{\lambda^\ast}$
 as $\lambda\in\mathbb{R}$ varies in both sides of $\lambda^\ast$ and is close to $\lambda^\ast$,
where $\mu_{\lambda}$ and $\nu_{\lambda}$ are the Morse index and the nullity of  $\mathscr{L}_{\lambda}$
at $0$, respectively.
 \end{description}
   Then  one of the following alternatives occurs:
\begin{description}
\item[(i)] $0\in X$ is not an isolated critical point of $\mathscr{L}_{\lambda^\ast}$;

\item[(ii)]  for every $\lambda\in\Lambda$ near $\lambda^\ast$ there is a nonzero critical point
$x_\lambda$ of $\mathscr{L}_{\lambda}$ converging to $0$ as $\lambda\to\lambda^\ast$;

\item[(iii)] there is an one-sided  neighborhood $\Lambda$ of $\lambda^\ast$ such that
for any $\lambda\in\Lambda\setminus\{\lambda^\ast\}$,
$\mathscr{L}_{\lambda}$ has at least two nonzero critical points converging to
zero as $\lambda\to\lambda^\ast$.
\end{description}
In particular,  $(\lambda^\ast, 0)\in\mathbb{R}\times X$ is a bifurcation point  for the equation
\begin{equation}\label{e:BBHH.1}
d\mathscr{L}_\lambda(0)=0,\; x\in B_X(0, \delta).
\end{equation}
\end{theorem}

\begin{proof}
Since either $\sigma(B_{\lambda}(0)|_X)$ or $\sigma(B_{\lambda}(0)|_X)\setminus\{0\}$ is bounded away from the imaginary axis,
there exists a direct sum decomposition of Banach spaces  $X=X_0^{\lambda}\oplus X_+^{\lambda}\oplus X_-^{\lambda}$,
 which corresponds to the spectral sets $\{0\}$, $\sigma_+(B_{\lambda}(0)|_X)$
 and $\sigma_-(B_{\lambda}(0)|_X)$.  Denote by $P_0^{\lambda}$, $P_+^{\lambda}$ and $P_-^{\lambda}$ the corresponding
  projections to this decomposition.

Let $H^+_\lambda$, $H^-_\lambda$ and $H^0_\lambda$ be the positive definite, negative definite and zero spaces of
${B}_\lambda(0)$.  Denote by $P^\ast_\lambda$  the orthogonal projections onto $H^\ast_\lambda$,
 $\ast=+,-,0$. 

By assumptions (b)-(d) and Lemma~\ref{lem:BB.8}, we have
\begin{description}
\item[1)] $X_0^{\lambda^\ast}=H^0_{\lambda^\ast}$, $X_-^{\lambda^\ast}=H^-_{\lambda^\ast}$
and $X_+^{\lambda^\ast}$ is a dense subspace in $H^+_{\lambda^\ast}$;

\item[2)] for each $\lambda\in\Lambda\setminus\{\lambda^\ast\}$,
$X_0^{\lambda}=H^0_{\lambda}=\{0\}$, $X_-^{\lambda}=H^-_{\lambda}$
and $X_+^{\lambda}$ is a dense subspace in $H^+_{\lambda}$.
\end{description}
Hence the nullity $\nu_\lambda:=\dim X^\lambda_0$ and  Morse index $\mu_\lambda:=\dim X^\lambda_-$
of the functional $\mathscr{L}_{\lambda}$ at $0$ are equal to $\dim H_\lambda^0$  and
$\dim H^-_\lambda$ (that is, the nullity and Morse index of the quadratic form $({B}_\lambda(0) u,u)$ on $H$),
respectively, and all of them are finite numbers.


By Theorem~\ref{th:BB.5}  there exist small numbers $0<\epsilon<\delta$ and $\rho>0$ with
$[\lambda^\ast-\rho,\lambda^\ast+\rho]\subset\Lambda$,
 a (unique) $C^1$ map
$
\mathfrak{h}:[\lambda^\ast-\rho,\lambda^\ast+\rho]\times B_{X}(0,\epsilon)\cap H_{\lambda^\ast}^0\to
X_0^{\lambda^\ast}\oplus H^-_{\lambda^\ast}
$
satisfying
\begin{eqnarray}\label{e:BBHH.2}
&&\mathfrak{h}(\lambda, 0)=0\quad\hbox{and}\quad (id_X-P_0^{\lambda^\ast})A_{\lambda}(z+ \mathfrak{h}(\lambda,z))=0
\end{eqnarray}
for all $(\lambda,z)\in [\lambda^\ast-\rho,\lambda^\ast+\rho]\times B_{X}(0,\epsilon)\cap H_{\lambda^\ast}^0$,
and another $C^1$ map
$$
[\lambda^\ast- \rho,\lambda^\ast+\rho]\times B_X(0,\epsilon)
\to  X,\;({\lambda}, x)\mapsto \Phi_{{\lambda}}(x)
$$
such that for each $\lambda\in [\lambda^\ast- \rho,\lambda^\ast+\rho]$ the map
 $\Phi_{\lambda}$ is a $C^1$  origin-preserving diffeomorphism from
$B_X(0,\epsilon)$ onto an open neighborhood $W_\lambda$ of $0$ in $X$ and that
the functional $\mathscr{L}_{\lambda}$ satisfies
\begin{eqnarray}\label{e:BBHH.3}
\mathscr{L}_{\lambda}\circ\Phi_{\lambda}(x)=\|P^{\lambda^\ast}_+x\|_H^2-\|P^{\lambda^\ast}_-x\|_H^2+ \mathscr{
L}_{{\lambda}}(P^{\lambda^\ast}_0x+ \mathfrak{h}({\lambda}, P^{\lambda^\ast}_0x)),\quad\forall x\in B_X(0,\epsilon).
\end{eqnarray}
 Moreover, 
$[\lambda^\ast- \rho,\lambda^\ast+\rho]\times B_{X}(0,\epsilon)\cap H_{\lambda^\ast}^0\to \mathbb{R}\ni (\lambda,u)\mapsto \mathscr{L}_{\lambda}^\circ(u)\in\mathbb{R}$ is $C^1$,  and
$$\mathscr{L}_{\lambda}^\circ: B_{X}(0,\epsilon)\cap H_{\lambda^\ast}^0\to \mathbb{R},\;
z\mapsto\mathscr{L}_{\lambda}(z+ \mathfrak{h}({\lambda}, z))$$
 is of class $C^{2}$ and  has the first-order derivative at $z_0\in
B_{X}(0,\epsilon)\cap H_{\lambda^\ast}^0$,
$$d\mathscr{L}^\circ_\lambda(z_0)[z]=\bigl(A_\lambda(z_0+ \mathfrak{h}(\lambda, z_0)), z\bigr)_H,\quad\forall z\in H_{\lambda^\ast}^0.$$
Since the map $z\mapsto z+ \mathfrak{h}({\lambda}, z)$
induces an one-to-one correspondence
 between the critical points of  $\mathscr{L}_{\lambda}^\circ$ near $0\in H_{\lambda^\ast}^0$
and those of $\mathscr{L}_{\lambda}$ near $0\in X$, $0\in H_{\lambda^\ast}^0$
is an isolated critical point of  $\mathscr{L}_{\lambda}^\circ$ if and only if
$0\in X$ is such a critical point of $\mathscr{L}_{\lambda}$
(after shrinking $\rho>0$ if necessary).
By the assumption (c) and Claim~\ref{cl:BB.6+}  we get

\begin{claim}\label{cl:BBHH.5.2}
For each $\lambda\in[\lambda^\ast- \rho,\lambda^\ast+\rho]\setminus\{\lambda^\ast\}$,
$0\in B_{X}(0,\epsilon)\cap H_{\lambda^\ast}^0$ is a nondegenerate critical point of $\mathscr{L}_{\lambda}^\circ$.
\end{claim}

(So far the condition (e) is not used yet!)
By assumptions (b)-(e)  there hold
\begin{eqnarray}
&&\nu_\lambda=0,\quad\forall\lambda\in \Lambda\setminus\{\lambda^\ast\},\nonumber\\
&&\hbox{either}\; \mu_\lambda=\mu_{\lambda^\ast}\;\forall\lambda<\lambda^\ast,\;
\hbox{and}\; \mu_\lambda=\mu_{\lambda^\ast}+\nu_{\lambda^\ast}\;\forall\lambda>\lambda^\ast,\label{e:BBHH.9}\\
&&\hbox{or}\; \mu_\lambda=\mu_{\lambda^\ast}+\nu_{\lambda^\ast}\;\forall\lambda<\lambda^\ast,\;
\hbox{and}\; \mu_\lambda=\mu_{\lambda^\ast}\;\forall\lambda>\lambda^\ast. \label{e:BBHH.10}
\end{eqnarray}

For each $\lambda\in \Lambda\setminus\{\lambda^\ast\}$, (\ref{e:BBHH.8}) implies that
$0\in X$ is a  a nondegenerate (and hence isolated) critical point of $\mathscr{L}_\lambda$.
It follows from this and Theorem~\ref{th:BB.3} that
\begin{equation}\label{e:BBHH.11}
C_q(\mathscr{L}_\lambda, 0;{\bf K})=\delta_{q\mu_\lambda}{\bf K}\quad\forall
q=0, 1,\cdots.
\end{equation}
By Claim~\ref{cl:BBHH.5.2}, (\ref{e:BBHH.3}) and Corollary~\ref{cor:BB.6} we obtain that for any Abel group ${\bf K}$,
\begin{equation}\label{e:BBHH.12}
C_q(\mathscr{L}_\lambda, 0;{\bf K})\cong
C_{q-\mu_{\lambda^\ast}}(\mathscr{L}^{\circ}_\lambda, 0;{\bf K})\quad\forall
q=0, 1,\cdots.
\end{equation}
Then (\ref{e:BBHH.11})-(\ref{e:BBHH.12}) lead to
\begin{eqnarray}\label{e:BBHH.13}
C_{j}(\mathscr{L}^\circ_{\lambda},0;{\bf K})=
\left\{\begin{array}{ll}
\delta_{j0}{\bf K},\quad\forall \lambda<\lambda^\ast,\\
\delta_{j\nu_{\lambda^\ast}}{\bf K},\quad\forall \lambda>\lambda^\ast
\end{array}\right.
\end{eqnarray}
for any $j\in\mathbb{N}_0$ if (\ref{e:BBHH.9}) holds, and
\begin{eqnarray}\label{e:BBHH.14}
C_{j}(\mathscr{L}^\circ_{\lambda},0;{\bf K})=
\left\{\begin{array}{ll}
\delta_{j0}{\bf K},\quad\forall \lambda>\lambda^\ast,\\
\delta_{j\nu_{\lambda^\ast}}{\bf K},\quad\forall \lambda<\lambda^\ast
\end{array}\right.
\end{eqnarray}
for any $j\in\mathbb{N}_0$ if (\ref{e:BBHH.10}) holds. These two groups of equalities and (\ref{e:Bi.2.17}) lead, respectively, to
\begin{eqnarray}\label{e:BBHH.14.1}
0\in H^0_{\lambda^\ast}\;\hbox{is a strict local}\left\{
\begin{array}{ll}
\hbox{minimizer of}\;\mathscr{L}^\circ_{\lambda},&\quad
\forall \lambda<\lambda^\ast,\\
\hbox{maximizer of}\;\mathscr{L}^\circ_{\lambda},&\quad
\forall \lambda>\lambda^\ast
\end{array}\right.
\end{eqnarray}
if (\ref{e:BBHH.9}) holds, and
\begin{eqnarray}\label{e:BBHH.15}
0\in H^0_{\lambda^\ast}\;\hbox{is a strict local}\left\{
\begin{array}{ll}
\hbox{maximizer of}\;\mathscr{L}^\circ_{\lambda},&\quad
\forall \lambda<\lambda^\ast,\\
\hbox{minimizer of}\;\mathscr{L}^\circ_{\lambda},&\quad
\forall \lambda>\lambda^\ast
\end{array}\right.
\end{eqnarray}
if (\ref{e:BBHH.10}) holds. The desired conclusions follow from
 Theorem~\ref{th:Bi.2.1} directly.
\end{proof}

Now, we give analogues of Corollaries~\ref{cor:Bi.2.4.1} and \ref{cor:Bi.2.4.2} and \ref{cor:Bi.3.2.1} under the following hypothesis.
(Related notations may be found in Appendix~\ref{app:B}. )

\begin{hypothesis}\label{hyp:BBH.1}
{\rm
Let $H$ and $X$ be as in Theorem~\ref{th:BBHH.1}, let $\mathscr{L}, \widehat{\mathscr{L}}:B_X(0, \delta)\to\mathbb{R}$ be two $(B_X(0, \delta), H)$-regular functionals   with critical point $0\in X$ and with corresponding operators  $A, B$ and $\widehat{A}, \widehat{B}$, respectively.
Assume that  $\lambda^\ast\in\mathbb{R}$ is an  eigenvalue of finite multiplicity of
\begin{equation}\label{e:BBH.0}
B(0)v-\lambda \widehat{B}(0)v=0,\quad v\in H,
\end{equation}
and that for each $\lambda$ near $\lambda^\ast$ the operator $\mathfrak{B}_{\lambda}:=B(0)-\lambda \widehat{B}(0)$
satisfies the following properties:
\begin{description}
\item[(i)] either $\sigma(\mathfrak{B}_\lambda|_X)$ or $\sigma(\mathfrak{B}_\lambda|_X)\setminus\{0\}$ is bounded away from the imaginary axis;
\item[(ii)] $H^0_\lambda:={\rm Ker}(\mathfrak{B}_\lambda)\subset X$, and the negative definite space
 $H^-_{\lambda}$
of $\mathfrak{B}_\lambda$ is of finite dimension.
\end{description}
}
\end{hypothesis}

Suppose  $\widehat{B}(0)$ is compact. Since $B(0)-\lambda^\ast\widehat{B}(0)$ is Fredholm, so is each
$B(0)-\lambda\widehat{B}(0)$. In this situation, if $\dim H^-_{\lambda_0}<\infty$ for some $\lambda_0$,
by the proof of \cite[Proposition~2.3.2]{Ab} we see that  $P_{H^-_{\lambda}}-P_{H^-_{\lambda_0}}$
is a compact operator for each $\lambda$, where $P_{H^-_{\lambda}}$ is the orthogonal projection
onto $H^-_{\lambda}$. It follows that all $\dim H^-_{\lambda}$ are finite.
Since $\mathfrak{B}_{\lambda}+P_{H^-_{\lambda}}$ is invertible, and 
$P_{H^-_{\lambda}}$ is compact, by \cite[Lemma~2.2]{BoBu}
(see Lemma~\ref{lem:BB.7}),  $0$ is an isolated point of $\sigma(\mathfrak{B}_\lambda)$ and 
an eigenvalue of $\mathfrak{B}_\lambda$ of the finite multiplicity, which implies (i).

Under Hypothesis~\ref{hyp:BBH.1} consider the bifurcation problem
near $(\lambda^\ast, 0)\in\mathbb{R}\times X$ for the equation
\begin{equation}\label{e:BBHH.8}
d\mathscr{L}(u)=\lambda d\widehat{\mathscr{L}}(u),
\quad u\in B_X(0, \delta).
\end{equation}

%

\begin{corollary}\label{cor:BBH.2}
Under Hypothesis~\ref{hyp:BBH.1}, suppose:
\begin{description}
\item[(a)]   the eigenvalue $\lambda^\ast\in\mathbb{R}$ of (\ref{e:BBH.0})   is  isolated;
\item[(b)] $\widehat{B}(0)\in \mathscr{L}_s(H)$ is compact, and either semi-positive or semi-negative.
\end{description}
  Then  one of the following alternatives occurs:
 \begin{description}
\item[(i)] $(\lambda^\ast,0)$ is not an isolated solution  in  $\{\lambda^\ast\}\times B_X(0, \delta)$ of the equation (\ref{e:BBHH.8});
\item[(ii)]  for every $\lambda\in\mathbb{R}$ near $\lambda^\ast$ there is a nontrivial solution $u_\lambda$ of (\ref{e:BBHH.8}) in $B_X(0, \delta)$, which  converges to $0$  as $\lambda\to\lambda^\ast$;
\item[(iii)] there is an one-sided  neighborhood $\Lambda$ of $\lambda^\ast$ such that
for any $\lambda\in\Lambda\setminus\{\lambda^\ast\}$, (\ref{e:BBHH.8}) has at least two nontrivial solutions in
$B_X(0, \delta)$, which  converge to $0$  as $\lambda\to\lambda^\ast$.
\end{description}
In particular, $(\lambda^\ast, 0)\in\mathbb{R}\times X$ is a bifurcation point  for (\ref{e:BBHH.8}).
 \end{corollary}

\begin{proof}
By (a),   $\lambda^\ast\in\mathbb{R}$  is an isolated eigenvalue  of (\ref{e:BBH.0}).
Thus we may take a number $\rho>0$ so small that $[\lambda^\ast- \rho,\lambda^\ast+\rho]$ contains
a unique eigenvalue $\lambda^\ast$ of (\ref{e:BBH.0}), and that (i) and (ii) in Hypothesis~\ref{hyp:BBH.1}
hold for each $\lambda\in[\lambda^\ast- \rho,\lambda^\ast+\rho]$.
Then conditions of Theorem~\ref{th:BBHH.1}, except (e), are satisfied for
 family $\{\mathscr{L}_\lambda=\mathscr{L}_1-\lambda \mathscr{L}_2\,|\,\lambda\in [\lambda^\ast- \rho,\lambda^\ast+\rho]\}$.

It remains to prove that (e) of Theorem~\ref{th:BBHH.1} is true for this family.

From  (a) and Hypothesis~\ref{hyp:BBH.1} we deduce that $0\in X$ is a nondegenerate
(and thus isolated) critical point of $\mathscr{L}_{\lambda}$ for each
$\lambda\in [\lambda^\ast- \rho,\lambda^\ast+\rho]\setminus\{\lambda^\ast\}$.
This is equivalent to
 the fact that  $0\in H$ is a nondegenerate critical point of the $C^\infty$ functional
 $H\ni u\mapsto\mathfrak{L}_\lambda(u):=([B(0)-\lambda \widehat{B}(0)]u,u)_H$
 for each $\lambda\in [\lambda^\ast-\rho,\lambda^\ast+\rho]\setminus\{\lambda^\ast\}$.
 Since $B(0)$ is  Fredholm by the arguments below Hypothesis~\ref{hyp:BBH.1}, and $\widehat{B}(0)$ is compact,
  it is easily seen that $\mathfrak{L}_\lambda$ satisfies the (PS) condition on any closed ball.
 As in the proof of Corollary~\ref{cor:Bi.2.4.1}, we can replace
 $\mathcal{L}''(0)$ and  $\widehat{\mathcal{L}}''(0)$ by
 $B(0)$ and  $\widehat{B}(0)$, respectively,  to obtain
\begin{equation}\label{e:BBH.9.1}
\mu_\lambda=\left\{\begin{array}{ll}
\mu_{\lambda^\ast},&\quad\forall \lambda\in [\lambda^\ast-\rho,\lambda^\ast),\\
\mu_{\lambda^\ast}+\nu_{\lambda^\ast},&\quad\forall\lambda\in (\lambda^\ast,\lambda^\ast+\rho]
\end{array}\right.
\end{equation}
if $\widehat{B}(0)\ge 0$, and
\begin{equation}\label{e:BBH.9.2}
\mu_\lambda=\left\{\begin{array}{ll}
\mu_{\lambda^\ast},&\quad\forall \lambda\in (\lambda^\ast,\lambda^\ast+\rho],\\
\mu_{\lambda^\ast}+\nu_{\lambda^\ast},&\quad\forall\lambda\in [\lambda^\ast-\rho,\lambda^\ast)
\end{array}\right.
\end{equation}
if $\widehat{B}(0)\le 0$. Hence (e) of Theorem~\ref{th:BBHH.1} holds.
\end{proof}

\begin{corollary}\label{cor:BBH.3}
Under Hypothesis~\ref{hyp:BBH.1}, suppose that the following two conditions are satisfied:
 \begin{description}
\item[(a)] $B(0)$ is invertible, and $\widehat{B}(0)$ is compact.
 \item[(b)] $B(0)\widehat{B}(0)=\widehat{B}(0)B(0)$,   and $B(0)$
 is either positive  or negative on $H^0_{\lambda^\ast}={\rm Ker}(B(0)-\lambda^\ast\widehat{B}(0))$.
 \end{description}
Then the conclusions of Corollary~\ref{cor:BBH.2} hold true.
Moreover, if $B(0)$ is positive definite, the condition (b) is unnecessary.
\end{corollary}

Indeed, in the proof of Corollary~\ref{cor:Bi.2.4.2} we only replace
$\mathcal{L}''(0)$ and  $\widehat{\mathcal{L}}''(0)$ by  $B(0)$ and  $\widehat{B}(0)$, respectively,
and the sentence ``Since $\mathcal{L}_\lambda:=\mathcal{L}-\lambda\widehat{\mathcal{L}}$
 satisfies Hypothesis~\ref{hyp:1.1} with $X=H$, it has finite Morse index $\mu_\lambda$ and nullity $\nu_\lambda$ at $0$."
 by  ``By Hypothesis~\ref{hyp:BBH.1}, $\mathscr{L}_\lambda:=\mathscr{L}-\lambda\widehat{\mathscr{L}}$
has finite Morse index $\mu_\lambda$ and nullity $\nu_\lambda$ at $0$."

 By Theorem~\ref{th:Bi.2.1E} and the proof of Theorem~\ref{th:BBHH.1}
we may directly get corresponding results with  Theorem~\ref{th:Bi.3.2} and Corollary~\ref{cor:Bi.3.2.1}.
Instead of these, as Theorem~\ref{th:Bif.2.2.4-} and Corollary~\ref{cor:Bif.3.3} we use  Theorem~\ref{th:Bif.2.2.2+} by Bartsch and Clapp \cite[\S4]{BaCl}
to obtain:

\begin{theorem}\label{th:BBH.6}
Under the assumptions of Theorem~\ref{th:BBHH.1},
let $G$ be a compact Lie group acting on $H$ orthogonally,
which induces a $C^1$ isometric action on $X$. Suppose that each $\mathscr{L}_\lambda$ is $G$-invariant and that
 $A_\lambda, B_\lambda$  are equivariant, and that
$H^0_{\lambda^\ast}:={\rm Ker}({B}_{\lambda^\ast}(0))$ only intersects at zero with the fixed point set $H^G$.
Then one of the following alternatives occurs:
\begin{description}
\item[(i)] $0\in X$ is not an isolated critical point of $\mathscr{L}_{\lambda^\ast}$;
\item[(ii)] there exist left and right  neighborhoods $\Lambda^-$ and $\Lambda^+$ of $\lambda^\ast$ in $\mathbb{R}$
and integers $n^+, n^-\ge 0$, such that $n^++n^-\ge \ell(SH^0_{\lambda^\ast})$
and for $\lambda\in\Lambda^-\setminus\{\lambda^\ast\}$ (resp. $\lambda\in\Lambda^+\setminus\{\lambda^\ast\}$),
$\mathscr{L}_\lambda$ has at least $n^-$ (resp. $n^+$) distinct critical
$G$-orbits different from $0$, which converge to
 $0$ as $\lambda\to\lambda^\ast$.
 \end{description}
In particular,  $(\lambda^\ast, 0)\in \Lambda\times X$
is a bifurcation point of (\ref{e:BBHH.1}).
\end{theorem}

By Remark~\ref{rem:Bif.3.4}, $\ell(SH^0_{\lambda^\ast})=\dim H^0_{\lambda^\ast}$ (resp. $\frac{1}{2}\dim H^0_{\lambda^\ast}$)
if the Lie group  $G$ is equal to $\mathbb{Z}_2=\{{\rm id}, -{\rm id}\}$ (resp. $S^1$).

 \begin{corollary}\label{th:BBH.7}
 Under the assumptions of one of Corollaries~\ref{cor:BBH.2} and \ref{cor:BBH.3}
let $G$ be a compact Lie group acting on $H$ orthogonally, which induces a $C^1$ isometric action on $X$.
Suppose that $\mathscr{L}, \widehat{\mathscr{L}}$ are $G$-invariant and that
 $A, B$ and $\widehat{A}, \widehat{B}$ are equivariant.
   Then the conclusions of Theorem~\ref{th:BBH.6} hold.
  \end{corollary}

With methods in  \cite[\S7.5]{Ba1}
it is possible to make further generalizations for the above results.

\part{Applications to bifurcations for quasi-linear elliptic systems}\label{par:BifE}

%
This part studies applications of the abstract theorems in Part I to bifurcations for quasi-linear elliptic systems.
Without special statements, throughout this part we always assume that integers $N\ge 1$, $n>1$,
 and that $\Omega\subset\R^n$ is a bounded domain   with  boundary $\partial\Omega$.
(The case $n=1$ will be considered in \cite{Lu8} independently.)
 For showing our methods  we only consider the Dirichlet boundary conditions in many results.

 It is possible that these results are generalized to the case of unbounded domains  $\Omega\subset\R^n$ with \cite{Vo}.
 We believe that theories in previous sections  can also be used to improve bifurcation results
for geometric variational problems such as \cite{Bor} and
\cite{BetPS1, BetPS2}, etc.

\section{Structural hypotheses and preliminaries}\label{sec:BifE.1}
\setcounter{equation}{0}

 In \cite{Lu7} we introduced the following (denoted by \textsf{Hypothesis} $\mathfrak{F}_{2,N}$ in \cite{Lu6}).

\noindent{\textsf{Hypothesis} $\mathfrak{F}_{2,N,m,n}$}.\quad
 For each multi-index $\gamma$ as above, let
 \begin{eqnarray*}
 &&2_\gamma\in (2,\infty)\;\hbox{if}\;
 |\gamma|=m-n/2,\qquad 2_\gamma=\frac{2n}{n-2(m-|\gamma|)}
\;\hbox{if}\; m-n/2<|\gamma|\le m,\\
 &&2'_\gamma=1\;\hbox{if}\;|\gamma|<m-n/2,\qquad
 2'_\gamma=\frac{2_\gamma}{2_\gamma-1} \;\hbox{if}\;m-n/2\le |\gamma|\le m;
 \end{eqnarray*}
 and for each two multi-indexes $\alpha, \beta$ as above, let
 $2_{\alpha\beta}=2_{\beta\alpha}$ be defined by the conditions
\begin{eqnarray*}
&&2_{\alpha\beta}= 1-\frac{1}{2_\alpha}-\frac{1}{2_\beta}\quad
 \hbox{if}\;|\alpha|=|\beta|=m,\\
&&2_{\alpha\beta}=  1-\frac{1}{2_\alpha}\quad \hbox{if}\;m-n/2\le |\alpha|\le
 m,\; |\beta|<m-n/2,\\
&&2_{\alpha\beta}= 1 \quad \hbox{if}\; |\alpha|, |\beta|<m-n/2, \\
&& 0<2_{\alpha\beta}<1-\frac{1}{2_\alpha}-\frac{1}{2_\beta}\quad\hbox{if}\;|\alpha|,\;|\beta|\ge
 m-n/2,\;|\alpha|+|\beta|<2m.
\end{eqnarray*}
Let $M(k)$ be the number of $n$-tuples
$\alpha=(\alpha_1,\cdots,\alpha_n)\in (\mathbb{N}_0)^n$  of length
 $|\alpha|:=\alpha_1+\cdots+\alpha_n\le k$, $M_0(k)=M(k)-M(k-1)$, $k=0,\cdots,m$,
where $M(-1)=\emptyset$ and  $M(0)=M_0(0)$ only consists of
${\bf 0}=(0,\cdots,0)\in (\mathbb{N}_0)^n$. Write $\xi\in \prod^m_{k=0}\mathbb{R}^{N\times M_0(k)}$ as
$\xi=(\xi^0,\cdots,\xi^m)$, where
 $ \xi^0=(\xi^1_{\bf 0},\cdots,\xi^N_{\bf 0})^T\in \mathbb{R}^{N}$ and
 $$
 \xi^k=\left(\xi^i_\alpha\right)\in \mathbb{R}^{N\times M_0(k)},
 \quad k=1,\cdots,m,\quad 1\le i\le N,\quad |\alpha|=k.
 $$
  Denote by  $\xi^k_\circ=\{\xi^k_\alpha\,:\,|\alpha|<m-n/2\}$ for $k=1,\cdots,N$. Let
\begin{eqnarray}\label{e:6.0}
\overline\Omega\times\prod^m_{k=0}\mathbb{R}^{N\times M_0(k)}\ni (x,
\xi)\mapsto F(x,\xi)\in\R
\end{eqnarray}
be twice continuously differentiable in $\xi$ for almost all $x$,
measurable in $x$ for all values of $\xi$, and $F(\cdot,\xi)\in L^1(\Omega)$ for $\xi=0$.
Suppose that derivatives of $F$ fulfill  the following properties:
\begin{description}
\item[(i)]  For $i=1,\cdots,N$ and $|\alpha|\le m$, functions
 $F^i_\alpha(x,\xi):= F_{\xi^i_\alpha}(x,\xi)$ for $\xi=0$
belong to $L^1(\Omega)$ if $|\alpha|<m-n/2$,
and to $L^{2'_\alpha}(\Omega)$ if $m-n/2\le |\alpha|\le m$.
\item[(ii)] There exists a continuous, positive, nondecreasing functions $\mathfrak{g}_1$ such that
for $i,j=1,\cdots,N$ and $|\alpha|, |\beta|\le m$  functions
$\overline\Omega\times\R^{M(m)}\to\R,\; (x, \xi)\mapsto
F^{ij}_{\alpha\beta}(x,\xi):=F_{\xi^i_\alpha\xi^j_\beta}(x,\xi)$
satisfy:
\begin{eqnarray}\label{e:6.1}
 |F^{ij}_{\alpha\beta}(x,\xi)|\le
\mathfrak{g}_1(\sum^N_{k=1}|\xi_\circ^k|)\left(1+
\sum^N_{k=1}\sum_{m-n/2\le |\gamma|\le
m}|\xi^k_\gamma|^{2_\gamma}\right)^{2_{\alpha\beta}}.
\end{eqnarray}
\item[(iii)] There exists a continuous, positive, nondecreasing functions $\mathfrak{g}_2$ such that
\begin{eqnarray}\label{e:6.2}
\sum^N_{i,j=1}\sum_{|\alpha|=|\beta|=m}F^{ij}_{\alpha\beta}(x,\xi)\eta^i_\alpha\eta^j_\beta\ge
\mathfrak{g}_2(\sum^N_{k=1}|\xi^k_\circ|)
\sum^N_{i=1}\sum_{|\alpha|= m}(\eta^i_\alpha)^2
\end{eqnarray}
for any $\eta=(\eta^{i}_{\alpha})\in\R^{N\times M_0(m)}$.

{\it Note}:  If $m\le n/2$ the functions  $\mathfrak{g}_1$ and $\mathfrak{g}_2$ should be understand as positive constants.
\end{description}

In \cite[Proposition~A.1]{Lu7} it was proved that
\textsf{Hypothesis} $\mathfrak{F}_{2,N,1,n}$ is weaker than the
 {\bf controllable growth conditions} (abbreviated to CGC below) \cite[page 40]{Gi}.
(CGC was called  `common condition of Morrey' or `the natural assumptions of Ladyzhenskaya and Ural'tseva' \cite[page 38,(I)]{Gi}.)\\

\noindent{\bf CGC}: $\overline\Omega\times\mathbb{R}^N\times\mathbb{R}^{N\times n}\ni (x,
z,p)\mapsto F(x, z,p)\in\R$ is of class $C^2$, and
there exist positive constants $\nu, \mu, \lambda, M_1, M_2$,  such that with
$|z|^2:=\sum^N_{l=1}|z_l|^2$ and $|p|^2:=\sum_{|\alpha|=1}\sum^N_{k=1}|p^k_\alpha|^2$,
\begin{eqnarray*}
&\nu\left(1+|z|^2+|p|^2\right)-\lambda\le F(x, z,p)
\le\mu\left(1+|z|^2+|p|^2\right),\\
&|F_{p^i_\alpha}(x,z,p)|, |F_{p^i_\alpha x_l}(x,z,p)|, |F_{z_j}(x,z,p)|, |F_{z_jx_l}(x,z,p)|\le \mu\left(1+|z|^2+|p|^2\right)^{1/2},\\
&|F_{p^i_\alpha z_j}(x,z,p)|,\quad |F_{z_iz_j}(x,z,p)|\le \mu,\\
&M_1\sum^N_{i=1}\sum_{|\alpha|= 1}(\eta^i_\alpha)^2\le\sum^N_{i,j=1}\sum_{|\alpha|=|\beta|=1}F_{p^i_\alpha p^j_\beta}(x,z,p)\eta^i_\alpha\eta^j_\beta\le
M_2\sum^N_{i=1}\sum_{|\alpha|= 1}(\eta^i_\alpha)^2\\
&\forall\eta=(\eta^{i}_{\alpha})\in\R^{N\times n}.
\end{eqnarray*}
Moreover, if $F=F(x,p)$ does not depend explicitly on $z$, the first three lines are replaced by
\begin{eqnarray*}
&\nu\left(1+|p|^2\right)-\lambda\le F(x,p)
\le\mu\left(1+|p|^2\right)\quad\hbox{and}\\
&|F_{p^i_\alpha}(x,p)|,\quad |F_{p^i_\alpha x_l}(x,p)|\le \mu\left(1+|p|^2\right)^{1/2}.
\end{eqnarray*}

 A bounded domain $\Omega$ in $\R^n$ is said to be a {\bf Sobolev domain} for $(2,m,n)$
 if  the Sobolev embeddings theorems for the spaces $W^{m, 2}(\Omega)$ hold.
Let $W^{m,2}_0(\Omega,\mathbb{R}^N)$ be equipped with the usual inner product
 \begin{eqnarray}\label{e:6.2.2}
 (\vec{u},\vec{v})_H=\sum^N_{i=1}\sum_{|\alpha|=m}\int_\Omega D^\alpha u^i D^\alpha v^i dx.
\end{eqnarray}

The following two theorems are contained in \cite[Theorem~4.1]{Lu6} (or \cite[Theorems~4.1,4.2]{Lu7}).

\begin{theorem}\label{th:6.1}
 Given  integers $m, N\ge 1$, $n\ge 2$, let $\Omega\subset\R^n$ be a Sobolev domain
 for $(2,m,n)$, and let $V_0$ be a closed subspace of $W^{m,2}(\Omega, \mathbb{R}^N)$ and $V=\vec{w}+V_0$
 for some $\vec{w}\in W^{m,2}(\Omega, \mathbb{R}^N)$.
Suppose that (i)-(ii) in \textsf{Hypothesis} $\mathfrak{F}_{2,N,m,n}$ hold.
 Then we have the following:
 \begin{description}
  \item[(A)] The restriction $\mathfrak{F}_V$ of  the functional
\begin{equation}\label{e:6.3}
W^{m,2}(\Omega, \mathbb{R}^N)\ni \vec{u}\mapsto\mathfrak{F}(\vec{u})=\int_\Omega F(x, \vec{u},\cdots, D^m\vec{u})dx
\end{equation}
 to $V$ is bounded on any bounded subset, of class $C^1$, and has  derivative
$\mathfrak{F}'_V(\vec{u})$  at $\vec{u}\in V$ given by
\begin{equation}\label{e:6.4}
\langle \mathfrak{F}'_V(\vec{u}), \vec{v}\rangle=\sum^N_{i=1}\sum_{|\alpha|\le m}\int_\Omega F^i_\alpha(x,
\vec{u}(x),\cdots, D^m \vec{u}(x))D^\alpha v^i dx,\quad\forall \vec{v}\in V_0.
\end{equation}
Moreover, the map $V\ni\vec{u}\to \mathfrak{F}'_V(\vec{u})\in V_0^\ast$ also maps bounded subset into bounded one.\\
\item[(B)]  At each $\vec{u}\in V$, the map $\mathfrak{F}'_V$  has the G\^ateaux derivative
$D\mathfrak{F}'_V(\vec{u})\in \mathscr{L}(V_0, V^\ast_0)$ given by
  \begin{equation}\label{e:6.5}
   \langle D\mathfrak{F}'_V(\vec{u})[\vec{v}],\vec{\varphi}\rangle=\sum^N_{i,j=1}\sum_{|\alpha|,|\beta|\le m}\int_\Omega
  F^{ij}_{\alpha\beta}(x, \vec{u}(x),\cdots, D^m \vec{u}(x))D^\beta v^j\cdot D^\alpha\varphi^i dx.
    \end{equation}
(Equivalently,   the gradient map of $\mathfrak{F}_V$,
$V\ni \vec{u}\mapsto\nabla \mathfrak{F}_V(\vec{u})\in V_0$,  given by
$(\nabla\mathfrak{F}_V(\vec{u}), \vec{v})_{m,2}=\langle \mathfrak{F}'_V(\vec{u}), \vec{v}\rangle\;
 \forall \vec{v}\in V_0$,
has the G\^ateaux derivative $D(\nabla \mathfrak{F}_V)(\vec{u})\in\mathscr{L}_s(V_0)$ at every $\vec{u}\in V$.)
Moreover,   $D\mathfrak{F}'_V$ also satisfies the following properties:
\begin{description}
\item[(i)] For every given $R>0$, $\{D\mathfrak{F}'_V(\vec{u})\,|\, \|\vec{u}\|_{m,p}\le R\}$
is bounded in $\mathscr{L}_s(V_0)$.
Consequently,  $\mathfrak{F}_V$ is  of class $C^{2-0}$.
\item[(ii)] For any $\vec{v}\in V_0$, $\vec{u}_k\to
\vec{u}_0$ implies
$D\mathfrak{F}'_V(\vec{u}_k)[\vec{v}]\to D\mathfrak{F}'_V(\vec{u}_0)[\vec{v}]$ in $V^\ast_0$.
\item[(iii)] If $F(x,\xi)$ is independent of all variables $\xi^k_\alpha$, $|\alpha|=m$,
$k=1,\cdots,N$, then
$$V\to \mathscr{L}(V_0, V^\ast_0),\;\vec{u}\mapsto D\mathfrak{F}'_V(\vec{u})
$$
is  continuous, (namely $\mathfrak{F}_V$ is of class $C^2$),  and  $D(\nabla\mathfrak{F}_V)(\vec{u}): V_0\to V_0$ is a completely continuous  linear operator for each $\vec{u}\in V$.
\end{description}
\end{description}
\end{theorem}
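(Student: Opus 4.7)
The plan for Part A is to work term-by-term using the Sobolev embeddings guaranteed by the hypothesis that $\Omega$ is a Sobolev domain for $(2,m,n)$. Integrating the pointwise bound \eqref{e:6.1} on $F^{ij}_{\alpha\beta}$ along a ray in jet-space from $0$ to $\xi$, together with the initial integrability of $F^i_\alpha(\cdot,0)$ from (i) of the hypothesis and a further integration producing a bound on $|F(x,\xi)|$, yields a pointwise control of $|F(x,\vec{u}(x),\ldots,D^m\vec{u}(x))|$ by an $L^1(\Omega)$-function plus a finite sum $\sum |D^\gamma \vec{u}|^{p_\gamma}$, each of whose integrals is controlled by a power of $\|\vec{u}\|_{m,2}$ via the embeddings $W^{m,2}(\Omega)\hookrightarrow L^{2_\gamma}(\Omega)$ (with $p_\gamma=2$ in the top-order case $|\gamma|=m$). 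This gives finiteness of $\mathfrak{F}_V$ and boundedness on bounded subsets. I would then establish the derivative formula \eqref{e:6.4} by applying the mean-value theorem under the integral sign to the difference quotient $[\mathfrak{F}(\vec{u}+t\vec{v})-\mathfrak{F}(\vec{u})]/t$ and invoking dominated convergence, the dominating function being an analogous growth bound on $F^i_\alpha$ obtained by integrating \eqref{e:6.1} once more. The functional on the right-hand side of \eqref{e:6.4} is bounded on $V_0$ since each term pairs a factor in $L^{2'_\alpha}(\Omega)$ with a test derivative in $L^{2_\alpha}(\Omega)$ via H\"older. Norm continuity of $\vec{u}\mapsto \mathfrak{F}'_V(\vec{u})$ into $V_0^\ast$ --- upgrading G\^ateaux to Fr\'echet --- then follows by another dominated-convergence argument along $W^{m,2}$-convergent sequences, using Rellich--Kondrachov compactness for $|\alpha|<m$.

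For Part B, I would differentiate the integrand in \eqref{e:6.4} once more with respect to $\vec{u}$, which formally produces \eqref{e:6.5}. The critical estimate is that multiplying the right-hand side of \eqref{e:6.1} by $|D^\beta v^j|\cdot|D^\alpha\varphi^i|$ yields something integrable with a bound of the form $C(R)\,\|\vec{v}\|_{m,2}\|\vec{\varphi}\|_{m,2}$ whenever $\|\vec{u}\|_{m,2}\le R$. Here the exponents $2_{\alpha\beta}$ are calibrated exactly so that three-factor H\"older closes: the $\xi$-dependent growth factor sits in $L^{1/2_{\alpha\beta}}(\Omega)$, while $D^\beta v^j\in L^{2_\beta}(\Omega)$ and $D^\alpha\varphi^i\in L^{2_\alpha}(\Omega)$ by Sobolev embedding (with the trilinear arithmetic $2_{\alpha\beta}+1/2_\alpha+1/2_\beta=1$ in the top-order regime). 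This estimate simultaneously delivers the existence of the G\^ateaux derivative in $\mathscr{L}_s(V_0)$, property (i) (uniform boundedness over bounded subsets of $V$), and the $C^{2-0}$ conclusion via a standard difference-quotient argument. Property (ii) is proved by fixing $\vec{v}$, extracting an a.e.-convergent subsequence from any $W^{m,2}$-convergent $\vec{u}_k\to\vec{u}_0$, and applying dominated convergence to conclude $D\mathfrak{F}'_V(\vec{u}_k)[\vec{v}]\to D\mathfrak{F}'_V(\vec{u}_0)[\vec{v}]$ in $V_0^\ast$, with the dominating function built from the $\sup_k$-envelope of $(1+\sum|D^\gamma\vec{u}_k|^{2_\gamma})^{2_{\alpha\beta}}$.

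For (iii), when $F(x,\xi)$ does not depend on the top-order variables $\xi^k_\alpha$ with $|\alpha|=m$, each coefficient $F^{ij}_{\alpha\beta}(x,\vec{u},\ldots,D^{m-1}\vec{u})$ involves only jets of order at most $m-1$. Since $W^{m,2}(\Omega)\hookrightarrow W^{m-1,q}(\Omega)$ is compact for every admissible exponent $q$, the Nemytskii map $\vec{u}\mapsto F^{ij}_{\alpha\beta}(\cdot,\vec{u},\ldots,D^{m-1}\vec{u})$ is completely continuous into the appropriate $L^{1/2_{\alpha\beta}}(\Omega)$-space. Factoring the bilinear form \eqref{e:6.5} as this compact Nemytskii operator followed by continuous integration against the two test derivatives (which lie in fixed $L^{2_\alpha}, L^{2_\beta}$ balls) gives both norm continuity of $\vec{u}\mapsto D\mathfrak{F}'_V(\vec{u})$ in $\mathscr{L}(V_0,V_0^\ast)$ --- so $\mathfrak{F}_V\in C^2$ --- and complete continuity of $D(\nabla\mathfrak{F}_V)(\vec{u})\colon V_0\to V_0$ through its Riesz-representative factorization. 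The main technical obstacle throughout is the bookkeeping among multi-indices and Sobolev exponents: the borderline case $|\gamma|=m-n/2$ (where any $2_\gamma\in(2,\infty)$ is admissible) and the intermediate regime with $|\alpha|,|\beta|\ge m-n/2$ but $|\alpha|+|\beta|<2m$ each demand a distinct H\"older partition, and the permissible ranges for $2_\gamma,2'_\gamma,2_{\alpha\beta}$ in Hypothesis $\mathfrak{F}_{2,N,m,n}$ are engineered precisely so that every such partition closes.
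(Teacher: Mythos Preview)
The paper does not prove this theorem; it states that Theorems~\ref{th:6.1} and \ref{th:6.2} are ``contained in \cite[Theorem~4.1]{Lu6} (or \cite[Theorems~4.1,4.2]{Lu7})'' and cites those references without further argument. Your sketch follows exactly the standard route used in those references (and earlier in Skrypnik's work): integrate the second-derivative bound \eqref{e:6.1} to control $F^i_\alpha$ and $F$, pair each resulting term with the matching Sobolev embedding $W^{m,2}\hookrightarrow L^{2_\gamma}$, and close all bilinear/trilinear products by H\"older with the exponents $2_\alpha,2_\beta,2_{\alpha\beta}$ that are calibrated precisely for this purpose. The treatment of (iii) via compact embeddings $W^{m,2}\hookrightarrow W^{m-1,q}$ is likewise the standard one.

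One small point to tighten: in your argument for (ii), the ``$\sup_k$-envelope'' of $(1+\sum|D^\gamma\vec u_k|^{2_\gamma})^{2_{\alpha\beta}}$ is not in general integrable, so it cannot serve directly as a dominating function. The usual fix is either to invoke Krasnosel'skii's continuity theorem for Nemytskii operators between the relevant $L^p$ spaces, or to argue via subsequences: pass to a subsequence along which $D^\gamma\vec u_k\to D^\gamma\vec u_0$ a.e.\ and in $L^{2_\gamma}$, use a Vitali/uniform-integrability argument (or, for the top-order coefficients with exponent $2_{\alpha\beta}=0$, dominated convergence with the fixed majorant $C|D^\beta v^j|^2$), and then conclude for the full sequence by the ``every subsequence has a further convergent subsequence'' trick. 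With that adjustment your outline is correct and matches the cited proofs.
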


\begin{theorem}\label{th:6.2}
Under assumptions of Theorem~\ref{th:6.1},  suppose that
\textsf{Hypothesis} $\mathfrak{F}_{2,N,m,n}$(iii) is also satisfied. Then
\begin{description}
\item[(C)]  $\mathfrak{F}': W^{m,2}(\Omega, \mathbb{R}^N)\to (W^{m,2}(\Omega, \mathbb{R}^N))^\ast$ is of class  $(S)_+$.
\item[(D)]   For $u\in V$, let $D(\nabla\mathfrak{F}_V)(\vec{u})$,
$P(\vec{u})$ and $Q(\vec{u})$ be
  operators in $\mathscr{L}(V_0)$ defined by
  \begin{eqnarray*}
  (D(\nabla\mathfrak{F}_V)(\vec{u})[\vec{v}],\vec{\varphi})_{m,2}&=&\sum^N_{i,j=1}\sum_{|\alpha|,|\beta|\le m}\int_\Omega
  F^{ij}_{\alpha\beta}(x, \vec{u}(x),\cdots, D^m \vec{u}(x))D^\beta v^j\cdot D^\alpha\varphi^i dx,\\
   (P(\vec{u})\vec{v}, \vec{\varphi})_{m,2}&=&\sum^N_{i,j=1}\sum_{|\alpha|=|\beta|=m}\int_\Omega
  F^{ij}_{\alpha\beta}(x, \vec{u}(x),\cdots, D^m \vec{u}(x))D^\beta v^j\cdot D^\alpha\varphi^i dx\\
  &&+ \sum^N_{i=1}\sum_{|\alpha|\le m-1}\int_\Omega  D^\alpha v^i\cdot D^\alpha\varphi^i dx,\\
   (Q(\vec{u})\vec{v},\vec{\varphi})_{m,2}&=&\sum^N_{i,j=1}\sum_{|\alpha|+|\beta|<2m}\int_\Omega
  F^{ij}_{\alpha\beta}(x, \vec{u}(x),\cdots, D^m \vec{u}(x))D^\beta v^j\cdot D^\alpha\varphi^i dx\\
  &&-\sum^N_{i=1}\sum_{|\alpha|\le m-1}\int_\Omega  D^\alpha v^i\cdot D^\alpha\varphi^i dx,
    \end{eqnarray*}
  respectively. (If $V\subset W^{m,2}_0(\Omega, \mathbb{R}^N)$,
  the final terms in the definitions of $P$ and $Q$ can be deleted.)
    Then $D(\nabla\mathfrak{F}_V)=P+ Q$,  and
 \begin{description}
\item[(i)]  for any $\vec{v}\in V_0$, the map $V\ni \vec{u}\mapsto P(\vec{u})\vec{v}\in V_0$ is continuous;
\item[(ii)] for every given $R>0$ there exist positive constants $C(R, n, m, \Omega)$ such that
$$
(P(\vec{u})\vec{v},\vec{v})_{m,2}\ge C\|\vec{v}\|^2_{m,2},\quad\forall \vec{v}\in V_0,\;
\forall\vec{u}\in V\;\hbox{with}\;\|\vec{u}\|_{m,2}\le R;
$$
\item[(iii)] $V\ni \vec{u}\mapsto Q(\vec{u})\in\mathscr{L}(V_0)$ is continuous,
and  $Q(\vec{u})$ is completely continuous
for each $\vec{u}$;
\item[(iv)] for every given $R>0$ there exist positive constants $C_j(R, n, m, \Omega), j=1,2$ such that
\begin{eqnarray*}
&&(D(\nabla\mathfrak{F}_V)(\vec{u})[\vec{v}],\vec{v})_{m,2}\ge C_1\|\vec{v}\|^2_{m,2}-C_2\|\vec{v}\|^2_{m-1,2},\\
&&\qquad\forall \vec{v}\in V_0,\;\forall\vec{u}\in V\;\hbox{with}\;\|\vec{u}\|_{m,2}\le R.
\end{eqnarray*}
\end{description}
\end{description}
\end{theorem}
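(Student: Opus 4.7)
\noindent\textbf{Proof proposal for Theorem~\ref{th:6.2}.}

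The plan is to deduce C) and D) directly from the integral representation (\ref{e:6.5}) for $D\mathfrak{F}_V'$, using the top-order coercivity (\ref{e:6.2}), the growth bound (\ref{e:6.1}), and the Rellich--Kondrachov compactness embeddings underlying Sobolev domains. The key mechanism throughout is the separation of ``leading'' ($|\alpha|=|\beta|=m$) and ``lower'' terms: the leading block supplies ellipticity while the lower block is compact.

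\emph{Proof of C)}. Suppose $\vec{u}_n\rightharpoonup \vec{u}_0$ in $W^{m,2}(\Omega,\mathbb{R}^N)$ with $\limsup_n\langle\mathfrak{F}'(\vec{u}_n),\vec{u}_n-\vec{u}_0\rangle\le 0$. By Rellich--Kondrachov applied to $W^{m,2}\hookrightarrow W^{k,q}$ for $k<m$ and $q<2_\alpha$, one has $D^\gamma\vec{u}_n\to D^\gamma\vec{u}_0$ strongly in $L^{2_\gamma}$ for $|\gamma|<m$, and in particular $F^i_\alpha(x,\vec{u}_n,\ldots,D^m\vec{u}_n)D^\alpha(u_n^i-u_0^i)\to 0$ in $L^1$ whenever $|\alpha|<m$ by the growth bound on $F^i_\alpha$ derived from (\ref{e:6.1}) after one integration. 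Thus the question reduces to the top-order terms. Write
\begin{equation*}
I_n:=\sum_{i=1}^N\sum_{|\alpha|=m}\int_\Omega\bigl(F^i_\alpha(x,\vec{u}_n,\ldots,D^m\vec{u}_n)-F^i_\alpha(x,\vec{u}_0,\ldots,D^{m-1}\vec{u}_0,D^m\vec{u}_n)\bigr)D^\alpha(u_n^i-u_0^i)\,dx.
\end{equation*}
Using the mean value theorem in the top variables on a segment between $D^m\vec{u}_n$ and $D^m\vec{u}_0$ (with lower derivatives frozen at $\vec{u}_n$ on one side and $\vec{u}_0$ on the other), combined with (\ref{e:6.2}) and the strong convergence of lower derivatives, I would show
\begin{equation*}
I_n\ge c\int_\Omega|D^m(\vec{u}_n-\vec{u}_0)|^2\,dx - o(1),
\end{equation*}
where $c>0$ comes from $\mathfrak{g}_2$ applied to a bounded set of lower-order arguments. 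Since the remaining part of $\langle\mathfrak{F}'(\vec{u}_n),\vec{u}_n-\vec{u}_0\rangle-I_n$ tends to zero (again by compact embedding and (\ref{e:6.1})), the $\limsup$ assumption forces $\|D^m(\vec{u}_n-\vec{u}_0)\|_{L^2}\to 0$, hence $\vec{u}_n\to\vec{u}_0$ in $W^{m,2}$.

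\emph{Proof of D)}. The identity $D(\nabla\mathfrak{F}_V)=P+Q$ on $V_0$ is immediate from (\ref{e:6.5}) and the definitions. For (ii), the first sum in $(P(\vec{u})\vec{v},\vec{v})_{m,2}$ is bounded below by $\mathfrak{g}_2(\cdot)\|D^m\vec{v}\|_{L^2}^2$ by (\ref{e:6.2}), while the added $|\alpha|\le m-1$ terms contribute $\|\vec{v}\|_{m-1,2}^2$; on $\{\|\vec{u}\|_{m,2}\le R\}$ the values of $\mathfrak{g}_2$ are bounded below by monotonicity and the Sobolev embedding controlling $\sum|\xi_\circ^k|$, and the equivalent norm $\|D^m\vec{v}\|_{L^2}^2+\|\vec{v}\|_{m-1,2}^2\sim\|\vec{v}\|_{m,2}^2$ finishes (ii). For (i), continuity $\vec{u}\mapsto P(\vec{u})\vec{v}$ follows from $F^{ij}_{\alpha\beta}(x,\cdot)$ being continuous, (\ref{e:6.1}), and dominated convergence using the $L^{2_\alpha}$-strong convergence of the lower derivatives and $L^2$-boundedness of the top ones (only the leading block appears in $P$ outside the low-order correction, which is manifestly continuous). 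For (iii), every term in $Q$ has $|\alpha|+|\beta|<2m$, so at least one factor $D^\beta v^j$ or $D^\alpha\varphi^i$ lies in a Sobolev space whose embedding into $L^{2_\alpha}$ (or $L^{2_\beta}$) is compact; applying H\"older with the exponents fixed by (\ref{e:6.1}) and using the compact embeddings converts weak convergence of $\vec{v}_k\rightharpoonup \vec{v}$ into strong convergence of $Q(\vec{u})\vec{v}_k$, yielding both complete continuity of each $Q(\vec{u})$ and continuity of $\vec{u}\mapsto Q(\vec{u})$ by the same dominated-convergence argument as in (i). Finally (iv) follows from (ii) and (iii) via $|(Q(\vec{u})\vec{v},\vec{v})_{m,2}|\le C_R\|\vec{v}\|_{m-1,2}\|\vec{v}\|_{m,2}$ and absorbing through Young's inequality.

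\emph{Main obstacle.} The delicate point is the $(S)_+$ proof: the splitting $I_n$ freezes lower derivatives on \emph{different} sides of $F^i_\alpha$, and one must show that all ``error'' terms generated by substituting $\vec{u}_0$ for $\vec{u}_n$ in the lower slots vanish in the limit under only the mixed-exponent bound (\ref{e:6.1}) with the sharp exponents $2_{\alpha\beta}$. This requires a careful H\"older bookkeeping tailored to the critical case $m-n/2\le|\gamma|\le m$, and is where the interaction between $\mathfrak{g}_1$, the exponents $2_\gamma$, and Rellich--Kondrachov must be used tightly.
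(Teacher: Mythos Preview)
The paper does not actually prove this theorem: immediately before its statement it says ``The following two theorems are contained in \cite[Theorem~4.1]{Lu6} (or \cite[Theorems~4.1,4.2]{Lu7})'', so Theorem~\ref{th:6.2} is imported verbatim from the author's earlier work. Your outline follows the standard route one finds in those references (and in Skrypnik's books \cite{Skr,Skr1}): separate leading from lower-order terms, exploit (\ref{e:6.2}) for coercivity of the $|\alpha|=|\beta|=m$ block, and use Rellich--Kondrachov to make every term with $|\alpha|+|\beta|<2m$ compact. For parts D)(i)--(iv) your sketch is essentially what one does and is fine.

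There is, however, a genuine slip in your $(S)_+$ argument. The quantity
\[
I_n=\sum_{i}\sum_{|\alpha|=m}\int_\Omega\Bigl(F^i_\alpha(x,\vec{u}_n,\ldots,D^m\vec{u}_n)-F^i_\alpha(x,\vec{u}_0,\ldots,D^{m-1}\vec{u}_0,D^m\vec{u}_n)\Bigr)D^\alpha(u_n^i-u_0^i)\,dx
\]
as you have written it differs only in the \emph{lower-order} slots while the top slot $D^m\vec{u}_n$ is the same on both sides. There is therefore no way to invoke the mean value theorem ``in the top variables on a segment between $D^m\vec{u}_n$ and $D^m\vec{u}_0$'' to produce $F^{ij}_{\alpha\beta}$ with $|\alpha|=|\beta|=m$ and hence (\ref{e:6.2}); your claimed lower bound $I_n\ge c\int|D^m(\vec{u}_n-\vec{u}_0)|^2-o(1)$ does not follow from the displayed $I_n$. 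In the standard argument the roles are reversed: your $I_n$ is the \emph{error} term (it tends to $0$ by strong convergence of the lower derivatives and the growth bound (\ref{e:6.1})), while the coercive piece is
\[
\sum_{|\alpha|=m}\int_\Omega\Bigl(F^i_\alpha(x,\vec{u}_0,\ldots,D^{m-1}\vec{u}_0,D^m\vec{u}_n)-F^i_\alpha(x,\vec{u}_0,\ldots,D^{m-1}\vec{u}_0,D^m\vec{u}_0)\Bigr)D^\alpha(u_n^i-u_0^i)\,dx,
\]
to which the mean value theorem in the top slot and (\ref{e:6.2}) genuinely apply. Once you swap these roles (and handle the remaining term $\sum_{|\alpha|=m}\int F^i_\alpha(\cdot,\vec{u}_0,\ldots,D^m\vec{u}_0)D^\alpha(u_n^i-u_0^i)\to 0$ by weak convergence), the argument goes through. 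An equivalent and slightly cleaner route is to prove D)(iv) first and then derive C) in one line from
\[
\langle\mathfrak{F}'(\vec{u}_n)-\mathfrak{F}'(\vec{u}_0),\vec{u}_n-\vec{u}_0\rangle=\int_0^1\bigl(D(\nabla\mathfrak{F}_V)(\vec{u}_0+t(\vec{u}_n-\vec{u}_0))[\vec{u}_n-\vec{u}_0],\vec{u}_n-\vec{u}_0\bigr)_{m,2}\,dt,
\]
together with $\vec{u}_n\to\vec{u}_0$ in $W^{m-1,2}$.
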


\begin{remark}\label{rem:BifE.3}
{\rm As noted in \cite[Remark~4.4]{Lu7}, Theorems~\ref{th:6.1} and \ref{th:6.2} and thus all results above
 have also more general versions in the setting of
\cite{Sma, Pa2}.  In particular, $\Omega\subset\mathbb{R}^n$ may be replaced by the torus $\mathbb{T}^n=\mathbb{R}^n/\mathbb{Z}^n$. In
 this situation, $F$ in \textsf{Hypothesis} $\mathfrak{F}_{2,N,m,n}$ is understood as a function
 on $\mathbb{R}^n\times\prod^m_{k=0}\mathbb{R}^{N\times M_0(k)}$, which is not only $1$-periodic in each variable
$x_i$, $i=1,\cdots,n$, but also satisfies \textsf{Hypothesis} $\mathfrak{F}_{2,N,m,n}$  with $\overline\Omega=[0,1]^n$.
Then all previous results in this section  also hold if $W^{m,2}(\Omega, \mathbb{R}^N)$ is replaced
by $W^{m,2}(\mathbb{T}^n, \mathbb{R}^N)$.}
\end{remark}

The tori $\mathbb{T}^n$ and $\mathbb{T}^1$ act, respectively, on $W^{m,2}(\mathbb{T}^n,\mathbb{R}^N)$  by the
isometric linear representations
\begin{eqnarray}
&&([t_1,\cdots,t_n]\cdot\vec{u})(x_1,\cdots,x_n)=\vec{u}(x_1+t_1,\cdots, x_n+t_n),\quad
 [t_1,\cdots,t_n]\in \mathbb{T}^n,\label{e:T^n-action1}\\
&& ([t]\cdot\vec{u})(x_1,\cdots,x_n)=\vec{u}(x_1+t,\cdots, x_n+t),\quad
 [t]\in \mathbb{T}^1.\label{e:T^n-action2}
\end{eqnarray}
The set of fixed points of the action in (\ref{e:T^n-action1}),
${\rm Fix}(\mathbb{T}^n)$, consists of all constant vector functions from $\mathbb{T}^n$ to $\mathbb{R}^N$.
Under Hypothesis~\ref{hyp:BifE.1} with $\Omega$ replaced by $\mathbb{T}^n$,
 every critical orbit different from points in ${\rm Fix}(\mathbb{T}^n)$ must be homeomorphic to some $T^s$, $1\le s\le n$.

 If $\mathbb{S}^1=\mathbb{T}^1$ acts on $\mathbb{R}^n$ by the orthogonal representation, and $\Omega$
is symmetric under the action, we get a $\mathbb{S}^1$ action on $W^{m,2}(\Omega,\mathbb{R}^N)$
and $W^{m,2}_0(\Omega,\mathbb{R}^N)$ by
\begin{eqnarray}\label{e:S^1-action}
 ([t]\cdot\vec{u})(x)=\vec{u}([t]\cdot x),\quad
 [t]\in \mathbb{S}^1.
\end{eqnarray}

There exists a natural $\mathbb{Z}_2$-action on $W^{m,2}(\Omega,\mathbb{R}^N)$ given by
\begin{eqnarray}\label{e:Z2-action.1}
 [0]\cdot\vec{u}=\vec{u},\quad [1]\cdot\vec{u}=-\vec{u},\quad\forall\vec{u}\in W^{m,2}(\Omega,\mathbb{R}^N).
\end{eqnarray}
If $\Omega$ is symmetric with respect to the origin, there is also another obvious $\mathbb{Z}_2$-action on $W^{m,2}(\Omega,\mathbb{R}^N)$,
\begin{eqnarray}\label{e:Z2-action.2}
 [0]\cdot\vec{u}=\vec{u},\quad ([1]\cdot\vec{u})(x)=\vec{u}(-x),\quad\forall\vec{u}\in W^{m,2}(\Omega,\mathbb{R}^N).
\end{eqnarray}

\begin{hypothesis}\label{hyp:BifE.1}
{\rm Let $\Omega\subset\R^n$ be a bounded  Sobolev domain, $N\in\mathbb{N}$, and let
  functions
\begin{eqnarray}\label{e:BifE.0}
F:\overline\Omega\times\prod^m_{k=0}\mathbb{R}^{N\times M_0(k)}\to \R\quad\hbox{and}\quad
K:\overline\Omega\times\prod^{m-1}_{k=0}\mathbb{R}^{N\times M_0(k)}\to \R
\end{eqnarray}
satisfy \textsf{Hypothesis} $\mathfrak{F}_{2,N,m,n}$ and (i)-(ii) in \textsf{Hypothesis} $\mathfrak{F}_{2,N,m,n}$,
respectively.  Let $V_0$ be a closed subspace of $W^{m,2}(\Omega,\mathbb{R}^N)$
containing $W^{m,2}_0(\Omega,\mathbb{R}^N)$, and $V=\vec{w}+V_0$
 for some $\vec{w}\in W^{m,2}(\Omega, \mathbb{R}^N)$.}
\end{hypothesis}

Consider (generalized) bifurcation solutions of the boundary value problem
corresponding to  $V$:
\begin{eqnarray}\label{e:BifE.1}
&&\sum_{|\alpha|\le m}(-1)^{|\alpha|}D^\alpha F^i_\alpha(x, \vec{u},\cdots, D^m\vec{u})=
\lambda\sum_{|\alpha|\le m-1}(-1)^{|\alpha|}D^\alpha K^i_\alpha(x, \vec{u},\cdots, D^{m-1}\vec{u}),\nonumber\\
&&\hspace{20mm}i=1,\cdots,N.
\end{eqnarray}
Call $\vec{u}\in V$ a {\it generalized solution} of (\ref{e:BifE.1}) if it is a
 critical point of the functional $\mathfrak{F}_V-\lambda \mathfrak{K}_V$, where
 $\mathfrak{F}_V$ is as in Theorem~\ref{th:6.1}, and $\mathfrak{K}_V$ is the restrictions of
 $\mathfrak{K}(\vec{u})=\int_\Omega K(x, \vec{u},\cdots, D^{m-1}\vec{u})dx$ to $V$.\\


\section{Bifurcations for quasi-linear elliptic systems
with growth restrictions}\label{sec:BifE.2}
\setcounter{equation}{0}

By the above Theorem~\ref{th:6.2} and  \cite[Theorem~7.1, Chapter~4]{Skr1}
we immediately obtain the following result, which in $N=1$ is a special case of \cite[Theorem~7.2, Chapter~4]{Skr1}.

\begin{theorem}\label{th:BifE.2}
Under Hypothesis~\ref{hyp:BifE.1}, assume $V=V_0$ and
\begin{description}
\item[(i)] the functionals $\mathfrak{F}_V$ and $\mathfrak{K}_V$ are even, $\mathfrak{F}_V({0})=\mathfrak{K}_V({0})=0$,
$\mathfrak{K}_V(\vec{u})\ne 0$ and $\mathfrak{K}'_V(\vec{u})\ne 0$ for any $\vec{u}\in V\setminus\{{0}\}$;
\item[(ii)] $\langle\mathfrak{F}_V'(\vec{u}), \vec{u}\rangle\ge\nu(\|\vec{u}\|_{m,2})$, where $\nu(t)$ is a continuous function and positive for $t>0$;
\item[(iii)] $\mathfrak{F}_V(\vec{u})\to +\infty$ as $\|\vec{u}\|_{m,2}\to\infty$.
\end{description}
Then for any $c>0$ there exists at least a sequence $(\lambda_j,\vec{u}_j)\subset\mathbb{R}\times
\mathfrak{F}_V^{-1}(c)$ satisfying (\ref{e:BifE.1}).
\end{theorem}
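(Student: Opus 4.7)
The plan is to apply the $\mathbb{Z}_2$-equivariant Ljusternik--Schnirelmann theorem on the prescribed level set $M_c := \mathfrak{F}_V^{-1}(c)$, in the precise form given by \cite[Chapter~4, Theorem~7.1]{Skr1}. Concretely, I would restrict $\mathfrak{K}_V$ to $M_c$ and recover the sequence $(\lambda_j, \vec{u}_j)$ as constrained critical pairs via the Lagrange multiplier rule.

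First I would verify that $M_c$ is a closed, bounded, symmetric $C^1$ submanifold of $V = V_0$: symmetry follows from the evenness of $\mathfrak{F}_V$ in (i); boundedness from the coercivity (iii); and regularity of the value $c$ from (ii), which gives $\langle \mathfrak{F}'_V(\vec{u}),\vec{u}\rangle \ge \nu(\|\vec{u}\|_{m,2}) > 0$ for every $\vec{u}\neq\vec{0}$, hence $\mathfrak{F}'_V(\vec{u}) \neq 0$ on $M_c$ (note that $\vec{0}\notin M_c$ because $\mathfrak{F}_V(\vec{0})=0\neq c$). Next, applying the equivariant LS scheme to $|\mathfrak{K}_V|$ on the symmetric manifold $M_c$, one forms the Krasnoselskii-genus minimax levels
\begin{equation*}
c_j := \inf_{A\in \Sigma_j(M_c)} \sup_{\vec{u}\in A} |\mathfrak{K}_V(\vec{u})|, \qquad j\in\mathbb{N},
\end{equation*}
where $\Sigma_j(M_c)$ denotes the family of symmetric compact subsets of $M_c$ of genus $\ge j$. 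Infinite-dimensionality of $V$ makes each $\Sigma_j(M_c)$ nonempty (take a $j$-dimensional subspace and intersect with $M_c$), and (i) forces $c_j>0$; standard genus-deformation arguments then produce countably many distinct constrained critical points $\vec{u}_j\in M_c$ of $\mathfrak{K}_V|_{M_c}$. The Lagrange multiplier rule gives $\mathfrak{K}'_V(\vec{u}_j)=\mu_j\mathfrak{F}'_V(\vec{u}_j)$ with $\mu_j\in\mathbb{R}$; since $\mathfrak{K}'_V(\vec{u}_j)\neq 0$ by (i) we obtain $\mu_j\neq 0$, so setting $\lambda_j=1/\mu_j$ delivers the desired pairs solving (\ref{e:BifE.1}).

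The main obstacle is producing the compactness required to run the deformation argument, i.e., a Palais--Smale-type condition for $\mathfrak{K}_V|_{M_c}$. This is precisely where the $(S)_+$ structure of $\mathfrak{F}'$ from conclusion C) of Theorem~\ref{th:6.2} intervenes: any constrained PS sequence $(\vec{u}_k)\subset M_c$ with $\mathfrak{K}'_V(\vec{u}_k)-\mu_k \mathfrak{F}'_V(\vec{u}_k)\to 0$ is already bounded in $V$ by membership in $M_c$, and because $K$ depends only on derivatives of order $\le m-1$ the map $\mathfrak{K}'_V:V\to V_0^*$ is completely continuous by Rellich--Kondrachov. These facts together with the uniform positivity $\langle\mathfrak{F}'_V(\vec{u}),\vec{u}\rangle\ge\nu(\|\vec{u}\|_{m,2})$ (coupled with the a priori bound $\|\mathfrak{F}'_V(\vec{u})\|$ on $M_c$ from Theorem~\ref{th:6.1}) keep the multipliers $\mu_k$ bounded, and the $(S)_+$ property upgrades the weak limit $\vec{u}_k\rightharpoonup\vec{u}_*$ to strong convergence, closing the PS condition. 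With this in hand, all abstract hypotheses of \cite[Chapter~4, Theorem~7.1]{Skr1} are in place, yielding the desired sequence $(\lambda_j,\vec{u}_j)\subset\mathbb{R}\times\mathfrak{F}_V^{-1}(c)$.
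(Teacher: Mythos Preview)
Your approach is essentially the same as the paper's: the paper gives no detailed argument but simply states that the result follows immediately from Theorem~\ref{th:6.2} (providing the $(S)_+$ structure) together with \cite[Chapter~4, Theorem~7.1]{Skr1}, which is precisely the abstract Ljusternik--Schnirelmann result on level sets that you invoke. Your write-up is an expanded version of exactly this citation, with the verification of the hypotheses spelled out.
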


  Corollary~\ref{cor:Bi.2.2} and Theorem~\ref{th:6.2} directly lead to:

\begin{theorem}\label{th:BifE.3}
Under Hypothesis~\ref{hyp:BifE.1}, let $\vec{u}_0\in V$ satisfy $\mathfrak{F}'_V(\vec{u}_0)=0$ and $\mathfrak{K}'_V(\vec{u}_0)=0$.
Suppose that $(\lambda^\ast, \vec{u}_0)$
is a bifurcation point for (\ref{e:BifE.1}). Then the linear problem
 \begin{eqnarray}\label{e:BifE.3}
 &&\sum^N_{j=1}\sum_{|\alpha|,|\beta|\le m}
(-1)^{|\alpha|}D^\alpha\bigl[F^{ij}_{\alpha\beta}(x, \vec{u}_0(x),\cdots, D^m \vec{u}_0(x))D^\beta v^j\bigr]
\nonumber\\
&&=\lambda\sum^N_{j=1}\sum_{|\alpha|,|\beta|\le m-1}
(-1)^{|\alpha|}D^\alpha\bigl[K^{ij}_{\alpha\beta}(x, \vec{u}_0(x),\cdots, D^{m-1}\vec{u}_0(x))D^\beta v^j\bigr]\nonumber\\
&&\hspace{30mm}i=1,\cdots,N
 \end{eqnarray}
 with $\lambda=\lambda^\ast$
 has a nontrivial solution in $V_0$, namely $\vec{u}_0$ is a degenerate
critical point of  $\mathfrak{F}_V-\lambda^\ast\mathfrak{K}_V$.
\end{theorem}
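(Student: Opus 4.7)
The plan is to reduce to a direct application of Theorem~\ref{th:Bi.2.2} with $n=1$ by translating $\vec{u}_0$ to the origin of $V_0$. Concretely, I would set $H := V_0$, pick a small open neighborhood $U$ of $\theta\in H$ with $\vec{u}_0+U\subset V$, and define $\mathcal{F}(\vec{v}) := \mathfrak{F}_V(\vec{u}_0+\vec{v})$ and $\mathcal{G}(\vec{v}) := \mathfrak{K}_V(\vec{u}_0+\vec{v})$ on $U$. By hypothesis $\mathcal{F}'(\theta) = \mathfrak{F}_V'(\vec{u}_0) = 0$ and $\mathcal{G}'(\theta) = \mathfrak{K}_V'(\vec{u}_0) = 0$, and the shift $\vec{u}\mapsto\vec{u}-\vec{u}_0$ sends bifurcating solutions of (\ref{e:BifE.1}) near $(\lambda^\ast,\vec{u}_0)$ to bifurcating solutions of $\mathcal{F}'(\vec{v}) = \lambda\mathcal{G}'(\vec{v})$ near $(\lambda^\ast,\theta)$; so $(\lambda^\ast,\theta)$ is a bifurcation point of the translated equation.

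The bulk of the proof will consist in verifying the structural hypotheses of Theorem~\ref{th:Bi.2.2}. For $\mathcal{F}$, the plan is to appeal to Theorem~\ref{th:6.2}: writing $D(\nabla\mathcal{F})(\vec{v}) = P(\vec{v})+Q(\vec{v})$ as there, parts D(i)--(iv) of that theorem supply uniform positive definiteness of $P$, compactness of each $Q(\vec{v})$, norm continuity of $Q$ and strong continuity of $P$, which give (D2)--(D4) in Hypothesis~\ref{hyp:1.1}; condition (D1) is vacuous because $X=H$. Here I also need the observation that translating the integrand $F$ by the fixed field $\bigl(\vec{u}_0, D\vec{u}_0, \dots, D^m\vec{u}_0\bigr)$ preserves \textsf{Hypothesis}~$\mathfrak{F}_{2,N,m,n}$ near $\xi=0$ (possibly with enlarged majorants $\mathfrak{g}_1,\mathfrak{g}_2$); this is a pointwise check directly from the estimates (\ref{e:6.1})--(\ref{e:6.2}). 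For $\mathcal{G}$, since $K$ involves no $m$-th order derivatives of $\vec{u}$, clause~(iii) of Theorem~\ref{th:6.1}B applies: $\mathcal{G}$ is $C^2$, every $\mathcal{G}''(\vec{v})\in\mathscr{L}_s(H)$ is compact (completely continuous), and $\mathcal{G}''(\vec{v})\to\mathcal{G}''(\theta)$ in operator norm as $\vec{v}\to\theta$, which is precisely condition~(ii) of Theorem~\ref{th:Bi.2.2}.

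With these verifications in hand, Theorem~\ref{th:Bi.2.2} immediately delivers that $\lambda^\ast$ is an eigenvalue of $\mathcal{F}''(\theta)w = \lambda\mathcal{G}''(\theta)w$ on $H$. Reading off $\mathcal{F}''(\theta)$ and $\mathcal{G}''(\theta)$ from the formula (\ref{e:6.5}) applied to $F$ and $K$ at $\vec{u}=\vec{u}_0$, this eigenvalue equation is exactly the weak form on $V_0$ of the linear system (\ref{e:BifE.3}) with $\lambda=\lambda^\ast$. Hence (\ref{e:BifE.3}) admits a nontrivial weak solution in $V_0$, which means that $\vec{u}_0$ is a degenerate critical point of $\mathfrak{F}_V-\lambda^\ast\mathfrak{K}_V$.

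The only step that requires any real analytic care is the translation invariance of \textsf{Hypothesis}~$\mathfrak{F}_{2,N,m,n}$ under replacing $F(x,\xi)$ by $F\bigl(x,\xi+D^\ast\vec{u}_0(x)\bigr)$, and this is routine; everything else is bookkeeping on top of Theorems~\ref{th:6.1}--\ref{th:6.2} and Theorem~\ref{th:Bi.2.2}, so I do not expect a genuine obstacle.
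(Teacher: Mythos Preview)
Your approach is correct and matches the paper's: the authors simply indicate that the theorem follows from Theorems~\ref{th:Bi.2.2} and~\ref{th:6.2}, which is exactly the reduction you spell out. One small simplification: you need not verify that the translated integrand still satisfies \textsf{Hypothesis}~$\mathfrak{F}_{2,N,m,n}$, since Theorem~\ref{th:6.2} already furnishes the decomposition $D(\nabla\mathfrak{F}_V)(\vec{u})=P(\vec{u})+Q(\vec{u})$ with properties D(i)--(iii) at every $\vec{u}\in V$, and these pass directly to $\tilde P(\vec{v}):=P(\vec{u}_0+\vec{v})$, $\tilde Q(\vec{v}):=Q(\vec{u}_0+\vec{v})$ to give (D2)--(D4) of Hypothesis~\ref{hyp:1.1} for the shifted functional.
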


The following are some sufficient criteria.
By  Corollary~\ref{cor:Bi.2.4.1}
and Theorem~\ref{th:6.2} we get:

\begin{theorem}\label{th:BifE.7}
Under Hypothesis~\ref{hyp:BifE.1}, let $\vec{u}_0\in V$ satisfy $\mathfrak{F}'_V(\vec{u}_0)=0$ and
$\mathfrak{K}'_V(\vec{u}_0)=0$. Suppose that  $\lambda^\ast$ is an isolated eigenvalue
of (\ref{e:BifE.3}) (this is true if the following Hypothesis~\ref{hyp:BifE.4} is satisfied), and that
$\mathfrak{K}''_V(\vec{u}_0)$ is either semi-positive or semi-negative.
Then $(\lambda^\ast, \vec{u}_0)\in\mathbb{R}\times V$ is a bifurcation point of
(\ref{e:BifE.1}),  and  one of the following alternatives occurs:
\begin{description}
\item[(i)] $(\lambda^\ast, \vec{u}_0)$ is not an isolated solution of (\ref{e:BifE.1}) in
 $\{\lambda^\ast\}\times V$;

\item[(ii)]  for every $\lambda\in\mathbb{R}$ near $\lambda^\ast$ there is a nontrivial solution $\vec{u}_\lambda$ of (\ref{e:BifE.1}) converging to $\vec{u}_0$ as $\lambda\to\lambda^\ast$;

\item[(iii)] there is an one-sided  neighborhood $\Lambda$ of $\lambda^\ast$ such that
for any $\lambda\in\Lambda\setminus\{\lambda^\ast\}$,
(\ref{e:BifE.1}) has at least two nontrivial solutions converging to
$\vec{u}_0$ as $\lambda\to\lambda^\ast$.
\end{description}
\end{theorem}


The following hypothesis can guarantee that that every eigenvalue
of (\ref{e:BifE.3}) is isolated.

\begin{hypothesis}\label{hyp:BifE.4}
{\rm Under Hypothesis~\ref{hyp:BifE.1},  assume that $\vec{u}_0\in V$ satisfy $\mathfrak{F}'_V(\vec{u}_0)=0$ and $\mathfrak{K}'_V(\vec{u}_0)=0$, and that the linear problem
 \begin{eqnarray}\label{e:BifE.4}
 \sum^N_{j=1}\sum_{|\alpha|,|\beta|\le m}
(-1)^{|\alpha|}D^\alpha\bigl[F^{ij}_{\alpha\beta}(x, \vec{u}_0(x),\cdots, D^m \vec{u}_0(x))D^\beta v^j\bigr]
=0,\quad i=1,\cdots,N
 \end{eqnarray}
has no nontrivial solutions in $V_0$, that is, $\mathfrak{F}''_V(\vec{u}_0)\in\mathscr{L}(V_0)$ has
a bounded linear inverse operator.}
\end{hypothesis}

Under Hypothesis~\ref{hyp:BifE.4}, by the arguments above Theorem~\ref{th:Bi.2.4},
 all eigenvalues of (\ref{e:BifE.3}) form a discrete subset of $\mathbb{R}$,
$\{\lambda_j\}^\infty_{j=1}$, which contains no zero and satisfies $|\lambda_j|\to\infty$
as $j\to\infty$; moreover, each $\lambda_j$ has finite multiplicity. Let $E_j\subset V_0$ be the eigensubspace of  (\ref{e:BifE.3}) associated with the eigenvalue $\lambda_j$, $j=1,2,\cdots$.
 From Corollary~\ref{cor:Bi.3} (resp. Corollary~\ref{cor:Bi.2.4.2}) and Theorem~\ref{th:6.2} we derive
 the first (resp. second) conclusion of the following theorem.


\begin{theorem}\label{th:BifE.5}
Under Hypothesis~\ref{hyp:BifE.4}, for an eigenvalue $\lambda_j$
of (\ref{e:BifE.3}) as above, there holds:
\begin{description}
\item[(1)]  $(\lambda_j, \vec{u}_0)\in\mathbb{R}\times V$ is a bifurcation point of
(\ref{e:BifE.1}) provided  that $\mathfrak{F}''_V(\vec{u}_0)$
commutes with $\mathfrak{K}''_V(\vec{u}_0)$ and that the positive and negative indexes of inertia of the restriction of $\mathfrak{F}''_V(\vec{u}_0)$ to $E_j$
  are different.
\item[(2)] The conclusions in Theorem~\ref{th:BifE.7} hold if  one of the following conditions is satisfied:
  \begin{description}
\item[(i)] $\mathfrak{F}''_V(\vec{u}_0)$ commutes with $\mathfrak{K}''_V(\vec{u}_0)$, and
$\mathfrak{F}''_V(\vec{u}_0)$ is either positive or negative on $E_j$;
\item[(ii)] $\mathfrak{F}''_V(\vec{u}_0)$ is  positive definite, i.e., for some $c>0$,
\begin{eqnarray*}
 \sum^N_{i,j=1}\sum_{|\alpha|,|\beta|\le m}\int_\Omega
  F^{ij}_{\alpha\beta}(x, \vec{u}_0(x),\cdots, D^m \vec{u}_0(x))D^\beta v^j\cdot D^\alpha v^i dx\ge c\|\vec{v}\|_{m,2},\quad\forall \vec{v}\in V.
 \end{eqnarray*}
\end{description}
  \end{description}
 \end{theorem}

\begin{remark}\label{rmk:BifE.6}
{\rm  When $N=1$, $V=W_0^{m,2}(\Omega)$, and for some $c>0$ it holds  that
\begin{eqnarray}\label{e:BifE.7}
(\mathfrak{F}'_V({u}),{u})_{m,2}\ge c\|u\|^2_{m,2}
\end{eqnarray}
near $0\in V$, it was proved in \cite[Chap.1, Theorem~3.5]{Skr2} that
$(\lambda^\ast, 0)$ is a bifurcation point of (\ref{e:BifE.1}) if and only if
 $\lambda^\ast$ is  an eigenvalue of (\ref{e:BifE.3}) with ${u}=0$.
Since $\mathfrak{F}'_V(0)=0$, it is clear that (\ref{e:BifE.7})
implies $\mathfrak{F}''_V(0)$ to be positive definite, that is,
 the condition (ii) in Theorem~\ref{th:BifE.5}
is satisfied at $u_0=0$. Hence Theorem~\ref{th:BifE.5}
is a significant generalization of \cite[Chap.1, Theorem~3.5]{Skr2}.
}
\end{remark}

Corollary~\ref{cor:Bi.3.2.1}
and Theorem~\ref{th:6.2} yield the following two results.

\begin{theorem}\label{th:BifE.7.1}
Let the assumptions of either Theorem~\ref{th:BifE.5}(2) or Theorem~\ref{th:BifE.7}
hold
with $V=V_0$ and $\vec{u}_0={0}$.
Suppose also that both ${F}(x,\xi)$ and ${K}(x,\xi)$ are even with respect to $\xi$,
and that $E_{\lambda^\ast}$ denotes the solution space  of  the linear problem (\ref{e:BifE.3}) with $\lambda=\lambda^\ast$
 in $V_0$.
  Then  one of the following alternatives holds:
 \begin{description}
\item[(i)] $(\lambda^\ast, {0})$ is not an isolated solution of (\ref{e:BifE.1}) in
 $\{\lambda^\ast\}\times V$;
 \item[(ii)] there exist left and right  neighborhoods $\Lambda^-$ and $\Lambda^+$ of $\lambda_\ast$ in $\mathbb{R}$
and integers $n^+, n^-\ge 0$, such that $n^++n^-\ge\dim E_{\lambda^\ast}$
and for $\lambda\in\Lambda^-\setminus\{\lambda^\ast\}$ (resp. $\lambda\in\Lambda^+\setminus\{\lambda^\ast\}$),
(\ref{e:BifE.1}) has at least $n^-$ (resp. $n^+$) distinct pairs of solutions of form $\{\vec{u},-\vec{u}\}$
 different from ${0}$, which converge to
 ${0}$ as $\lambda\to\lambda^\ast$.
\end{description}
  \end{theorem}

\begin{theorem}\label{th:BifE.7.2}
Let the assumptions of either Theorem~\ref{th:BifE.5} or Theorem~\ref{th:BifE.7}
hold
with $V=V_0$ and $\vec{u}_0={0}$,
and let $E_{\lambda^\ast}$ be the solution space  of  the linear problem (\ref{e:BifE.3}) with $\lambda=\lambda^\ast$
 in $V_0$.
 We have:
\begin{description}
\item[(I)]  If $\Omega$ is symmetric with respect to the origin,
 both $\mathfrak{F}_{V_0}$ and $\mathfrak{K}_{V_0}$ are invariant for the $\mathbb{Z}_2$-action
 in (\ref{e:Z2-action.2}), and (\ref{e:BifE.3}) with $\lambda=\lambda^\ast$ has no nontrivial solutions
 $\vec{u}\in V_0$ satisfying $\vec{u}(-x)=\vec{u}(x)\;\forall x\in\Omega$,
 then the conclusions in  Theorem~\ref{th:BifE.7.1},
 after ``pairs of solutions of form $\{\vec{u},-\vec{u}\}$" being changed into
 ``pairs of solutions of form $\{\vec{u}(\cdot), \vec{u}(-\cdot)\}$", still holds.
\item[(II)] Let $\mathbb{S}^1$ act on $\mathbb{R}^n$ by the orthogonal representation, and $\Omega$
be symmetric under the action, let both $\mathfrak{F}_{V_0}$ and $\mathfrak{K}_{V_0}$ are invariant for the $\mathbb{S}^1$ action on $V_0$
 in (\ref{e:S^1-action}). If the fixed point set of the induced $S^1$-action in $E_{\lambda^\ast}$ is $\{0\}$,
 then   the conclusions of Theorem~\ref{th:BifE.7.1},
 after ``$n^++n^-\ge\dim E_{\lambda^\ast}$" and ``pairs of solutions of form $\{\vec{u},-\vec{u}\}$" being changed into ``critical $\mathbb{S}^1$-orbits" and ``$n^++n^-\ge\frac{1}{2}\dim E_{\lambda^\ast}$", hold.
 \end{description}
 \end{theorem}

\begin{remark}\label{rm:BifE.8}
{\rm By Remark~\ref{rem:BifE.3}, some of the above results also hold if
 $\Omega\subset\mathbb{R}^n$ is replaced by the torus $\mathbb{T}^n=\mathbb{R}^n/\mathbb{Z}^n$.}
\end{remark}

The following is a result associated with Theorems~5.4.2 and 5.7.4 in \cite{EKBB}.

\begin{theorem}\label{th:BifE.9}
 Let $\Omega\subset\R^n$ be a bounded  Sobolev domain, $N\in\mathbb{N}$. Suppose that
 $$
\overline\Omega\times\prod^m_{k=0}\mathbb{R}^{N\times M_0(k)}\times [0, 1]\ni (x,
\xi,\lambda)\mapsto F(x,\xi;\lambda)\in\R
$$
is differentiable with respect to $\lambda$, and satisfies the following conditions:
 \begin{description}
 \item[(i)] All $F(\cdot;\lambda)$ satisfy Hypothesis~$\mathfrak{F}_{2,N,m,n}$ uniformly with respect to $\lambda\in [0,1]$, i.e., the inequalities (\ref{e:6.1}) and (\ref{e:6.2}) are uniformly satisfied for all $\lambda\in [0,1]$.
 \item[(ii)] For $2'_\alpha=1$ in Hypothesis~$\mathfrak{F}_{2,N,m,n}$, it holds that
 $$
\sup_{|\alpha|\le m}\sup_{1\le i\le N}\sup_\lambda\int_\Omega \left[|D_\lambda F(x,0;\lambda)|+ |D_\lambda F^i_{\alpha}(x, 0;\lambda)|^{2'_\alpha}\right]dx<\infty.
$$
 \item[(iii)] For all $i=1,\cdots,N$ and $|\alpha|\le m$,
 \begin{eqnarray*}
&&|D_\lambda F^i_\alpha(x,\xi;\lambda)|\le|D_\lambda F^i_\alpha(x,0;\lambda)|\\
&&+ \mathfrak{g}(\sum^N_{k=1}|\xi_0^k|)\sum_{|\beta|<m-n/2}\bigg(1+
\sum^N_{k=1}\sum_{m-n/2\le |\gamma|\le
m}|\xi^k_\gamma|^{2_\gamma }\bigg)^{2_{\alpha\beta}}\nonumber\\
&&+\mathfrak{g}(\sum^N_{k=1}|\xi^k_0|)\sum^N_{l=1}\sum_{m-n/2\le |\beta|\le m} \bigg(1+
\sum^N_{k=1}\sum_{m-n/2\le |\gamma|\le m}|\xi^k_\gamma|^{2_\gamma }\bigg)^{2_{\alpha\beta}}|\xi^l_\beta|;
\end{eqnarray*}
 where $\mathfrak{g}:[0,\infty)\to\mathbb{R}$ is a continuous, positive, nondecreasing function,
 and is constant if $m<n/2$.
 \end{description}
 Let $V_0$ be a closed subspace of $W^{m,2}(\Omega, \mathbb{R}^N)$ and $V=\vec{w}+V_0$
 for some $\vec{w}\in W^{m,2}(\Omega, \mathbb{R}^N)$. For each $\lambda\in [0,1]$,
  suppose that the functional
  $$
V\ni\vec{u}\mapsto\mathfrak{F}_\lambda(\vec{u})=\int_\Omega F(x, \vec{u},\cdots, D^m\vec{u};\lambda)dx
$$
has a critical point $\vec{u}_\lambda$ such that
 $[0, 1]\ni\lambda\mapsto \vec{u}_\lambda\in V$ is continuous. Then
one of the following alternatives occurs:
\begin{description}
\item[(I)] There exists certain $\lambda_0\in [0,1]$ such that $(\lambda_0, \vec{u}_{\lambda_0})$
           is a bifurcation point of $\nabla\mathfrak{F}_\lambda(\vec{u})=0$.
\item[(II)] Each $\vec{u}_\lambda$ is an isolated critical point of $\mathfrak{F}_\lambda$ and
$C_\ast(\mathfrak{F}_\lambda, \vec{u}_\lambda;{\bf K})=C_\ast(\mathfrak{F}_0, \vec{u}_0;{\bf K})$
for all $\lambda\in [0,1]$; moreover, $\vec{u}_\lambda$ is a local minimizer of $\mathfrak{F}_\lambda$ if and only if $\vec{u}_0$ is a local minimizer of $\mathfrak{F}_0$.
 \end{description}
\end{theorem}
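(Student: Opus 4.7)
\noindent\textbf{Proof proposal for Theorem~\ref{th:BifE.9}.}
The plan is to reduce the assertion at each fixed parameter value to Theorem~\ref{th:Bi.1.1}(I) by translating the critical point to the origin, and then to spread the resulting local statements over $[0,1]$ by a connectedness argument. Concretely, for each $\lambda_0\in[0,1]$ I would set $V_0$ as the ambient Hilbert space $H$ (which coincides with $X$), pick a small ball $\bar{B}_H(\vec{u}_{\lambda_0},2\delta)$, and for $\lambda$ sufficiently close to $\lambda_0$ define
\begin{equation*}
\mathcal{G}_\lambda(v):=\mathfrak{F}_\lambda(v+\vec{u}_\lambda),\qquad v\in \bar{B}_H(\theta,\delta).
\end{equation*}
By Theorems~\ref{th:6.1} and \ref{th:6.2} applied to each $F(\cdot,\cdot;\lambda)$ (whose Hypothesis~$\mathfrak{F}_{2,N,m,n}$ constants are uniform in $\lambda$ thanks to~(i)), each $\mathcal{G}_\lambda\in C^1(\bar{B}_H(\theta,\delta))$ satisfies Hypothesis~\ref{hyp:1.1} with $X=H$, and $\theta$ is a critical point of $\mathcal{G}_\lambda$.

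Next I would check the continuity $\lambda\mapsto\mathcal{G}_\lambda\in C^1(\bar{B}_H(\theta,\delta))$ at $\lambda_0$, so that option~(1) of Theorem~\ref{th:Bi.1.1} applies. Writing
\begin{equation*}
\mathcal{G}_\lambda(v)-\mathcal{G}_{\lambda_0}(v)=\bigl[\mathfrak{F}_\lambda(v+\vec{u}_\lambda)-\mathfrak{F}_\lambda(v+\vec{u}_{\lambda_0})\bigr]+\bigl[\mathfrak{F}_\lambda(v+\vec{u}_{\lambda_0})-\mathfrak{F}_{\lambda_0}(v+\vec{u}_{\lambda_0})\bigr],
\end{equation*}
the first bracket tends to zero uniformly on $\bar{B}_H(\theta,\delta)$ because $\mathfrak{F}_\lambda$ is $C^1$ (with derivative bounded on bounded sets by Theorem~\ref{th:6.1}) and $\vec{u}_\lambda\to\vec{u}_{\lambda_0}$ in $V$; the second bracket is handled by the mean value theorem in $\lambda$, combined with hypotheses~(ii) and~(iii), which provide an $L^1$ majorant for $D_\lambda F$ and an $L^{2'_\alpha}$ majorant for $D_\lambda F^i_\alpha$ locally uniform in $(\vec{u},D\vec{u},\cdots,D^m\vec{u})$ on bounded sets of $V$. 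The same two brackets, applied to the G\^ateaux gradient formula~\eqref{e:6.4}, give the corresponding $C^1$-continuity of $\mathcal{G}'_\lambda$ after using Sobolev embeddings and H\"older's inequality exactly as in the proof of Theorem~\ref{th:6.1}.

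With this in hand I would argue as follows. Fix the Abel group ${\bf K}$ and consider
\begin{equation*}
\Lambda:=\bigl\{\lambda\in[0,1]\,:\,\vec{u}_\lambda\text{ is isolated in }K(\mathfrak{F}_\lambda)\text{ and }C_\ast(\mathfrak{F}_\lambda,\vec{u}_\lambda;{\bf K})\cong C_\ast(\mathfrak{F}_0,\vec{u}_0;{\bf K})\bigr\}.
\end{equation*}
Assume~(I) fails. Then no $(\lambda_0,\vec{u}_{\lambda_0})$ is a bifurcation point, so by the translation above and Theorem~\ref{th:Bi.1.1}(I) applied to $\mathcal{G}_\lambda$, there is a neighborhood of each $\lambda_0$ on which $\vec{u}_\lambda$ is the unique critical point of $\mathfrak{F}_\lambda$ in some fixed ball and the critical groups at $\vec{u}_\lambda$ are isomorphic to those at $\vec{u}_{\lambda_0}$; hence $\Lambda$ is both open and closed in $[0,1]$. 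Since $0\in\Lambda$ and $[0,1]$ is connected, $\Lambda=[0,1]$, which is the first half of~(II). For the minimizer statement I would invoke the standard fact (e.g.\ \cite[Example 6.45]{MoMoPa} quoted in the excerpt) that for an isolated critical point of a $C^1$ functional of class $(S)_+$, being a local minimizer is equivalent to $C_q(\cdot,\vec{u}_\lambda;{\bf K})=\delta_{q0}{\bf K}$; this is a topological invariant preserved along $\lambda\in[0,1]$ by what has just been shown.

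The main obstacle I anticipate is the verification of the $C^1$-continuity of $\lambda\mapsto\mathcal{G}_\lambda$: one must propagate the growth bounds of~(ii)--(iii) through the Sobolev machinery used in Theorem~\ref{th:6.1} so that the $\lambda$-difference of both $\mathfrak{F}_\lambda$ and $\mathfrak{F}'_\lambda$ becomes uniformly small on $\bar{B}_H(\vec{u}_{\lambda_0},2\delta)$. This is technical but follows the same pattern as the proof of Theorem~\ref{th:6.1}, only with $D_\lambda F^i_\alpha$ replacing $F^i_\alpha$ and the dominated convergence theorem supplying the final step. Everything else is essentially a packaging of Theorem~\ref{th:Bi.1.1}(I) plus a connectedness argument on $[0,1]$.
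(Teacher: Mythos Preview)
Your reduction of the first half of (II) to Theorem~\ref{th:Bi.1.1}(I) via translation to the origin and an open--closed subset argument on $[0,1]$ is essentially the paper's approach; the paper phrases the stability globally (choosing one bounded open set containing the whole curve $\lambda\mapsto\vec u_\lambda$ and applying \cite[Theorem~5.1]{CorH}), but the content is the same, and your handling of the $C^1$-continuity via hypotheses~(ii)--(iii) matches the paper's estimates.

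The gap is in the minimizer clause. The equivalence you invoke,
\[
\vec u_\lambda\ \text{is a local minimizer}\quad\Longleftrightarrow\quad C_q(\mathfrak{F}_\lambda,\vec u_\lambda;{\bf K})=\delta_{q0}{\bf K},
\]
is \emph{not} a standard fact for $C^1$ functionals of class $(S)_+$ in infinite dimensions. The reference \cite[Example~6.45]{MoMoPa} is quoted in this paper only for functions on $\mathbb{R}^N$ (see \eqref{e:Bi.2.17}); the forward implication is elementary, but the reverse one generally fails on infinite-dimensional spaces and is precisely what requires work here. The paper closes this gap by passing through the splitting theorem: from $C_q(\mathfrak{F}_0,\vec u_0;{\bf K})=\delta_{q0}{\bf K}$ one first deduces via \cite[Theorem~2.3]{Lu7} that the Morse index at $\vec u_0$ vanishes, hence by the shifting theorem $C_q(\mathfrak{F}^\circ_0,\theta;{\bf K})=\delta_{q0}{\bf K}$ on the finite-dimensional reduction; only then does the finite-dimensional characterization (Example~4 in \cite[p.~43]{Ch}) give that $\theta$ is a local minimizer of $\mathfrak{F}^\circ_0$, which is lifted back to $\mathfrak{F}_0$ by \cite[Theorem~2.2]{Lu7}. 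Your argument skips this reduction and therefore does not establish the reverse implication.
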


If $D_\lambda F(\cdot;\lambda)$ uniformly satisfy  the inequalities (\ref{e:6.1}) and (\ref{e:6.2})  for all $\lambda\in [0,1]$, (ii) can yield (iii). When $N=1$ and $V=W^{m,2}_0(\Omega, \mathbb{R}^N)$,  (II) and (I), in some sense,
generalize Theorems~5.4.2 and 5.7.4 in \cite{EKBB}, respectively.

\begin{proof}[Proof of Theorem~\ref{th:BifE.9}]
 Suppose that (I) does not hold. Then for each $\mu\in [0,1]$ we have neighborhoods of it and $u_\mu$ in $[0,1]$ and $V$
 respectively,
 $\mathscr{N}_\mu$ and $\mathscr{O}_\mu$, such that $u_\lambda$ is
 a unique critical point of $\mathfrak{F}_\lambda$  in
the closure $\overline{\mathscr{O}_\mu}$ of  $\mathscr{O}_\mu$ for each $\lambda\in\mathscr{N}_\mu$.
 Moreover, we can assume that each  $\mathscr{N}_\mu$ is an interval.
   Since $[0, 1]\ni\lambda\mapsto \vec{u}_\lambda\in V$ is continuous,
we can take $R>0$ such that all $u_\lambda$ and $\overline{\mathscr{O}_\mu}$
are contained in $B_V(0, R)$.
%

As in the proof of \cite[(4.8)]{Lu6} we can derive from (iii) that with $2'_\alpha$ in (ii),
\begin{eqnarray*}
&&|D_\lambda F(x,\xi;\lambda)|\le |D_\lambda F(x,0;\lambda)|+
\Big(\sum^N_{k=1}|\xi^k_0|\Big)\sum^N_{i=1}\sum_{|\alpha|<m-n/2}|D_\lambda F^i_{\alpha}(x, 0;\lambda)|\\
&&+\sum^N_{i=1}\sum_{m-n/2\le|\alpha|\le m}|D_\lambda F^i_{\alpha}(x, 0;\lambda)|^{q_\alpha}
+ \widehat{\mathfrak{g}}(\sum^N_{k=1}|\xi^k_0|)\bigg(1+\sum^N_{l=1}\sum_{m-n/2\le|\alpha|\le m}|\xi^l_\alpha|^{2_\alpha}\bigg)
\end{eqnarray*}
for all $(x,\xi,\lambda)$ and some  continuous, positive, nondecreasing function
$\widehat{\mathfrak{g}}:[0,\infty)\to\mathbb{R}$. Thus for every given $R>0$, as before we have a constant
$C=C(m,n,N, R)>0$ such that
$$
\sup\bigg\{\sum_{|\alpha|<m-n/2}|D^\alpha\vec{u}(x)|\,\bigg|\, x\in\Omega\bigg\}<C,
\quad\forall \vec{u}\in W^{m,2}(\Omega,\mathbb{R}^N)\;\hbox{with}\;\|\vec{u}\|_{m,2}\le 2R.
$$
It follows that for any $\lambda_i\in [0,1]$, $i=1,2$,
\begin{eqnarray*}
&&|\mathfrak{F}_{\lambda_1}(\vec{u})- \mathfrak{F}_{\lambda_2}(\vec{u})|\le |\lambda_2-\lambda_1|\int_\Omega\sup_\lambda
|D_\lambda F(x, \vec{u},\cdots, D^m\vec{u};\lambda)|dx\\
&&\le |\lambda_2-\lambda_1|\biggl[\sup_\lambda\int_\Omega |D_\lambda F(x,0;\lambda)|dx +
C\sum^N_{i=1}\sum_{|\alpha|<m-n/2}\sup_\lambda\int_\Omega |D_\lambda F^i_{\alpha}(x, 0;\lambda)|dx\\
&&+\sum^N_{i=1}\sum_{m-n/2\le|\alpha|\le m}\sup_\lambda\int_\Omega
|D_\lambda F^i_{\alpha}(x, 0;\lambda)|^{q_\alpha}dx\\
&&\hspace{20mm}+ \widehat{\mathfrak{g}}(C)\int_\Omega\bigg(1+\sum^N_{l=1}\sum_{m-n/2\le|\alpha|\le m}|D^\alpha u^l|^{2_\alpha}\bigg)dx
\biggr].
\end{eqnarray*}
This implies that $[0,1]\ni\lambda\mapsto \mathfrak{F}_\lambda$
is continuous in $C^0(\bar{B}_V(0, 2R))$. Similarly,
 (ii)--(iii) yield the continuity of the map $[0,1]\ni\lambda\mapsto \nabla\mathfrak{F}_\lambda$
 in $C^0(\bar{B}_V(0, 2R), V)$. Hence the map
 $[0,1]\ni\lambda\mapsto \mathfrak{F}_\lambda$
is continuous in $C^1(\bar{B}_V(0, 2R))$.

Replacing $F^{ij}_{\alpha\beta}(x, \vec{u}(x),\cdots, D^m \vec{u}(x))$
with $F^{ij}_{\alpha\beta}(x, \vec{u}(x),\cdots, D^m \vec{u}(x);\lambda)$
in the definitions of $P$ and $Q$ in Theorem~\ref{th:6.2},
we obtain operators $P_\lambda$ and $Q_\lambda$. Then
$D(\nabla\mathfrak{F}_\lambda)=P_\lambda+Q_\lambda$.
Put
$$
\widehat{\mathfrak{F}}_\lambda(\vec{u}):=\mathfrak{F}_\lambda(\vec{u}-\vec{u}_\lambda),\quad
\widehat{P}_\lambda(\vec{u}):=P_\lambda(\vec{u}-\vec{u}_\lambda),\quad
\widehat{Q}_\lambda(\vec{u}):=Q_\lambda(\vec{u}-\vec{u}_\lambda).
$$
Then the map  $[0,1]\ni\lambda\mapsto\widehat{\mathfrak{F}}_\lambda$
is continuous in $C^1(\bar{B}_V(0, R))$, and  $D(\nabla\widehat{\mathfrak{F}}_\lambda)=\widehat{P}_\lambda+ \widehat{Q}_\lambda$.
By Theorems~\ref{th:6.1}, \ref{th:6.2} and the conditions (i)-(iii) it is direct to prove that
$\widehat{\mathfrak{F}}_\lambda$, $\widehat{B}_\lambda=D(\nabla\widehat{\mathfrak{F}}_\lambda)$,
and $\widehat{P}_\lambda$, $\widehat{Q}_\lambda$ satisfy the conditions of
Proposition~\ref{prop:stablity1}. Hence each functional $\widehat{\mathfrak{F}}_\lambda$ satisfies
the (PS) condition on $\bar{B}_V(0,R)$.

Let $\mathscr{N}_\mu$ and $\mathscr{O}_\mu$ be as in the beginning of the proof.
Shrinking $\mathscr{N}_\mu$ we have $\epsilon>0$ such that
$\vec{u}_\lambda+ B_V(0,\epsilon)\subset \mathscr{O}_\mu$ for all $\lambda\in \mathscr{N}_\mu$.
Then $\widehat{\mathfrak{F}}_\lambda$ has a unique critical point $0$ in $B_V(0,\epsilon)$
 for each $\lambda\in \mathscr{N}_\mu$.
Applying Theorem~\ref{th:stablity1} to the family
$\{\widehat{\mathfrak{F}}_\lambda|_{B_V(0,\epsilon)}\,|\,\lambda\in \mathscr{N}_\mu\}$
we deduce that critical groups  $C_\ast(\mathfrak{F}_\lambda, \vec{u}_\lambda;{\bf K})=
C_\ast(\widehat{\mathfrak{F}}_\lambda, 0;{\bf K})$ are independent of choices of
$\lambda$ in $\mathscr{N}_\mu$. Note that $[0,1]$ may be covered by finitely many
 $\mathscr{N}_\mu$. We arrive at $C_\ast(\mathfrak{F}_\lambda, \vec{u}_\lambda;{\bf K})=C_\ast(\mathfrak{F}_0, \vec{u}_0;{\bf K})$
for all $\lambda\in [0,1]$.


 For the second claim in (II), it suffices to prove that
 $0\in V$ is a local minimizer of  $\widehat{\mathfrak{F}}_0$
 provided $0\in V$ is a local minimizer of $\widehat{\mathfrak{F}}_\lambda$.
 Since  $0\in V$ is an isolated critical point of $\widehat{\mathfrak{F}}_\lambda$,
 by Example~1 in \cite[page 33]{Ch} we have $C_q(\widehat{\mathfrak{F}}_\lambda, 0;{\bf K})=\delta_{q0}{\bf K}$
 for each $q\in\mathbb{N}_0$.   It follows that
 $C_q(\widehat{\mathfrak{F}}_0, 0;{\bf K})=\delta_{q0}{\bf K}$ for each $q\in\mathbb{N}_0$.
 By \cite[Theorem~2.3]{Lu7} (or Theorem~\ref{th:A.3})   this means that the Morse
  index of $\widehat{\mathfrak{F}}_0$ at $0\in V$
 must be zero, and
 $C_q(\widehat{\mathfrak{F}}^\circ_0, 0;{\bf K})=\delta_{q0}{\bf K}$
 for each $q\in\mathbb{N}_0$, where $\widehat{\mathfrak{F}}^\circ_0$ is the finite dimensional reduction
 of $\widehat{\mathfrak{F}}_0$ near $0\in V^0:={\rm Ker}(D(\nabla\mathfrak{F}_\lambda)(\vec{u}_0))$.
   By Example~4 in \cite[page 43]{Ch},  $0\in V^0$ is a local minimizer of
 $\widehat{\mathfrak{F}}^\circ_0$.
  From this and \cite[Theorem~2.2]{Lu7} (or Theorem~\ref{th:A.2} with $\lambda=0$) it follows that
 $0\in V$ must be  a local minimizer of  $\widehat{\mathfrak{F}}_0$.
 \end{proof}

\section{Bifurcations for quasi-linear elliptic systems
without growth restrictions}\label{sec:BifE.3}
\setcounter{equation}{0}

Now let us begin with some applications of results in Section~\ref{sec:BBH}.
For simplicity we only consider the Dirichlet boundary conditions. So
till the end of this subsection, we take the Hilbert space $H:=W^{m,2}_0(\Omega,\mathbb{R}^N)$ with the usual inner product (\ref{e:6.2.2}). The following special case of \cite[Theorem 6.4.8]{Mor} is key for our arguments
in this subsection.

\begin{proposition}\label{prop:BifE.9.1}
  For a real $p\ge 2$ and an integer $k\ge m+\frac{n}{p}$,  let $\Omega\subset\R^n$ be a bounded
   domain with boundary of class $C^{k-1,1}$, $N\in\mathbb{N}$, and let bounded and measurable functions on $\overline{\Omega}$,
    $A^{ij}_{\alpha\beta}$, $i,j=1,\cdots,N$, $|\alpha|,|\beta|\le m$,  fulfill the following conditions:
    \begin{description}
\item[(i)]  $A^{ij}_{\alpha\beta}\in C^{k+|\alpha|-2m-1,1}(\overline{\Omega})$ if $2m-k<|\alpha|\le m$;
\item[(ii)] there exists $c_0>0$ such that
$$
\sum^N_{i,j=1}\sum_{|\alpha|=|\beta|=m}\int_\Omega
A^{ij}_{\alpha\beta}\eta^i_\alpha\eta^j_\beta\ge c_0\sum^N_{i=1}\sum_{|\alpha|=m}|\eta^i_\alpha|^2,\quad\forall \eta\in\mathbb{R}^{N\times M_0(m)}.
$$
\end{description}
Suppose that $\vec{u}=(u^1,\cdots,u^N)\in W^{m,2}_0(\Omega,\mathbb{R}^N)$ and $\lambda\in (-\infty, 0]$ satisfy
$$
\sum^N_{i,j=1}\sum_{|\alpha|,|\beta|\le m}\int_\Omega
(A^{ij}_{\alpha\beta}-\lambda\delta_{ij}\delta_{\alpha\beta})D^\beta u^i\cdot D^\alpha v^j dx=0,\quad\forall v\in W^{m,2}_0(\Omega,\mathbb{R}^N).
$$
Then $\vec{u}\in W^{k,p}(\Omega,\mathbb{R}^N)$. Moreover, for $f_j=\sum_{|\alpha|\le m}(-1)^{|\alpha|}D^\alpha f^j_\alpha$,
where
$$
f^j_\alpha\in\left\{\begin{array}{ll}
 W^{k-2m+|\alpha|,p}(\Omega),&\quad\hbox{if}\;|\alpha|>2m-k,\\
  L^p(\Omega),&\quad\hbox{if}\;|\alpha|\le 2m-k,
  \end{array}\right.
  $$
   if $\vec{u}\in W^{m,2}_0(\Omega,\mathbb{R}^N)$ satisfy
$$
\int_\Omega\sum^N_{j=1}\sum_{|\alpha|\le m}\left[\sum^N_{i=1}\sum_{|\beta|\le m}A^{ij}_{\alpha\beta}D^\beta u^i-f^j_\alpha      \right]D^\alpha v^jdx=0,\quad\forall v\in W^{m,2}_0(\Omega,\mathbb{R}^N),
$$
then there also holds $\vec{u}\in W^{k,p}(\Omega,\mathbb{R}^N)$.
\end{proposition}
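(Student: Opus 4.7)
The plan is to deduce both assertions directly from Morrey's regularity theorem \cite[Theorem~6.4.8]{Mor} by checking that our hypotheses match his conventions, with the first (homogeneous) statement being reduced to the second (inhomogeneous) one.

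First I would reduce the homogeneous claim to the inhomogeneous claim. Set the shifted coefficients
\[
\tilde A^{ij}_{\alpha\beta}:=A^{ij}_{\alpha\beta}-\lambda\,\delta_{ij}\delta_{\alpha\beta}.
\]
These differ from the $A^{ij}_{\alpha\beta}$ only by a constant tensor, so hypothesis (i) is preserved for $\tilde A^{ij}_{\alpha\beta}$. Moreover, since $\lambda\le 0$, the shift adds a nonnegative multiple of the identity at each order $|\alpha|=|\beta|\le m$; in particular on the top order $|\alpha|=|\beta|=m$ the G\aa{}rding-type inequality in (ii) holds for $\tilde A^{ij}_{\alpha\beta}$ with the same constant $c_0$ (in fact improved by $|\lambda|$). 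Thus the hypothesis that $\vec u\in W^{m,2}_0(\Omega,\mathbb{R}^N)$ weakly annihilates the operator with coefficients $\tilde A^{ij}_{\alpha\beta}$ is exactly the special case of the second assertion with $f^j_\alpha\equiv 0$ and coefficients $\tilde A$.

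Second, for the inhomogeneous claim I would invoke \cite[Theorem~6.4.8]{Mor}. Morrey's theorem produces $W^{k,p}$-regularity up to the boundary for weak solutions of a $2m$-th order linear elliptic system once: (a) the principal coefficients satisfy a strong Legendre(--Hadamard) condition, guaranteed by our (ii); (b) each coefficient $A^{ij}_{\alpha\beta}$ with $|\alpha|>2m-k$ lies in a Sobolev/Lipschitz class of order $k+|\alpha|-2m-1$, which under the relabeling $r=2m-|\alpha|$ is exactly $C^{k+|\alpha|-2m-1,1}(\overline\Omega)$, while coefficients with $|\alpha|\le 2m-k$ are required only to be bounded and measurable, both of which are precisely our (i); (c) the data $f^j_\alpha$ lie in $W^{k-2m+|\alpha|,p}(\Omega)$ if $|\alpha|>2m-k$ and in $L^p(\Omega)$ otherwise, which is what is assumed; and (d) $\partial\Omega$ is at least $C^{k-1,1}$, which is assumed. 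Since $\vec u\in W^{m,2}_0(\Omega,\mathbb{R}^N)$ has vanishing Dirichlet data and the test functions range over $W^{m,2}_0(\Omega,\mathbb{R}^N)$, the boundary condition of Morrey's theorem is fulfilled. The conclusion $\vec u\in W^{k,p}(\Omega,\mathbb{R}^N)$ follows.

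The only genuine work, and the main (purely bookkeeping) obstacle, is translating Morrey's indexing scheme, which labels coefficients by the differential orders $2m-|\alpha|$ and $2m-|\beta|$ of the factors $D^\beta u$ and $D^\alpha v$ in the bilinear form, into the $(\alpha,\beta)$-indexing used here; the strong ellipticity, boundary regularity, and coefficient smoothness then translate directly. Once this matching is done, both assertions are immediate consequences of \cite[Theorem~6.4.8]{Mor}.
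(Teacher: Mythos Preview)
Your proposal is correct and matches the paper's approach exactly: the paper introduces this proposition with the sentence ``The following special case of \cite[Theorem 6.4.8]{Mor} is key for our arguments in this subsection'' and gives no further proof, so your reduction of the homogeneous case to the inhomogeneous one and the bookkeeping translation into Morrey's indexing is precisely the justification the paper leaves implicit.
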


\begin{theorem}\label{th:BifE.9.2}
 Let a real $p\ge 2$, integers $k$ and $m$ satisfy $k> m+\frac{n}{p}$,  and let $\Omega\subset\R^n$  be a bounded
   domain with boundary of class $C^{k-1,1}$, $N\in\mathbb{N}$.
   Consider the Banach subspace  of $W^{k,p}(\Omega,\mathbb{R}^N)$,
  $$
  X_{k,p}:=W^{k,p}(\Omega,\mathbb{R}^N)\cap W^{m,2}_0(\Omega,\mathbb{R}^N).
  $$
   (It can be continuously embedded to the space $C^m(\overline{\Omega}, \mathbb{R}^N)$,
   and is dense in $H$).  Let  the functions
   \begin{eqnarray*}
F:\overline\Omega\times\prod^m_{k=0}\mathbb{R}^{N\times M_0(k)}\to \R\quad\hbox{and}\quad
K:\overline\Omega\times\prod^{m-1}_{k=0}\mathbb{R}^{N\times M_0(k)}\to \R
\end{eqnarray*}
    be of class $C^{k-m+3}$,  and let $\vec{u}_0\in C^k(\overline\Omega,\mathbb{R}^N)\cap W^{2,m}_0(\Omega,\mathbb{R}^N)$
  be a common critical point of functionals on $X_{k,p}$  given by
 \begin{eqnarray}\label{e:BifE.8.1}
&&\mathscr{L}_1(\vec{u})=\int_\Omega F(x,\vec{u},D\vec{u},\cdots,D^m\vec{u})dx,\quad\vec{u}\in X_{k,p},\\
&&\mathscr{L}_2(\vec{u})=\int_\Omega K(x,\vec{u},D\vec{u},\cdots,D^{m-1}\vec{u})dx,\quad\vec{u}\in X_{k,p}.\label{e:BifE.8.2}
\end{eqnarray}
Suppose also:
\begin{description}
\item[(a)]  there exists some $c>0$ such that for all $x\in\overline{\Omega}$
and for all $\eta=(\eta^{i}_{\alpha})\in\R^{N\times M_0(m)}$,
  \begin{eqnarray}\label{e:BifE.8.3}
\sum^N_{i,j=1}\sum_{|\alpha|=|\beta|=m}F^{ij}_{\alpha\beta}(x, \vec{u}_0(x),\cdots, D^m \vec{u}_0(x))\eta^i_\alpha\eta^j_\beta\ge
c\sum^N_{i=1}\sum_{|\alpha|= m}(\eta^i_\alpha)^2;
\end{eqnarray}
\item[(b)]  either
\begin{eqnarray*}
 \sum^N_{i,j=1}\sum_{|\alpha|,|\beta|\le m-1}
\int_\Omega K^{ij}_{\alpha\beta}(x, \vec{u}_0(x),\cdots, D^{m-1} \vec{u}_0(x))D^\beta v^j(x)D^\alpha v^i(x)dx\ge 0,\quad\forall v\in H,
 \end{eqnarray*}
or
\begin{eqnarray*}
 \sum^N_{i,j=1}\sum_{|\alpha|,|\beta|\le m-1}
\int_\Omega K^{ij}_{\alpha\beta}(x, \vec{u}_0(x),\cdots, D^{m-1} \vec{u}_0(x))D^\beta v^j(x)D^\alpha v^i(x)dx\le 0,\quad\forall v\in H;
 \end{eqnarray*}
\item[(c)]   $\lambda^\ast\in\mathbb{R}$ is an isolated eigenvalue of
the linear eigenvalue problem  in $H$,
 \begin{eqnarray}\label{e:BifE.3*}
 &&\sum^N_{j=1}\sum_{|\alpha|,|\beta|\le m}
(-1)^{|\alpha|}D^\alpha\bigl[F^{ij}_{\alpha\beta}(x, \vec{u}_0(x),\cdots, D^m \vec{u}_0(x))D^\beta v^j\bigr]
\nonumber\\
&&=\lambda\sum^N_{j=1}\sum_{|\alpha|,|\beta|\le m-1}
(-1)^{|\alpha|}D^\alpha\bigl[K^{ij}_{\alpha\beta}(x, \vec{u}_0(x),\cdots, D^{m-1}\vec{u}_0(x))D^\beta v^j\bigr]\nonumber\\
&&\hspace{30mm}i=1,\cdots,N.
 \end{eqnarray}
\end{description}
 Then $(\lambda^\ast, \vec{u}_0)\in\mathbb{R}\times X_{k,p}$ is a bifurcation point of
 \begin{eqnarray}\label{e:BifE.8.4}
d\mathscr{L}_1(\vec{u})=\lambda d\mathscr{L}_2(\vec{u}),\quad(\lambda,\vec{u})\in\mathbb{R}\times X_{k,p},
 \end{eqnarray}
  and  one of the following alternatives occurs:
\begin{description}
\item[(i)] $(\lambda^\ast, \vec{u}_0)$ is not an isolated solution of (\ref{e:BifE.8.4}) in
 $\{\lambda^\ast\}\times X_{k,p}$;

\item[(ii)]  for every $\lambda\in\mathbb{R}$ near $\lambda^\ast$ there is a nontrivial solution $\vec{u}_\lambda$ of (\ref{e:BifE.8.4}) converging to $\vec{u}_0$ as $\lambda\to\lambda^\ast$;

\item[(iii)] there is an one-sided  neighborhood $\Lambda$ of $\lambda^\ast$ such that
for any $\lambda\in\Lambda\setminus\{\lambda^\ast\}$,
(\ref{e:BifE.8.4}) has at least two nontrivial solutions converging to
$\vec{u}_0$ as $\lambda\to\lambda^\ast$.
\end{description}
\end{theorem}
\begin{proof}
For $s<0$ let $W^{s,p}(\Omega, \mathbb{R}^N)=[W_0^{-s,p'}(\Omega, \mathbb{R}^N)]^\ast$ as usual, where $p'=p/(p-1)$. Note that the $m$th power of the Laplace operator, $\triangle^m: X_{k,p}\to W^{k-2m,p}(\Omega, \mathbb{R}^N)$,  is an isomorphism,
and that its inverse, denoted by $\triangle^{-m}$, is from $W^{k-2m,p}(\Omega, \mathbb{R}^N)$ to $X_{k,p}$.

Since the functions $F$ and $K$ are of class $C^{k-m+3}$, and
  $X_{k,p}\hookrightarrow C^m(\overline{\Omega}, \mathbb{R}^N)$ is continuous,  using $\omega$-lemma (cf. \cite[Lemma~2.96]{Wend}) we had proved below \cite[Theorem~4.20]{Lu7}:\\
   {\bf I)} The functionals
  $\mathscr{L}_1$ and $\mathscr{L}_2$ are $C^{k-m+3}$.\\
   {\bf II)} The operators $A_1, A_2: X_{k,p}\to X_{k,p}$ given by
 \begin{eqnarray}\label{e:BifE.8.5}
 &&(A_1(\vec{u}))^i=\triangle^{-m}\sum_{|\alpha|\le m}(-1)^{m+|\alpha|}D^\alpha(F^i_\alpha(\cdot,
\vec{u}(\cdot),\cdots, D^m \vec{u}(\cdot))),\\
&&(A_2(\vec{u}))^i=\triangle^{-m}\sum_{|\alpha|\le m-1}(-1)^{m+|\alpha|}D^\alpha(K^i_\alpha(\cdot,
\vec{u}(\cdot),\cdots, D^{m-1} \vec{u}(\cdot)))\label{e:BifE.8.6}
 \end{eqnarray}
 for $i=1,\cdots,N$,
are of class $C^2$,  thus locally uniformly continuously differentiable, and
  maps $B_1, B_2: X_{k,p}\to\mathscr{L}_s(X_{k,p})$ given by
\begin{eqnarray}\label{e:BifE.8.7}
&&(B_1(\vec{u})\vec{v})^i= \triangle^{-m}\sum^N_{j=1}\sum_{\scriptsize\begin{array}{ll}
   &|\alpha|\le m,\\
   &|\beta|\le m
   \end{array}}(-1)^{m+|\alpha|}
D^\alpha(F^{ij}_{\alpha\beta}(\cdot, \vec{u}(\cdot),\cdots, D^m \vec{u}(\cdot))D^\beta v^j),\\
&&(B_2(\vec{u})\vec{v})^i= \triangle^{-m}\sum^N_{j=1}\sum_{\scriptsize\begin{array}{ll}
   &|\alpha|\le m-1,\\
   &|\beta|\le m-1
   \end{array}}(-1)^{m+|\alpha|}
D^\alpha(K^{ij}_{\alpha\beta}(\cdot, \vec{u}(\cdot),\cdots, D^{m-1} \vec{u}(\cdot))D^\beta v^j)\nonumber\\
\label{e:BifE.8.8}
\end{eqnarray}
 for $i=1,\cdots,N$,
are of class $C^1$.\\
{\bf III)} $B_1$ and $B_2$ as maps from $X_{k,p}$ to $\mathscr{L}_s(H)$
are uniformly continuous on any bounded subsets of $X_{k,p}$ because the above two equalities
can be used to extend $B_1(\vec{u})$ and $B_2(\vec{u})$ into operators in $\mathscr{L}(H)$, also denoted by
$B_1(\vec{u})$ and $B_2(\vec{u})$, see Claim~4.17 in \cite{Lu7}.

It is easily checked that
 \begin{eqnarray*}
 &&d\mathscr{L}_1(\vec{u})[\vec{v}]=(A_1(\vec{u}), \vec{v})_H,\quad dA_1(\vec{u})[\vec{v}]=B_1(\vec{u})\vec{v},\quad\forall
 \vec{u},\vec{v}\in X_{k,p},\\
 &&d\mathscr{L}_2(\vec{u})[\vec{v}]=(A_2(\vec{u}), \vec{v})_H,\quad dA_2(\vec{u})[\vec{v}]=B_2(\vec{u})\vec{v},\quad\forall
 \vec{u},\vec{v}\in X_{k,p}.
 \end{eqnarray*}
These show that $\mathscr{L}_1, \mathscr{L}_2:X_{k,p}\to\mathbb{R}$ are two $(B(\vec{u},r), H)$-regular functionals
 for some ball $B(\vec{u},r)\subset X_{k,p}$ centred at any given $\vec{u}\in X_{k,p}$.
(See Appendix~\ref{app:B} for the definition).

For each $\vec{u}\in X_{k,p}$, we may write $B_1(\vec{u})=P_1(\vec{u})+ Q_1(\vec{u})$, where
\begin{eqnarray}\label{e:BifE.8.9}
(P_1(\vec{u})\vec{v},\vec{w})_{m,2}&=& \sum^N_{i,j=1}\sum_{|\alpha|=|\beta|=m}
\int_\Omega F^{ij}_{\alpha\beta}(x, \vec{u}(x),\cdots, D^m \vec{u}(x))D^\beta v^j(x) D^\alpha w^i(x)dx\nonumber\\
&&+\sum^N_{i=1}\int_\Omega v^i(x) w^i(x)dx,\\
(Q_1(\vec{u})\vec{v},\vec{w})_{m,2}&=&\sum^N_{i,j=1}\sum_{\scriptsize\begin{array}{ll}
   &|\alpha|\le m, |\beta|\le m,\\
   &|\alpha|+|\beta|<2m
   \end{array}}   F^{ij}_{\alpha\beta}(x, \vec{u}(x),\cdots, D^m \vec{u}(x))D^\beta v^j(x)D^\alpha w^i(x)dx\nonumber\\
 &&-\sum^N_{i=1}\int_\Omega v^i(x) w^i(x)dx.\label{e:BifE.8.10}
\end{eqnarray}
Clearly, both $P_1(\vec{u})$ and $Q_1(\vec{u})$ are in $\mathscr{L}_s(H)$,
and $P_1(\vec{u})|_{X_{k,p}}, Q_1(\vec{u})|_{X_{k,p}}\in \mathscr{L}(X_{k,p})$.
Moreover, $Q_1(\vec{u})$ and $B_2(\vec{u})$ as operators in $\mathscr{L}_s(H)$ are compact.
By (\ref{e:BifE.8.3}), $P_1(\vec{u}_0)\in \mathscr{L}_s(H)$ is positive definite.
The assumption {\bf (b)} shows that $B_2(\vec{u}_0)$ as operators in $\mathscr{L}_s(H)$
is either semi-positive or semi-negative.
Note that the linear eigenvalue problem (\ref{e:BifE.3*}) with $v\in H$ is equivalent to
\begin{eqnarray}\label{e:BifE.8.11}
B_1(\vec{u}_0)\vec{v}=\lambda B_2(\vec{u}_0)\vec{v},\quad(\lambda,\vec{v})\in\mathbb{R}\times H.
\end{eqnarray}
By {\bf (c)}, $0\in\sigma(B_1(\vec{u}_0)-\lambda^\ast B_2(\vec{u}_0))$. Since
$B_1(\vec{u}_0)-\lambda^\ast B_2(\vec{u}_0)=P_1(\vec{u}_0)+ (Q_1(\vec{u}_0)-\lambda^\ast B_2(\vec{u}_0))$
is a sum of a positive operator and a compact self-adjoint one, and so Fredholm, we obtain
\begin{claim}\label{cl:BifE.9.2+}
 $\lambda^\ast\in\mathbb{R}$ is an isolated eigenvalue of
  (\ref{e:BifE.8.11}) of the finite multiplicity.
\end{claim}

For each fixed $\lambda\in\mathbb{R}$, it is not hard to prove that
\begin{eqnarray}\label{e:BifE.8.12}
A^{ij}_{\alpha\beta}:=F^{ij}_{\alpha\beta}(\cdot, \vec{u}_0(\cdot),\cdots, D^m \vec{u}_0(\cdot))-\lambda
K^{ij}_{\alpha\beta}(\cdot, \vec{u}_0(\cdot),\cdots, D^{m-1} \vec{u}_0(\cdot))
\end{eqnarray}
 satisfy the conditions of Proposition~\ref{prop:BifE.9.1}.
Hence the solution spaces of (\ref{e:BifE.8.11}) are contained in $X_{k,p}$.
\textsf{These imply that  Hypothesis~\ref{hyp:BBH.1}(ii) is satisfied.}

It remains to check that  Hypothesis~\ref{hyp:BBH.1}(i) holds.
Consider the complexification of  $H$ and $X_{k,p}$,
$H^{\mathbb{C}}=H+iH$ and $X^{\mathbb{C}}_{k,p}=X_{k,p}+iX_{k,p}$ (cf., \cite[p.14]{DaKr}).
 The inner product $(\cdot,\cdot)_H$ in (\ref{e:6.2.2}) is extended
 into a Hermite inner product $\langle\cdot,\cdot\rangle_H$ on $H^{\mathbb{C}}$
 in natural ways. Let $\mathbb{B}^C_\lambda$ be the natural complex linear extension on $H^{\mathbb{C}}$ of
 $\mathbb{B}_\lambda:=B_1(\vec{u}_0)-\lambda B_2(\vec{u}_0)$. Then
 $\mathbb{B}^C_\lambda|_{X^{\mathbb{C}}_{k,p}}$ is such an extension on
 $X^{\mathbb{C}}_{k,p}$ of $\mathbb{B}_\lambda|_{X^{\mathbb{C}}_{k,p}}$.
 Both are symmetric with respect to $\langle\cdot,\cdot\rangle_H$.

Let $\vec{g},\vec{h}\in X_{k,p}$. Since $\mathbb{B}^C_\lambda$ is a self-adjoint operator on $H^{\mathbb{C}}$,
for every $\tau\in\mathbb{R}\setminus\{0\}$ there exist unique $\vec{u},\vec{v}\in H$ such that
$\mathbb{B}^C_\lambda(\vec{u}+i\vec{v})-i\tau(\vec{u}+i\vec{v})=\vec{g}+i\vec{h}$,
which is equivalent to
\begin{eqnarray}\label{e:BifE.8.13}
\left.\begin{array}{ll}
&\sum^N_{k=1}\sum_{|\alpha|\le m}\sum_{|\beta|\le m}(-1)^{|\alpha|}D^\alpha (A^{jk}_{\alpha\beta}D^\beta u^k)+ \tau\triangle^mv^j=g^j,\\
&-\tau\triangle^mu^j+\sum^N_{k=1}\sum_{|\alpha|\le m}\sum_{|\beta|\le m}(-1)^{|\alpha|}D^\alpha (A^{jk}_{\alpha\beta}D^\beta v^k)=h^j,\\
&j=1,\cdots,N,
\end{array}\right\}
\end{eqnarray}
where $A^{ij}_{\alpha\beta}$ are as in (\ref{e:BifE.8.12}), $\triangle^mv^j=(-1)^m\sum_{|\alpha|=m}D^\alpha(D^\alpha v^j)$ and
$$
g^j=\sum_{|\alpha|\le m}(-1)^{|\alpha|}D^\alpha g^j_\alpha,\quad
h^j=\sum_{|\alpha|\le m}(-1)^{|\alpha|}D^\alpha h^j_\alpha.
$$
Define
\begin{eqnarray*}
&&\hat{A}^{jk}_{\alpha\beta}=A^{jk}_{\alpha\beta},\quad j,k=1,\cdots,N,\\
&&\hat{A}^{jk}_{\alpha\beta}=\tau\delta_{\alpha\beta}\delta_{j(k-N)},\quad j=1,\cdots,N,\;k=N+1,\cdots,2N,\\
&&\hat{A}^{jk}_{\alpha\beta}=-\tau\delta_{\alpha\beta}\delta_{(j-N)k},\quad j=N+1,\cdots,2N,\;k=1,\cdots,N,\\
&&\hat{A}^{jk}_{\alpha\beta}=A^{(j-N)(k-N)}_{\alpha\beta},\quad j,k=N+1,\cdots,2N,\\
&&\hat f^j_\alpha=g^j_\alpha,\;j=1,\cdots,N,\quad
\hat f^j_\alpha=h^{j-N}_\alpha,\;j=N+1,\cdots,2N,\\
&&\hat w^j_\alpha=u^j_\alpha,\;j=1,\cdots,N,\quad
\hat w^j_\alpha=v^{j-N}_\alpha,\;j=N+1,\cdots,2N.
\end{eqnarray*}
Then (\ref{e:BifE.8.13}) is equivalent to:
\begin{eqnarray}\label{e:BifE.8.14}
\sum^{2N}_{k=1}\sum_{|\alpha|\le m}\sum_{|\beta|\le m}(-1)^{|\alpha|}D^\alpha (\hat{A}^{jk}_{\alpha\beta}D^\beta \hat{w}^k)= \hat{f}^j,\quad
j=1,\cdots,2N.
\end{eqnarray}
 Note that (\ref{e:BifE.8.3}) implies
\begin{eqnarray*}
\sum^{2N}_{j,k=1}\sum_{|\alpha|=m=|\beta|}\hat{A}^{jk}_{\alpha\beta}\xi^j_\alpha\xi^k_\beta
&=&\sum^{N}_{j,k=1}\sum_{|\alpha|=m=|\beta|}{A}^{jk}_{\alpha\beta}\xi^j_\alpha\xi^k_\beta
+\sum^{N}_{j,k=1}\sum_{|\alpha|=m=|\beta|}{A}^{jk}_{\alpha\beta}\xi^{j+N}_\alpha\xi^{k+N}_\beta\\
&\ge& c\sum^N_{j=1}\sum_{|\alpha|=m}|\xi^j_\alpha|^2+ c\sum^N_{j=1}\sum_{|\alpha|=m}|\xi^{N+j}_\alpha|^2.
\end{eqnarray*}
Applying Proposition~\ref{prop:BifE.9.1} to (\ref{e:BifE.8.14}) we get
$\vec{\hat{w}}\in W^{k,p}(\Omega,\mathbb{R}^{2N})$, and thus
$\vec{u},\vec{v}\in X_{k,p}$.
Hence $i\tau$ is a regular value of $\mathbb{B}^C_\lambda|_{X^{\mathbb{C}}_{k,p}}$.

By Claim~\ref{cl:BifE.9.2+}, for each $\lambda$ near $\lambda^\ast$,  $\lambda\ne\lambda^\ast$,
 $\mathbb{B}^C_\lambda:H^{\mathbb{C}}\to H^{\mathbb{C}}$ is an isomorphism.
 Thus we may take $\tau=0$ in the arguments above. This shows that
 $0$ is also a regular value of $\mathbb{B}^C_\lambda|_{X^{\mathbb{C}}_{k,p}}$.
 Hence the  spectrum of $\mathbb{B}^C_\lambda|_{X^{\mathbb{C}}_{k,p}}$
 is bounded away from the imaginary axis.

Claim~\ref{cl:BifE.9.2+} implies that $N:={\rm Ker}(\mathbb{B}^C_{\lambda^\ast})={\rm Ker}(\mathbb{B}^C_{\lambda^\ast}|_{X^{\mathbb{C}}_{k,p}})$
 is of finite dimension.   Denote by $Y$ the intersection
of $X_{k,p}^{\mathbb{C}}$ and the orthogonal complement of $N$ in $(H^{\mathbb{C}},\langle\cdot,\cdot\rangle_H)$.
It is an invariant subspace of $\mathbb{B}^C_{\lambda^\ast}|_{X^{\mathbb{C}}_{k,p}}$,
and there exists a direct sum decomposition of Banach spaces, $X_{k,p}^{\mathbb{C}}=N\oplus Y$.
 The above arguments imply that
 the  spectrum of the restriction of $\mathbb{B}^C_{\lambda^\ast}|_{X^{\mathbb{C}}_{k,p}}$ to $Y$
  is bounded away from the imaginary axis. Hence
   $\sigma\left(\mathbb{B}^C_{\lambda^\ast}|_{X^{\mathbb{C}}_{k,p}}\right)\setminus\{0\}$
   has a positive distance to the imaginary axis.

 To summarize what we have proved, \textsf{Hypothesis~\ref{hyp:BBH.1}(i) holds.}
 Hence the desired conclusions follow from Corollary~\ref{cor:BBH.2}.
\end{proof}


Similarly, by Corollary~\ref{cor:BBH.3} we may obtain

\begin{theorem}\label{th:BifE.9.3}
The conclusions  of  Theorem~\ref{th:BifE.9.2} also hold true if the conditions {\bf (a)}--{\bf (c)} in Theorem~\ref{th:BifE.9.2}
are replaced by the following
\begin{description}
\item[(a')]  there exists some $c>0$ such that for all $x\in\overline{\Omega}$
and for all $\eta=(\eta^{i}_{\alpha})\in\R^{N\times M_0(m)}$,
  \begin{eqnarray}\label{e:BifE.8.3}
\sum^N_{i,j=1}\sum_{|\alpha|\le m, |\beta|\le m}F^{ij}_{\alpha\beta}(x, \vec{u}_0(x),\cdots, D^m \vec{u}_0(x))\eta^i_\alpha\eta^j_\beta\ge
c\sum^N_{i=1}\sum_{|\alpha|\le m}(\eta^i_\alpha)^2.
\end{eqnarray}
\end{description}
\end{theorem}



By Corollary~\ref{th:BBH.7} we immediately obtain the following two results.

\begin{theorem}\label{th:BifE.9.5}
Let the assumptions of one of Theorems~\ref{th:BifE.9.2} and \ref{th:BifE.9.3}
with $\vec{u}_0=0$ be satisfied.
Suppose also that both ${F}(x,\xi)$ and ${K}(x,\xi)$ are even with respect to $\xi$,
and that  $E_{\lambda^\ast}$ denotes the solution space of (\ref{e:BifE.3*})  with $\lambda=\lambda^\ast$.
Then  one of the following alternatives occurs:
 \begin{description}
\item[(i)] $(\lambda^\ast, {0})$ is not an isolated solution of (\ref{e:BifE.8.4}) in
 $\{\lambda^\ast\}\times X_{k,p}$.
 \item[(ii)] There exist left and right  neighborhoods $\Lambda^-$ and $\Lambda^+$ of $\lambda_\ast$ in $\mathbb{R}$
and integers $n^+, n^-\ge 0$, such that $n^++n^-\ge\dim E_{\lambda^\ast}$
and for $\lambda\in\Lambda^-\setminus\{\lambda^\ast\}$ (resp. $\lambda\in\Lambda^+\setminus\{\lambda^\ast\}$),
(\ref{e:BifE.8.4}) has at least $n^-$ (resp. $n^+$) distinct pairs of solutions of form $\{\vec{u},-\vec{u}\}$
 different from ${0}$, which converge to
 ${0}$ as $\lambda\to\lambda^\ast$.
\end{description}
\end{theorem}

\begin{theorem}\label{th:BifE.9.6}
Let the assumptions of one of Theorems~\ref{th:BifE.9.2} and \ref{th:BifE.9.3}
with $\vec{u}_0=0$ be satisfied, and let $E_{\lambda^\ast}$ be  the solution space  of  (\ref{e:BifE.8.11})
 with $\lambda=\lambda^\ast$. Then we have:
\begin{description}
\item[(I)]  If $\Omega$ is symmetric with respect to the origin,
 both $\mathscr{L}_1$ and $\mathscr{L}_2$ are invariant for the $\mathbb{Z}_2$-action
 in (\ref{e:Z2-action.2}), and (\ref{e:BifE.3*}) with $\lambda=\lambda^\ast$ has no nontrivial solutions
 $\vec{u}\in H=W^{m,2}_0(\Omega,\mathbb{R}^N)$ satisfying $\vec{u}(-x)=\vec{u}(x)\;\forall x\in\Omega$,
 then the conclusions in  Theorem~\ref{th:BifE.9.5},
 after ``pairs of solutions of form $\{\vec{u},-\vec{u}\}$" being changed into
 ``pairs of solutions of form $\{\vec{u}(\cdot), \vec{u}(-\cdot)\}$", still holds.
\item[(II)] Let $\mathbb{S}^1$ act on $\mathbb{R}^n$ by the orthogonal representation, let $\Omega$
be symmetric under the action,  both $\mathscr{L}_1$ and $\mathscr{L}_2$  be invariant for the $\mathbb{S}^1$ action on $H=W^{m,2}_0(\Omega,\mathbb{R}^N)$  in (\ref{e:S^1-action}).
If the fixed point set of the induced $S^1$-action in $E_{\lambda^\ast}$ is $\{0\}$,
 then   the conclusions of  Theorem~\ref{th:BifE.9.5},
 after ``$n^++n^-\ge\dim E_{\lambda^\ast}$" and ``pairs of solutions of form $\{\vec{u},-\vec{u}\}$" being changed into ``critical $\mathbb{S}^1$-orbits" and ``$n^++n^-\ge\frac{1}{2}\dim E_{\lambda^\ast}$", hold.
 \end{description}
\end{theorem}

\begin{remark}\label{rem:BifE.9.6+}
{\rm {\bf (i)}. For Theorem~\ref{th:BifE.9.6}(I), if both $\mathscr{L}_1$ and $\mathscr{L}_2$ are invariant for the $\mathbb{Z}_2$-action
 in (\ref{e:Z2-action.2}), and both ${F}(x,\xi)$ and ${K}(x,\xi)$ are even with respect to $\xi$,
then $\mathscr{L}_1$ and $\mathscr{L}_2$ are invariant for the product $\mathbb{Z}_2\times \mathbb{Z}_2$-action given by
(\ref{e:Z2-action.1}) and (\ref{e:Z2-action.2}),
\begin{eqnarray*}
 ([0],[0])\cdot\vec{u}=\vec{u},\quad ([1],[0])\cdot\vec{u}=-\vec{u},\quad ([0],[1])\cdot\vec{u}=\vec{u}(-\cdot),\quad
 ([1],[1])\cdot\vec{u}=-\vec{u}(-\cdot)
\end{eqnarray*}
for all $\vec{u}\in W^{m,2}(\Omega,\mathbb{R}^N)$. Since the $\mathbb{Z}_2$-action given by
(\ref{e:Z2-action.1}) has no nonzero fixed point, so is this product action.
By Remark~\ref{rem:Bif.3.4} and Corollary~\ref{th:BBH.7}
 the conclusions in  Theorem~\ref{th:BifE.9.5},
 after ``pairs of solutions of form $\{\vec{u},-\vec{u}\}$" being changed into
 `` critical $(\mathbb{Z}_2\times \mathbb{Z}_2)$-orbits", still holds.

\noindent{\bf (ii)}. For Theorem~\ref{th:BifE.9.6}(II), if both $\mathscr{L}_1$ and $\mathscr{L}_2$ are invariant
for the $\mathbb{S}^1$ action on $H=W^{m,2}_0(\Omega,\mathbb{R}^N)$  in (\ref{e:S^1-action}),
and both ${F}(x,\xi)$ and ${K}(x,\xi)$ are even with respect to $\xi$,
then $\mathscr{L}_1$ and $\mathscr{L}_2$ are invariant for the product $(\mathbb{S}^1\times \mathbb{Z}_2)$-action given by
(\ref{e:S^1-action}) and (\ref{e:Z2-action.1}),
\begin{eqnarray*}
 ([t],[0])\cdot\vec{u}(x)=\vec{u}([t]\cdot x),\quad
 ([t],[1])\cdot\vec{u}=-\vec{u}([t]\cdot x), \quad\forall [t]\in \mathbb{S}^1,\quad x\in\overline{\Omega}.
\end{eqnarray*}
This action has no nonzero fixed point yet.
By Remark~\ref{rem:Bif.3.4} and Corollary~\ref{th:BBH.7}
 the conclusions of  Theorem~\ref{th:BifE.9.5},
 after ``$n^++n^-\ge\dim E_{\lambda^\ast}$" and ``pairs of solutions of form $\{\vec{u},-\vec{u}\}$" being changed into ``critical $(\mathbb{S}^1\times \mathbb{Z}_2)$-orbits" and ``$n^++n^-\ge\frac{1}{2}\dim E_{\lambda^\ast}$", hold.
}
\end{remark}

\begin{corollary}\label{cor:BifE.9.7}
 For positive integers  $n>1$ and $m,N$, let real $p\ge 2$ be such that $m>1+\frac{n}{p}$,
 and let $\Omega\subset\R^n$  be a bounded
   domain with boundary of class $C^{m-1,1}$. Suppose that $C^{m+2}$ functions
$G:\overline{\Omega}\times\mathbb{R}^N\to\mathbb{R}$ and
$A^{ij}_{kl}=A^{ji}_{lk}:\overline{\Omega}\times\mathbb{R}^N\to\mathbb{R}$, $i,j=1,\cdots,N$
and $k,l=1,\cdots,n$,  satisfy  the following conditions:
 \begin{description}
\item[A.1)] 
   $\nabla_{\xi}G(x,0)=0$;
 \item[A.2)]  there exists $c>0$ such that
for $a.e.\;x\in\Omega$ and all $(\eta_k^i)\in\mathbb{R}^{N\times n}$,
\begin{eqnarray*}
\sum^N_{i,j=1}\sum^n_{k,l=1}A_{kl}^{ij}(x, 0)\eta^i_k\eta^j_l\ge
c\sum^N_{i=1}\sum^n_{k=1}(\eta^i_k)^2;
\end{eqnarray*}
    \item[A.3)] $\lambda^\ast$ is an eigenvalue of
    \begin{eqnarray}\label{e:BifE.8.14.1}
&&\int_\Omega\sum^N_{i,j=1}\sum^n_{k,l=1}A_{kl}^{ij}(x,0)D_ku^i D_lv^j dx-\int_\Omega \sum^N_{i,j=1}
\frac{\partial^2 G}{\partial\xi_i\partial\xi_j}(x,0)u^iv^j dx\nonumber\\
&&=\lambda\int_\Omega \sum^N_{i=1}u^iv^idx,\quad\forall \vec{v}\in W^{1,2}_0(\Omega,\mathbb{R}^N).
\end{eqnarray}
   \end{description}
Then with  the Banach subspace of  $W^{m,p}(\Omega,\mathbb{R}^N)$,  $X_{m,p}:=W^{m,p}(\Omega,\mathbb{R}^N)\cap W^{1,2}_0(\Omega,\mathbb{R}^N)$,
    $(\lambda^\ast, 0)\in\mathbb{R}\times X_{m,p}$ is a bifurcation point of the quasilinear eigenvalue problem
  \begin{eqnarray}\label{e:BifE.8.15}
&&-\sum^N_{i=1}\sum^n_{k,l=1}D_l(A_{kl}^{ij}(x,\vec{u})D_ku^i)
+\frac{1}{2}\sum^N_{i,r=1}\sum^n_{k,l=1}\nabla_{\xi_j}A^{ir}_{kl}(x,\vec{u}) D_ku^iD_lu^r-
\nabla_{\xi_j}G(x,\vec{u})\nonumber\\
&&=\lambda u^j\hspace{5mm}\quad\hbox{in}\;\Omega,\quad j=1,\cdots,N,\\
&&\vec{u}=0\quad\hbox{on}\;\partial\Omega,\label{e:BifE.8.16}
\end{eqnarray}
 and  one of the following alternatives occurs:
\begin{description}
\item[(i)] $(\lambda^\ast, 0)$ is not an isolated solution of (\ref{e:BifE.8.15})--(\ref{e:BifE.8.16}) in
 $\{\lambda^\ast\}\times X_{m,p}$;

\item[(ii)]  for every $\lambda\in\mathbb{R}$ near $\lambda^\ast$ there is a nontrivial solution $\vec{u}_\lambda$ of (\ref{e:BifE.8.15})--(\ref{e:BifE.8.16}) converging to $0$ as $\lambda\to\lambda^\ast$;

\item[(iii)] there is an one-sided  neighborhood $\Lambda$ of $\lambda^\ast$ such that
for any $\lambda\in\Lambda\setminus\{\lambda^\ast\}$,
(\ref{e:BifE.8.15})--(\ref{e:BifE.8.16}) has at least two nontrivial solutions converging to
$0$ as $\lambda\to\lambda^\ast$.
\end{description}
Moreover, if both $A^{ij}_{kl}(x,\xi)$ and $G(x,\xi)$ are even with respect to $\xi$,
and  $E_{\lambda^\ast}$ is the solution space of (\ref{e:BifE.8.14.1})  with $\lambda=\lambda^\ast$,
 then  one of the following alternatives holds:
 \begin{description}
\item[(iv)] $(\lambda^\ast, {0})$ is not an isolated solution of (\ref{e:BifE.8.15})--(\ref{e:BifE.8.16}) in
 $\{\lambda^\ast\}\times X_{m,p}$;
 \item[(v)] there exist left and right  neighborhoods $\Lambda^-$ and $\Lambda^+$ of $\lambda^\ast$ in $\mathbb{R}$
and integers $n^+, n^-\ge 0$, such that $n^++n^-\ge\dim E_{\lambda^\ast}$
and for $\lambda\in\Lambda^-\setminus\{\lambda^\ast\}$ (resp. $\lambda\in\Lambda^+\setminus\{\lambda^\ast\}$),
(\ref{e:BifE.8.15})--(\ref{e:BifE.8.16}) has at least $n^-$ (resp. $n^+$) distinct pairs of solutions of form $\{\vec{u},-\vec{u}\}$  different from ${0}$, which converge to
 ${0}$ as $\lambda\to\lambda^\ast$.
\end{description}
\end{corollary}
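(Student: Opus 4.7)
\noindent\textbf{Proof proposal for Corollary~\ref{cor:BifE.9.7}.}\quad The plan is to verify that the concrete choices of $F$ and $K$ fit the abstract framework of Theorem~\ref{th:BifE.9.2} with $k=m$, $\vec{u}_0=\theta$, and then invoke Theorem~\ref{th:BifE.9.5} for the symmetric case. First I would identify the Euler--Lagrange operators: a direct computation using the symmetry $A^{ij}_{kl}=A^{ji}_{lk}$ shows that the $L^2$-gradient of $\mathscr{L}_1$ at $\vec{u}$ in the $j$th coordinate equals
\begin{equation*}
-\sum_{i=1}^N\sum_{k,l=1}^n D_l\bigl(A^{ij}_{kl}(x,\vec{u})D_k u^i\bigr)
+\tfrac{1}{2}\sum_{i,r=1}^N\sum_{k,l=1}^n\nabla_{\xi_j}A^{ir}_{kl}(x,\vec{u})D_ku^iD_lu^r
-\nabla_{\xi_j}G(x,\vec{u}),
\end{equation*}
while the $L^2$-gradient of $\mathscr{L}_2$ at $\vec{u}$ is simply $\vec{u}$. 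Thus the bifurcation equation $d\mathscr{L}_1(\vec{u})=\lambda d\mathscr{L}_2(\vec{u})$ on $X_{m,p}$ is exactly \eqref{e:BifE.8.15}--\eqref{e:BifE.8.16}, and the linearization at $\vec{u}_0=\theta$ reproduces \eqref{e:BifE.8.14.1}. Hypothesis A.1) provides the $C^{m+2}$ regularity needed for $F,K\in C^{k-m+3}=C^{3}$ (with room to spare), and since $\nabla_\xi G(x,0)=0$, $\theta$ is a common critical point of $\mathscr{L}_1$ and $\mathscr{L}_2$.

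Next I would verify conditions (a), (b), (c) of Theorem~\ref{th:BifE.9.2}. For (a): at $\vec{u}_0=\theta$ the top-order coefficients of $F$ are $F^{ij}_{\alpha\beta}(x,0)=A^{ij}_{kl}(x,0)$ (with $|\alpha|=|\beta|=1$, $\alpha$ corresponding to $k$ and $\beta$ to $l$), so A.2) gives precisely the Legendre--Hadamard type coercivity required. For (b): the operator $B_2(\theta)\in\mathscr{L}_s(H)$ associated to $K(\xi)=\tfrac12\sum \xi_k^2$ is characterized by $(B_2(\theta)\vec{v},\vec{w})_H=\int_\Omega \vec{v}\cdot\vec{w}\,dx$, which is obviously semi-positive on $H=W^{1,2}_0(\Omega,\mathbb{R}^N)$ (in fact it is $(-\triangle)^{-1}$ in the Riesz representation and hence positive definite on $H$, so \emph{a fortiori} semi-positive). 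For (c): as noted in the statement, the uniform ellipticity in A.2) combined with standard spectral theory for second-order elliptic systems (cf.\ \cite[\S6.5]{Mor}) forces the spectrum of the linear eigenvalue problem \eqref{e:BifE.8.14.1} to be a discrete real sequence of eigenvalues of finite multiplicity accumulating only at $+\infty$; in particular every eigenvalue $\lambda^\ast$ is isolated. Applying Theorem~\ref{th:BifE.9.2} then yields the trichotomy (i)--(iii).

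Finally, for the second half, when $A^{ij}_{kl}(x,\xi)$ and $G(x,\xi)$ are even in $\xi$, both $F$ and $K$ are even in $\xi$, so $\mathscr{L}_1$ and $\mathscr{L}_2$ are invariant under the $\mathbb{Z}_2$-action in \eqref{e:Z2-action.1}. With $\vec{u}_0=\theta$ and $\dim E_{\lambda^\ast}\ge 2$, I would directly invoke Theorem~\ref{th:BifE.9.5} to produce the alternatives (iv)--(v). The main (non-routine) point in the whole argument is checking that the chosen $F$ and $K$ together with $\vec{u}_0=\theta$ satisfy the \emph{regularity} and \emph{spectral} conditions hidden in Hypothesis~\ref{hyp:BBH.1} used in the proof of Theorem~\ref{th:BifE.9.2}; this is precisely the content of Proposition~\ref{prop:BifE.9.1} applied to the coefficient matrix $A^{ij}_{\alpha\beta}:=A^{ij}_{kl}(x,0)-\lambda\delta_{ij}\delta_{kl}$ plus the lower-order term arising from $\partial^2 G/\partial\xi_i\partial\xi_j(x,0)$, and it works because A.1) gives $C^{m+2}$ (hence $C^{m-1,1}$) smoothness of all top-order coefficients, which is exactly the regularity threshold demanded in Proposition~\ref{prop:BifE.9.1}. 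The remaining details are identical to those already carried out in the proof of Theorem~\ref{th:BifE.9.2} and may be omitted.
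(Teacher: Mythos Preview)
Your approach is correct and matches the paper's: the corollary is proved precisely by specializing Theorem~\ref{th:BifE.9.2} (with the theorem's order parameter equal to $1$ and the theorem's regularity parameter equal to the corollary's $m$) to the given $F$, $K$, $\vec{u}_0=0$, and then invoking Theorem~\ref{th:BifE.9.5} for the even case. One small slip: with the theorem's $m=1$ and the theorem's $k$ equal to the corollary's $m$, the required smoothness is $C^{k-m+3}=C^{m+2}$, not $C^{3}$, so A.1) meets the hypothesis exactly rather than ``with room to spare''; this does not affect the argument.
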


\begin{proof}
Note that  A.2) implies  that each eigenvalue of (\ref{e:BifE.8.14.1}) is isolated (cf. \cite[\S6.5]{Mor}).
The claims above ``Moreover" can be obtained by
 applying Theorem~\ref{th:BifE.9.2} to the Hilbert space $H=W^{1,2}_0(\Omega,\mathbb{R}^N)$,  the Banach space
   $X_{m,p}:=W^{m,p}(\Omega,\mathbb{R}^N)\cap W^{1,2}_0(\Omega,\mathbb{R}^N)$ and $\vec{u}_0=0$,
  and functions
   \begin{eqnarray*}
  && F(x,\xi_1,\cdots,\xi_N;\eta^i_k)=\frac{1}{2}\sum^N_{i,j=1}\sum^n_{k,l=1}
A_{kl}^{ij}(x,\xi_1,\cdots,\xi_N)\eta_k^i\eta_l^j-G(x,\xi_1,\cdots,\xi_N),\\
&&K(x,\xi_1,\cdots,\xi_N;\eta^i_k)=\frac{1}{2}\sum^N_{k=1}\xi_k^2.
\end{eqnarray*}
Others follow from Theorem~\ref{th:BifE.9.5} directly.
\end{proof}

Similarly, under suitable assumptions as in Theorem~\ref{th:BifE.9.6} the corresponding conclusions hold.

\begin{remark}\label{rm:BifE.9.8}
{\rm {\bf (i)} In Corollary~\ref{cor:BifE.9.7}, if $\partial\Omega$ is of class $C^{2,1}$, and functions
$G, A^{ij}_{kl}=A^{ji}_{lk}:\overline{\Omega}\times\mathbb{R}^N\to\mathbb{R}$, $i,j=1,\cdots,N$ and $k,l=1,\cdots,n$,
   are all $C^5$,  taking $m=3$ and $p>n$ we see that
solutions in $X_{3,p}$  are classical solutions
(since $\vec{u}\in X_{3,p}\subset C^2(\overline{\Omega},\mathbb{R}^N)\cap W^{1,2}_0(\Omega,\mathbb{R}^N)$
satisfies $\vec{u}|_{\partial\Omega}=0$ by \cite[Theorem~9.17]{Bre}).
Similar conclusions hold true for Theorem~\ref{th:BifE.9.2}--Theorem~\ref{th:BifE.9.6}.\\
{\bf (ii)} By contrast, when $N=1$, $\Omega\subset\mathbb{R}^n$ is a bounded open subset,
$G, A^{ij}_{kl}=A^{ji}_{lk}:\overline{\Omega}\times\mathbb{R}\to\mathbb{R}$
is measurable in $x$ for all $\xi\in \mathbb{R}$, and of class $C^1$ in $\xi$ for $a.e.\;x\in\Omega$,
some authors obtained corresponding results in space $W^{1,2}_0(\Omega)\cap L^\infty(\Omega)$
under some additional growth conditions on $A^{ij}$ and $G$,
see \cite{Can,Can1} and references therein.}
\end{remark}

\begin{example}\label{ex:BifE.9.9}
{\rm  Let $p, k, m$ and $\Omega$ be as in Theorem~\ref{th:BifE.9.2}.
Assume
\begin{eqnarray*}
 &&F(x,\xi,\eta_1,\cdots,\eta_n)=(1+\sum^n_{i=1}|\eta_i+D_iu_0(x)|^2)^{1/2}-\mu\xi,\\
&&K(x,\xi,\eta_1,\cdots,\eta_n)=\frac{1}{2}(\xi+u_0(x))^2,
\end{eqnarray*}
where $\mu\in\mathbb{R}$ is a constant and
${u}_0\in C^{k-m+3}(\overline\Omega)$ satisfies the following equation
\begin{eqnarray}\label{e:BifE.8.17}
\sum^n_{i=1}D_i\left(\frac{D_iu_0}{\sqrt{1+|Du_0|^2}}\right)-\mu=\lambda^\ast u_0
\end{eqnarray}
for some constant $\lambda^\ast\in\mathbb{R}$. Consider  the bifurcation problem
\begin{eqnarray}\label{e:BifE.8.18}
\sum^n_{i=1}D_i\left(\frac{D_i(u+u_0)}{\sqrt{1+|D(u+u_0)|^2}}\right)-\mu=\lambda (u+u_0),\quad
u|_{\partial\Omega}=0,
\end{eqnarray}
where the left side is the Euler-Lagrange operator for the correponding functional
$\mathscr{L}_1$ as in (\ref{e:BifE.8.1}), that is, the area functional if $u_0=0$. The corresponding  linearized problem at the trivial
solution $u=0$ is
\begin{eqnarray}\label{e:BifE.8.19}
\sum^n_{i,j=1}D_i\left(\frac{\delta_{ij}(1+|Du_0|^2)-D_iu_0D_ju_0}{(1+|Du_0|^2)^{3/2}}D_ju\right)=\lambda u,\quad
u|_{\partial\Omega}=0,
\end{eqnarray}
which is a linear elliptic problem since it is easy to check that for all $\zeta\in\mathbb{R}^n$,
\begin{eqnarray*}
\sum^n_{i,j=1}\frac{\delta_{ij}(1+|Du_0|^2)-D_iu_0D_ju_0}{(1+|Du_0|^2)^{3/2}}\zeta_i\zeta_j&\ge&
\frac{1}{(1+|Du_0|^2)^{3/2}}|\zeta|^2\\
&\ge& \frac{1}{(1+\max|Du_0|^2)^{3/2}}|\zeta|^2.
\end{eqnarray*}
Hence each eigenvalue of (\ref{e:BifE.8.19}) is isolated.
Suppose that $\lambda^\ast$ is an eigenvalue of (\ref{e:BifE.8.19}). Since
${u}_0\in C^{k-m+3}(\overline\Omega)$ implies that $F$ is of class $C^{k-m+3}$,  we may
obtain some bifurcation results for (\ref{e:BifE.8.18}) near $(\lambda^\ast,0)\in\mathbb{R}\times X_{k,p}$ with Theorem~\ref{th:BifE.9.2}--Theorem~\ref{th:BifE.9.6},
which are also classical solutions for $k> 2+\frac{n}{p}$.

When $n=2$ (i.e., $\Omega\subset\mathbb{R}^2$),  (\ref{e:BifE.8.18})
occurred as a mathematical model for many problems of  hydrodynamics and theory of spring
 membrane (cf. \cite[\S5]{Bor}). In particular, if $\mu=0$ and $u_0=0$,
(\ref{e:BifE.8.18}) becomes
\begin{eqnarray}\label{e:BifE.8.20}
-(1+|\nabla u|^2)^{-3/2}(\triangle u+ u^2_yu_{xx}-2u_xu_yu_{xy}+ u_x^2u_{yy})=\lambda u,\quad
u|_{\partial\Omega}=0,
\end{eqnarray}
which  was studied in \cite{Bor, BorMa} with functional-topological
 properties of the Plateau operator. When  $\partial\Omega$ is of class $C^{2,1}$, $n=2$, $k=3$ and $p>2$
 our results  are supplement for \cite{Bor, BorMa}. }
\end{example}


\section{Bifurcations from deformations of domains}\label{sec:BifE.4}
\setcounter{equation}{0}

Let  $\Omega\subset\R^n$ be as in Theorem~\ref{th:6.1}. We also assume that
 it has $C^{m'}$ boundary $\partial\Omega$ for some integer $m'\ge m$. Let $H=W^{m,2}_0(\Omega,\mathbb{R}^N)$.
 By a {\it deformation} $\{\overline{\Omega}_t\}_{0\le t\le 1}$
of $\overline{\Omega}$ we mean a continuous curve of $C^{m'}$ embedding
$\varphi_t:\overline{\Omega}\to\overline{\Omega}$ such that $\overline{\Omega}_t=
\varphi_t(\overline{\Omega})$, $\varphi_0=id_{\overline{\Omega}}$ and
$\partial\Omega_t=\varphi_t(\partial\Omega)$ for $0\le t\le 1$.
Call the deformation $C^{m'}$ {\it smooth} (resp. {\it contracting}) if
$\partial\Omega_t$ depends $C^{m'}$-smoothly on $t$ in the sense that
$[0,1]\times\overline{\Omega}\to\overline{\Omega},\;(t,x)\mapsto \varphi_t(x)$
is $C^{m'}$ (resp. $\Omega_{t_2}\subset\Omega_{t_1}$ for $t_1<t_2$).
In the following we always assume that $\{\overline{\Omega}_t\}_{0\le t\le 1}$
is a $C^m$-smooth contracting of $\overline{\Omega}$.
(Such a deformation can always be obtained by the negative flow of a suitable Morse function on $\overline{\Omega}$.)
Then we have a Banach space isomorphism
\begin{eqnarray}\label{e:BifE.10-2}
\varphi_t^\ast:W^{m,2}_0(\Omega,\mathbb{R}^N)\to
W^{m,2}_0(\Omega_t,\mathbb{R}^N),\;\vec{u}\mapsto \vec{u}\circ\varphi_t^{-1}.
\end{eqnarray}
Let $F$  satisfy \textsf{Hypothesis} $\mathfrak{F}_{2,N,m,n}$.
It gives a family of functionals
\begin{eqnarray}\label{e:BifE.10-1}
\mathfrak{F}_t(\vec{u})=\int_{\Omega_t} F(x, \vec{u},\cdots, D^m\vec{u})dx,\quad\forall\vec{u}\in W^{m,2}_0(\Omega_t,\mathbb{R}^N),
\;0\le t\le 1.
\end{eqnarray}
The critical points of $\mathfrak{F}_t$ correspond to weak solutions of
\begin{eqnarray}\label{e:BifE.10}
\left.\begin{array}{ll}
&\sum_{|\alpha|\le m}(-1)^{|\alpha|}D^\alpha F^i_\alpha(x, \vec{u},\cdots, D^m\vec{u})=0\quad
\hbox{on}\;\Omega_{t},\quad i=1,\cdots,N,\\
&\hspace{20mm}D^i\vec{u}|_{\partial\Omega_{t}}=0,\quad i=0,1,\cdots,m-1.
\end{array}\right\}
\end{eqnarray}
Let $\vec{u}=0$ be a solution of (\ref{e:BifE.10}) with $t=0$. We say $t^\ast\in [0,1]$
to be a {\it bifurcation point} for the system (\ref{e:BifE.10}) if
there exist  sequences $t_k\to t^\ast$ and $(\vec{u}_k)\subset W^{m,2}_0(\Omega_{t_k}, \mathbb{R}^N)$
such that each $u_k$ is a nontrivial weak solution of (\ref{e:BifE.10}) with $t=t_k$ and
$\|u_k\|_{m,2}\to 0$. Recently, such a problem was studied for semilinear elliptic Dirichlet problems on a ball
 in \cite{PoWa}. We shall here  generalize their result. To this goal,
define $\mathcal{F}:[0,1]\times H\to\mathbb{R}$  by
\begin{eqnarray}\label{e:BifE.11}
\mathcal{F}(t,\vec{u})=\int_{\Omega_t} F(x, \vec{u}\circ\varphi_t^{-1},\cdots, D^m(\vec{u}\circ\varphi_t^{-1}))dx,
\quad\forall\vec{u}\in H.
 \end{eqnarray}
This is continuous, and $\mathcal{F}_t=\mathcal{F}(t,\cdot)=\mathfrak{F}_t\circ\varphi_t^\ast$,
that is, $\mathcal{F}_t$ is the pull-back of $\mathfrak{F}_t$ via $\varphi_t^\ast$.
Clearly, $\vec{u}\in H$ is a critical point of $\mathcal{F}_t$ if and only if $\varphi_t^\ast(\vec{u})$
is that of $\mathfrak{F}_t$ and both have the same Morse indexes and nullities. Note that $\vec{u}=0\in H$ is
the critical point of each $\mathcal{F}_t$ (and so $\mathfrak{F}_t$).
Denote by $\mu_t$ and $\nu_t$ the common Morse index and nullity of $\mathfrak{F}_t$ and
$\mathcal{F}_t$ at zeros. According to Smale's Morse index theorem \cite{Sma1} (precisely see  Uhlenbeck \cite[Th.3.5]{Uh} and Swanson \cite[Th.5.7]{Swa}), we have:

\begin{proposition}\label{prop:BifE.10}
Let $F$ be $C^4$ (for $m=1$) and $C^\infty$ (for $m>1$).  Suppose either that
 \begin{eqnarray}\label{e:BifE.12}
\left.\begin{array}{ll}
&\sum^N_{i,j=1}\sum_{|\alpha|,|\beta|\le m}
(-1)^{\alpha}D^\alpha\bigl[F^{ij}_{\alpha\beta}(x, 0,\cdots, 0)D^\beta v^j\bigr]
=0,\\
&\hspace{20mm}D^i\vec{u}|_{\partial\Omega}=0,\quad i=0,1,\cdots,m-1
\end{array}\right\}
 \end{eqnarray}
have no nontrivial solutions in $W^{m,2}_0(\Omega,\mathbb{R}^N)$ with compact
support in any of the manifolds $\Omega_t$, or that
(\ref{e:BifE.12}) has no solutions $\vec{u}\ne 0$ such that $\vec{u}$
vanishes on some open set in $\Omega$. Then
$$
\mu_0-\mu_1=\sum_{0\le t\le 1}\nu_t.
$$
 Moreover, if $\Omega_1$ has sufficiently small volume, then $\mu_1=0$.
\end{proposition}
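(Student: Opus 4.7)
The plan is to reduce the statement to a direct invocation of the Smale--Uhlenbeck--Swanson Morse index theorem, which is precisely designed for monotone families of elliptic boundary value problems on contracting domains. The bulk of the work is to verify its hypotheses in our setting, and only at the end to handle the small-volume claim by a Poincar\'e-type coerciveness argument.

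First I would work on the fixed reference space $V=W^{m,2}_0(\Omega,\mathbb{R}^N)$ via the pull-backs $\mathcal{F}_t=\mathfrak{F}_t\circ\varphi_t^\ast$ in (\ref{e:BifE.11}). Since $\varphi_t^\ast$ is a Banach-space isomorphism by (\ref{e:BifE.10-2}), preserving the critical point $\vec 0$ as well as Morse indices and nullities, it suffices to track $\mu_t,\nu_t$ for the family $\mathcal{F}_t$. Under the smoothness assumption on $F$ (and on $\{\varphi_t\}$), $(t,\vec u)\mapsto d^2\mathcal{F}_t(\vec 0)[\vec u,\vec u]$ is continuous, and defines a norm-continuous family of bounded self-adjoint operators $T_t\in\mathscr{L}_s(V)$ of the form ``positive definite + compact'' by Theorem~\ref{th:6.2}. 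Thus each $T_t$ has a discrete spectrum in $(-\infty,c]$ for any $c>0$, its Morse index $\mu_t=\dim E^-(T_t)$ and nullity $\nu_t=\dim\ker T_t$ are finite, and $t\mapsto \mu_t,\nu_t$ are integer-valued with $\mu_t$ semicontinuous.

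The second and central step is to transfer the problem back to the moving domain picture $\mathfrak{F}_t$, where the linearization is (\ref{e:BifE.12}) posed on $\Omega_t$ with zero Dirichlet boundary data up to order $m-1$. Here the contracting property $\Omega_{t_2}\subset\Omega_{t_1}$ ($t_1<t_2$) lets us identify eigenfunctions on smaller domains with functions on larger ones by extension by zero, and this yields monotonicity of the eigenvalues of the linearized problem in $t$. The unique continuation hypothesis (either ``no nontrivial solution of (\ref{e:BifE.12}) with compact support in any $\Omega_t$'', or ``no nontrivial solution of (\ref{e:BifE.12}) vanishing on any open subset'') rules out eigenvalues which remain stationary across a parameter interval after being extended by zero. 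These are exactly the hypotheses under which the Smale--Uhlenbeck--Swanson theorem (\cite[Th.3.5]{Uh}, \cite[Th.5.7]{Swa}) applies, yielding both that the crossing eigenvalues are discrete in $t$ and that each crossing of zero contributes exactly its multiplicity to the jump of $\mu_t$. Summing these contributions over $t\in[0,1]$ gives
\begin{equation*}
\mu_0-\mu_1=\sum_{0\le t\le 1}\nu_t,
\end{equation*}
which is the first assertion. The hardest part here is really the bookkeeping to see that the abstract statements in \cite{Uh,Swa}, phrased for second-order problems in \cite{Uh} and more generally in \cite{Swa}, cover our order-$2m$ system with the two alternative unique continuation hypotheses; I would follow Swanson's formulation almost verbatim.

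For the final claim, suppose $\Omega_1$ has small volume $|\Omega_1|$. By (iii) of Theorem~\ref{th:6.2} applied at $\vec u_0=\vec 0$ (with $R$ fixed small) we have, for some constants $C_1,C_2>0$ independent of $\Omega_1$,
\begin{equation*}
d^2\mathfrak{F}_1(\vec 0)[\vec v,\vec v]\ge C_1\|\vec v\|_{m,2}^2-C_2\|\vec v\|_{m-1,2}^2\qquad\forall \vec v\in W^{m,2}_0(\Omega_1,\mathbb{R}^N).
\end{equation*}
The Poincar\'e inequality on $\Omega_1$ (in the Dirichlet class $W^{m,2}_0$) gives $\|\vec v\|_{m-1,2}\le C(n,m)|\Omega_1|^{1/n}\|\vec v\|_{m,2}$, so shrinking $|\Omega_1|$ makes $d^2\mathfrak{F}_1(\vec 0)$ strictly positive on $W^{m,2}_0(\Omega_1,\mathbb{R}^N)\setminus\{\vec 0\}$. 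Consequently $\mu_1=0$, which completes the proof. The only subtle point in this last step is checking that $C_1,C_2$ can indeed be chosen independently of $t$ in the contracting family (they depend on $F$ via the moduli $\mathfrak{g}_1,\mathfrak{g}_2$ evaluated at the $L^\infty$-norm of $\vec u=\vec 0$, hence are uniform), which is immediate.
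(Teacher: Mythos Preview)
Your approach is essentially the same as the paper's: the proposition is stated there without proof, simply as a direct consequence of Smale's Morse index theorem in the form given by Uhlenbeck \cite[Th.~3.5]{Uh} and Swanson \cite[Th.~5.7]{Swa}, which is exactly what you invoke. One small correction: the G{\aa}rding-type estimate you use for the small-volume claim is item (iv), not (iii), of Theorem~\ref{th:6.2}.
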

 The time $t\in [0, 1]$ with $\nu_t\ne 0$ is
called a {\it conjugate point}.

For the sake of simplicity, we restrict to the case that \textsf{$\Omega$ is star-shaped},
and take
\begin{eqnarray}\label{e:BifE.star}
\Omega_t=\{tx\,|\,x\in\Omega\}\quad\hbox{and}\quad
\varphi_t(x)=tx\quad\hbox{for}\quad t\in (0, 1].
\end{eqnarray}
 Then since
$D^\alpha(\vec{u}\circ\varphi_t^{-1})(x)=\frac{1}{t^{|\alpha|}}(D^\alpha\vec{u})(x/t)$ we have
\begin{eqnarray}\label{e:BifE.13}
\mathcal{F}(t,\vec{u})&=&\int_{\Omega_t} F(x, \vec{u}(x/t),\cdots,
\frac{1}{t^m}(D^m\vec{u})(x/t))dx\nonumber\\
&=&\int_{\Omega} F(x, \vec{u}(x),\cdots,
(D^m\vec{u})(x);t)dx
 \end{eqnarray}
 where $F(x,\xi;t)=t^nF(tx, \xi^0, \frac{1}{t}\xi^1,\cdots, \frac{1}{t^m}\xi^m)$. Note that
 \begin{eqnarray}\label{e:BifE.14}
 D_tF(x,\xi;t)&=&nt^{n-1}F(tx, \xi^0, \frac{1}{t}\xi^1,\cdots, \frac{1}{t^m}\xi^m)+
 t^{n+1}\sum^n_{l=1}F_{x_l}(tx, \xi^0, \frac{1}{t}\xi^1,\cdots, \frac{1}{t^m}\xi^m)\nonumber\\
 &+& \sum^N_{i=1}\sum^m_{|\alpha|=1}t^{n-|\alpha|}F^i_{\alpha}(tx, \xi^0, \frac{1}{t}\xi^1,\cdots, \frac{1}{t^m}\xi^m),\\
  F^i_\alpha(x,\xi;t)&=&t^{n-|\alpha|}F^i_\alpha(tx, \xi^0, \frac{1}{t}\xi^1,\cdots, \frac{1}{t^m}\xi^m)
  \end{eqnarray}
 and thus
 \begin{eqnarray}\label{e:BifE.15}
 D_tF^i_\alpha(x,\xi;t)&=&(n-|\alpha|)t^{n-|\alpha|-1}F^i_\alpha(tx, \xi^0, \frac{1}{t}\xi^1,\cdots, \frac{1}{t^m}\xi^m)\nonumber\\
 &+&t^{n-|\alpha|+1}\sum^n_{l=1}F^i_{\alpha x_l}(tx, \xi^0, \frac{1}{t}\xi^1,\cdots, \frac{1}{t^m}\xi^m)\nonumber\\
 &+& \sum^N_{j=1}\sum^m_{|\beta|=1} t^{n-|\alpha|-|\beta|}F^{ij}_{\alpha\beta}(tx, \xi^0, \frac{1}{t}\xi^1,\cdots, \frac{1}{t^m}\xi^m).
   \end{eqnarray}

\begin{theorem}\label{th:BifE.11}
Let $\Omega\subset\mathbb{R}^n$ be a star-shaped bounded  domain
with $C^{m+1}$-smooth boundary, and let $\Omega_t$ and
$\varphi_t$ be as in (\ref{e:BifE.star}) for $t\in (0, 1]$. Assume that $F$  satisfy \textsf{Hypothesis} $\mathfrak{F}_{2,N,m,n}$,
$\vec{u}=0$ is a solution of (\ref{e:BifE.10}) and that
the conditions of Proposition~\ref{prop:BifE.10} are fulfilled.
Suppose also that for all $\alpha, i, l$,
\begin{eqnarray}\label{e:BifE.16}
&&|F^i_{\alpha x_l}(x,\xi)|\le  \mathfrak{g}(\sum^N_{k=1}|\xi_0^k|)\sum_{|\beta|<m-n/2}\bigg(1+
\sum^N_{k=1}\sum_{m-n/2\le |\gamma|\le
m}|\xi^k_\gamma|^{2_\gamma }\bigg)^{2_{\alpha\beta}}\nonumber\\
&&+\mathfrak{g}(\sum^N_{k=1}|\xi^k_0|)\sum^N_{l=1}\sum_{m-n/2\le |\beta|\le m} \bigg(1+
\sum^N_{k=1}\sum_{m-n/2\le |\gamma|\le m}|\xi^k_\gamma|^{2_\gamma }\bigg)^{2_{\alpha\beta}}|\xi^l_\beta|,
\end{eqnarray}
 where $\mathfrak{g}:[0,\infty)\to\mathbb{R}$ is a continuous, positive, nondecreasing function,  and is constant if $m<n/2$.
Then for each $\epsilon\in (0, 1)$
there is only a finite number of $t\in [\epsilon, 1]$ such that
$\mathcal{F}_t=\mathcal{F}(t,\cdot)$  has nonzero nullity $\nu_t$ at $\vec{u}=0$. Moreover,
$(t,0)\in (0, 1]\times W^{m,2}_0(\Omega,\mathbb{R}^N)$ is a bifurcation point  for the equation
\begin{eqnarray}\label{e:BifE.16.0}
\mathcal{F}_{\vec{u}}'(t, \vec{u})=0
\end{eqnarray}
 if and only if $\nu_t\ne 0$.
\end{theorem}

 This generalizes a recent result for semilinear elliptic Dirichlet problems on a ball
 in \cite{PoWa}.   If $n=\dim\Omega=1$ and $m\ge 1$ the similar result
 can be proved with \cite[Theorem~2.4]{Uh}, see \cite{Lu8}.

 \begin{proof}[Proof of Theorem~\ref{th:BifE.11}]
 The first claim follows from Proposition~\ref{prop:BifE.10} directly.

 Let $\nu_s\ne 0$ for some $s\in (0, 1)$.
For each fixed $\epsilon\in (0, 1)$, it follows from (\ref{e:BifE.14})--(\ref{e:BifE.16}) and \cite[Proposition~4.3]{Lu6}
that
$$
\overline\Omega\times\prod^m_{k=0}\mathbb{R}^{N\times M_0(k)}\times [\epsilon, 1]\ni (x,
\xi, t)\mapsto F(x,\xi;t)=t^nF(tx, \xi^0, \frac{1}{t}\xi^1,\cdots, \frac{1}{t^m}\xi^m)\in\R
$$
satisfies the conditions of  Theorem~\ref{th:BifE.9}.
 By the first claim we may assume that $\epsilon<s$, $\nu_\epsilon=0$
and $s_1<\cdots<s_k$ are all points in $[\epsilon,1]$ in which $\nu_t\ne 0$.
Clearly, we can assume $s=s_1$.  Proposition~\ref{prop:BifE.10} we deduce that
$\mu_t=\mu_s$ for all $t\in [\epsilon, s]$, and $\mu_t=\mu_s+\nu_s$ for all
$t\in (s,s_2)$. Thus $\vec{u}=0\in V$ is a nondegenerate critical point of
$\mathcal{F}_{t}$ for each $t\in [\epsilon, s]\cup (s,s_2)$.
It follows from \cite[Theorem~2.1]{Lu7} (or Theorem~\ref{th:A.1}) that
$$
C_q(\mathcal{F}_{t}, 0;{\bf K})=\delta_{q\mu_{s}}\;\forall t\in [\epsilon, s]\quad\hbox{and}\quad
C_q(\mathcal{F}_{t}, 0;{\bf K})=\delta_{q(\mu_{s}+\nu_s)}\;\forall t\in (s,s_2).
$$
For any $t_1<t_2$ satisfying $\epsilon\le t_1<s<t<t_2<s_2$,
applying  Theorem~\ref{th:BifE.9} to the family $[t_1,t_2]\ni t\mapsto\mathcal{F}_{t}$
we see that for some $\bar{t}\in[t_1, t_2]$, $(\bar{t},0)\in (0, 1]\times W^{m,2}_0(\Omega,\mathbb{R}^N)$ is a bifurcation point  for
(\ref{e:BifE.16.0}). Since $t_1$ and $t_2$ may be  arbitrarily close to $s$,
$(s,0)\in (0, 1]\times W^{m,2}_0(\Omega,\mathbb{R}^N)$ must be a bifurcation point for (\ref{e:BifE.16.0}).

Conversely, suppose that $(s,0)\in (0, 1]\times W^{m,2}_0(\Omega,\mathbb{R}^N)$ is a bifurcation point  for the equation
(\ref{e:BifE.16.0}). Then for sufficiently small $\rho\in (0,s)$ it is not hard to check that
$\mathcal{F}_t$ with $t\in\Lambda:=[s-\rho, s+\rho]\cap (0,1]$ satisfies the conditions of
Theorem~\ref{th:Ka1}.


\end{proof}

If $m=1$, as in the first paragraph of \cite[Appendix A]{Lu6}
we can write $F$ as
$$
\overline\Omega\times\mathbb{R}^N\times\mathbb{R}^{N\times n}\ni (x,
z,p)\mapsto F(x, z,p)\in\R.
$$
 Let $\kappa_n=2n/(n-2)$ for $n>2$, and $\kappa_n\in (2,\infty)$ for $n=2$.
Then (\ref{e:BifE.16}) means that there exist positive constants  $\mathfrak{g}'_1$,  $\mathfrak{g}'_2$ and
$s\in (0, \frac{\tau_n-2}{\tau_n})$, $r_\alpha\in (0,\frac{\tau_n-2}{2\tau_n})$ for each $\alpha\in \mathbb{N}_0^n$ with $|\alpha|=1$,  such that for $i=1,\cdots,N$, $l=1,\cdots,n$ and $|\alpha|=1$,
\begin{eqnarray*}
&|F_{z_jx_l}(x,z,p)|\le \mathfrak{g}'_1\left(1+\sum^N_{l=1}|z_l|^{\tau_n}+
\sum^N_{k=1}|p^k_\alpha|^2\right)^{s+\frac{1}{2}},\\
&|F_{p^i_\alpha x_l}(x,z,p)|\le \mathfrak{g}'_2\left(1+\sum^N_{l=1}|z_l|^{\tau_n}+
\sum^N_{k=1}|p^k_\alpha|^2\right)^{r_\alpha+\frac{1}{2}}.
\end{eqnarray*}
Clearly, the {\bf CGC} above Theorem~\ref{th:6.1} implies these.
Combing \cite[Proposition~A.1]{Lu6} we see that Theorem~\ref{th:BifE.11} leads to

\begin{theorem}\label{th:BifE.12}
Let $\Omega\subset\mathbb{R}^n$ be a star-shaped bounded Sobolev domain
 for $(2,1,n)$,   $\Omega_t=\{tx\,|\,x\in\Omega\}$ and
$\varphi_t(x)=tx$ for $t\in (0, 1]$. For a $C^4$ function
$$
\overline\Omega\times\mathbb{R}^N\times\mathbb{R}^{N\times n}\ni (x,
z,p)\mapsto F(x, z,p)\in\R
$$
satisfying {\bf CGC}.
Suppose that $\vec{u}=0$ is a solution of (\ref{e:BifE.10}) with $m=1$.
Then for
$$
\mathcal{F}:[0,1]\times W^{1,2}_0(\Omega,\mathbb{R}^N)\to\mathbb{R},\;(t,\vec{u})\mapsto
\mathcal{F}_t(\vec{u})=
\int_{\Omega_t} F\left(x, \vec{u}(x/t), \frac{1}{t}(D\vec{u})(x/t)\right)dx,
$$
and each $\epsilon\in (0, 1)$, there is only a finite number of $t\in [\epsilon, 1]$ such that
$\mathcal{F}_t$  has nonzero nullity $\nu_t$ at $\vec{u}=0$. Moreover,
$(t,0)\in (0, 1]\times W^{1,2}_0(\Omega,\mathbb{R}^N)$ is a bifurcation point  for the equation
$\mathcal{F}_{\vec{u}}'(t, \vec{u})=0$ if and only if $\nu_t\ne 0$.
\end{theorem}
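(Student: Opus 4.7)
The plan is to deduce Theorem~\ref{th:BifE.12} as a direct specialization of Theorem~\ref{th:BifE.11} to the first-order case $m=1$, using \cite[Proposition~A.1]{Lu6} to convert {\bf CGC} into the hypotheses needed there. I would verify the three ingredients required to invoke Theorem~\ref{th:BifE.11}: (i) that $F$ satisfies \textsf{Hypothesis} $\mathfrak{F}_{2,N,1,n}$; (ii) that the conditions of Proposition~\ref{prop:BifE.10} hold; and (iii) that the derivative bound (\ref{e:BifE.16}) holds for all $\alpha,i,l$ with $|\alpha|\le 1$.

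For (i), this is precisely \cite[Proposition~A.1]{Lu6}, which has just been recalled in the paragraph preceding the theorem. For (iii), when $m=1$ condition (\ref{e:BifE.16}) reduces to polynomial growth estimates on $F_{z_j x_l}$ and $F_{p^i_\alpha x_l}$ of the form exhibited in that same paragraph. But {\bf CGC} provides the much stronger uniform bounds
\begin{equation*}
|F_{z_j x_l}(x,z,p)|,\ |F_{p^i_\alpha x_l}(x,z,p)|\le \mu\Big(1+\sum_{l=1}^{N}|z_l|^2+\sum_{k,\alpha}|p^k_\alpha|^2\Big)^{1/2},
\end{equation*}
which, combined with the Sobolev exponent $\kappa_n$, comfortably imply (\ref{e:BifE.16}) with any admissible choice of the small parameters $s$ and $r_\alpha$. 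For (ii), the linearized problem (\ref{e:BifE.12}) with $m=1$ is a second-order linear elliptic system whose coefficients $F^{ij}_{\alpha\beta}(x,0,\dots,0)$ are smooth (because $F$ is $C^4$) and whose principal part is uniformly elliptic by the last line of {\bf CGC}; standard unique continuation for such systems forces any nontrivial weak solution of (\ref{e:BifE.12}) to be supported on all of $\Omega$, so the second alternative in Proposition~\ref{prop:BifE.10} holds automatically.

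With (i)--(iii) verified, Theorem~\ref{th:BifE.11} applies and yields both the finiteness of conjugate times on each $[\epsilon,1]$ and the equivalence $\nu_t\ne 0\Leftrightarrow(t,0)$ is a bifurcation point. The main obstacle, such as it is, is bookkeeping rather than conceptual: one must match the exponents $2_\gamma$ and $2_{\alpha\beta}$ appearing in \textsf{Hypothesis} $\mathfrak{F}_{2,N,1,n}$ and in (\ref{e:BifE.16}) with those furnished by {\bf CGC}. Specializing to $m=1$ gives $2_0=\kappa_n$ and $2_1=2$, after which the $L^\infty$-type bounds on the relevant first and second derivatives of $F$ supplied by {\bf CGC} are more than enough. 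No new analytic difficulty arises beyond what has already been carried out in Theorems~\ref{th:BifE.9}--\ref{th:BifE.11}.
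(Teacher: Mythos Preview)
Your proposal is correct and follows exactly the paper's route: the paper derives Theorem~\ref{th:BifE.12} from Theorem~\ref{th:BifE.11} by observing that {\bf CGC} implies both \textsf{Hypothesis} $\mathfrak{F}_{2,N,1,n}$ (via \cite[Proposition~A.1]{Lu6}) and the derivative bound (\ref{e:BifE.16}) (as spelled out in the paragraph immediately preceding the theorem). Your step (ii), checking the unique-continuation alternative of Proposition~\ref{prop:BifE.10}, is more explicit than the paper, which simply carries this hypothesis over from Theorem~\ref{th:BifE.11}; note, however, that ``standard unique continuation'' is genuinely standard only for $N=1$, so for systems this point deserves the same caveat the paper implicitly leaves open.
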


\begin{remark}\label{rem:BifE.12}
{\rm It is possible to remove the assumption
  that $\Omega$ is star--shaped in Theorems~\ref{th:BifE.11} and \ref{th:BifE.12}
if $F(x,0)=0\;\forall x$. For $\vec{u}\in W^{m,2}_0(\Omega,\mathbb{R}^N)$,
we extend $\varphi_t^\ast(\vec{u})\in W^{m,2}_0(\Omega_t,\mathbb{R}^N)$ to a function
on $\Omega$ by defining $\varphi_t^\ast(\vec{u})(x)=0$ outside $\Omega_t$.
Then for $0\le t_1<t_2\le 1$ and $\vec{u}\in V$ we have
\begin{eqnarray*}\label{e:BifE.20}
\mathcal{F}_{t_2}(\vec{u})- \mathcal{F}_{t_1}(\vec{u}) &=&\int_{\Omega_{t_1}} [F(x, \varphi_{t_2}^\ast(\vec{u}),\cdots,
D^m(\varphi_{t_2}^\ast(\vec{u})))-  F(x, \varphi_{t_1}^\ast(\vec{u}),\cdots,
D^m(\varphi_{t_1}^\ast(\vec{u})))]dx\\
&=&\mathfrak{F}_0(\varphi_{t_2}^\ast(\vec{u}))-\mathfrak{F}_0(\varphi_{t_1}^\ast(\vec{u}))\\
&=&\langle D\mathfrak{F}_0(s\varphi_{t_2}^\ast(\vec{u})+ (1-s)\varphi_{t_1}^\ast(\vec{u})), \varphi_{t_2}^\ast(\vec{u})-\varphi_{t_1}^\ast(\vec{u})\rangle
\end{eqnarray*}
for some $s\in [0, 1]$. Let $\iota_t:W^{m,2}_0(\Omega_t,\mathbb{R}^N)\to W^{m,2}_0(\Omega,\mathbb{R}^N)$
be the inclusion. Then $t\mapsto\iota_t\circ\varphi_t^\ast\in \mathscr{L}(W^{m,2}_0(\Omega,\mathbb{R}^N))$
is continuous. Fix $R>0$ we have  $C(R)>0$ such that
$$
\sup\{\|s\varphi_{t_2}^\ast(\vec{u})+ (1-s)\varphi_{t_1}^\ast(\vec{u})\|\;|\;s\in [0,1],\;\|\vec{u}\|_{m,2}\le R,\;
t_1, t_2\in [0, 1]\}\le C(R).
$$
It follows from  Theorem~\ref{th:6.1}(A) that for some $C_1(R)>0$,
\begin{eqnarray*}
&&|\mathcal{F}_{t_2}(\vec{u})- \mathcal{F}_{t_1}(\vec{u})|\le C_1(R)\|\varphi_{t_2}^\ast(\vec{u})-\varphi_{t_1}^\ast(\vec{u})\|_{m,2},\\
&&\quad\forall \|\vec{u}\|_{m,2}\le R,\;0\le t_1<t_2\le 1.
\end{eqnarray*}
Using this it should be able to prove that $t\mapsto \mathcal{F}_{t}|_{B(0, R)}\in C^0(B(0, R))$
is continuous. Similarly, by (i) of Theorem~\ref{th:6.1}(B) we may show that
$t\mapsto \nabla\mathcal{F}_{t}|_{B(0, R)}\in C^0(B(0, R), V)$
is continuous. Hence Theorem~\ref{th:Bi.1.1} is applicable. }
\end{remark}

 By increasing smoothness of $F$ and $\partial\Omega$, we can even use
Theorem~\ref{th:BB.5} and Proposition~\ref{prop:BifE.10} to
obtain a similar result to Rabinowitz bifurcation theorem \cite{Rab}
with neither the assumption  \textsf{Hypothesis} $\mathfrak{F}_{2,N,m,n}$
for $F$ nor the requirement that $\Omega\subset\mathbb{R}^n$ is  star-shaped.

 \begin{theorem}\label{th:BifE.13}
  Let a real $p\ge 2$, integers $k$ and $m$ satisfy $k> m+\frac{n}{p}$,  and let
   $\Omega\subset\R^n$ be a bounded
   domain with boundary of class $C^{k}$, $N\in\mathbb{N}$. Let
   $\{\overline{\Omega}_t\}_{0\le t\le 1}$ be a $C^k$-smooth contracting of $\overline{\Omega}$.
(Thus the Banach space isomorphism in (\ref{e:BifE.10-2}) is also a Banach space isomorphism
from $C^k(\overline\Omega,\mathbb{R}^N)$ (resp. $X_{k,p}$)
 to $C^k(\overline\Omega_t,\mathbb{R}^N)$ (resp. $X^t_{k,p}$), still denoted by
 $\varphi_t^\ast$, where
$X^t_{k,p}=C^k(\overline\Omega_t,\mathbb{R}^N)\cap W^{2,m}_0(\Omega_t,\mathbb{R}^N)$.)
Let
$$
F:\overline\Omega\times\prod^m_{k=0}\mathbb{R}^{N\times M_0(k)}\to\R
$$
be $C^{k-m+3}$ (resp. $C^\infty$) for $m=1$ (resp. $m>1$),
and satisfy Proposition~\ref{prop:BifE.10}.
Let $\mathcal{F}:[0,1]\times X_{k,p}\to\mathbb{R}$ be still defined by
the right side of (\ref{e:BifE.11}). Assume that  $\vec{u}=0$ is a solution of (\ref{e:BifE.10}) with $t=0$,
and that (\ref{e:BifE.8.3}) with $\vec{u}=0$ is satisfied.
(In this case Proposition~\ref{prop:BifE.10} and all arguments before it are also valid if $V$ is replaced
by $X_{k,p}$.) Fix a conjugate point $t_0\in (0, 1)$, i.e., $\nu_{t_0}\ne 0$.  Then
$(t_0, 0)\in (0, 1]\times X_{k,p}$ is a bifurcation point  for the equation
(\ref{e:BifE.10})  and  one of the following alternatives occurs:
\begin{description}
\item[(i)] $(t_0, 0)$ is not an isolated solution of (\ref{e:BifE.10}) in
 $\{t_0\}\times X_{k,p}$;

\item[(ii)]  for every $t\in (0,1)$ near $t_0$ there is a nontrivial solution
$\vec{u}_t$ of (\ref{e:BifE.10}) converging to $0$ as $t\to t_0$;

\item[(iii)] there is an one-sided  neighborhood $\mathfrak{T}$ of $t_0$ such that
for any $t\in \mathfrak{T}\setminus\{t_0\}$,
(\ref{e:BifE.10}) has at least two nontrivial solutions converging to
zero as $t\to t_0$.
\end{description}
 \end{theorem}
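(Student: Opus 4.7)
The plan is to mirror the proof of Theorem~\ref{th:BBH.2}, with $t$ playing the role of the spectral parameter. Specifically, I would apply the parameterized splitting theorem (Theorem~\ref{th:BB.5}) to the family $\{\mathcal{F}_t\}$ near $(t_0,0)\in [0,1]\times X_{k,p}$, reducing the bifurcation problem to one for a $C^1$ family of $C^2$ functionals $\mathcal{F}^\circ_t$ on the finite-dimensional space $H^0_{t_0}:=\ker B_{t_0}\subset H=W^{m,2}_0(\Omega,\mathbb{R}^N)$, and then invoke Canino's Theorem~\ref{th:Bi.2.1} on the reduced family.

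First I would verify the Banach--Hilbert regular setup of Appendix~\ref{app:B} for $\mathcal{F}_t$ on $X=X_{k,p}$, uniformly for $t$ in a neighborhood of $t_0$. Since $\varphi_t$ is $C^k$ jointly in $(t,x)$ and $F$ is $C^{k-m+3}$ (or $C^\infty$), the $\omega$-lemma argument used after (\ref{e:BifE.8.5})--(\ref{e:BifE.8.8}) in the proof of Theorem~\ref{th:BifE.9.2} shows that $(t,\vec u)\mapsto \mathcal{F}_t(\vec u)$ and its $X_{k,p}$-gradient $A_t(\vec u)$ are $C^1$ in $(t,\vec u)$, and the linearization $B_t(\vec u)\in\mathscr{L}_s(X_{k,p})$ extends to a self-adjoint operator on $H$ depending continuously on $(t,\vec u)$. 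The uniform ellipticity assumption (\ref{e:BifE.8.3}) at $(t_0,0)$ persists for $(t,\vec u)$ near $(t_0,0)$, giving the decomposition $B_t(0)=P_t(0)+Q_t(0)$ with $P_t(0)$ positive and $Q_t(0)$ compact. Finally, Proposition~\ref{prop:BifE.9.1} applied to the linearized Euler--Lagrange system ensures Hypothesis~\ref{hyp:BBH.1}(ii) (eigenfunctions of $B_t(0)$ lie in $X_{k,p}$ and $H^-_t$ is finite dimensional), and the complexification argument used at the end of the proof of Theorem~\ref{th:BifE.9.2} yields (i) (the spectrum of $B_{t_0}(0)|_{X_{k,p}}\setminus\{0\}$ is bounded away from the imaginary axis).

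Second, I would exploit the Morse--index jump at $t_0$. By Proposition~\ref{prop:BifE.10} and the paragraph following it, conjugate points are isolated, $0$ is a nondegenerate critical point of $\mathcal{F}_t$ for $t_1<t_0<t_2$ sufficiently close to $t_0$, and the Morse indices satisfy $\mu_{t_1}=\mu_{t_0}+\nu_{t_0}$, $\mu_{t_2}=\mu_{t_0}$. Hence $C_q(\mathcal{F}_{t_j},0;\mathbf{K})=\delta_{q,\mu_{t_j}}\mathbf{K}$ for $j=1,2$, and the shifting isomorphism (\ref{e:BBH.7}) delivered by Theorem~\ref{th:BB.5} gives
\begin{equation*}
C_q(\mathcal{F}^\circ_{t_1},0;\mathbf{K})=\delta_{q,\nu_{t_0}}\mathbf{K},\qquad
C_q(\mathcal{F}^\circ_{t_2},0;\mathbf{K})=\delta_{q,0}\mathbf{K}.
\end{equation*}
Since $\dim H^0_{t_0}=\nu_{t_0}$, the characterization (\ref{e:Bi.2.17}) then identifies $0\in H^0_{t_0}$ as a local maximum of $\mathcal{F}^\circ_{t_1}$ and as a local minimum of $\mathcal{F}^\circ_{t_2}$, provided these critical points are isolated. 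If isolation fails for either, the one-to-one correspondence between critical points of $\mathcal{F}_t$ and $\mathcal{F}^\circ_t$ delivered by Theorem~\ref{th:BB.5} yields alternative (i). Otherwise Canino's Theorem~\ref{th:Bi.2.1} applied to the $C^1$ family $t\mapsto \mathcal{F}^\circ_t\in C^1(\bar B_{H^0_{t_0}}(0,\epsilon))$ produces alternatives (ii) or (iii) in the form of nontrivial critical points of $\mathcal{F}^\circ_t$, which lift to bifurcation solutions of (\ref{e:BifE.10}) via the splitting homeomorphism and the pull-back $\varphi_t^*$.

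The main obstacle will be the uniform verification in $t$ near $t_0$ of the Banach--Hilbert regular hypotheses, particularly Hypothesis~\ref{hyp:BBH.1}(i) for the one-parameter family: one must show that $\sigma(B_t(0)|_{X_{k,p}})\setminus\{0\}$ is bounded away from the imaginary axis uniformly in $t$ near $t_0$, which requires combining the regularity theorem (Proposition~\ref{prop:BifE.9.1}) with the isolation of the conjugate point $t_0$ (Proposition~\ref{prop:BifE.10}). Once this uniform regularity and the parameterized reduction are in hand, the remaining critical-group bookkeeping and the appeal to Canino's theorem are formally identical to the proof of Theorem~\ref{th:BBH.2}.
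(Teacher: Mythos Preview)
Your proposal is correct and follows essentially the same route as the paper: apply the parameterized splitting Theorem~\ref{th:BB.5} to $\{\mathcal{F}_t\}$ at $t_0$, use Proposition~\ref{prop:BifE.10} to obtain the Morse-index jump $\mu_t\in\{\mu_{t_0},\mu_{t_0}+\nu_{t_0}\}$ on either side of $t_0$, compute the critical groups of the reduced functionals $\mathcal{F}^\circ_t$ via the shifting Corollary~\ref{cor:BB.6}, and conclude with Theorem~\ref{th:Bi.2.1}. The paper's proof is terser---it applies Theorem~\ref{th:BB.5} without spelling out the verification of Hypothesis~\ref{hyp:BBH.1} that you flag as the main obstacle---but the argument is otherwise identical.
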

\begin{proof}
If $t$ is sufficiently close to $t_0$, then  $\vec{u}=0\in X_{k,p}$ is nondegenerate as a critical point of
$\mathcal{F}_{t}$. Applying Theorem~\ref{th:BB.5} to
the family $\{\mathcal{F}_{t}\,|\,0\le t\le 1\}$ and $\lambda_0=t_0$ we get
 $0<\delta<\min\{t_0,1-t_0\}$, $0<\epsilon\ll 1$ and a family of   functionals
of class $C^{2}$ (with $X=X_{k,p}$)
$$
\mathcal{F}_{t}^\circ: B_{X}(0, 2\epsilon)\cap X_0\to \mathbb{R},\;
z\mapsto\mathcal{F}_{t}(z+ \mathfrak{h}({\lambda}, z)),\quad t\in T:=[t_0-\delta, t_0+\delta],
$$
which depends on $t$ continuously. By (\ref{e:BB.5}) we also deduce that
$\{(z,t)\to d\mathcal{F}_{t}^\circ(z)\}$ is continuous on
$T\times (B_{X}(0, 2\epsilon)\cap X_0)$.

 Suppose (by shrinking $\epsilon>0$ if necessary) that $0\in B_{X}(0, 2\epsilon)\cap X_0$
is a unique critical point of $\mathcal{F}_{t}^\circ$ for each $t\in T$.
By shrinking $\delta>0$ we assume that each $t\in T\setminus\{t_0\}$ is not
conjugate point.
As in the proof of (\ref{e:Bi.2.16})
we derive from Corollary~\ref{cor:BB.6} that  for any $j\in\mathbb{N}_0$,
\begin{eqnarray}\label{e:BifE.16.1}
C_{j}(\mathcal{F}_{t}^\circ, 0;{\bf K})=\delta_{(j+\mu_{t_0})\mu_t}{\bf K},\quad
\forall t\in T\setminus\{t_0\}.
\end{eqnarray}
By Proposition~\ref{prop:BifE.10} we have
$$
\mu_t=\left\{\begin{array}{ll}
\mu_{t_0-\delta},&\quad\forall t\in [t_0-\delta,t_0),\\
\mu_{t_0-\delta}+\nu_{t_0},&\quad\forall t\in (t_0, t_0+\delta].
\end{array}\right.
$$
 Hence (\ref{e:BifE.16.1}) leads to
$$
C_{j}(\mathcal{F}_{t}^\circ, 0;{\bf K})=
\left\{\begin{array}{ll}
\delta_{j0}{\bf K},&\quad
\forall t\in [t_0-\delta, t_0),\\
\delta_{j\nu_{t_0}}{\bf K},&\quad
\forall t\in (t_0, t_0+\delta].
\end{array}\right.
$$
As in the proof of Theorem~\ref{th:Bi.2.4} the conclusions follow from Theorem~\ref{th:Bi.2.1}.
\end{proof}

\part{Appendix}\label{par:app}

\appendix

\section{Appendix:\quad
 Parameterized splitting theorems by the author}\label{app:A}\setcounter{equation}{0}

Under Hypothesis~\ref{hyp:1.1} or Hypothesis~\ref{hyp:1.3}
let $H=H^+\oplus H^0\oplus H^-$ be the orthogonal decomposition
according to the positive definite, null and negative definite spaces of $B(0)$.
Denote by $P^\ast$ the orthogonal projections onto $H^\ast$, $\ast=+,0,-$.
$\nu:=\dim H^0$ and $\mu:=\dim H^-$ are called the {\it Morse index} and
{\it nullity} of the critical point $0$. In particular, if $\nu=0$ the critical point $0$ is
said to be {\it nondegenerate}.
Such a critical point is isolated by \cite[Theorem~2.13]{Lu7}] (resp.
by Claim~2(i) and the arguments in Step 3 in the proof of \cite[Theorem~1.1]{Lu1}])
for the case of Hypothesis~\ref{hyp:1.1} (resp. Hypothesis~\ref{hyp:1.3}).
We have the following parameterized Morse-Palais lemma.

\begin{theorem}[\hbox{\cite[Theorem~2.9]{Lu7}}]\label{th:A.1}
Let  $\mathcal{L}\in C^1(U,\mathbb{R})$ satisfy Hypothesis~\ref{hyp:1.1}, and
let $\widehat{\mathcal{L}}\in C^1(U,\mathbb{R})$ satisfy Hypothesis~\ref{hyp:1.2}
without requirement that each $\widehat{\mathcal{L}}''(u)\in\mathscr{L}_s(H)$ is compact.
Suppose that the critical point  $0$  of $\mathcal{L}$ is a nondegenerate.
 Then there exist $\rho>0$, $\epsilon>0$, a family of open neighborhoods of $0$ in
$H$, $\{W_\lambda\,|\, |\lambda|\le\rho\}$
and a family of origin-preserving homeomorphisms, $\phi_\lambda: B_{H^+}(0,\epsilon) +
B_{H^-}(0,\epsilon)\to W_\lambda$, $|\lambda|\le\rho$,
 such that
$$
(\mathcal{L}+\lambda\widehat{\mathcal{L}})\circ\phi_\lambda(u^++ u^-)=\|u^+\|^2-\|u^-\|^2,
\quad\forall (u^+, u^-)\in B_{H^+}(0,\epsilon)\times
B_{H^-}(0,\epsilon).
$$
Moreover, $[-\rho,\rho]\times (B_{H^+}(0,\epsilon) +
B_{H^-}(0,\epsilon))\ni (\lambda, u)\mapsto \phi_\lambda(u)\in H$
is continuous, and $0$ is an isolated critical point of each $\mathcal{L}+\lambda\widehat{\mathcal{L}}$.
\end{theorem}

If $\nu=\dim H^0\ne 0$ there exists the following parameterized splitting theorem.

\begin{theorem}[\hbox{\cite[Theorems~2.12,2.16]{Lu7}}]\label{th:A.2}
Let  $\mathcal{L}\in C^1(U,\mathbb{R})$ satisfy Hypothesis~\ref{hyp:1.1}, and
let $\widehat{\mathcal{L}}_j\in C^1(U,\mathbb{R})$,  $j=1,\cdots,n$, satisfy Hypothesis~\ref{hyp:1.2}
without requirement that each $\widehat{\mathcal{L}}''_j(u)\in\mathscr{L}_s(H)$ is compact.
 Suppose further that the nullity $\mathcal{L}$ at $0$, $\nu\ne 0$.
Then for sufficiently small  $\delta>0$, $\epsilon>0$ and $r>0, s>0$, there exist:
\begin{description}
\item[(a)]  a unique continuous map
 \begin{eqnarray*}
\psi:[-\delta, \delta]^n\times B_H(0,\epsilon)\cap H^0\to \mathcal{Q}_{r,s}:=B_{H^+}(0,r)\oplus B_{H^-}(0,s)\subset
(H^0)^\bot:=H^+\oplus H^-
 \end{eqnarray*}
 such that for all $(\vec{\lambda}, z)\in [-\delta, \delta]^n\times B_{H}(0,\epsilon)\cap H^0$
 with $\vec{\lambda}=(\lambda_1,\cdots,\lambda_n)$,
$\psi(\vec{\lambda},0)=0$,  and
\begin{equation}\label{e:S.4.22}
 P^\bot\nabla\mathcal{L}(z+ \psi(\vec{\lambda}, z))+
 \sum^n_{j=1}\lambda_j P^\bot\nabla\widehat{\mathcal{L}}_j(z+ \psi(\vec{\lambda}, z))=0,
 \end{equation}
 where  $P^\bot$ is the orthogonal projection onto $(H^0)^\bot$,
\item[(b)] an open neighborhood $W$ of $0$ in $H$ and an origin-preserving homeomorphism
\begin{eqnarray*}
&&[-\delta, \delta]^n\times B_{H^0}(0,\epsilon)\times
\left(B_{H^+}(0, r) + B_{H^-}(0, s)\right)\to [-\delta, \delta]^n\times W,\nonumber\\
&&(\vec{\lambda}, z, u^++u^-)\mapsto (\vec{\lambda},\Phi_{\vec{\lambda}}(z, u^++u^-))
\end{eqnarray*}
such that
for all $(\vec{\lambda}, z, u^+ + u^-)\in [-\delta,\delta]^n\times B_{H^0}(0,\epsilon)\times\left(B_{H^+}(0, r) + B_{H^-}(0, s)\right)$,
\begin{eqnarray*}
\mathcal{L}_{\vec{\lambda}}\circ\Phi_{\vec{\lambda}}(z, u^++ u^-)=\|u^+\|^2-\|u^-\|^2+ \mathcal{
L}_{\vec{\lambda}}(z+ \psi(\vec{\lambda}, z)).
\end{eqnarray*}
\end{description}
Moreover, there also hold:
\begin{description}
\item[(i)] the functional  $B_H(0, \epsilon)\cap H^0\ni z\mapsto \mathcal{L}^\circ_{\vec{\lambda}}$ defined by
 \begin{eqnarray}\label{e:S.4.23}
 \mathcal{L}^\circ_{\vec{\lambda}}(z):=\mathcal{L}_{\vec{\lambda}}(z+\psi_{\vec{\lambda}}(z))
 =\mathcal{L}(z+\psi(\vec{\lambda}, z))+ \sum^n_{j=1}\lambda_j\widehat{\mathcal{L}}_j(z+\psi(\vec{\lambda}, z))
  \end{eqnarray}
  is of class $C^1$, and its differential is given by
\begin{eqnarray}\label{e:S.4.24}
D\mathcal{L}_{\vec{\lambda}}^\circ(z)[h]=D\mathcal{L}(z+\psi(\vec{\lambda},z))[h]+
\sum^n_{j=1}\lambda_j D\widehat{\mathcal{L}}_j(z+\psi(\vec{\lambda},z))[h],\quad\forall h\in H^0;
\end{eqnarray}
\item[(ii)] if $\mathcal{L}$ and $\widehat{\mathcal{L}}_j$, $j=1,\cdots,n$, are of class $C^{2-0}$,
then so is $\mathcal{L}_{\vec{\lambda}}^\circ$ for each $\vec{\lambda}\in [-\delta, \delta]^n$;
\item[(iii)] if a compact Lie group $G$  acts on $H$ orthogonally, and
$V$, $\mathcal{L}$ and $\widehat{\mathcal{L}}_j$ are $G$-invariant (and hence $H^0$, $(H^0)^\bot$
are $G$-invariant subspaces), then for each $\vec{\lambda}\in [-\delta, \delta]^n$, $\psi(\vec{\lambda}, \cdot)$
 and $\Phi_{\vec{\lambda}}(\cdot,\cdot)$  are $G$-equivariant, and $\mathcal{L}^\circ_{\vec{\lambda}}(z)=\mathcal{
L}_{\vec{\lambda}}(z+ \psi(\vec{\lambda}, z))$ is $G$-invariant.
\end{description}
\end{theorem}

For each $\vec{\lambda}\in [-\delta, \delta]^n$, it is easily checked that
the map $z\mapsto z+ \psi(\vec{\lambda}, z))$ induces an one-to-one correspondence
 between the critical points of  $\mathcal{L}_{\vec{\lambda}}^\circ$ near $0\in H^0$
and those of $\mathcal{L}_{\vec{\lambda}}$ near $0\in H$.
Thus $0\in H^0$ is an isolated critical point of $\mathcal{L}_{\vec{\lambda}}^\circ$ if and only if
$0\in H$ an isolated critical point of $\mathcal{L}_{\vec{\lambda}}$.

\begin{theorem}[Parameterized Shifting Theorem (\hbox{\cite[Theorem~2.18]{Lu7}})]\label{th:A.3}
Under the assumptions of Theorem~\ref{th:A.2}, suppose
 for some $\vec{\lambda}\in [-\delta, \delta]^n$ that $0\in H$ is an isolated critical point of $\mathcal{L}_{\vec{\lambda}}$ (thus $0\in H^0$ is that of $\mathcal{L}_{\vec{\lambda}}^\circ$). Then
\begin{eqnarray*}
C_q(\mathcal{L}_{\vec{\lambda}}, 0;{\bf K})=C_{q-\mu}(\mathcal{L}^\circ_{\vec{\lambda}}, 0;{\bf K}),\quad\forall
q\in\mathbb{N}\cup\{0\}.
\end{eqnarray*}
\end{theorem}

There also exist the parameterized Morse-Palais lemma around critical orbits (\cite[Theorem~2.21]{Lu7}),
 the parameterized splitting theorem around critical orbits (\cite[Theorem~2.22]{Lu7}) and the corresponding
parameterized shifting theorem (\cite[Corollary~2.27]{Lu7}).

Under Hypothesis~\ref{hyp:1.3}, it easily follows from the proof of \cite[Theorem~2.9]{Lu7}
and \cite[Remark~2.2]{Lu2} that Theorem~\ref{th:A.1} has the following corresponding version.

\begin{theorem}\label{th:A.4}
Let  $\mathcal{L}\in C^1(U,\mathbb{R})$ satisfy Hypothesis~\ref{hyp:1.3}, and
let $\widehat{\mathcal{L}}\in C^1(U,\mathbb{R})$ satisfy Hypothesis~\ref{hyp:1.4}
without requirement that each $\widehat{B}(x)\in\mathscr{L}_s(H)$ is compact.
Suppose that the critical point  $0$  of $\mathcal{L}$ is a nondegenerate.
Then Theorem~\ref{th:A.1} still holds.
\end{theorem}

We can also prove a more general version of this result if the condition
``${\rm Ker}(B_{\lambda^\ast}(0))\ne\{0\}$" in the assumptions of the following
theorem is changed into ``${\rm Ker}(B_{\lambda^\ast}(0))=\{0\}$".

\begin{theorem}\label{th:A.5-}
Let $H$, $X$ and $U$ be as in Hypothesis~\ref{hyp:1.3},
 and $\Lambda$ a topological space.
Let $\mathcal{L}_\lambda\in C^1(U, \mathbb{R})$, $\lambda\in\Lambda$, be a continuous family of functionals
    satisfying $\mathcal{L}'_\lambda(0)=0\;\forall\lambda$.
 For each $\lambda\in\Lambda$, assume that
 there exist maps $A_\lambda\in C^1(U^X, X)$ and $B_\lambda: U\cap X\to \mathscr{L}_s(H)$,
which  depend on $\lambda$ continuously,  such that for all $x\in U\cap X$  and $u, v\in X$,
 $$
 D\mathcal{L}_\lambda(x)[u]=(A_\lambda(x), u)_H\quad\hbox{and}\quad
(DA_\lambda(x)[u], v)_H=(B_\lambda(x)u, v)_H,
$$
 and  that  $B_\lambda$ has a decomposition
$B_\lambda=P_\lambda+Q_\lambda$, where for each $x\in U\cap X$,
 $P_\lambda(x)\in\mathscr{L}_s(H)$ is  positive definitive and
$Q_\lambda(x)\in\mathscr{L}_s(H)$ is compact.
   Let $0\in H$ be a degenerate critical point of
  some $\mathcal{L}_{\lambda^\ast}$, i.e.,  ${\rm Ker}(B_{\lambda^\ast}(0))\ne\{0\}$.
Suppose also that $P_\lambda$ and $Q_\lambda$ satisfy the following conditions:
    \begin{description}
\item[(i)]  For each $h\in H$, it holds that $\|P_{\lambda}(x)h-P_{\lambda^\ast}(0)h\|\to 0$
as $x\in U\cap X$ approaches to $0$ in $H$ and $\lambda\in\Lambda$ converges to $\lambda^\ast$.

 \item[(ii)]  For some small $\delta>0$, there exist positive constants $c_0>0$ such that
$$
(P_\lambda(x)u, u)\ge c_0\|u\|^2\quad\forall u\in H,\;\forall x\in
\bar{B}_H(0,\delta)\cap X,\quad\forall\lambda\in \Lambda.
$$
 \item[(iii)]  $Q_\lambda: U\cap X\to \mathscr{L}_s(H)$ is uniformly continuous at $0$  with respect to $\lambda\in \Lambda$.
  \item[(iv)]  If $\lambda\in \Lambda$ converges to $\lambda^\ast$ then
  $\|Q_{\lambda}(0)-Q_{\lambda^\ast}(0)\|\to 0$.
   \item[(v)] $(\mathcal{L}_{\lambda^\ast}, H, X, U, A_{\lambda^\ast}, B_{\lambda^\ast}=P_{\lambda^\ast}+ Q_{\lambda^\ast})$ satisfies Hypothesis~\ref{hyp:1.3}.
    \end{description}
 Let $H^+_\lambda$, $H^-_\lambda$ and $H^0_\lambda$ be the positive definite, negative definite and zero spaces of
${B}_\lambda(0)$.  Denote by $P^0_\lambda$ and $P^\pm_\lambda$ the orthogonal projections onto $H^0_\lambda$ and $H^\pm_\lambda=H^+_\lambda\oplus H^-_\lambda$,
and by $X^\star_\lambda=X\cap H^\star_\lambda$ for $\star=+,-$, and by  $X^\pm_\lambda=P^\pm_\lambda(X)$.
 Then there exists a neighborhood $\Lambda_0$ of $\lambda^\ast$ in $\Lambda$,
$\epsilon>0$, a (unique) $C^0$ map
\begin{equation}\label{e:Spli.2.1.1}
\psi:\Lambda_0\times B_{H^0_{\lambda^\ast}}(0,\epsilon)\to X^\pm_{\lambda^\ast}
\end{equation}
which is $C^1$ in the second variable and
satisfies $\psi(\lambda, 0)=0\;\forall\lambda\in \Lambda_0$ and
\begin{equation}\label{e:Spli.2.1.2}
 P^\pm_{\lambda^\ast}A_\lambda(z+ \psi(\lambda,z))=0\quad\forall (\lambda,z)\in \Lambda_0
 \times B_{H^0_{\lambda^\ast}}(0,\epsilon),
 \end{equation}
an open neighborhood $W$ of $0$ in $H$ and a homeomorphism
\begin{eqnarray}\label{e:Spli.2.1.3}
&&\Lambda_0\times B_{H^0_{\lambda^\ast}}(0,\epsilon)\times
\left(B_{H^+_{\lambda^\ast}}(0, \epsilon) + B_{H^-_{\lambda^\ast}}(0, \epsilon)\right)\to \Lambda_0\times W,\nonumber\\
&&\hspace{20mm}({\lambda}, z, u^++u^-)\mapsto ({\lambda},\Phi_{{\lambda}}(z, u^++u^-))
\end{eqnarray}
satisfying $\Phi_{{\lambda}}(0)=0$, such that for each $\lambda\in \Lambda_0$,
\begin{eqnarray}\label{e:Spli.2.2}
&&\mathcal{L}_{\lambda}\circ\Phi_{\lambda}(z, u^++ u^-)=\|u^+\|^2-\|u^-\|^2+ \mathcal{
L}_{{\lambda}}(z+ \psi({\lambda}, z))\\
&& \quad\quad \forall (z, u^+ + u^-)\in  B_{H^0_{\lambda^\ast}}(0,\epsilon)\times
\left(B_{H^+_{\lambda^\ast}}(0, \epsilon) + B_{H^-_{\lambda^\ast}}(0, \epsilon)\right).\nonumber
\end{eqnarray}
 Moreover, there also hold: {\bf (i)} 
 $$
d_z\psi(\lambda,z)=-[P^\pm_{\lambda^\ast}\circ({B}_{\lambda}(z+\psi(\lambda,z))|_{X^\pm_{\lambda^\ast}})]^{-1}
\circ(P^\pm_{\lambda^\ast}\circ({B}_\lambda(z+\psi(\lambda,z))|_{H^0_{\lambda^\ast}})).
$$

\noindent{\bf (ii)} The functional
\begin{equation}\label{e:Spli.2.3}
\mathcal{L}_{\lambda}^\circ: B_{H^0_{\lambda^\ast}}(0,\epsilon)\to \mathbb{R},\;
z\mapsto\mathcal{L}_{\lambda}(z+ \psi({\lambda}, z))
\end{equation}
 is of class $C^{2}$, its first-order and second-order differentials  at $z\in
B_{H^0}(0, \epsilon)$ are given by
 \begin{eqnarray}\label{e:Spli.2.4}
&& d\mathcal{L}^\circ_\lambda(z)[\zeta]=\bigl(A_\lambda(z+ \psi(\lambda, z)), \zeta\bigr)_H\quad\forall \zeta\in H^0,\\
  &&d^2\mathcal{L}^\circ_\lambda(0)[z,z']=\left(P^0_{\lambda^\ast}\bigr[{B}_\lambda(0)-
 {B}_\lambda(0)(P^\pm_{\lambda^\ast}{B}_{\lambda}(0)|_{X^\pm_{\lambda^\ast}})^{-1}
 (P^\pm_{\lambda^\ast}{B}_\lambda(0))\bigr]z, z'\right)_H,\nonumber\\
&& \hspace{40mm} \forall z,z'\in H^0.\label{e:Spli.2.5}
 \end{eqnarray}
%

\noindent{\bf (iii)}  If a compact Lie group $G$  acts on $H$ orthogonally, which induces  $C^1$ isometric actions on $X$,
 both $U$ and $\mathcal{L}_\lambda$ are $G$-invariant (and hence $H^0_\lambda$, $H^\pm_\lambda$
are $G$-invariant subspaces), then for each $\lambda\in \Lambda$,
the above maps $\psi(\lambda, \cdot)$  and $\Phi_{\lambda}(\cdot,\cdot)$  are
  $G$-equivariant, and $\mathcal{L}^\circ_{\lambda}$ is $G$-invariant.
  If for some $p\in\mathbb{N}$,  $\Lambda$ is a $C^p$ manifold and
$\Lambda\times U^X\ni (\lambda,x)\mapsto A(\lambda, x):=A_\lambda(x)\in X$ is $C^p$, then so is $\psi$.
\end{theorem}

\begin{proof}
Take $\eta>0$ so small that $B_{H^0_{\lambda^\ast}}(0,\eta)\oplus B_{X^\pm_{\lambda^\ast}}(0,\eta)\subset U^X$.
Since $P^\pm_{\lambda^\ast}\circ(B(0)|_{X^\pm_{\lambda^\ast}})$ is a Banach space isomorphism from $X^\pm_{\lambda^\ast}$ onto itself, applying
the implicit function theorem to the map
$$
\Lambda\times B_{H^0_{\lambda^\ast}}(0,\eta)\oplus B_{X^\pm_{\lambda^\ast}}(0,\eta)\to X^\pm_{\lambda^\ast},\;(\lambda,
z, x)\mapsto P^\pm_{\lambda^\ast}(A_\lambda(z+ x))
$$
near $(\lambda^\ast, 0)$ we can get (\ref{e:Spli.2.1.1})-(\ref{e:Spli.2.1.2}) and (i)-(ii).

By (v), $X^\star_{\lambda^\ast}=H^\star_{\lambda^\ast}$, $\star=0,-$, have finite dimensions.
Let $e_1,\cdots,e_m$ be an unit orthogonal basis of $H^0_{\lambda^\ast}\oplus H^-_{\lambda^\ast}$.
By the proof of \cite[Theorem~1.1]{Lu1} (or
the proof of \cite[Lemma~3.3]{Lu2}), we have
\begin{eqnarray}\label{e:Spli.2.5.1}
|(B_\lambda(x)u,v)_H-(B_{\lambda^\ast}(0)u,v)_H|\le \omega_\lambda(x)\|u\|\|v\|
\end{eqnarray}
for any $x\in U\cap X$, $u\in H^0_{\lambda^\ast}\oplus H^-_{\lambda^\ast}$ and $v\in H$, where
$$
\omega_\lambda(x)=\left(\sum^m_{i=1}\|P_\lambda(x)e_i-P_{\lambda^\ast}(0)e_i\|^2\right)^{1/2}+ \sqrt{m}\|Q_\lambda(x)-Q_{\lambda^\ast}(0)\|.
$$
Clearly,  (i), (iii) and (iv) imply that $\omega_\lambda(x)\to 0$
as $x\in U\cap X$ approaches to $0$ in $H$ and $\lambda\in\Lambda$ converges to $\lambda^\ast$.
It is  clear that (\ref{e:Spli.2.5.1}) leads to
\begin{eqnarray}\label{e:Spli.2.5.2}
|(B_\lambda(x)u,v)_H|\le \omega_\lambda(x)\|u\|\|v\|,\quad\forall u\in H^+_{\lambda^\ast}, \;\forall
v\in H^0_{\lambda^\ast}\oplus H^-_{\lambda^\ast}
\end{eqnarray}
for any $x\in U\cap X$.
Moreover,  there exists $a_0>0$ such that $(B_{\lambda^\ast}(0)u, u)_H\ge 2a_0\|u\|^2\;\forall u\in H^+_{\lambda^\ast}$.
Take a neighborhood of $0\in H$, $V\subset U$,  and shrink $\Lambda_0\subset\Lambda$, such that
 $\omega_\lambda(x)<a_0$ for all $x\in V\cap X$ and $\lambda\in\Lambda_0$.
  It follows from this and (\ref{e:Spli.2.5.1}) that
\begin{eqnarray}\label{e:Spli.2.5.3}
(B_\lambda(x)v,v)_H\le -2a_0\|v\|^2+ \omega_\lambda(x)\|v\|^2\le -a_0\|v\|^2\quad\forall v\in H^-_{\lambda^\ast}
\end{eqnarray}
for all $x\in V\cap X$ and $\lambda\in\Lambda_0$. As in the proof of \cite[Lemma~3.4]{Lu2}(i), (see below),
by shrinking $V$ and $\Lambda_0$ we can get $a_1>0$ such that
for all $x\in V\cap X$ and $\lambda\in\Lambda_0$,
\begin{eqnarray}\label{e:Spli.2.5.4}
(B_\lambda(x)u,u)_H\ge a_1\|u\|^2\quad\forall u\in H^+_{\lambda^\ast}.
\end{eqnarray}
Since $\psi(\lambda,0)=0$, we can choose $\epsilon\in (0,\eta)$ such that
$z+\psi(\lambda,z)+ u^++u^-\in V$ for all $\lambda\in\Lambda_0$,
$z\in\bar{B}_{H^0_{\lambda^\ast}}(0,\epsilon)$ and $u^\star\in \bar{B}_{H^\star_{\lambda^\ast}}(0,\epsilon)$, $\star=+,-$.
For each $\lambda\in \Lambda_0$, we define a functional
\begin{eqnarray*}
{\bf F}_\lambda: B_{H^0_{\lambda^\ast}}(0,\epsilon)\oplus B_{X^\pm_{\lambda^\ast}}(0,\epsilon)\to \mathbb{R},
(z,u)\mapsto \mathcal{L}_{\lambda}(z+\psi({\lambda}, z)+u)- \mathcal{L}_{{\lambda}}(z+ \psi({\lambda}, z)).
\end{eqnarray*}
Then following the proof ideas of \cite[Lemma~3.5]{Lu2} and shrinking $\Lambda_0$ and $\epsilon>0$
(if necessary) we can obtain positive constants $\mathfrak{a}_1$ and $\mathfrak{a}_2$ such that
\begin{eqnarray*}
 &&(D_2{\bf F}_{{\lambda}}(z, u^+ + u^-_2)-D_2{\bf F}_{{\lambda}}(z, u^++ u^-_1))[u^-_2-u^-_1]\le
-\mathfrak{a}_1\|u^-_2-u^-_1\|^2,\\
&&D_2{\bf F}_{{\lambda}}(z, u^++u^-)[u^+-u^-]\ge  \mathfrak{a}_2(\|u^+\|^2+ \|u^-\|^2)
\end{eqnarray*}
for all $\lambda\in \Lambda_0$,
$z\in B_{H^0_{\lambda^\ast}}(0,\epsilon)$ and $u^+\in B_{X^+_{\lambda^\ast}}(0,\epsilon)$, $u^-\in B_{X^-_{\lambda^\ast}}(0,\epsilon)$.
Using \cite[Theorem~A.2]{Lu2} leads to the desired results.

\textsf{Finally, for completeness let us prove (\ref{e:Spli.2.5.4})  by  contradiction.}  
 Then there exist
sequences $(x_n)\subset V\cap X$ with $\|x_n\|\to 0$, $(\lambda_n)\subset\Lambda$ with $\lambda_n\to\lambda^\ast$
and
$(u_n)\subset SH^+_{\lambda^\ast}$, such that
$(B_{\lambda_n}(x_n)u_n, u_n)_H<1/n$ for any $n=1,2,\cdots$.
Passing a subsequence, we can assume that
\begin{equation}\label{e:2.24}
(B_{\lambda_n}(x_n)u_n, u_n)_H\to\beta\le 0\;\hbox{as}\;n\to\infty,
\end{equation}
and that $u_n\rightharpoonup u_0$ in $H$. We claim: $u_0\ne
\theta$. In fact, by (ii) we have
\begin{eqnarray}\label{e:2.25}
(B_{\lambda_n}(x_n)u_n, u_n)_H&=&(P_{\lambda_n}(x_n)u_n, u_n)_H + (Q_{\lambda_n}(x_n)u_n, u_n)_H\nonumber\\
&\ge & c_0+ (Q_{\lambda_n}(x_n)u_n, u_n)_H\quad\forall n>n_0.
\end{eqnarray}
 Moreover, a direct computation gives
\begin{eqnarray}\label{e:2.26}
&&\!\!\!\!\!\quad |(Q_{\lambda_n}(x_n)u_n, u_n)_H-(Q_{\lambda^\ast}(0)u_0, u_0)_H|\\
&&\!\!\!\!\!=|((Q_{\lambda_n}(x_n)-Q_{\lambda^\ast}(0))u_n, u_n)_H+ (Q_{\lambda^\ast}(0)u_n, u_n)_H-(Q_{\lambda^\ast}(0)u_0, u_n)_H\nonumber\\
&&\hspace{70mm}+ (Q_{\lambda^\ast}(0)u_0, u_n-u_0)_H|\nonumber\\
&&\!\!\!\!\!\le \|Q_{\lambda_n}(x_n)-Q_{\lambda^\ast}(0)\|\cdot\|u_n\|^2+
\|Q_{\lambda^\ast}(0)u_n-Q_{\lambda^\ast}(0)u_0\|\cdot\|u_n\|\nonumber\\
&&\hspace{40mm}+
|(Q_{\lambda^\ast}(0)u_0, u_n-u_0)_H|\nonumber\\
&&\!\!\!\!\!\le \|Q_{\lambda_n}(x_n)-Q_{\lambda^\ast}(0)\|+ \|Q_{\lambda^\ast}(0)u_n-Q_{\lambda^\ast}(0)u_0\|+
|(Q_{\lambda^\ast}(0)u_0, u_n-u_0)_H|.\nonumber
\end{eqnarray}
Since $u_n\rightharpoonup u_0$ in $H$,
$\lim_{n\to\infty}|(Q_{\lambda^\ast}(0)u_0, u_n-u_0)_H|=0$. We have also
\begin{equation}\label{e:2.27}
\lim_{n\to\infty}\|Q_{\lambda^\ast}(0)u_n-Q_{\lambda^\ast}(0)u_0\|=0
\end{equation}
by the  compactness  of $Q_{\lambda^\ast}(0)$, and
\begin{equation}\label{e:2.28}
\lim_{n\to\infty}\|Q_{\lambda_n}(x_n)-Q_{\lambda^\ast}(0)\|\le
\lim_{n\to\infty}\|Q_{\lambda_n}(x_n)-Q_{\lambda_n}(0)\|+\lim_{n\to\infty}\|Q_{\lambda_n}(0)-Q_{\lambda^\ast}(0)\|=0
\end{equation}
 by the conditions (iii)-(iv).
Hence  (\ref{e:2.26})-(\ref{e:2.28}) give
\begin{equation}\label{e:2.29}
\lim_{n\to\infty}(Q_{\lambda_n}(x_n)u_n, u_n)_H=(Q_{\lambda^\ast}(0)u_0, u_0)_H.
\end{equation}
This and (\ref{e:2.24})-(\ref{e:2.25}) yield
$$
0\ge \beta=\lim_{n\to\infty}(B_{\lambda_n}(x_n)u_n, u_n)_H\ge c_0 +
(Q_{\lambda^\ast}(0)u_0,u_0)_H,
$$
which implies $u_0\ne\theta$. Note that $u_0$ also sits in $H^+_{\lambda^\ast}$.

As above, using (\ref{e:2.28}) we derive
\begin{eqnarray}\label{e:2.30}
&&|(Q_{\lambda_n}(x_n)u_0, u_n)_H- (Q_{\lambda^\ast}(0)u_0, u_0)_H|\\
&\le& |(Q_{\lambda_n}(x_n)u_0, u_n)_H- (Q_{\lambda^\ast}(0)u_0, u_n)_H|+ |(Q_{\lambda^\ast}(0)u_0,
u_n)_H- (Q_{\lambda^\ast}(0)u_0,
u_0)_H|\nonumber\\
&\le&  \|Q_{\lambda_n}(x_n)-Q_{\lambda^\ast}(0)\|\cdot\|u_0\|+ |(Q_{\lambda^\ast}(0)u_0, u_n-u_0)_H
|\to 0.\nonumber
\end{eqnarray}
 Note that
\begin{eqnarray*}
&&(B_{\lambda_n}(x_n)(u_n-u_0), u_n-u_0)_H\\
&=&(P_{\lambda_n}(x_n)(u_n-u_0), u_n-u_0)_H + (Q_{\lambda_n}(x_n)(u_n-u_0), u_n-u_0)_H\\
&\ge& c_0\|u_n-u_0\|^2+ (Q_{\lambda_n}(x_n)(u_n-u_0), u_n-u_0)_H\\
&\ge& (Q_{\lambda_n}(x_n)u_n, u_n)_H-2(Q_{\lambda_n}(x_n)u_0, u_n)_H+ (Q_{\lambda^\ast}(0)u_0, u_0)_H.
\end{eqnarray*}
It follows from this and (\ref{e:2.29})-(\ref{e:2.30}) that
\begin{eqnarray}\label{e:2.31}
\liminf_{n\to\infty}(B_{\lambda_n}(x_n)(u_n-u_0),
u_n-u_0)_H\ge\lim_{n\to\infty}(Q_{\lambda_n}(x_n)(u_n-u_0), u_n-u_0)_H = 0.
\end{eqnarray}
Note that $u_n\rightharpoonup u_0$ implies that $(P_{\lambda^\ast}(0)u_0,
u_n-u_0)_H\to 0$. By (D2) and (\ref{e:2.30}) we get
\begin{eqnarray*}
&&|(B_{\lambda_n}(x_n)u_0, u_n)_H-(B_{\lambda^\ast}(0)u_0, u_0)_H|\\
&=&|(P_{\lambda_n}(x_n)u_0, u_n)_H+ (Q_{\lambda_n}(x_n)u_0, u_n)_H- (P_{\lambda^\ast}(0)u_0, u_0)_H-
(Q_{\lambda^\ast}(0)u_0, u_0)_H|\\
&\le & |(P_{\lambda_n}(x_n)u_0, u_n)_H-(P_{\lambda^\ast}(0)u_0, u_0)_H|+ |(Q_{\lambda^\ast}(x_n)u_0,
u_n)_H-(Q_{\lambda^\ast}(0)u_0, u_0)_H| \\
&\le & |(P_{\lambda_n}(x_n)u_0, u_n)_H-(P_{\lambda^\ast}(0)u_0, u_n)_H|+|(P_{\lambda^\ast}(0)u_0,
u_n)_H-(P_{\lambda^\ast}(0)u_0, u_0)_H| \\
&&\quad + |(Q_{\lambda_n}(x_n)u_0, u_n)_H-(Q_{\lambda^\ast}(0)u_0, u_0)_H|
\\
&\le & \|P_{\lambda_n}(x_n)u_0- P_{\lambda^\ast}(0)u_0\| + |(P_{\lambda^\ast}(0)u_0, u_n-u_0)_H| \\
&&\quad + |(Q_{\lambda_n}(x_n)u_0, u_n)_H-(Q_{\lambda^\ast}(0)u_0, u_0)_H| \to 0.
\end{eqnarray*}
 Similarly, we have
$\lim_{n\to\infty}(B_{\lambda_n}(x_n)u_0, u_0)_H=(B_{\lambda^\ast}(0)u_0, u_0)_H$.
From these, (\ref{e:2.24}) and (\ref{e:2.31}) it follows that
\begin{eqnarray*}
0&\le& \liminf_{n\to\infty}(B_{\lambda_n}(x_n)(u_n-u_0), u_n-u_0)_H\\
&=& \liminf_{n\to\infty}[(B_{\lambda_n}(x_n)u_n,u_n)_H-2(B_{\lambda_n}(x_n)u_0, u_n)_H+
(B_{\lambda_n}(x_n)u_0,
u_0)_H]\\
&=&\lim_{n\to\infty}(B_{\lambda_n}(x_n)u_n,u_n)_H- (B_{\lambda^\ast}(0)u_0, u_0)_H
=\beta- (B_{\lambda^\ast}(0)u_0, u_0)_H.
\end{eqnarray*}
Namely, $(B_{\lambda^\ast}(0)u_0,u_0)_H\le\beta\le 0$. It contradicts to
the fact that $(B_{\lambda^\ast}(0)u, u)_H\ge 2a_0\|u\|^2\;\forall u\in H^+_{\lambda^\ast}$
 because $u_0\in H^+_{\lambda^\ast}\setminus\{0\}$.
 \end{proof}

\begin{remark}\label{rm:Spl.2.4}
{\rm
{\bf (i)} As in the proof of \cite[Claim~2.17]{Lu7} we may show:
if $0\in H$ is a nondegenerate critical point of $\mathcal{L}_\lambda$,
i.e., ${\rm Ker}(B_\lambda(0))=\{0\}$,
then $0\in H^0_{\lambda^\ast}$ is such a critical point of $\mathcal{L}^\circ_\lambda$ too.\\
{\bf (ii)}   Every critical point $z$ of $\mathcal{L}_{\lambda}^\circ$ in
$B_{H^0_{\lambda^\ast}}(0,\epsilon)$ yields a critical point $z+ \psi({\lambda}, z)$
of $\mathcal{L}_{\lambda}$, and  $z+ \psi({\lambda}, z)$ sits in $X$.
Moreover, if $z\to 0$ in $H^0_{\lambda^\ast}$, then $z+ \psi({\lambda}, z)\to 0$ in $X$.
Conversely, every critical point of $\mathcal{L}_{\lambda}|_X$ near $0\in X$
has the form $z+\psi({\lambda}, z)$, where $z\in B_{H^0_{\lambda^\ast}}(0,\epsilon)$ is a
critical point of $\mathcal{L}_{\lambda}^\circ$.
Clearly, if $0\in H$ is an isolated critical point of $\mathcal{L}_{\lambda}$,
then $0\in H^0_{\lambda^\ast}$ is also an isolated critical point of $\mathcal{L}_{\lambda}^\circ$.
Conversely, it might not be true, which is  different from \cite[Theorem~2.2]{Lu7}.
}
\end{remark}

Let $\nu_\lambda$ and $\mu_\lambda$ denote the nullity and  Morse index
of the functional $\mathcal{L}_{\lambda}$ at $0$. They are finite numbers and are
equal to $\dim H_\lambda^0$  and $\dim H^-_\lambda$ (that is, the nullity and Morse index
of the quadratic form $({B}_\lambda u,u)$ on $H$), respectively.
 As in \cite[Corollary~2.6]{Lu2} we have

\begin{corollary}[Shifting]\label{cor:A.6}
Under the assumptions of Theorem~\ref{th:A.5-}, suppose
 for some $\lambda\in [\lambda^\ast-\delta, \lambda^\ast+\delta]$ that $0\in H$ is an isolated critical point of $\mathcal{L}_{{\lambda}}$ (thus $0\in H^0_{\lambda^\ast}$ is that of $\mathcal{L}_{{\lambda}}^\circ$). Then
\begin{eqnarray*}
C_q(\mathcal{L}_{{\lambda}}, 0;{\bf K})=C_{q-\mu_{\lambda^\ast}}(\mathcal{L}^\circ_{{\lambda}}, 0;{\bf K}),\quad\forall
q\in\mathbb{N}\cup\{0\}.
\end{eqnarray*}
\end{corollary}


\begin{corollary}\label{cor:A.5}
Under Hypothesis~\ref{hyp:Bif.2.2.0} the conclusions of Theorem~\ref{th:A.5-}
hold with family $\{\mathcal{L}_\lambda:=\mathcal{L}-\lambda\widehat{\mathcal{L}}\,|\,
\lambda\in\Lambda=\mathbb{R}\}$.
\end{corollary}

\begin{remark}\label{rm:Spl.2.4}
{\rm If  Hypothesis~\ref{hyp:Bif.2.2.0} in Corollary~\ref{cor:A.5} is replaced by
Hypothesis~\ref{hyp:Bif.2.2.0+}, that is, we  only assume that $A: U^X\to X$  is
 G\^ateaux differentiable, and strictly Fr\'{e}chet differentiable at $0\in U^X$,
then we can still prove Corollary~\ref{cor:A.5} with weaker conclusions
that $\psi$ and $\mathcal{L}_{\lambda}^\circ$ are $C^{1-0}$ and $C^{2-0}$, respectively,
and that all $\psi(\lambda,\cdot)$ and $d\mathcal{L}_{\lambda}^\circ$ are strictly Fr\'echet differentiable
at $0\in H^0_{\lambda^\ast}$.}
\end{remark}

\begin{remark}\label{rm:Spl.2.5}
{\rm From the proofs of  \cite[Th.5.1.13]{Ch1} and \cite[Th.8.3]{MaWi} it is easily seen that
the classical splitting lemma for $C^2$ functionals has also the parameterized version as Theorem~\ref{th:A.5-}.
Let $U\subset H$ be as in Theorem~\ref{th:A.5-}, $\Lambda$ a  topological space,
and let $\mathcal{L}_\lambda\in C^2(U, \mathbb{R})$, $\lambda\in\Lambda$, be a  family of functionals
    satisfying $\mathcal{L}'_\lambda(0)=0\;\forall\lambda$, and be such that
  $\Lambda\times U\ni (\lambda,u)\mapsto\nabla\mathcal{L}_\lambda(u)\in H$ is  continuous.
 For some $\lambda^\ast\in\Lambda$, assume that $\mathcal{L}^{\prime\prime}_{\lambda^\ast}(0)$ is a Fredholm operator with nontrivial kernel.
 Then the conclusions of Theorem~\ref{th:A.5-} hold as long as we take $X=H$, $A_\lambda=\nabla \mathcal{L}_\lambda$ and
 $B_\lambda=\mathcal{L}^{\prime\prime}_{\lambda}$, and  replace the paragraph
 `` an open neighborhood $W$ of $0$ in $H$ and a homeomorphism ......
satisfying $\Phi_{{\lambda}}(0)=0$" by
 `` a $C^0$ map
\begin{eqnarray*}
\Lambda_0\times B_{H^0_{\lambda^\ast}}(0,\epsilon)\oplus
B_{H^+_{\lambda^\ast}}(0, \epsilon)\oplus B_{H^-_{\lambda^\ast}}(0, \epsilon)
\to  H,\quad ({\lambda}, u)\mapsto \Phi_{{\lambda}}(u)
\end{eqnarray*}
 such that for each $\lambda\in \Lambda_0$ the map
 $\Phi_{\lambda}$ is an  origin-preserving homeomorphism from
$B_{H^0_{\lambda^\ast}}(0,\epsilon)\oplus
B_{H^+_{\lambda^\ast}}(0, \epsilon)\oplus B_{H^-_{\lambda^\ast}}(0, \epsilon)$ onto
an open neighborhood $W_\lambda$ of $0$ in $H$".

If the kernel of $\mathcal{L}^{\prime\prime}_{\lambda^\ast}(0)$ is trivial,
we have a corresponding  parameterized Morse-Palais lemma.

Moreover, there also exist corresponding corollaries of Theorem~\ref{th:A.4},~\ref{th:A.5-} as above.

As noted in \cite[Remark~2.4]{Lu2}, these can directly follow from
 Theorem~\ref{th:A.4},~\ref{th:A.5-} and their corollaries if $\mathcal{L}^{\prime\prime}_{\lambda^\ast}(0)$
has also a finite dimensional negative definite space.
}
\end{remark}

\section{Appendix:\quad
 Parameterized Bobylev-Burman splitting  lemmas}\label{app:B}\setcounter{equation}{0}


We here give parameterized versions of splitting  lemmas in \cite{BoBu}
in consistent notations with those of this paper.
Let $H$ be a Hilbert space with inner product $(\cdot,\cdot)_H$
and the induced norm $\|\cdot\|$,  $X$  a Banach space with
norm $\|\cdot\|_X$, such that
\begin{description}
\item[(S)]   $X\subset H$ is dense in $H$ and
 the inclusion $X\hookrightarrow H$ is continuous.
\end{description}

Suppose that a $C^2$ functional $\mathscr{L}:B_X(0, \delta)\to\mathbb{R}$
satisfies  for some constant $M>0$:
\begin{description}
\item[(a)] $|d\mathscr{L}(x)[u]|\le M\|u\|,\;\forall x\in B_X(0, \delta)$, $\forall u\in X$.
\item[(b)] $|d^2\mathscr{L}(x)[u,v]|\le M\|u\|\cdot\|v\|,\;\forall x\in B_X(0, \delta)$, $\forall u,v\in X$.
\end{description}

%

Then there are bounded maps $A:B_X(0, \delta)\to H$
 and $B:B_X(0, \delta)\to \mathscr{L}_s(H)$ such that
\begin{eqnarray}\label{e:BB.-2}
d\mathscr{L}(x)[u]=(A(x),u)_H\quad\hbox{and}\quad
d^2\mathscr{L}(x)[u,v]=(B(x)u,v)_H
\end{eqnarray}
for all $x\in B_X(0, \delta)$ and for all $u,v\in X$.

 Suppose also:
 \begin{description}
\item[(c)] $A(B_X(0, \delta))\subset X$ and $A:B_X(0, \delta)\to X$ is
           uniformly continuously differentiable.
\item[(d)] (For any $x\in B_X(0, \delta)$, (c) and (\ref{e:BB.-2}) imply that $B(x)(X)\subset X$
and $X\ni y\mapsto B(x)y\in X$ is an element in $\mathscr{L}(X)$, denoted by
$B(x)|_X$ without confusions.)
           $B(\cdot)|_X: B_X(0, \delta)\to \mathscr{L}(X)$ is continuously differentiable.
\item[(e)]  $B:B_X(0, \delta)\to \mathscr{L}_s(H)$ is uniformly continuous.
\end{description}

A functional $\mathscr{L}$ satisfying the conditions (a)-(e) is called {\bf $(B_X(0, \delta),
H)$-regular}. It was proved in {\cite[Lemma~1.1]{BoBu}} that
$B(\cdot)|_X: B_X(0, \delta)\to \mathscr{L}(X)$ is a Fr\'echet derivative of $A$ as a map
from $B_X(0, \delta)$ to $X$.

In this paper, \textsf{by the spectrum of a linear operator on a real Banach space
we always mean one of its natural complex linear extension on the complexification of
the real Banach space} (\cite[p.14]{DaKr}). Similarly, if  others cannot be clearly explained in the real world,
we consider their complexification and then take invariant parts under complex adjoint on the complexification
spaces.

Let $\mathscr{S}(X):=\{L\in\mathscr{L}(X)\,|\,  (Lu,v)_H=(u,Lv)_H\;\forall u,v\in X\}$.
Recall the following Riesz lemma (cf. \cite[Lemma~1.3]{BoBu} and \cite[p.19]{DaKr}, \cite[p.54, Lemma~1]{Tr} and
 \cite[p.59, Propositions~2,3]{Tr}).

\begin{lemma}\label{lem:BB.2}
 Under the assumption {\bf (S)}, let $D\in\mathscr{S}(X)$ and  its spectrum have a decomposition
 $$
 \sigma(D)=\{0\}\cup\sigma_+(D)\cup\sigma_-(D),
 $$
 where $\sigma_+(D)$ and $\sigma_-(D)$ are closed subsets of $\sigma(D)$ contained in
 the interiors of the left and right halfplanes, respectively.
  Then the space $X$ can be decomposed into a direct sum of
spaces $X_0$, $X_+$ and $X_-$, which are closed in $X$, invariant with respect to $D$, orthogonal in $H$, and
$\sigma(D_\ast)=\sigma_\ast(D)$ for $D_\ast=D|_{X_\ast}$, $\ast=+,-$.  The projections $P_0$, $P_+$ and $P_-$,
corresponding to this decomposition, are symmetric operators with respect to the inner product in $H$, and
satisfy
$$
(P_\ast)^2=P_\ast,\quad \ast=0,+,-,\quad P_0+P_++P_-=id_X,
$$
and any two of three projections have zero compositions,
and moreover $P_\ast$, $\ast=+,-,0$,  are expressible as a limit of power series in $D$.
In addition, square roots $S_-$ and $S_+$ to $¡ªD_-$ and $D_+$
may be defined with the functional calculus, and are expressible in power
series in $D_-$ and $D_+$, and also symmetric with respect to the inner product in $H$.
\end{lemma}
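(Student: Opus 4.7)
My plan is to obtain the decomposition via the Dunford--Riesz holomorphic functional calculus applied to $D \in \mathscr{L}(X)$, then extract symmetry in $(\cdot,\cdot)_H$ from polynomial approximation, and finally promote the construction to the real setting via complexification.

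First I would pass to the complexification $(X^{\mathbb{C}}, D^{\mathbb{C}})$ (cf.\ \cite[p.14]{DaKr}) and choose three pairwise disjoint, rectifiable, positively oriented Jordan contours $\Gamma_0, \Gamma_+, \Gamma_-$ in $\mathbb{C}$ with $\Gamma_\ast$ surrounding $\sigma_\ast(D)$ and bounded away from the other two spectral components, and with $\Gamma_\pm$ lying entirely in the open right/left halfplane respectively. The Riesz projections
\begin{equation*}
P_\ast^{\mathbb{C}} \;=\; \frac{1}{2\pi i}\oint_{\Gamma_\ast}(zI - D^{\mathbb{C}})^{-1}\, dz,\qquad \ast\in\{0,+,-\},
\end{equation*}
are then bounded operators in $\mathscr{L}(X^{\mathbb{C}})$ satisfying $P_\ast^2 = P_\ast$, $P_0+P_++P_-=I$, and $P_\ast P_\bullet = 0$ for $\ast\neq\bullet$ by the standard calculus. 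Each contour is symmetric with respect to complex conjugation (this can be arranged since $\sigma(D^{\mathbb{C}}) = \overline{\sigma(D^{\mathbb{C}})}$, which itself follows from $D$ being real and symmetric in $H$), so $P_\ast^{\mathbb{C}}$ commutes with complex conjugation and descends to $P_\ast \in \mathscr{L}(X)$. Setting $X_\ast := P_\ast(X)$ gives closed $D$-invariant subspaces with $X = X_0 \oplus X_+ \oplus X_-$, and the spectral mapping theorem yields $\sigma(D|_{X_\ast}) = \sigma_\ast(D)$.

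The next step is the power-series representation, which will in turn deliver symmetry in $H$. By Runge's theorem each of the three holomorphic characteristic functions $\chi_{U_\ast}$ (where $U_\ast$ is a small neighborhood of $\Gamma_\ast$'s interior bounded by $\Gamma_\ast$) is uniformly approximable on a neighborhood of $\sigma(D)$ by a sequence of polynomials $p^{(\ast)}_n$; equivalently the Riesz integral defining $P_\ast^{\mathbb{C}}$ is a norm limit of $p^{(\ast)}_n(D^{\mathbb{C}})$ in $\mathscr{L}(X^{\mathbb{C}})$. Restricted back to $X$ this gives $P_\ast = \lim_n p^{(\ast)}_n(D)$ in $\mathscr{L}(X)$. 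Because the inclusion $X \hookrightarrow H$ is continuous and each $p^{(\ast)}_n(D)$ is a polynomial in $D$, which is symmetric with respect to $(\cdot,\cdot)_H$ on $X$, the identity $(p^{(\ast)}_n(D)u,v)_H = (u, p^{(\ast)}_n(D)v)_H$ holds for all $u,v \in X$; passing to the limit yields $(P_\ast u, v)_H = (u, P_\ast v)_H$. Combined with $P_0P_\pm = P_+P_- = 0$, this gives mutual orthogonality of $X_0, X_+, X_-$ in $H$.

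For the square roots, I would apply the Dunford--Riesz calculus once more with the functions $f_-(z) = \sqrt{-z}$ on $U_-$ (taking the principal branch on the left halfplane, which is holomorphic there) and $f_+(z) = \sqrt{z}$ on $U_+$. These define $S_- \in \mathscr{L}(X_-)$ and $S_+\in \mathscr{L}(X_+)$ with $S_\pm^2 = \mp D_\pm$, and Runge's theorem again gives approximation by polynomials in $D_\pm$, from which symmetry in $H$ follows by the same limiting argument as above.

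The main obstacle I anticipate is not any single step but ensuring the polynomial (Runge) approximation actually converges in the operator norm of $\mathscr{L}(X)$ rather than merely pointwise or on a smaller algebra; this requires choosing the approximating domain to be a small open neighborhood of $\sigma(D)$ on which the resolvent is norm-bounded, so that uniform convergence of $p^{(\ast)}_n \to \chi_{U_\ast}$ on that neighborhood transfers to norm convergence $p^{(\ast)}_n(D) \to P_\ast$ via the representation $p^{(\ast)}_n(D) - P_\ast = \frac{1}{2\pi i}\oint (p^{(\ast)}_n(z) - \chi_{U_\ast}(z))(z-D)^{-1}\,dz$. Once this bookkeeping is in place, every remaining assertion in the lemma is a direct consequence.
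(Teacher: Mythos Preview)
The paper does not supply its own proof of this lemma; it is stated with the preface ``Recall the following Riesz lemma'' and attributed to \cite[Lemma~1.3]{BoBu}, \cite[p.19]{DaKr}, and \cite[p.54, Lemma~1; p.59, Propositions~2,3]{Tr}. Your outline via the Dunford--Riesz calculus, Runge approximation by polynomials, and the passage of $H$-symmetry through norm limits of polynomials in $D$ is exactly the standard argument found in those references, and it is correct.

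Two small remarks. First, note the paper's (admittedly unusual) convention: $\sigma_+(D)$ lies in the \emph{left} halfplane and $\sigma_-(D)$ in the \emph{right}; you have implicitly swapped these, so your $S_\pm$ are the paper's $S_\mp$. This is purely notational. Second, your handling of the norm-convergence issue is right: since one may take the approximation domain to be a union of three disjoint simply connected bounded open sets containing the spectral pieces, its complement in $\widehat{\mathbb{C}}$ is connected, so Runge gives honest polynomials, and the contour-integral representation $p_n(D)-P_\ast=\tfrac{1}{2\pi i}\oint_\Gamma(p_n(z)-\chi_{U_\ast}(z))(zI-D)^{-1}\,dz$ converts uniform convergence on $\Gamma$ into $\mathscr{L}(X)$-norm convergence.
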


 For the above $(B_X(0, \delta), H)$-regular functional $\mathscr{L}$,
 suppose that  $d\mathscr{L}(0)=0$, and that the spectrum $\sigma(B(0)|_X)$ of
 $B(0)|_X\in\mathscr{L}(X)$ satisfies:
\textsl{$\sigma(B(0)|_X)\setminus\{0\}$ is bounded away from the imaginary axis.}
By Lemma~\ref{lem:BB.2}, corresponding to the spectral sets $\{0\}$,
 $\sigma_+(B(0)|_X)$ and $\sigma_-(B(0)|_X)$
 we have a direct sum decomposition of Banach spaces,
 $X=X_0\oplus X_+\oplus X_-$.   Denote by  $P_0$, $P_+$ and $P_-$
  the projections corresponding to the decomposition.
 When $\sigma(B(0)|_X)$ does not intersect the imaginary axis,
 namely the operator $B(0)|_X\in\mathscr{L}(X)$  is  {\bf hyperbolic} (\cite{Uh0}),
  (in particular, $\dim X_0=0$),
  we say that the critical point $0$ is  {\bf nondegenerate}.
  (Such nondegenerate critical points are isolated by Morse lemma \cite[Theorem~1.1]{BoBu}.)
  Call $\nu:=\dim X_0$ and $\mu:=\dim X_-$
  the {\bf nullity} and the {\bf Morse index} of $0$, respectively.
(Using Lemma~\ref{lem:BB.2} we may prove
that $\dim X_-$ is the supremum of the dimensions of the vector subspaces
of $X$ on which $(B(0)u,u)_H$ is negative definite).
The following is a parametric version of \cite[Theorem~1.1]{BoBu}.


\begin{theorem}\label{th:BB.3}
Let $\Lambda$ be a topological space, and  $\{\mathscr{L}_\lambda\,|\,\lambda\in\Lambda\}$
 a family of $(B_X(0, \delta), H)$-regular functionals such that
the corresponding operators
$A_\lambda$, $B_\lambda$ and  $B_\lambda|_X$   depend on $\lambda$ continuously.
 Suppose that $d\mathscr{L}_\lambda(0)=0\;\forall\lambda$,
and that for some $\lambda^\ast\in\Lambda$  the operator $B_{\lambda^\ast}(0)|_X\in\mathscr{L}(X)$
is hyperbolic (i.e., $0$ is a nondegenerate critical point of $\mathscr{L}_{\lambda_0}$).
Then there exist a neighborhood $\Lambda_0$ of $\lambda^\ast$ in $\Lambda$,
  $\epsilon\in (0, \delta)$ and  a family of  origin-preserving  $C^1$ diffeomorphism $\varphi_\lambda$ from
$B_X(0,\epsilon)$ onto an open neighborhood of $0$ in $X$, which continuously depend on $\lambda\in\Lambda_0$
  (i.e., $(\lambda,u)\mapsto\varphi_\lambda(u)$ is of class $C^0$),  such that
\begin{eqnarray}\label{e:BB.-1}
\mathscr{L}_\lambda\circ\varphi_\lambda(x)=\frac{1}{2}(B_{\lambda^\ast}(0)x, x)_H+ \mathscr{L}_\lambda(0),\quad\forall x\in B_X(0,\epsilon),\quad\forall\lambda\in\Lambda_0.
\end{eqnarray}
Actually, after suitably shrinking $\Lambda_0$ and $\epsilon>0$ these diffeomorphisms  $\varphi_\lambda$ can be chosen to satisfy
\begin{eqnarray}\label{e:BB.-0.5}
\mathscr{L}_\lambda\circ\varphi_\lambda(x)=\|P_+^{\lambda^\ast}x\|_H^2-\|P_-^{\lambda^\ast}x\|_H^2+ \mathscr{L}_\lambda(0),\quad\forall x\in B_X(0,\epsilon),\quad\forall\lambda\in\Lambda_0,
\end{eqnarray}
where $X=X^{\lambda^\ast}_+\oplus X_-^{\lambda^\ast}$ is a direct sum decomposition of Banach spaces
corresponding to the spectral sets  $\sigma_+(B_{\lambda^\ast}(0)|_X)$
 and $\sigma_-(B_{\lambda^\ast}(0)|_X)$, and $P_+^{\lambda^\ast}$ and $P_-^{\lambda^\ast}$
 denote the projections corresponding to the decomposition.
Moreover, if $\Lambda$ is a $C^1$ manifold,
and 
$A_\lambda$, $B_\lambda$ and $B_\lambda|_X$ depend on $\lambda$ in the $C^1$ way,
then $(\lambda,u)\mapsto\varphi_\lambda(u)$ is of class $C^1$.
\end{theorem}

Clearly, (\ref{e:BB.-1}) implies that  the nondegenerate critical point $0$ of $\mathscr{L}_{\lambda_0}$
is isolated.

(\ref{e:BB.-0.5})  was not pointed out in \cite[Theorem~1.1]{BoBu} explicitly.
It is easily derived from Lemma~\ref{lem:BB.2}. In fact, let
$S^-$ and $S^+$ be  square roots  to $-B_{\lambda^\ast}(0)|_{X_-}$ and $B_{\lambda^\ast}(0)|_{X_+}$,
respectively. Then
$$
(B_{\lambda^\ast}(0)x, x)_H=\|S_+P_+^{\lambda^\ast}x\|_H^2-\|S_-P_-^{\lambda^\ast}x\|_H^2=
\|P_+^{\lambda^\ast}S_+P_+^{\lambda^\ast}x\|_H^2-\|P_-^{\lambda^\ast}S_-P_-^{\lambda^\ast}x\|_H^2.
$$
 Let $\psi:X\to X$ be the isomorphism
defined by $\psi(x)=S_+P_+^{\lambda^\ast}x+ S_-P_-^{\lambda^\ast}x$. Replacing $\varphi_\lambda$ with
$\varphi_\lambda\circ\psi^{-1}$ in (\ref{e:BB.-1}) yields (\ref{e:BB.-0.5}).

For the sake of clarity we also give the proof of Theorem~\ref{th:BB.3}
though it is a slight modification of that of \cite[Theorem~1.1]{BoBu}.

\begin{proof}[\it Proof of Theorem~\ref{th:BB.3}]
Let $\mathcal{B}_\lambda(u)=\int^1_0(1-t)B_\lambda(tu)|_X dt$. Then
$\mathcal{B}_\lambda(0)=B_\lambda(0)|_X$,
 $B_X(0, \delta)\ni x\mapsto\mathcal{B}_\lambda(0)\in\mathscr{L}(X)$ is $C^1$ by (d),
  and $\mathscr{L}_\lambda(u)=(\mathcal{B}_\lambda(u)u,u)_H$ for all $u\in B_X(\theta,\delta)$
  by \cite[Lemma~1.2]{BoBu}. Since $B_{\lambda}(0)|_X$ continuously depending on $\lambda\in\Lambda$
  and the spectrum $\sigma(B_{\lambda^\ast}(0)|_X)$ does not intersect the imaginary axis,
  by shrinking $\Lambda$ we can assume that for some $r>0$,
\begin{eqnarray}\label{e:S.3.2}
\sigma(B_{\lambda}(0)|_X)\cap\{z=x+iy\,|\, |x|\le r\}=\emptyset,\quad\forall \lambda\in\Lambda,
\end{eqnarray}
and that for any $\lambda\in\Lambda$, $\sigma(B_{\lambda}(0)|_X)$ is equal to the union
$\sigma(B_{\lambda}(0)|_X)_+\cup\sigma(B_{\lambda}(0)|_X)_-$,  where
$$
\sigma(B_{\lambda}(0)|_X)_+=\{\mu\in \sigma(B_{\lambda}(0)|_X)\,|\, {\rm Re}\mu>0\},\quad
\sigma(B_{\lambda}(0)|_X)_-=\{\mu\in \sigma(B_{\lambda}(0)|_X)\,|\, {\rm Re}\mu<0\}.
$$
By Lemma~\ref{lem:BB.2} with $D=B_{\lambda}(0)|_X$, corresponding to the
spectral sets $\sigma(B_{\lambda}(0)|_X)_+$ and $\sigma(B_{\lambda}(0)|_X)_-$ we have a
decomposition of spaces, $X=X_+^{\lambda}\oplus X_-^{\lambda}$,
 and  projections $P_\ast^\lambda:X\to X_\ast^\lambda$
belonging to $\mathscr{S}(X)$,  $\ast=+,-$.
By shrinking $\Lambda$ (if necessary) we can take $M>0$ such that
\begin{eqnarray}\label{e:S.3.2.1}
\sigma(B_{\lambda^\ast}(0)|_X)_+\cap(-M^2\sigma(B_{\lambda^\ast}(0)|_X)_-)=\emptyset.
\end{eqnarray}

\noindent{\bf Step 1}. {\it Special case with assumption $\sigma(B_{\lambda^\ast}(0)|_X)\cap(-\sigma(B_{\lambda^\ast}(0)|_X))=\emptyset$.}
Consider the  map
$$
\Phi:\Lambda\times B_X(0,\delta)\times\mathscr{S}(X)\to \mathscr{S}(X),\;(\lambda,u,Y)\mapsto
Y\mathcal{B}_\lambda(Yu)Y- \mathcal{B}_{\lambda^\ast}(0).
$$
It is continuous, and $C^1$ in $(u,Y)$, and satisfies $D_Y\Phi(\lambda^\ast,0,id_X)=q_{\mathcal{B}_{\lambda^\ast}(0)}$,
where the operator
$$
q_{\mathcal{B}_{\lambda^\ast}(0)}:\mathscr{S}(X)\to\mathscr{S}(X),\; Z\mapsto \mathcal{B}_{\lambda^\ast}(0)Z+ Z\mathcal{B}_{\lambda^\ast}(0).
$$
By \cite[Lemma~1.4]{BoBu}, the operator $q_{\mathcal{B}_{\lambda^\ast}(0)}$
has a linear bounded invertible one. With the classical implicit function theorem
we obtain a neighborhood $\Lambda_0$ of $\lambda^\ast$ in $\Lambda$,
  $\epsilon\in (0, \delta)$ and  a continuous map $\psi$ from
$\Lambda_0\times B_X(\theta,\epsilon)$ to $\mathscr{S}(X)$, which is also $C^1$
 with respect to the second variable, such that
 $$
 \psi(\lambda^\ast, 0)=id_X\quad\hbox{and}\quad
  \Phi(\lambda, u, \psi(\lambda,u))\equiv 0\quad\forall (\lambda,u)\in\Lambda_0\times B_X(\theta,\epsilon).
 $$
 (Clearly, $\Phi$ and so $\psi$ is $C^1$ if the conditions in the ``Moreover" part hold.)
Shrinking $\Lambda_0$ and $\epsilon>0$ such that
 $\|\psi(\lambda,u)-\psi(\lambda^\ast, 0)\|<1/2$ for all $(\lambda, u)\in \Lambda_0\times B_X(\theta,\epsilon)$,
we get that for each $\lambda\in\Lambda_0$,
$$
\varphi_\lambda(u): B_X(\theta,\epsilon)\to X,\;u\mapsto\psi(\lambda,u)u
$$
 is  an origin-preserving  $C^1$-differeomorphism
 from  $B_X(\theta,\epsilon)$ onto an open neighborhood of $0\in X$ in $X$, and
 satisfies (\ref{e:BB.-1}) as checked in \cite{BoBu}.

\noindent{\bf Step 2}. {\it General case.}
Set $R_\lambda := P_+^{\lambda}+M P_-^{\lambda}$ and $\widehat{\mathscr{L}}_\lambda:=\mathscr{L}_\lambda\circ R_\lambda$.
By \cite[Lemma~1.2]{BoBu},
  $$
\widehat{\mathscr{L}}_\lambda(u)=(\widehat{\mathcal{B}}_\lambda(u)u,u)_H,\quad\forall u\in B_X(\theta,\delta),
$$
where $\widehat{\mathcal{B}}_\lambda=R_\lambda {\mathcal{B}}_\lambda R_\lambda$. Since
$\sigma(\widehat{\mathcal{B}}_\lambda)=\sigma(B_{\lambda}(0)|_X)_+\cup(M^2\sigma(B_{\lambda}(0)|_X)_-)$,
 (\ref{e:S.3.2.1}) implies that $\sigma(\widehat{\mathcal{B}}_{\lambda^\ast})$ is disjoint with
$-\sigma(\widehat{\mathcal{B}}_{\lambda^\ast})$. By the first step,
there exists a neighborhood $\Lambda_0$ of $\lambda^\ast$ in $\Lambda$,
  $\epsilon\in (0, \delta)$ and  a continuous map $\widehat\varphi$ from
$\Lambda_0\times B_X(\theta,\epsilon)$ to $X$, which is also $C^1$
 with respect to the second variable, such that
 for each $\lambda\in\Lambda_0$, $\widehat\varphi(\lambda,\cdot)$ is an origin-preserving  $C^1$-differeomorphism
 from  $B_X(\theta,\epsilon)$ onto an open neighborhood of $0\in X$ in $X$, and that
 $$
 \widehat{\mathscr{L}}_\lambda(\widehat\varphi(\lambda,u))=\frac{1}{2}(\widehat{\mathcal{B}}_{\lambda^\ast}(0)u,u)_H+
 \widehat{\mathscr{L}}_\lambda(0)=\frac{1}{2}(R_{\lambda^\ast}\circ(B_{\lambda^\ast}(0)|_X)\circ R_{\lambda^\ast} u,u)_H+
 {\mathscr{L}}_\lambda(0)
 $$
 Set $\tilde\varphi_\lambda(u):=R_\lambda\widehat\varphi(\lambda,(R_{\lambda^\ast})^{-1}u)$,
 where $(R_\lambda)^{-1}=P_+^{\lambda}+M^{-1} P_-^{\lambda}$ is the inverse  of $R_\lambda$. Then
 \begin{eqnarray*}
 {\mathscr{L}}_\lambda(\tilde\varphi_{\lambda}(u))&=&{\mathscr{L}}_\lambda(R_\lambda\widehat\varphi(\lambda,(R_{\lambda^\ast})^{-1}u)
 )= \widehat{\mathscr{L}}_\lambda(\widehat\varphi(\lambda,(R_{\lambda^\ast})^{-1}u) )\\
 &=&
 \frac{1}{2}(B_{\lambda^\ast}(0)|_X u,u)_H+ {\mathscr{L}}_\lambda(0),\quad\forall (\lambda,u)\in\Lambda_0\times B_X(0,\delta).
 \end{eqnarray*}
 The final claim is easily seen from the above proof.

\end{proof}

By the standard arguments
(see the following proof of Theorem~\ref{th:BB.5})
Theorem~\ref{th:BB.3}  can lead to:

\begin{theorem}[{\cite[Theorem~1.2]{BoBu}}]\label{th:BB.4}
Let $\mathscr{L}:B_X(0, \delta)\to\mathbb{R}$
be a $(B_X(0, \delta), H)$-regular functional. Suppose that $d\mathscr{L}(0)=0$, $0\in\sigma(B(0)|_X)$ and
that $\sigma(B(0)|_X)\setminus\{0\}$ is bounded away from the imaginary axis.
 Let $X=X_0\oplus X_+\oplus X_-$ be direct sum decomposition of Banach spaces,
 which corresponds to the spectral sets $\{0\}$,
  $\sigma_+(B(0)|_X)$ and $\sigma_-(B(0)|_X)$ as above.
 Then there exists $\epsilon\in (0, \delta)$,  a $C^1$ map $h:B_X(0,\epsilon)\cap X_0\to X_+\oplus X_-$ with $h(0)=0$ and
 a $C^1$ origin-preserving  diffeomorphism $\varphi$ from
$B_X(0,\epsilon)$ onto an open neighborhood of $0$ in $X$  such
that
\begin{eqnarray}\label{e:BB.0}
\mathscr{L}\circ\varphi(x)=\frac{1}{2}(B(0)x, x)_H+ \mathscr{L}^\circ(P_0x),\quad\forall x\in B_X(0,\epsilon).
\end{eqnarray}
where the functional
$\mathscr{L}^\circ: B_{X_0}(0, \epsilon)\to \mathbb{R}$, defined by
$\mathscr{L}^\circ(z)=\mathscr{L}(z+ h(z))$,
 is of class $C^{2}$, and has the first-order  derivative at $z_0\in
B_{X_0}(0, \epsilon)$  given by
 \begin{eqnarray}\label{e:BB.0+}
d\mathscr{L}^\circ(z_0)[z]=\bigl(A(z_0+ h(z_0)), z\bigr)_H,\quad\forall z\in X_0,
\end{eqnarray}
and the second-order derivative at $0\in B_{X_0}(0, \epsilon)$, $d^2\mathscr{L}^\circ(z_0)=0$.
Moreover, after suitably shrinking $\epsilon>0$ the diffeomorphism  $\varphi$ can be chosen to satisfy
$$
\mathscr{L}\circ\varphi(x)=\|P_+x\|_H^2-\|P_-x\|_H^2+ \mathscr{L}(h(P_0x)+P_0x),\quad\forall x\in B_X(0,\epsilon).
$$
\end{theorem}

(\ref{e:BB.0}) and (\ref{e:BB.0+}) imply  that $0\in X_0$ is an isolated critical point of $\mathscr{L}^\circ$
if and only if $0\in X$ is such a critical point of of $\mathscr{L}$.

\begin{remark}\label{rm:BB.4+}
{\rm (\ref{e:BB.0}) was proved in \cite[p.436, Theorem~1]{DiHiTr}
when $\mathscr{L}$ is of class $C^3$, and
the conditions (c)--(e) and the assumption that the operator $B(0)|_X\in\mathscr{L}(X)$
is hyperbolic are, respectively, replaced by the following:
 \begin{description}
\item[1)]  $A(B_X(0, \delta))\subset X$ and $A:B_X(0, \delta)\to X$ is $C^2$,
\item[2)] the operator $B(0)|_X\in\mathscr{L}(X)$ restricts an isomorphism on $X_++X_-$.
 \end{description}
Correspondingly, (\ref{e:BB.-1})  with $\Lambda=\{\lambda^\ast\}$ was proved in \cite[Theorem~1.4]{Tr1}
under similar conditions. When $\varphi$ is only required
to be a homeomorphism, Ming Jiang \cite[Theorem~2.5]{JM} proved (\ref{e:BB.0})
if the conditions {\bf (c)}--{\bf (e)} and the hyperbolicity assumption for $B(0)|_X\in\mathscr{L}(X)$ are replaced by the following:
 \begin{description}
\item[3)]  $A(B_X(0, \delta))\subset X$ and $A:B_X(0, \delta)\to X$ is $C^1$,
\item[4)]  $B:B_X(0, \delta)\to \mathscr{L}_s(H)$ is continuous,
\item[5)] C2) in Hypothesis~\ref{hyp:1.3}, and either $0\notin\sigma(B(0))$ or
$0$ is an isolated point of $\sigma(B(0))$.
 \end{description}
 The proof methods of the first two
are easily modified to give their parametric versions as below.
It seems troublesome to use the latter one. Later on, we shall discuss their relations
for the case used in this paper.
}
\end{remark}

Here is a more general parametric version of Theorem~\ref{th:BB.4} (\cite[Theorem~1.2]{BoBu}).

\begin{theorem}\label{th:BB.5}
Let $\Lambda$ be a topological space, $\lambda^\ast\in\Lambda$, and $\{\mathscr{L}_\lambda\,|\,\lambda\in\Lambda\}$
 a family of $(B_X(0, \delta), H)$-regular functionals such that
 the corresponding operators $A_\lambda$, $B_\lambda$ and  $B_\lambda|_X$   depend on $\lambda$ continuously.
  Suppose that $d\mathscr{L}_\lambda(0)=0\;\forall\lambda$,
and  that  $\sigma(B_{\lambda^\ast}(0)|_X)\setminus\{0\}$
is bounded away from the imaginary axis.
 Let $X=X_0^{\lambda^\ast}\oplus X_+^{\lambda^\ast}\oplus X_-^{\lambda^\ast}$ be direct sum decomposition of Banach spaces,
 which corresponds to the spectral sets $\{0\}$, $\sigma_+(B_{\lambda^\ast}(0)|_X)$
 and $\sigma_-(B_{\lambda^\ast}(0)|_X)$.
 Denote by $P_0^{\lambda^\ast}$, $P_+^{\lambda^\ast}$ and $P_-^{\lambda^\ast}$ the corresponding  projections to this decomposition.
    Then  there exist  a neighborhood $\Lambda_0$ of $\lambda^\ast$ in $\Lambda$,
$\epsilon>0$, and
\begin{description}
\item[{\bf (i)}] a (unique) $C^0$ map
\begin{eqnarray}\label{e:BB.1-}
\mathfrak{h}:\Lambda_0\times B_{X_0^{\lambda^\ast}}(0,\epsilon)\to X_+^{\lambda^\ast}\oplus X_-^{\lambda^\ast}
\end{eqnarray}
which is $C^1$ in the second variable and
satisfies $\mathfrak{h}(\lambda, 0)=0\;\forall\lambda\in \Lambda_0$ and
\begin{eqnarray}\label{e:BB.1}
(id_X-P_0^{\lambda^\ast})A_\lambda(z+ \mathfrak{h}(\lambda,z))=0,\quad\forall (\lambda,z)\in \Lambda_0\times B_{X_0^{\lambda^\ast}}(0,\epsilon),
\end{eqnarray}
\item[{\bf (ii)}] a $C^0$ map
\begin{eqnarray}\label{e:BB.2}
\Lambda_0\times B_X(0,\epsilon)
\to  X,\quad
({\lambda}, x)\mapsto \Phi_{{\lambda}}(x)
\end{eqnarray}
 such that for each $\lambda\in \Lambda_0$ the map
 $\Phi_{\lambda}$ is a $C^1$  origin-preserving diffeomorphism from
$B_X(0,\epsilon)$ onto an open neighborhood $W_\lambda$ of $0$ in $X$ and
 satisfies  for any $x\in B_X(0,\epsilon)$,
\begin{eqnarray}\label{e:BB.3}
\mathscr{L}_{\lambda}\circ\Phi_{\lambda}(x)=\|P_+^{\lambda^\ast}x\|_H^2-\|P_-^{\lambda^\ast}x\|_H^2+ \mathscr{
L}_{{\lambda}}(P_0^{\lambda^\ast}x+ \mathfrak{h}({\lambda}, P_0^{\lambda^\ast}x)). 
\end{eqnarray}
\end{description}
In addition, with {\bf notations} $X_\pm^{\lambda^\ast}:=X_+^{\lambda^\ast}\oplus X_-^{\lambda^\ast}$ and
$P_\pm^{\lambda^\ast}:=P_+^{\lambda^\ast}+P_-^{\lambda^\ast}$,
there also hold:
\begin{description}
\item[{\bf (iii)}]
\begin{equation}\label{e:BB.3+}
d_z\mathfrak{h}(\lambda,z)=-[P_\pm^{\lambda^\ast}\circ({B}_{\lambda}(z+\mathfrak{h}(\lambda,z))|_{X_\pm^{\lambda^\ast}})]^{-1}
\circ(P_\pm^{\lambda^\ast}\circ({B}_\lambda(z+\mathfrak{h}(\lambda,z))|_{X_0^{\lambda^\ast}})).
\end{equation}
\item[{\bf (iv)}]  the functional
\begin{equation}\label{e:BB.4}
\mathscr{L}_{\lambda}^\circ: B_{X_0^{\lambda^\ast}}(0, \epsilon)\to \mathbb{R},\;
z\mapsto\mathscr{L}_{\lambda}(z+ \mathfrak{h}({\lambda}, z))
\end{equation}
 is of class $C^{2}$, and has the first-order  derivative at $z_0\in
B_{X_0^{\lambda^\ast}}(0, \epsilon)$  given by
 \begin{eqnarray}\label{e:BB.5}
d\mathscr{L}^\circ_\lambda(z_0)[z]=\bigl(A_\lambda(z_0+ \mathfrak{h}(\lambda, z_0)), z\bigr)_H,\quad\forall z\in X_0^{\lambda^\ast},
\end{eqnarray}
and the second-order derivative at $0\in
B_{X_0^{\lambda^\ast}}(0, \epsilon)$
 \begin{eqnarray}\label{e:BB.6}
  &&d^2\mathscr{L}^\circ_\lambda(0)[z,z']=\left(P_0^{\lambda^\ast}\bigr[{B}_\lambda(0)-
 {B}_\lambda(0)(P_\pm^{\lambda^\ast}{B}_{\lambda}
 (0)|_{X_\pm^{\lambda^\ast}})^{-1}(P_\pm^{\lambda^\ast}{B}_\lambda(0))\bigr]z, z'\right)_H,\nonumber\\
&& \hspace{40mm} \forall z,z'\in X_0^{\lambda^\ast};
 \end{eqnarray}
\item[{\bf (v)}] the map $z\mapsto z+ \mathfrak{h}({\lambda}, z))$ induces an one-to-one correspondence
 between the critical points of  $\mathscr{L}_{\lambda}^\circ$ near $0\in X_0^{\lambda^\ast}$
and those of $\mathscr{L}_{\lambda}$ near $0\in X$.
\end{description}

Let $G$ be a compact Lie group acting on $H$ orthogonally,
which induces a $C^1$ isometric action on $X$. Suppose that each $\mathscr{L}_\lambda$ is $G$-invariant and that
 $A_\lambda, B_\lambda$  are equivariant. Then for each $\lambda\in\Lambda_0$,
 both $\mathfrak{h}(\lambda,\cdot)$ and $\Phi_\lambda$ are  equivariant, and
 $\mathscr{L}_{\lambda}^\circ$ is $G$-invariant.

 Finally, if $\Lambda$ is a $C^1$ manifold, and $\mathscr{L}_\lambda$, $A_\lambda$, $B_\lambda$ depend
 on $\lambda$ in the $C^1$ way, then the maps in (\ref{e:BB.1-}) and (\ref{e:BB.2}) are of class $C^1$.
\end{theorem}

\begin{proof}
A standard implicit function theorem argument yields (i) and (\ref{e:BB.3+}).
As in Step 2 of the proof of \cite[Lemma~3.1]{Lu2} we can get (\ref{e:BB.5}).
Then it and (\ref{e:BB.3+}) lead to (\ref{e:BB.6}).

By shrinking $\Lambda_0$ and $\epsilon$ we can assume that
$z+ \mathfrak{h}(\lambda,z))\in B_X(0, \delta/2)$ for all $(\lambda,z)\in \Lambda_0\times B_{X_0^{\lambda^\ast}}(0,\epsilon)$.
Let  $H_\pm^{\lambda^\ast}$ be the closure of $X_\pm^{\lambda^\ast}$ in $H$.
For each $(\lambda,z)\in \Lambda_0\times B_{X_0^{\lambda^\ast}}(0,\epsilon)$, define
$$
{\bf L}_{(\lambda,z)}:B_{X_\pm^{\lambda^\ast}}(0,\delta/2)\to\mathbb{R},\;u\mapsto \mathscr{L}_{\lambda}(z+ \mathfrak{h}(\lambda,z))+u).
$$
It is easily checked that ${\bf L}_{(\lambda,z)}$ is
$(B_{X_\pm^{\lambda^\ast}}(0, \delta/2), H_\pm^{\lambda^\ast})$-regular with corresponding bounded maps
\begin{eqnarray*}
&&{\bf A}_{(\lambda,z)}: B_{X_\pm^{\lambda^\ast}}(0, \delta/2)\to H_\pm^{\lambda^\ast},\;u\mapsto (P_+^{\lambda^\ast}+P_-^{\lambda^\ast})A(z+\mathfrak{h}(\lambda,z))+u),\\
&&{\bf B}_{(\lambda,z)}: B_{X_\pm^{\lambda^\ast}}(0, \delta/2)\to \mathscr{L}_s(H_\pm^{\lambda^\ast}),\;u\mapsto (P_+^{\lambda^\ast}
+P_-^{\lambda^\ast})\circ (B(z+\mathfrak{h}(\lambda,z))+u)|_{H_\pm^{\lambda^\ast}}).
\end{eqnarray*}
Clearly, ${\bf B}_{(\lambda^\ast, 0)}|_{X_\pm^{\lambda^\ast}}=(P_+^{\lambda^\ast}+P_-^{\lambda^\ast})\circ
(B(0)|_{X_\pm^{\lambda^\ast}})\in\mathscr{L}(X_\pm^{\lambda^\ast})$  and is  hyperbolic.
By Theorem~\ref{th:BB.3}, by further shrinking $\Lambda_0$ and $\epsilon$ we have
 a family of  origin-preserving  $C^1$ diffeomorphism $\varphi_{(\lambda,z)}$ from
$B_{X_\pm^{\lambda^\ast}}(0,\epsilon)$ onto an open neighborhood of $0$ in $X_\pm^{\lambda^\ast}$, which continuously depends on
$(\lambda,z)\in \Lambda_0\times B_{X_0^{\lambda^\ast}}(0,\epsilon)$ (and  on $z$ in $C^1$ way),
  such that for all $(\lambda,z)\in \Lambda_0\times B_{X_0^{\lambda^\ast}}(0,\epsilon)$,
\begin{eqnarray}\label{e:BB.6.1}
{\bf L}_{(\lambda,z)}\circ\varphi_{(\lambda,z)}(x)=\|P_+^{\lambda^\ast}x\|_H^2-\|P_-^{\lambda^\ast}x\|_H^2+ {\bf L}_{(\lambda,z)}(0),\quad\forall x\in B_{X_\pm^{\lambda^\ast}}(0,\epsilon).
\end{eqnarray}
For each $\lambda\in\Lambda_0$, define $\Phi_\lambda: B_X(0,\epsilon/2)\to  X$ by
$\Phi_\lambda(x)=z+\mathfrak{h}(\lambda,z)+ \varphi_{(\lambda,z)}(y)$, where $z=P_0^{\lambda^\ast}x$ and $y=P_\pm^{\lambda^\ast} x$.
This and (\ref{e:BB.6.1}) show that (\ref{e:BB.3}) holds for all $x\in B_X(0,\epsilon/2)$.

 For $z'\in X_0^{\lambda^\ast}$ and $y'\in X_\pm^{\lambda^\ast}$ a direct computation
leads to
 \begin{eqnarray}\label{e:BB.6.2}
 D\Phi_\lambda(0)[z'+y']&=&z'+ D_z\mathfrak{h}(\lambda,0)[z']+ D\varphi_{(\lambda,0)}(0)[y']+ \frac{d}{dt}|_{t=0}\varphi_{(\lambda,tz')}(0)\nonumber\\
 &=&z'+ D_z\mathfrak{h}(\lambda,0)[z']+ D\varphi_{(\lambda,0)}(0)[y']
 \end{eqnarray}
because  $\varphi_{(\lambda,z)}(0)=0\;\forall (\lambda,z)\in \Lambda_0\times B_{X_0^{\lambda^\ast}}(0,\epsilon)$.
Note that the last two terms in (\ref{e:BB.6.2}) sit in $X_\pm^{\lambda^\ast}$.
It is easily checked that $D\Phi_\lambda(0)$ is an injection.
For $u+v\in X$, where $u\in X_0^{\lambda^\ast}$ and $v\in X_\pm^{\lambda^\ast}$,
since $D\varphi_{(\lambda,0)}(0)\in\mathscr{L}(X_\pm^{\lambda^\ast})$ is an isomorphism,
we can choose $y'\in X_\pm^{\lambda^\ast}$ such that
$D\varphi_{(\lambda,0)}(0)[y']=v-D_z\mathfrak{h}(\lambda,0)[u]$, and therefore
$D\Phi_\lambda(0)[u+y']=u+v$. This shows that  $D\Phi_\lambda(0)\in\mathscr{L}(X)$ is surjective, and so
a Banach space isomorphism by the Banach inverse operator theorem.
Hence after shrinking $\Lambda_0$
we can get $0<\epsilon'\ll\epsilon/2$ such that $\Phi_{\lambda}$ is a $C^1$  origin-preserving diffeomorphism from
$B_X(0,\epsilon')$ onto an open neighborhood $W_\lambda$ of $0$ in $X$ by the inverse function theorem.
 Other claims easily follows by carefully checking the above proof.
\end{proof}

Since $0\in X_0^{\lambda^\ast}$ is an isolated critical point of  $\mathscr{L}_{\lambda}^\circ$ if and only if
$0\in X$ is such a critical point of $\mathscr{L}_{\lambda}$,
as usual Theorem~\ref{th:BB.5} lead to:

\begin{corollary}[Shifting]\label{cor:BB.6}
Under the assumptions of Theorem~\ref{th:BB.5},
let $\nu_{\lambda^\ast}:=\dim X_0^{\lambda^\ast}<\infty$ and $\mu_{\lambda^\ast}:=\dim X_-^{\lambda^\ast}<\infty$.
For any Abel group ${\bf K}$, and for any given $\lambda\in\Lambda_0$, if $0\in X$ is an
isolated critical point of $\mathscr{L}$ then
$C_q(\mathscr{L}_\lambda, 0;{\bf K})\cong
C_{q-\mu_{\lambda^\ast}}(\mathscr{L}^{\circ}_\lambda, 0;{\bf K})$ for all
$q\in\mathbb{N}\cup\{0\}$.
\end{corollary}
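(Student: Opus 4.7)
The plan is to use the diffeomorphism produced by Theorem~\ref{th:BB.5} to reduce the computation to a product situation, and then to invoke the classical shifting formula. Fix $\lambda\in\Lambda_0$ and abbreviate $F_\lambda:=\mathscr{L}_\lambda\circ\Phi_\lambda-\mathscr{L}_\lambda(\theta)$ on $B_X(\theta,\epsilon)$. Since $\Phi_\lambda$ is an origin-preserving homeomorphism from $B_X(\theta,\epsilon)$ onto a neighborhood of $\theta$ in $X$, one has $C_q(\mathscr{L}_\lambda,\theta;{\bf K})\cong C_q(F_\lambda,\theta;{\bf K})$ for every $q$. By (\ref{e:BB.3}),
\[
F_\lambda(x_0+x_++x_-)=\|x_+\|_H^2-\|x_-\|_H^2+\tilde{\mathscr L}^\circ_\lambda(x_0),
\]
where the decomposition $X=X_0\oplus X_+\oplus X_-$ is orthogonal in $H$ (Lemma~\ref{lem:BB.2}) and $\tilde{\mathscr L}^\circ_\lambda(z):=\mathscr{L}^\circ_\lambda(z)-\mathscr{L}^\circ_\lambda(\theta)$.

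First I would verify that $\theta\in X_0$ is an isolated critical point of $\mathscr{L}^\circ_\lambda$. The $H$-orthogonality of the three summands together with (\ref{e:BB.5}) shows that for $x=x_0+x_++x_-$ small,
\[
dF_\lambda(x)[v_0+v_++v_-]=d\tilde{\mathscr L}^\circ_\lambda(x_0)[v_0]+2(x_+,v_+)_H-2(x_-,v_-)_H\quad\forall v_0+v_++v_-\in X,
\]
so $dF_\lambda(x)=0$ forces $x_+=\theta$, $x_-=\theta$, and $d\mathscr{L}^\circ_\lambda(x_0)=0$; hence isolation of $\theta$ for $\mathscr{L}_\lambda$ (equivalently for $F_\lambda$) transfers to isolation of $\theta\in X_0$ for $\mathscr{L}^\circ_\lambda$ after shrinking $\epsilon>0$. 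This is also recorded in the last paragraph of Theorem~\ref{th:BB.5}.

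Next I would build a Gromoll--Meyer pair for $F_\lambda$ at $\theta$ as a product of pairs adapted to the three summands. Choose small radii $r_0,r_+,r_->0$ and let $W=\bar B_{X_0}(\theta,r_0)\times \bar B_{X_+}(\theta,r_+)\times \bar B_{X_-}(\theta,r_-)$ and $W^-=W\cap\{F_\lambda\le 0\}$. Because $F_\lambda$ is a separated sum of functions of $x_0,x_+,x_-$, radial contraction $t\mapsto(1-t)x_+$ in the positive factor decreases $F_\lambda$ and sends $X_+$ onto $\theta$; this gives a strong deformation retraction of the pair along $X_+$ showing
\[
C_q(F_\lambda,\theta;{\bf K})\cong C_q\bigl(\tilde{\mathscr L}^\circ_\lambda(x_0)-\|x_-\|_H^2,\ \theta;\ {\bf K}\bigr)\qquad\text{on}\quad X_0\oplus X_-.
\]
Since $\dim X_0+\dim X_-=\nu_\theta+\mu_\theta<\infty$, the reduced problem lives on a finite-dimensional Banach space, and $-\|\cdot\|_H^2$ is a nondegenerate Morse form on $X_-$ of index $\mu_\theta$. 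By the classical shifting theorem in finite dimensions (Chang \cite{Ch}, Mawhin--Willem \cite{MaWi}, or the product formula via the Künneth theorem applied to the Gromoll--Meyer pair $\bar B_{X_0}(\theta,r_0)\times(\bar B_{X_-}(\theta,r_-),\partial\bar B_{X_-}(\theta,r_-))$),
\[
C_q\bigl(\tilde{\mathscr L}^\circ_\lambda(x_0)-\|x_-\|_H^2,\ \theta;\ {\bf K}\bigr)\cong C_{q-\mu_\theta}(\mathscr{L}^\circ_\lambda,\theta;{\bf K}).
\]
Chaining the three isomorphisms yields the claim.

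The main obstacle is the fact that on $X_+$ the quadratic form $\|\cdot\|_H^2$ need not be coercive or even equivalent to $\|\cdot\|_X^2$, so a naive Morse-lemma argument on $X_+$ is unavailable. I would sidestep this by using only the straight-line deformation $(t,x_+)\mapsto(1-t)x_+$, which manifestly decreases $\|x_+\|_H^2$ and is continuous in $X$; this deformation is all that is required for $\theta\in X_+$ to have trivial critical groups (only $C_0$ nonzero) and, more importantly, for the positive factor to be contractible inside every sublevel pair. Once this is done the remaining argument takes place in finite dimension and is routine.
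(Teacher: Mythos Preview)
Your proposal is correct and is precisely the standard shifting argument the paper refers to with the phrase ``As usual we may derive from this theorem''; the paper itself provides no further details. The linear contraction $x_+\mapsto(1-t)x_+$ on the (possibly infinite-dimensional) positive summand is the right workaround for the lack of $\|\cdot\|_X$-coercivity of $\|\cdot\|_H^2$, and your check that points with $x_0=x_-=\theta$ and $x_+\ne\theta$ lie strictly above the critical level (hence the deformation never collapses a point of $F_\lambda^0\setminus\{\theta\}$ onto $\theta$) is the only nontrivial verification needed before the finite-dimensional K\"unneth/shifting step on $X_0\oplus X_-$.
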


\begin{claim}\label{cl:BB.6+}
In Theorem~\ref{th:BB.5}, if  for some $\lambda\in\Lambda_0\setminus\{\lambda^\ast\}$,
$0\in X$ is a nondegenerate critical point of $\mathscr{L}_\lambda$,
i.e.,  the operator $B_\lambda(0)|_X\in\mathscr{L}(X)$  is   hyperbolic (\cite{Uh0}),
then $0\in X_0^{\lambda^\ast}$ is such a critical point of $\mathscr{L}^\circ_\lambda$ too.
\end{claim}

\begin{proof}
Let $z\in X_0^{\lambda^\ast}$ be such that  $d^2\mathscr{L}^\circ_\lambda(0)[z,z']=0\;\forall z'\in X_0^{\lambda^\ast}$.
By (\ref{e:BB.6}) we deduce
 \begin{eqnarray}\label{e:BB.7}
  P_0^{\lambda^\ast}\bigr[{B}_\lambda(0)-
 {B}_\lambda(0)(P_\pm^{\lambda^\ast}{B}_{\lambda^\ast}
 (0)|_{X_\pm^{\lambda^\ast}})^{-1}(P_\pm^{\lambda^\ast}{B}_\lambda(0))\bigr]z=0.
 \end{eqnarray}
Moreover, differentiating the equality in (\ref{e:BB.1}) at $z=0$ and using (\ref{e:BB.3+}) we get that
\begin{eqnarray*}
0&=&(id_X-P_0^{\lambda^\ast})B_\lambda(0)|_X(z+ d_z\mathfrak{h}(\lambda,0)z)\\
&=&(id_X-P_0^{\lambda^\ast})B_\lambda(0)|_X(z-[P_\pm^{\lambda^\ast}{B}_{\lambda^\ast}(0)|_{X_\pm^{\lambda^\ast}}]^{-1}
(P_\pm^{\lambda^\ast}{B}_\lambda(0))|_{X_0^{\lambda^\ast}}z)\\
&=&(id_X-P_0^{\lambda^\ast})[B_\lambda(0)-B_\lambda(0)(P_\pm^{\lambda^\ast}{B}_{\lambda^\ast}(0)|_{X_\pm^{\lambda^\ast}})^{-1}
(P_\pm^{\lambda^\ast}{B}_\lambda(0))|_{X_0^{\lambda^\ast}})]z\\
&=&[B_\lambda(0)-B_\lambda(0)(P_\pm^{\lambda^\ast}{B}_{\lambda^\ast}(0)|_{X_\pm^{\lambda^\ast}})^{-1}
(P_\pm^{\lambda^\ast}{B}_\lambda(0))|_{X_0^{\lambda^\ast}})]z\\
&-&P_0^{\lambda^\ast}[B_\lambda(0)-B_\lambda(0)(P_\pm^{\lambda^\ast}{B}_{\lambda^\ast}(0)|_{X_\pm^{\lambda^\ast}})^{-1}
(P_\pm^{\lambda^\ast}{B}_\lambda(0))|_{X_0^{\lambda^\ast}})]z\\
&=&[B_\lambda(0)-B_\lambda(0)(P_\pm^{\lambda^\ast}{B}_{\lambda^\ast}(0)|_{X_\pm^{\lambda^\ast}})^{-1}
(P_\pm^{\lambda^\ast}{B}_\lambda(0))|_{X_0^{\lambda^\ast}})]z
\end{eqnarray*}
because of  (\ref{e:BB.7}). Since $B_\lambda(0)$ is invertible, we deduce
$$
z=(P_\pm^{\lambda^\ast}{B}_{\lambda^\ast}(0)|_{X_\pm^{\lambda^\ast}})^{-1}
(P_\pm^{\lambda^\ast}{B}_\lambda(0))|_{X_0^{\lambda^\ast}})z.
$$
Note that $z\in X_0^{\lambda^\ast}$ and that the right side belongs to $X_\pm^{\lambda^\ast}$.
Hence $z=0$.
\end{proof}

In general, it is not easy to judge whether $\sigma(B(0)|_X)\setminus\{0\}$ is bounded away from the imaginary axis.
However, it is not hard to prove that this condition is equivalent to the following:
\begin{description}
\item[(*)] $0$ is at most an isolated point of
$\sigma(B(0)|_X)$ and $B(0)$ induces a hyperbolic  operator on the quotient space $X/X_0$,
where $X_0={\rm Ker}(B(0)|_X)$.
\end{description}

For a class of operators used in this paper the following lemma determines when $0$
is at most an isolated point of $\sigma(B(0))$.

\begin{lemma}\label{lem:BB.7}
 Let a self-adjoined operator $\mathfrak{B}\in\mathscr{L}_s(H)$ be a sum $\mathfrak{B} = \mathfrak{P} +
 \mathfrak{Q}$, where
$\mathfrak{P}\in\mathscr{L}(H)$ is invertible, and $\mathfrak{Q}\in\mathscr{L}(H)$ is compact. Let
 $0\in\sigma(\mathfrak{B})$. Then $0$ is an
isolated point of $\sigma(\mathfrak{B})$ and an eigenvalue of $\mathfrak{B}$ of the finite multiplicity. ({\cite[Lemma~2.2]{BoBu}})

 In addition, if $\mathfrak{P}$ is also positive definite, then every $\lambda<\inf\{(\mathfrak{P}u,u)_H\,|\,\|u\|_H=1\}$
 is either a regular value of $\mathfrak{B}$ or an isolated point of
$\sigma(\mathfrak{B})$, which is also an eigenvalue of finite multiplicity. (\cite[Proposition~B.2]{Lu2})
\end{lemma}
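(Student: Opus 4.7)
My plan is to derive both parts uniformly from Fredholm theory applied to the factorization
\[
B - \lambda I \;=\; (P - \lambda I)\bigl(I + (P - \lambda I)^{-1} Q\bigr),
\]
valid whenever $P - \lambda I$ is invertible. Since $Q$ is compact, so is $(P - \lambda I)^{-1} Q$, which makes the second factor a compact perturbation of the identity, hence Fredholm of index zero. Composition with the invertible factor $P - \lambda I$ then shows that $B - \lambda I$ is itself Fredholm of index zero. For part~1 this applies at $\lambda = 0$ by the assumed invertibility of $P$. For part~2 I will need $P - \lambda I$ invertible for every $\lambda < c := \inf\{(Pu, u)_H : \|u\| = 1\}$; under the positive-definiteness hypothesis $P$ is self-adjoint and the estimate $((P - \lambda I) u, u)_H \geq (c - \lambda)\|u\|^2$ gives coercivity, so Lax--Milgram (or bijectivity of coercive self-adjoint operators) supplies the invertibility.

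Once Fredholmness of $B - \lambda I$ is in hand, the self-adjointness of $B$ closes the argument at each such $\lambda$. First, $\dim \ker(B - \lambda I)$ is finite by the Fredholm property, giving the finite multiplicity claim. Second, $B - \lambda I$ has closed range, so self-adjointness yields the orthogonal splitting
\[
H \;=\; \ker(B - \lambda I) \,\oplus\, \mathrm{range}(B - \lambda I) \;=\; \ker(B - \lambda I) \,\oplus\, \ker(B - \lambda I)^\perp,
\]
and the restriction of $B - \lambda I$ to the second summand is a self-adjoint bijection, hence invertible in $\mathscr{L}(\ker(B - \lambda I)^\perp)$. Its spectrum is closed and avoids zero, so it stays bounded away from zero. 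Consequently $\sigma(B)$ meets a neighborhood of $\lambda$ only at $\lambda$ itself, i.e., $\lambda$ is an isolated point of $\sigma(B)$ whenever it belongs to $\sigma(B)$ at all; otherwise it is already a regular value.

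No step is genuinely delicate. The heart of the argument is the Fredholm factorization, which is a concrete expression of Weyl's invariance of the essential spectrum under compact perturbations. The only checks I expect to pause on are (i) that coercivity of $P - \lambda I$ for $\lambda < c$ actually yields invertibility rather than mere injectivity, which is where the positive-definiteness hypothesis on $P$ is essential in part~2, and (ii) that the Fredholm property gives closed range, which is what converts ``zero is not an eigenvalue of the restriction to the orthogonal complement'' into ``zero is isolated in the spectrum of that restriction,'' and thereby isolates $\lambda$ in $\sigma(B)$.
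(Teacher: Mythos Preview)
The paper does not supply its own proof of this lemma; it simply records the statement with citations to \cite[Lemma~2.2]{BoBu} and \cite[Proposition~B.2]{Lu2}. So there is nothing in the paper to compare against directly.

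Your argument is correct and is the standard route: the factorization $B-\lambda I=(P-\lambda I)\bigl(I+(P-\lambda I)^{-1}Q\bigr)$ exhibits $B-\lambda I$ as a Fredholm operator of index zero whenever $P-\lambda I$ is invertible, and self-adjointness of $B$ then converts Fredholmness into the orthogonal splitting $H=\ker(B-\lambda I)\oplus\ker(B-\lambda I)^\perp$ with the restriction to the second summand invertible, which isolates $\lambda$ in $\sigma(B)$. This is exactly Weyl's theorem on the stability of the essential spectrum under compact perturbations, applied in the self-adjoint setting. One small remark: you write that ``under the positive-definiteness hypothesis $P$ is self-adjoint''; strictly speaking positive-definiteness in the sense $(Pu,u)_H>0$ need not force $P=P^\ast$, but in this paper's conventions $P$ is always taken in $\mathscr{L}_s(H)$ (see Hypothesis~\ref{hyp:1.1}), so the coercivity estimate $((P-\lambda I)u,u)_H\ge(c-\lambda)\|u\|^2$ does give invertibility of $P-\lambda I$ for $\lambda<c$, as you claim.
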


 As before let $H^0$, $H^+$ and $H^-$ be null, positive and negative definite spaces of $B(0)$,
and let $P^\ast:H\to H^\ast$ for $\ast=0,+,-$,  be the orthogonal projections.
Clearly, $X_0\subset H^0$. The following lemma shows how the Morse index $\mu$
is computed.

\begin{lemma}\label{lem:BB.8}
 If $X_0=H^0$ and
either $\dim H^-<\infty$ or $\dim H^+<\infty$, then
$X_\ast$ are dense subspaces of $H^\ast$ for $\ast=+,-$, moreover
$X_-=H^-$ (resp. $X_+=H^+$) as
 $\dim H^-<\infty$ (resp. $\dim H^+<\infty$).
(These imply $X_\ast=H^\ast\cap X$ and $P_\ast=P^\ast|_{X}$
for $\ast=0,+,-$, and hence
 $X_\pm=X\cap H^\pm=(P^++P^-)(X)$, where $H^\pm:=H^++H^-$.)
\end{lemma}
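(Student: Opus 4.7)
The plan is to treat the case $\dim H^-<\infty$ in detail; the case $\dim H^+<\infty$ is symmetric, giving the analogous conclusion. The overall strategy is to show that taking $H$-closures of the Riesz decomposition $X = H^0\oplus X_+\oplus X_-$ (with $X_0=H^0$ by hypothesis) recovers the canonical spectral decomposition $H=H^0\oplus H^+\oplus H^-$ of the bounded self-adjoint operator $B(\theta)$ on $H$.

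First I would extract coercivity on $X_\pm$ in the $H$-norm. By Lemma~\ref{lem:BB.2}, the symmetric (with respect to $(\cdot,\cdot)_H$) square roots $S_\pm$ of $\pm B(\theta)|_{X_\pm}$ have spectra bounded below by some $\sqrt{c}>0$ (since $\sigma(B(\theta)|_X)\setminus\{0\}$ is bounded away from the imaginary axis); a second application of functional calculus to $S_\pm^2-cI$ gives $\|S_\pm x\|_H^2\ge c\|x\|_H^2$, so
\[
(B(\theta)x,x)_H\ge c\|x\|_H^2\ \ \forall x\in X_+,\qquad (B(\theta)x,x)_H\le -c\|x\|_H^2\ \ \forall x\in X_-.
\]
Because $X_\pm\perp_H X_0=H^0$, both sit inside $(H^0)^\perp=H^+\oplus H^-$. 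For $x\in X_-$, write $x=x^++x^-$ along $H^+\oplus H^-$; $B(\theta)$-invariance of $H^\pm$ gives $(B(\theta)x,x)_H=(B(\theta)x^+,x^+)_H+(B(\theta)x^-,x^-)_H\ge -\|B(\theta)\|\cdot\|x^-\|_H^2$, and combined with the coercive upper bound this forces $\|P^-x\|_H\ge(c/\|B(\theta)\|)^{1/2}\|x\|_H$. Hence $P^-|_{X_-}$ is bounded below, so $\dim X_-\le\dim H^-<\infty$, and $X_-$ is closed in $H$.

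Let $\overline{X_+}$ denote the $H$-closure of $X_+$. Because $X_-$ is finite-dimensional, $\overline{X_+}+X_-$ is closed, and by density of $X$ in $H$ (together with $X_0=H^0$) it is $H$-dense in $(H^0)^\perp$, so $\overline{X_+}\oplus X_-=H^+\oplus H^-$ as an $H$-orthogonal direct sum. The coercivity extends by continuity: $B(\theta)|_{\overline{X_+}}$ is bounded self-adjoint on the Hilbert space $\overline{X_+}$ with $\sigma(B(\theta)|_{\overline{X_+}})\subset[c,\|B(\theta)\|]\subset(0,\infty)$, and similarly $\sigma(B(\theta)|_{X_-})\subset[-\|B(\theta)\|,-c]\subset(-\infty,0)$. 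Applying bounded Borel functional calculus of $B(\theta)$ on the invariant subspace $\overline{X_+}$, one gets $\chi_{(-\infty,0]}(B(\theta))v=\chi_{(-\infty,0]}(B(\theta)|_{\overline{X_+}})v=0$ for every $v\in\overline{X_+}$, so $\overline{X_+}\subseteq\mathrm{Range}(\chi_{(0,\infty)}(B(\theta)))=H^+$; analogously $X_-\subseteq H^-$. Combined with the orthogonal decomposition $\overline{X_+}\oplus X_-=H^+\oplus H^-$, this forces $\overline{X_+}=H^+$ and $X_-=H^-$; that is, $X_+$ is dense in $H^+$ and $X_-=H^-$, as required.

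The main obstacle is the spectral-theoretic step $\overline{X_+}\subseteq H^+$: one must exploit $B(\theta)$-invariance of the closed subspace $\overline{X_+}$ together with its coercive lower bound in order to apply bounded Borel functional calculus of the ambient operator $B(\theta)$ and conclude containment in the canonical spectral subspace $H^+$. The finite-dimensionality hypothesis on $H^-$ enters earlier and more elementarily: it is what guarantees $\dim X_-\le\dim H^-<\infty$, so that $X_-$ is itself closed in $H$ and the sum $\overline{X_+}+X_-$ is closed, exhausting $(H^0)^\perp$.
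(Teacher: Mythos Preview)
Your proof is correct and takes a genuinely different route from the paper's. Both arguments begin from the density of $X_+\oplus X_-$ in $(H^0)^\perp=H^+\oplus H^-$ and aim to match the $H$-closures of $X_\pm$ with the spectral subspaces $H^\pm$, but the mechanisms differ. The paper argues qualitatively: it passes to the closures $X_\pm^H$, shows $B(\theta)$ is strictly positive (resp.\ negative) on $X_+^H$ (resp.\ $X_-^H$) from semi-definiteness plus injectivity on $(H^0)^\perp$, and then runs a maximal-negative-subspace dimension count, comparing $X_-^H$ against $H^-$ through their intersection and orthogonal complements to force $X_-^H\subset H^-$. You instead work quantitatively: you extract the spectral-gap constant $c>0$ via a second square-root to obtain genuine coercivity $\pm(B(\theta)x,x)_H\ge c\|x\|_H^2$ on $X_\pm$, use this to bound $P^-|_{X_-}$ below (giving $\dim X_-\le\dim H^-<\infty$ immediately), and then invoke Borel functional calculus on the reducing subspace $\overline{X_+}$ to land it directly in the spectral subspace $H^+$. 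Your route uses heavier spectral-theoretic tools (reducing subspaces, Borel calculus) but each step is transparent and self-contained; the paper's route is in principle more elementary, relying only on sign of the quadratic form and finite-dimensional linear algebra, though its pivotal claim---that $X_{--}^H+H^{--}+(X_-^H\cap H^-)$ is a single negative-definite subspace---needs the additional observation $X_{--}^H\cap H^-=\{0\}$ and a bit more justification than is written.
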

\begin{proof}
Since $X$ is dense in $H$, it is easily proved that $X_\pm:=X_+\oplus X_-$ is dense in
$H^\pm$. Let $X_+^H$ and $X_-^H$ be the closure of $X_+$ and $X_-$ in $H$, respectively.
They are orthogonal in $H$, invariant with respect to $B(0)$ and $H^\pm=X_+^H+ X_-^H$.
Since $B(0)$ is semi-positive (resp. semi-negative) on $X_+^H$ (resp. $X_-^H$)
by Lemma~\ref{lem:BB.2}, and $B(0)$ has no nontrivial kernel on $H^\pm$, we deduce that
$B(0)$ must be positive (resp. negative) on $X_+^H$ (resp. $X_-^H$).
Let $\dim H^-<\infty$. If $X^H_-\cap H^-\ne\{0\}$, it is an invariant subspace of $B(0)$.
Denote by $X^H_{--}$ and $H^{--}$ the orthogonal complements of $X^H_-\cap H^-$ in $X^H_-$ and $H^-$,
respectively. Then $X^H_{--}+ H^{--}+X^H_-\cap H^-$ is a negative definite subspace of $B(0)$.
But $\dim H^-$ is the maximal dimension of negative definite subspaces of $B(0)$.
Hence $X^H_{--}=\{0\}$,  and thus $X^H_{-}=X^H_-\cap H^-$. The latter means $X^H_{-}\subset H^-$
and so $X^H_{+}\supseteq H^+$. For any $x\in H^-$, since
$X_\pm=X_+\oplus X_-$ is dense in $H^\pm=H^++H^-$, we have a sequence $(x_n^++x_n^-)\subset X_+\oplus X_-$
such that $x_n^++x_n^-\to x$. Note that
$\|x_n^++x_n^--x_m^+-x_m^-\|^2=\|x_n^+-x_m^+\|^2+\|x_n^--x_m^-\|^2$.
We get $x_n^+\to x^+\in X^H_+$ and $x_n^-\to x^-\in X^H_-$. Hence $x=x^++x^-$.
But $(x,x^+)_H=0$. So $x^+=0$ and $x=x^-$. This shows that $X_-$ is dense in $H^-$
and thus $X_-=X_-^H=H^-$ (since $\dim H^-<\infty$). The latter implies $X_+^H=H^+$.
 Similarly, we can deal with the case that $\dim H^+<\infty$.
\end{proof}



\medskip

\begin{tabular}{l}
 School of Mathematical Sciences, Beijing Normal University\\
 Laboratory of Mathematics and Complex Systems, Ministry of Education\\
 Beijing 100875, The People's Republic of China\\
 E-mail address: gclu@bnu.edu.cn\\
\end{tabular}

\end{document}